\newtheoremstyle{teoremas}
{13pt}
{14pt}
{\itshape}
{}
{\bfseries}
{}
{.5em}
{}
\theoremstyle{teoremas}
\newtheorem{teo}{Theorem}[section]
\newtheorem{cor}[teo]{Corollary}
\newtheorem{coro}[teo]{Corollary}
\newtheorem{lem}[teo]{Lemma}
\newtheorem{lema}[teo]{Lemma}
\newtheorem{prop}[teo]{Proposition}
\newtheoremstyle{definition}
{12pt}
{12pt}
{}
{}
{\bfseries}
{}
{.5em}
{}
\theoremstyle{definition}
\newtheorem{defi}[teo]{Definition}
\newtheorem{conj}[teo]{Conjecture}
\newtheorem{ex}[teo]{Example}
\newtheorem{problem}[teo]{Problem}
\newtheorem{obs}[teo]{Remark}
\DeclareMathOperator{\rk}{rk}
\newcommand{\trk}{\widetilde{\rk}}
\def\L{\mathscr L}
\newcommand{\inv}{^{-1}}
\newcommand{\M}{\mathsf{M}}
\newcommand{\N}{\mathsf{N}}
\newcommand{\U}{\mathsf{U}}
\newcommand{\K}{\mathsf{K}}
\newcommand{\V}{\mathsf{V}}
\newcommand{\B}{\mathsf{B}}
\newcommand{\tm}{\widetilde{\M}}
\newcommand{\T}{\mathsf{T}}
\newcommand{\si}{\operatorname{si}}
\newcommand{\Skyt}{\operatorname{Skyt}}
\newcommand{\skyt}{\operatorname{skyt}}
\newcommand{\Syt}{\operatorname{Syt}}
\newcommand{\syt}{\operatorname{syt}}
\newcommand{\bSyt}{\operatorname{\overline{Syt}}}
\newcommand{\bsyt}{\operatorname{\overline{syt}}}
\newcommand{\bSkyt}{\operatorname{\overline{Skyt}}}
\newcommand{\bskyt}{\operatorname{\overline{skyt}}}
\title[Stressed hyperplanes and $\gamma$-positivity for matroids]{Stressed hyperplanes and Kazhdan--Lusztig\\ gamma-positivity for matroids}
\author[L. Ferroni]{Luis Ferroni}
\address{KTH Royal Institute of Technology, Department of Mathematics, Stockholm, Sweden}
\email{ferroni@kth.se}
\author[G. D. Nasr]{George D. Nasr}
\address{University of Oregon, Department of Mathematics, Eugene, Oregon, United States}
\email{gdnasr@uoregon.edu}
\author[L. Vecchi]{Lorenzo Vecchi}
\address{Universit\`a di Bologna, Dipartimento di Matematica, Bologna, Italy} 
\email{lorenzo.vecchi6@unibo.it}
\thanks{LF is supported by the Marie Sk{\l}odowska-Curie PhD fellowship as part of the program INdAM-DP-COFUND-2015, Grant Number 713485 and by the Swedish Research Council, Grant 2018-03968. GDN is supported by the NSF FRG grant, Grant Number DMS-2053243. LV is partially supported by the National Group for Algebraic and Geometric Structures, and their Applications (GNSAGA - INdAM)}
\subjclass[2020]{05B35, 06A11, 11B83, 05A10}
\begin{document}

\begin{abstract}
    In this article we make several contributions of independent interest. First, we introduce the notion of \emph{stressed hyperplane} of a matroid, essentially a type of cyclic flat that permits to transition from a given matroid into another with more bases. Second, we prove that the framework provided by the stressed hyperplanes allows one to write very concise closed formulas for the Kazhdan--Lusztig, inverse Kazhdan--Lusztig and $Z$-polynomials of all paving matroids, a class which is conjectured to predominate among matroids. Third, noticing the palindromicity of the $Z$-polynomial, we address its $\gamma$-positivity, a midpoint between unimodality and real-rootedness. To this end, we introduce the \emph{$\gamma$-polynomial} associated to it, we study some of its basic properties and we find closed expressions for it in the case of paving matroids. Also, we prove that it has positive coefficients in many interesting cases, particularly in the large family of sparse paving matroids, and other smaller classes such as projective geometries, thagomizer matroids and other particular graphs. Our last contribution consists of providing explicit combinatorial interpretations for the coefficients of many of the polynomials addressed in this article by enumerating fillings in certain Young tableaux and skew Young tableaux.\\
    
    \smallskip
    \noindent{\scshape Keywords:} Matroids, Geometric lattices, Gamma-positivity, Kazhdan--Lusztig polynomials, Tableaux enumeration.
\end{abstract}


\maketitle

\section{Introduction}

\subsection{Overview}
In 1979, Kazhdan and Lusztig initiated the study of certain polynomials that are in correspondence with pairs of elements in a Coxeter group \cite{KL1979}. The definition of these polynomials is recursive and uses the Bruhat order to induce a poset on the elements of a given Coxeter group. These polynomials were later named the \emph{Kazhdan--Lusztig polynomials} of the Coxeter group. Since then, their definition has been generalized---for instance, see Stanley's work in \cite{S2} and Brenti's continuation of Stanley's work in \cite{B1999,B2003}---so that one may define them in other combinatorial settings.

In 2016, Elias, Proudfoot and Wakefield \cite{elias-proudfoot} introduced the notion of \emph{Kazhdan--Lusztig polynomial} of a matroid $\M$. Since then, several ramifications of the theory have been explored, leading naturally to the study of further polynomial invariants of matroids. In \cite{proudfoot-zeta} Proudfoot, Xu and Young studied the so-called \emph{$Z$-polynomial} of a matroid. Also, in \cite{gao-xie} Gao and Xie introduced the definition of the \emph{inverse Kazhdan--Lusztig polynomial} of $\M$, which gets its name from being its inverse, up to a sign, with respect to the convolution product in the incidence algebra of $\M$. It is now customary to use the notation $P_\M(t)$ for the classical Kazhdan--Lusztig polynomials, $Q_\M(t)$ for the inverse Kazhdan--Lusztig polynomials and $Z_\M(t)$ for the $Z$-polynomial.

When $\M$ is realizable, by considering an arrangement $\mathcal{A}$ whose underlying matroid is isomorphic to $\M$, these polynomials possess deep algebro-geometric interpretations; $P_\M(t)$ is the intersection cohomology Poincar\'e polynomial of the \emph{reciprocal plane} $X_\mathcal{A}$ \cite[Theorem 3.10]{elias-proudfoot}, whereas $Z_{\M}(t)$ is the intersection cohomology Poincar\'e polynomial of the \emph{Schubert variety}\footnote{These varieties receive their names from being analogs of the Schubert varieties which arise in the flag variety of a semisimple algebraic group.} $Y_{\mathcal{A}}$ \cite[Theorem 7.2]{proudfoot-zeta}.
Recently, Braden et al. \cite[Theorem 1.2 and Theorem 1.4]{braden-huh} extended such interpretations to all matroids, proving thus the following result, previously conjectured by Elias et al. and Gao and Xie.

\begin{teo}[\cite{braden-huh}]\label{thm:braden-huh}
    For every matroid $\M$, the polynomials $P_\M(t)$, $Q_\M(t)$ and $Z_{\M}(t)$ have non-negative coefficients.
\end{teo}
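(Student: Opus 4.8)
The plan is to import into matroid theory the ``singular Hodge-theoretic'' package of Braden, Huh, Matherne, Proudfoot and Wang \cite{braden-huh}, which upgrades the Adiprasito--Huh--Katz Hodge theory of the Chow ring to a combinatorial analogue---available for \emph{all} matroids---of the intersection cohomology of the reciprocal plane (governing $P_\M$) and of the Schubert variety (governing $Z_\M$). Concretely, for a loopless matroid $\M$ on a ground set $E$ I would work with the graded Möbius algebra $B(\M)=\bigoplus_{F\in L(\M)}\mathbb{Q}\,y_F$ (graded by $\rk F$, with its combinatorially defined product), together with the Chow ring $\underline{CH}(\M)$ and the augmented Chow ring $CH(\M)$, and then introduce the \emph{intersection cohomology module} $IH(\M)$: a finitely generated graded $B(\M)$-module, defined up to canonical isomorphism by stalk-and-costalk vanishing conditions that mimic the characterization of an intersection cohomology sheaf, and realized inside $CH(\M)$ through a canonical \emph{semi-small decomposition}. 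The whole argument then rests on proving that $IH(\M)$ satisfies the \emph{Kähler package}: Poincaré duality, the hard Lefschetz theorem with respect to an ample class, and the Hodge--Riemann bilinear relations.

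I would establish the Kähler package for $IH(\M)$ by a single simultaneous induction on $|E|$. The inductive engine is the semi-small decomposition produced by deleting a non-coloop element $e\in E$, which expresses $IH(\M)$ in terms of $IH(\M\setminus e)$ together with correction summands built, up to degree shifts, from the intersection cohomology modules of proper minors of $\M$. Granting Poincaré duality, hard Lefschetz and the Hodge--Riemann relations for all proper minors, one first obtains Poincaré duality for $\M$ directly from the decomposition; one then deduces hard Lefschetz for $\M$ from the Hodge--Riemann relations of the summands, via the standard implication that Hodge--Riemann implies hard Lefschetz; and finally one upgrades hard Lefschetz for $\M$ to the Hodge--Riemann relations for $\M$ by a connectedness argument for the space of ample classes together with the signature bookkeeping forced by the decomposition. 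This is exactly the step that is long and delicate, and it is the main obstacle: unlike in the Chow-ring case, where one has an explicit fan-theoretic model to work with, the decomposition is genuinely semi-small (so all of the correction summands have to be carried through the induction), and one must first set up an appropriate category of graded $B(\M)$-modules and prove existence, uniqueness, indecomposability and the requisite functoriality of $IH(\M)$ before the induction can even be phrased.

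Once the Kähler package for $IH(\M)$ is in hand, the three non-negativity statements follow. The coefficients of $P_\M(t)$ are, by construction, the dimensions of the graded pieces of the \emph{stalk} $IH(\M)/\bigl(B^{>0}(\M)\cdot IH(\M)\bigr)$; one checks, using the deletion decomposition, that this graded dimension obeys the Elias--Proudfoot--Wakefield recursion for $P_\M$, and being dimensions of vector spaces its values are non-negative. The coefficients of $Q_\M(t)$ similarly encode the dimensions of the graded pieces of the \emph{costalk} of $IH(\M)$ (its socle, the kernel of the natural surjection of $IH(\M)$ onto its stalk), up to the standard degree reversal; matching the resulting recursion against the convolution identity that defines $Q_\M$ in the sense of Gao--Xie identifies the two, so again one is counting dimensions. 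Finally, by the Proudfoot--Xu--Young identity $Z_\M(t)=\sum_{F\in L(\M)} t^{\rk F}\,P_{\M/F}(t)$, the polynomial $Z_\M$ is a $\mathbb{Z}_{\geq 0}[t]$-combination of Kazhdan--Lusztig polynomials of contractions of $\M$ (equivalently, $Z_\M(t)$ is the Poincaré polynomial of the global sections of $IH(\M)$, the combinatorial intersection cohomology of the Schubert variety), so its non-negativity is inherited from that of the polynomials $P_{\M/F}$. Passing from $\M$ to a loopless matroid is harmless, since $P_\M$, $Q_\M$ and $Z_\M$ are unaffected by deleting loops.
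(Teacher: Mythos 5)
This theorem is not proved in the paper at all: it is imported verbatim from Braden--Huh--Matherne--Proudfoot--Wang \cite{braden-huh}, and your outline is a faithful summary of exactly that argument --- the K\"ahler package (Poincar\'e duality, hard Lefschetz, Hodge--Riemann) for the intersection cohomology module $\mathrm{IH}(\M)$ over the graded M\"obius algebra, established by induction on the ground set via the semi-small deletion decomposition, with $P_\M$, $Q_\M$ and $Z_\M$ read off from the stalk, the costalk, and $\mathrm{IH}(\M)$ itself (the last also following from $Z_\M(t)=\sum_F t^{\rk F}P_{\M_F}(t)$, which is how the paper itself phrases the non-negativity of $Z_\M$). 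So you are taking essentially the same route as the paper's cited source; the only slip is the parenthetical equating the costalk with both ``the socle'' and ``the kernel of the surjection of $\mathrm{IH}(\M)$ onto its stalk,'' which are different submodules, but this does not affect the structure of the argument.
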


Although it is conjectured that these polynomials possess further nice properties, such as $P_\M(t)$ and $Z_\M(t)$ being real-rooted \cite{elias-proudfoot,gedeonsurvey,proudfoot-zeta} and $Q_\M(t)$ having log-concave coefficients \cite{gao-xie}, closed or explicit formulas for their coefficients were known only for a very limited number of matroids: for instance, uniform matroids \cite{kazhdan-uniform}, braid matroids \cite{karn-wakefield}, thagomizer matroids \cite{gedeon-thagomizer, equivariant-thagomizer} and fans, wheels and whirls \cite{wheels-whirls}. 

Since these polynomials can be intrinsically defined in purely combinatorial terms by using the lattice of flats of a matroid, a widely open question is to interpret the coefficients of $P_\M(t)$, $Q_\M(t)$ and $Z_\M(t)$ combinatorially. To this end, by approaching particularly the $P_\M(t)$ case, in \cite{lee-nasr-radcliffe-rho-removed} and \cite{lee-nasr-radcliffe-sparse} Lee, Nasr and Radcliffe, interpreted the coefficients of $P_{\M}(t)$ for the class of $\rho$-removed uniform matroids and for the much larger class of sparse paving matroids, respectively. The interpretation they provided was based on the enumeration of certain skew Young tableaux. In \cite{ferroni-vecchi}, Ferroni and Vecchi were able to provide purely combinatorial formulas for all of $P_\M(t)$, $Q_\M(t)$ and $Z_\M(t)$ when $\M$ is sparse paving by exploiting several properties induced by the circuit-hyperplane relaxation and the already known formulas for uniform matroids. It is worth noticing that the class of sparse paving matroids is conjectured to be predominant among matroids \cite[Conjecture 1.6]{mayhew}, so that having explicit formulas for all these matroids allows one to verify and test several conjectures.

\subsection{Outline and main results}

In Section \ref{sec:a review of terminology} we review all the basic terminology of matroid theory and we establish the definitions of the Kazhdan--Lusztig polynomials. Also, we give a more precise overview of many of the problems that are of interest and that we will address in the sequel.

This paper was initially conceived as a unified approach to all the aforementioned polynomials for the broad class of paving matroids. This class is not as well behaved as the class of sparse paving matroids. A paving matroid (as opposed to sparse paving) can have hyperplanes with many elements, so they cannot possibly be circuits. However, after noticing that these hyperplanes could be accordingly ``relaxed'' (we shall explain in a moment), we were able to identify the right instance in which a matroid (paving or not) allows an operation that extends the classical circuit-hyperplane relaxation. 

A \emph{stressed hyperplane} in a matroid $\M$ is a hyperplane $H$ such that all of its subsets of cardinality $\rk(\M)$ are circuits. The following constitutes our first main result.

\begin{teo}[Relaxation of stressed hyperplanes]\label{main1}
    Let $\M=(E,\mathscr{B})$ be a matroid of rank $k$ with ground set $E$ and set of bases $\mathscr{B}$. If $H$ is a stressed hyperplane of $\M$, then the set
        \[ \widetilde{\mathscr{B}} = \mathscr{B} \sqcup \left\{ S \subseteq H : |S| = k\right\},\]
    is the family of bases of another matroid $\widetilde{\M} = (E,\widetilde{\mathscr{B}})$.
\end{teo}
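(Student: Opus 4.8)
The plan is to verify directly that $\widetilde{\mathscr{B}}$ satisfies the basis exchange axiom, treating the ``old'' bases $\mathscr B$ and the ``new'' ones $\mathcal N := \{S \subseteq H : |S| = k\}$ separately; we may assume $|H| \ge k$, for otherwise $\mathcal N = \varnothing$ and $\widetilde{\M} = \M$. I would first record three elementary facts. Since $H$ is a hyperplane, $\rk_\M(H) = k - 1$, so every $k$-subset of $H$ spans $H$ in $\M$, no element of $\mathscr B$ is contained in $H$, and hence $\mathscr B \cap \mathcal N = \varnothing$. Since $H$ is stressed, every subset of $H$ of size at most $k - 1$ is independent in $\M$: otherwise it would contain a circuit $C$ with $|C| < k$, and enlarging $C$ to a $k$-subset of $H$ would produce a dependent $k$-set properly containing a circuit, contradicting that every $k$-subset of $H$ is a circuit. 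Finally, $\widetilde{\mathscr B} \ne \varnothing$ and all its members have size $k$, so checking exchange suffices.

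Next I would run through the four cases for $B_1, B_2 \in \widetilde{\mathscr B}$ and $x \in B_1 \setminus B_2$. If $B_1, B_2 \in \mathscr B$, exchange in $\M$ furnishes $y$ with $(B_1 \setminus x) \cup y \in \mathscr B$. If $B_1, B_2 \in \mathcal N$, then $B_2 \setminus B_1 \subseteq H$, and any $y$ in it makes $(B_1 \setminus x) \cup y$ a $k$-subset of $H$, hence an element of $\mathcal N$. If $B_1 \in \mathcal N$ and $B_2 \in \mathscr B$, then $B_2 \not\subseteq H$; choosing $y \in B_2 \setminus H$, the set $B_1 \setminus x$ is a $(k-1)$-subset of $H$, hence independent with $\operatorname{cl}_\M(B_1 \setminus x) = H$, so $(B_1 \setminus x) \cup y$ is independent of size $k$, i.e. an element of $\mathscr B$. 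The last case, $B_1 \in \mathscr B$ and $B_2 \in \mathcal N$, is the crux: put $F := \operatorname{cl}_\M(B_1 \setminus x)$, a hyperplane of $\M$. If $F = H$ then $B_1 \setminus x \subseteq H$ and any $y \in B_2 \setminus B_1 \subseteq H$ gives $(B_1 \setminus x) \cup y \in \mathcal N$. If $F \ne H$, then $\operatorname{cl}_\M(B_2) = H \not\subseteq F$ forces $B_2 \not\subseteq F$, so we may pick $y \in B_2 \setminus F$; then $y \notin B_1$ and $(B_1 \setminus x) \cup y$ is independent of size $k$, hence in $\mathscr B$.

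I expect the case $B_1 \in \mathscr B$, $B_2 \in \mathcal N$ to be the main obstacle: one must find the exchange element $y$ inside the circuit $B_2$ while keeping the exchanged set either entirely inside $H$ or independent in $\M$, and this is exactly where closedness of $H$ (so that $\operatorname{cl}_\M(B_2) = H$, giving $B_2 \not\subseteq F$ when $F \ne H$) and the stressed hypothesis (so that $(k-1)$-subsets of $H$ are independent) are used; the remaining cases are essentially bookkeeping.

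An alternative I would keep in reserve is to show directly that the function $r$ on $2^E$ defined by $r(A) = \rk_\M(A)$ for $A \not\subseteq H$ and $r(A) = \min\{|A|, k\}$ for $A \subseteq H$ is a matroid rank function with bases $\widetilde{\mathscr B}$: monotonicity and the unit-increase property are immediate, and submodularity reduces, after a short case analysis, to submodularity in $\M$ together with the facts that $\M|_H$ is the uniform matroid $U_{k-1, |H|}$ and that any subset of $H$ with at least $k$ elements already spans $H$ in $\M$.
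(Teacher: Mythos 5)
Your proof is correct, but it takes a genuinely different route through the one hard case. Where the paper handles $B_1\in\mathscr{B}$, $B_2\subseteq H$ by applying the submodular inequality to $X=(B_1\sm\{x\})\cup B_2$ and $Y=H$ to force $\rk\bigl((B_1\sm\{x\})\cup B_2\bigr)=k$, extracting a basis $B_3$ inside that union, and then invoking the exchange axiom between $B_1$ and $B_3$, you instead pass to the hyperplane $F=\operatorname{cl}_{\M}(B_1\sm\{x\})$ and argue directly: if $F=H$ you stay inside $H$, and if $F\neq H$ then $\operatorname{cl}_{\M}(B_2)=H\not\subseteq F$ (distinct rank-$(k-1)$ flats are incomparable) yields $y\in B_2\sm F$, and adding $y$ to the independent set $B_1\sm\{x\}$ immediately gives a basis of $\M$. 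Likewise in the case $B_1\subseteq H$, $B_2\in\mathscr{B}$ you choose $y\in B_2\sm H$ using $\operatorname{cl}_{\M}(B_1\sm\{x\})=H$, whereas the paper invokes independence augmentation. Your closure-based argument is more direct and avoids submodularity altogether, at the price of leaning on flat/closure facts (incomparability of distinct hyperplanes, closure of an independent $(k-1)$-set inside $H$); the paper's version stays closer to the bare basis and rank axioms. One small repair: your opening claim that ``every $k$-subset of $H$ spans $H$'' does not follow merely from $\rk(H)=k-1$ (a $k$-subset of a hyperplane with parallel elements can have small rank); it follows from the stressed hypothesis, since each such subset is a circuit and hence has rank $k-1$, so its closure is a rank-$(k-1)$ flat contained in $H$ and therefore equals $H$. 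With that justification supplied, all four cases, including the checks that the exchanged element $y$ avoids $B_1$, go through.
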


This result is proved in Section \ref{sec:stressed-hyp}, where it is stated as Theorem \ref{thm:relaxation-of-stressed-hyp}. We believe that, although elementary and innocent looking, this might be of interest for pure matroid theorists. It fits nicely in the study of the lattice of cyclic flats and the valuative invariants of a matroid polytope. Although we will not address these consequences here as they deviate from our main goal, we will explore them in future work.

This operation, as far as we know, has not been treated in the literature before, so we take the rest of Section \ref{sec:stressed-hyp} to prove several properties related to it. Particularly, we study how the rank function and the lattice of flats change when applying this operation. Also, we address the Tutte polynomial and the characteristic polynomial, and we present the class of matroids admitting a stressed hyperplane. We also characterize the matroids that are obtained after performing a relaxation in another matroid. Just as the presence of a free basis is a certificate that proves that a matroid was obtained by doing a circuit-hyperplane in another matroid (see \cite[Lemma 4.2]{ferroni-vecchi}), we introduce the notion of \emph{free subset} and we prove that a matroid was obtained by relaxing a stressed hyperplane in another matroid if and only if it has a free subset.

In Section \ref{sec:relaxing KL} we study the interplay between our new operation and the Kazhdan--Lusztig theory of the matroid. This constitutes our second main result.

\begin{teo}\label{main2}
    For every pair of integers $k,h\geq 1$ there exist polynomials $p_{k,h}(t)$, $q_{k,h}(t)$ and $z_{k,h}(t)$ with integer coefficients, having the following property: for every matroid $\M$ of rank $k$ having a stressed hyperplane of cardinality $h$,
    \begin{align*}
        P_{\tm}(t) &= P_{\M}(t)+p_{k,h}(t),\\
        Q_{\tm}(t) &= Q_{\M}(t)+q_{k,h}(t),\\
        Z_{\tm}(t) &= Z_{\M}(t)+z_{k,h}(t),
    \end{align*}
    where $\widetilde{\M}$ denotes the corresponding relaxation of $\M$. 
\end{teo}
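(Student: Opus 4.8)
The plan is to show that the three polynomial invariants $P$, $Q$, $Z$ of the relaxation $\tm$ depend, beyond $P_\M$, $Q_\M$, $Z_\M$ themselves, only on the two numerical parameters $k=\rk(\M)$ and $h=|H|$. The key structural input, established in Section~\ref{sec:stressed-hyp}, is that relaxing a stressed hyperplane only alters the lattice of flats in a very controlled way: the flats strictly below $H$ that are contained in $H$ together with $H$ itself get replaced by a ``boolean-like'' piece, while every flat not contained in $H$, and the whole interval above $H$, is untouched. Concretely, I would first record the precise local picture of $\L(\tm)$ versus $\L(\M)$: the only flats that change are those in the interval $[\hat 0, H]$, and in $\tm$ this interval becomes a truncation of a boolean lattice on $h$ atoms (of rank $k$, since $H$ had rank $k-1$ in $\M$ but its $k$-subsets are now independent), whereas every localization $\tm^F$ at a flat $F \not\subseteq H$ and every contraction $(\tm)_F$ for $F \supseteq H$ is canonically isomorphic to the corresponding minor of $\M$. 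This is the content I would cite from the earlier sections (rank function and lattice of flats under relaxation).

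Next I would feed this into the defining recursions. Recall that $P_\M(t)$ is determined by the requirement $\deg P_\M < \tfrac12\rk(\M)$ together with
\[
  t^{\rk \M} P_\M(t^{-1}) = \sum_{F \in \L(\M)} \chi_{\M^F}(t)\, P_{\M_F}(t),
\]
and similarly $Z_\M(t) = \sum_{F} t^{\rk F} P_{\M_F}(t)$, while $Q_\M$ is characterized by the convolution identity $\sum_F (-1)^{\rk F} Q_{\M^F}(t)\,P_{\M_F}(t) = \delta$. The strategy is induction on the rank (or on the size of $\M$): for any proper contraction $\M_F$ with $F \neq \hat 0$ the hyperplane $H$ either contains $F$, in which case $(\tm)_F = \M_F$ and nothing changes, or it does not, in which case $H/F$ (or rather its closure) is again a stressed hyperplane of $\M_F$ of the same rank and a cardinality determined by $k$, $h$ and $\rk F$ — so by the inductive hypothesis the difference $P_{(\tm)_F} - P_{\M_F}$ is a universal polynomial in those parameters. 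Splitting the sum in the recursion into the part over flats $F \not\subseteq H$ (where localizations are unchanged and contractions change by universal polynomials), the interval strictly inside $[\hat0,H)$, and the top flat $H$ (where in $\M$ the localization $\M^H$ is a rank-one matroid on $h$ points, in $\tm$ the flat $H$ is no longer present but is replaced by the rank-$k$ boolean truncation whose atoms are the $h$ points), one sees that $t^{\rk}P_{\tm}(t^{-1}) - t^{\rk}P_{\M}(t^{-1})$ is a polynomial depending only on $k$, $h$, and the already-known universal correction terms for smaller rank; extracting the part of degree $\ge \tfrac12 k$ and using uniqueness of the defining solution then isolates $p_{k,h}(t)$. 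The same bookkeeping, run through the $Z$-recursion and the $Q$-convolution, yields $z_{k,h}(t)$ and $q_{k,h}(t)$; integrality is automatic since all the manipulations are $\mathbb Z$-linear.

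The main obstacle, and where I would spend the most care, is the combinatorial analysis of the contributions coming from the interval $[\hat0, H]$ and its replacement in $\tm$ — that is, making precise that the ``new'' flats (subsets of $H$ of various sizes that become flats after relaxation) and the ``lost'' flat $H$ contribute, via their localizations and contractions, an expression that is genuinely a function of $k$ and $h$ alone and not of the ambient matroid. This amounts to identifying these local minors explicitly: the relevant contractions are the uniform/boolean-type matroids $\U_{k-j, h-j}$ and the relevant localizations are boolean lattices, so their Kazhdan--Lusztig, inverse and $Z$-polynomials are already known in closed form (e.g.\ from the uniform matroid computations), and one must check that the flats of $\M$ outside $H$ that previously had $H$ in their upper interval do not acquire new comparabilities after relaxation — which is exactly where the ``$H$ is a hyperplane and a cyclic flat'' hypothesis is used. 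Once this local replacement is pinned down, everything else is a formal consequence of the recursions together with the degree bound and the induction, and the explicit formulas for $p_{k,h}$, $q_{k,h}$, $z_{k,h}$ in the paving case will drop out by instantiating these universal expressions.
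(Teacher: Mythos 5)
Your overall architecture --- induction on the rank, subtracting the defining recursions for $\M$ and $\tm$, and sorting the terms according to the position of a flat $F$ relative to $H$ --- is exactly the strategy of the paper's proof, but the case analysis that drives your induction is stated backwards, and as written the inductive step has nothing to apply to. For a nonempty flat $F\subseteq H$ it is \emph{not} true that $\tm_F=\M_F$: the flat $H$ disappears after relaxation, so the interval above $F$ changes; what is true (and what the paper uses) is that $\tm_F$ is obtained from $\M_F$ by relaxing the stressed hyperplane $H\smallsetminus F$, which has cardinality $h-|F|$ inside a matroid of rank $k-|F|$ (here $F$ is independent, being a flat properly contained in $H$ and hence a subset of $H$ of size at most $k-2$), and this is precisely where the induction on the rank enters. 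Conversely, for $F\not\subseteq H$ both the localization and the contraction are unchanged, so those terms cancel identically; your proposed inductive step in that branch cannot be carried out, because the closure of $H\smallsetminus F$ in $\M_F$ is the entire ground set ($H\cup F$ spans $\M$, since $H$ is a hyperplane and $F\not\subseteq H$), so there is no stressed hyperplane of $\M_F$ to relax there, let alone one ``of the same rank.''

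Relatedly, your assertion that the relevant contractions are the matroids $\U_{k-j,h-j}$ is not accurate: for $\varnothing\neq F\subsetneq H$ the contraction $\M_F$ is an arbitrary matroid, and only the difference $P_{\tm_F}-P_{\M_F}$ is controlled (by induction). What can be made explicit are the localizations $\M^F\cong\B_{|F|}$ for such $F$, the localization $\M^H\cong\U_{k-1,h}$ at the removed flat (whose contraction has rank $1$), the new flats of $\tm$ (the $(k-1)$-subsets of $H$, with $\tm^F\cong\B_{k-1}$ and $\tm_F$ of rank $1$), and the change $\chi_{\tm}(t)-\chi_{\M}(t)=(-1)^k(1-t)\binom{h-1}{k-1}$ of the characteristic polynomial. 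For the universality claim you also need that the number of flats of $\M$ properly contained in $H$ of each cardinality depends only on $h$ and $k$; this holds because those flats are exactly the subsets of $H$ of cardinality at most $k-2$ (a consequence of $H$ being stressed), giving $\binom{h}{j}$ flats of each size $j$. Once the two cases are swapped back and these identifications are made, your argument coincides with the paper's proof, and the degree-bound/uniqueness bookkeeping you describe at the end goes through.
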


This result is restated later as Theorem \ref{thm:kl-for-relax}. Additionally, we find explicit formulas for all the polynomials $p_{k,h}(t)$, $q_{k,h}(t)$ and $z_{k,h}(t)$ in terms of the Kazhdan--Lusztig invariants of uniform matroids. 

Let us denote by $\U_{k,n}$ the uniform matroid of rank $k$ and cardinality $n$. After combining the above result with the good behavior that paving matroids have with respect to this operation, we obtain the following corollary, which is another of our main results.

\begin{teo}\label{main3} 
    Let $\M$ be a paving matroid of rank $k$ and cardinality $n$. Suppose $\M$ has exactly $\lambda_h$ (stressed) hyperplanes of cardinality $h$. Then
    \begin{align*}
        P_{\M}(t)=P_{\U_{k,n}}(t)-\sum_{h\geq k} \lambda_h\cdot p_{k,h}(t),\\
        Q_{\M}(t)=Q_{\U_{k,n}}(t)-\sum_{h\geq k} \lambda_h\cdot q_{k,h}(t),\\
        Z_{\M}(t)=Z_{\U_{k,n}}(t)-\sum_{h\geq k} \lambda_h\cdot z_{k,h}(t).
    \end{align*}
\end{teo}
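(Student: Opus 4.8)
The plan is to induct on the total number $m=\sum_{h\geq k}\lambda_h$ of hyperplanes of $\M$ of cardinality at least $k$, feeding Theorem \ref{main2} into the inductive step. Two structural facts about paving matroids will be needed, both of which should be available from the analysis of the relaxation operation carried out in Section \ref{sec:stressed-hyp}. First, in a paving matroid $\M$ of rank $k$ every hyperplane $H$ with $|H|\geq k$ is a stressed hyperplane: each $k$-subset $S\subseteq H$ is dependent because $\rk(H)=k-1$, while every proper subset of $S$ has at most $k-1$ elements and is therefore independent since $\M$ is paving, so $S$ is a circuit. Second, relaxing such a hyperplane $H$, say with $|H|=h$, yields a matroid $\widetilde{\M}$ which is again paving of rank $k$ and cardinality $n$, and whose hyperplanes of cardinality at least $k$ are precisely those of $\M$ different from $H$; in particular $\widetilde{\M}$ has $\lambda_h-1$ hyperplanes of cardinality $h$ and $\lambda_{h'}$ of each cardinality $h'\neq h$, and these are again stressed by the first fact.

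For the base case $m=0$, a paving matroid of rank $k$ and cardinality $n$ all of whose hyperplanes have fewer than $k$ elements has every $k$-subset of $E$ independent — a $k$-element circuit would lie in its closure, which is a hyperplane of cardinality at least $k$ — hence $\M=\U_{k,n}$, and the three identities reduce to $P_\M=P_{\U_{k,n}}$, $Q_\M=Q_{\U_{k,n}}$, $Z_\M=Z_{\U_{k,n}}$, which hold trivially since then $\lambda_h=0$ for all $h$.

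For the inductive step, suppose $m\geq 1$ and choose a hyperplane $H$ of $\M$ with $|H|=h\geq k$; by the first fact it is a stressed hyperplane, so Theorem \ref{main2} gives $P_{\widetilde{\M}}(t)=P_{\M}(t)+p_{k,h}(t)$ together with the analogous identities for $Q$ and $Z$. By the second fact, $\widetilde{\M}$ is a paving matroid of rank $k$ and cardinality $n$ with $m-1$ hyperplanes of cardinality at least $k$, namely $\lambda_h-1$ of cardinality $h$ and $\lambda_{h'}$ of each cardinality $h'\neq h$, so the inductive hypothesis applied to $\widetilde{\M}$ yields
\begin{align*}
    P_{\widetilde{\M}}(t) &= P_{\U_{k,n}}(t)-\sum_{h'\geq k}\lambda_{h'}\,p_{k,h'}(t)+p_{k,h}(t),
\end{align*}
and analogously for $Q$ and $Z$. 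Equating the two expressions for $P_{\widetilde{\M}}(t)$ and cancelling $p_{k,h}(t)$ (and likewise cancelling $q_{k,h}(t)$ and $z_{k,h}(t)$ for the other two invariants) produces exactly the asserted formulas for $P_\M$, $Q_\M$ and $Z_\M$.

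The main obstacle is the second structural fact: one must know that the relaxation of a paving matroid is again paving of the same rank and cardinality, and that it removes precisely the relaxed hyperplane from the list of hyperplanes of cardinality at least $k$ while leaving all the others untouched — this is exactly what the description of the lattice of flats of $\widetilde{\M}$ in Section \ref{sec:stressed-hyp} is meant to provide. A minor but essential point is that the polynomials $p_{k,h}$, $q_{k,h}$ and $z_{k,h}$ depend only on $k$ and $h$ and not on the ambient matroid, so the same ones can be reused at every stage of the chain of relaxations; this is guaranteed by the statement of Theorem \ref{main2} itself.
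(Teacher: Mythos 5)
Your proof is correct and follows essentially the same route as the paper: the paper relaxes all stressed hyperplanes of cardinality at least $k$ one by one until reaching $\U_{k,n}$ (Corollary \ref{coro:uniform-paving}), accumulating the polynomials $p_{k,h}(t)$, $q_{k,h}(t)$, $z_{k,h}(t)$ via Theorem \ref{thm:kl-for-relax}, which is exactly your induction on the number of such hyperplanes made explicit. The structural facts you flag (all hyperplanes of a paving matroid are stressed, relaxation preserves paving, and the remaining large hyperplanes survive as stressed hyperplanes of the relaxation) are precisely the paper's Proposition \ref{prop:paving} and the flat/stressed-hyperplane analysis of Section \ref{sec:stressed-hyp}, so nothing is missing.
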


This is restated as Theorem \ref{thm:formulas-paving}. In other words, the preceding result establishes explicit formulas for the Kazhdan--Lusztig polynomials of a paving matroid in terms only of its cardinality, its rank and the number of stressed hyperplanes of each size. This extends results of \cite{ferroni-vecchi} and \cite{lee-nasr-radcliffe-sparse} which were valid only for sparse paving matroids. Also, when combined with some of the combinatorial interpretations addressed in Section \ref{sec:tableaux}, this result can be used to support a conjecture posed by Gedeon, which asserts that uniform matroids provide a coefficient-wise upper bound for the Kazhdan--Lusztig polynomials.
 
\begin{teo}\label{main4}
    If $\M$ is a paving matroid of rank $k$ and cardinality $n$, then $P_{\M}(t)$, $Q_{\M}(t)$ and $Z_{\M}(t)$ are coefficient-wise smaller than $P_{\U_{k,n}}(t)$, $Q_{\U_{k,n}}(t)$ and $Z_{\U_{k,n}}(t)$, respectively.
\end{teo}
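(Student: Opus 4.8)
The plan is to obtain the statement as an immediate consequence of Theorem \ref{thm:formulas-paving}. That result gives $P_{\U_{k,n}}(t)-P_{\M}(t)=\sum_{h\geq k}\lambda_h\,p_{k,h}(t)$, and the analogous identities for $Q$ and $Z$, where each $\lambda_h$ is a non-negative integer. Hence it suffices to prove that the three families $p_{k,h}(t)$, $q_{k,h}(t)$, $z_{k,h}(t)$ of Theorem \ref{thm:kl-for-relax} have non-negative coefficients; equivalently, that relaxing a stressed hyperplane never decreases a coefficient of $P$, $Q$ or $Z$. Since by Theorem \ref{thm:kl-for-relax} these polynomials do not depend on the matroid in which the relaxation is performed, I would compute them on a convenient one.

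A convenient choice is $\M_0:=\U_{k-1,h}\oplus\U_{1,1}$. For $h\geq k$ this matroid is paving of rank $k$ and has a stressed hyperplane of cardinality $h$, namely the $\U_{k-1,h}$ summand; relaxing that hyperplane yields exactly $\U_{k,h+1}$. Using multiplicativity of $P$, $Q$, $Z$ under direct sums, together with $P_{\U_{1,1}}=Q_{\U_{1,1}}=1$ and $Z_{\U_{1,1}}=1+t$, Theorem \ref{thm:kl-for-relax} then gives
\[
p_{k,h}(t)=P_{\U_{k,h+1}}(t)-P_{\U_{k-1,h}}(t),\qquad q_{k,h}(t)=Q_{\U_{k,h+1}}(t)-Q_{\U_{k-1,h}}(t),
\]
\[
z_{k,h}(t)=Z_{\U_{k,h+1}}(t)-(1+t)\,Z_{\U_{k-1,h}}(t),
\]
which (up to rearrangement) are the explicit formulas of Section \ref{sec:relaxing KL}. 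Using $Z_{\U_{k,m}}(t)=\sum_{j=0}^{k-1}\binom{m}{j}t^{k-j}+P_{\U_{k,m}}(t)$ one can further rewrite the last line as $z_{k,h}(t)=\binom{h}{k-1}t+P_{\U_{k,h+1}}(t)-(1+t)P_{\U_{k-1,h}}(t)$, so that the whole theorem reduces to three coefficient-wise inequalities among Kazhdan--Lusztig invariants of uniform matroids: $P_{\U_{k,h+1}}\succeq P_{\U_{k-1,h}}$, $Q_{\U_{k,h+1}}\succeq Q_{\U_{k-1,h}}$, and $P_{\U_{k,h+1}}(t)+\binom{h}{k-1}t\succeq(1+t)P_{\U_{k-1,h}}(t)$.

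For these I would invoke the tableaux enumerations of Section \ref{sec:tableaux}: uniform matroids are sparse paving, so the coefficients of $P_{\U}$, $Q_{\U}$, $Z_{\U}$ count fillings of explicit (skew) Young tableaux, and the plan is to produce, degree by degree, injections witnessing the three inequalities (for the $Z$-inequality, absorbing the extra $(1+t)$ and $\binom{h}{k-1}t$ contributions). Alternatively one can substitute the known closed formulas for these uniform invariants and finish by Pascal-type binomial manipulation. The main obstacle is the $Q$-inequality: inverse Kazhdan--Lusztig polynomials carry no positivity-transparent cohomological model, so ``$Q$ of the larger uniform matroid dominates $Q$ of the smaller'' genuinely needs a combinatorial or inductive witness --- I would attack it via the tableau model for $Q$-coefficients of sparse paving matroids, or by induction on $k$ through the defining convolution identity for $Q$ combined with the control of the lattice of flats and the characteristic polynomial under relaxation established in Section \ref{sec:stressed-hyp}; once the $Q$-inequality is secured, the $P$- and $Z$-inequalities and the passage back to arbitrary paving matroids are routine.
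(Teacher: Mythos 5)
Your reduction is exactly the paper's: Theorem \ref{thm:formulas-paving} plus the non-negativity of $p_{k,h}(t)$, $q_{k,h}(t)$, $z_{k,h}(t)$ gives the statement, and your computation on $\U_{k-1,h}\oplus\U_{1,1}$ recovers Corollary \ref{cor:p_(k,h) in terms of uniform matroids}. The genuine gap is that the non-negativity itself --- the entire content of the theorem after this reduction --- is only announced, not proved. In the paper this is Remark \ref{degree of p,q,z}, which rests on Section \ref{sec:tableaux}: for $P$ one combines the Lee--Nasr--Radcliffe interpretation of $P_{\U_{k,n}}$ with Proposition \ref{prop:bskyt and skyt} to get $p_{k,h}(t)=\sum_i\bskyt(h-k+2,i,k-2i+1)t^i$; for $Q$ one must first \emph{prove} a tableau interpretation $Q_{\U_{k,n}}(t)=\sum_i\syt(n-k+1,i,k-2i-1)t^i$ (a new result, obtained from \cite[Theorem 1.3]{gao-xie} and the alternating-sum identity of Lemma \ref{lem:skew_to_standard}) and then apply Proposition \ref{prop:bsyt_and_syt}; for $Z$ one needs Corollary \ref{cor:z_poly_comb} and the computation of Proposition \ref{z_kh}. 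You correctly identify the $Q$-inequality as the hard point, but ``invoke a tableau model for $Q$'' is not available off the shelf --- constructing it is precisely the missing work --- so the proposal stops short of a proof.

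There is also a concrete error in your treatment of $z_{k,h}$: the identity $Z_{\U_{k,m}}(t)=\sum_{j=0}^{k-1}\binom{m}{j}t^{k-j}+P_{\U_{k,m}}(t)$ is false. From the definition, $Z_{\U_{k,m}}(t)=t^k+\sum_{j=0}^{k-1}\binom{m}{j}t^{j}P_{\U_{k-j,m-j}}(t)$, and already for $k=3$ the $t^2$-coefficient of $Z_{\U_{3,m}}$ is $\binom{m}{2}$, not $m$. Consequently the claimed reduction of the $Z$-part to the inequality $P_{\U_{k,h+1}}(t)+\binom{h}{k-1}t\succeq(1+t)P_{\U_{k-1,h}}(t)$ is invalid: for $k=h=3$ one has $Z_{\U_{3,4}}(t)=t^3+6t^2+6t+1$ and $(1+t)Z_{\U_{2,3}}(t)=t^3+4t^2+4t+1$, so $z_{3,3}(t)=2t^2+2t$, whereas your rewritten expression gives $3t+(1+2t)-(1+t)=4t$. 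Indeed $z_{k,h}(t)$ has degree $k-1$ (Proposition \ref{z_kh}), which a reduction to low-degree Kazhdan--Lusztig polynomials of the form you propose cannot capture; the positivity of $z_{k,h}$ has to be argued from the full expansion of $Z_{\U_{k,h+1}}(t)-(1+t)Z_{\U_{k-1,h}}(t)$ as in Proposition \ref{z_kh}.
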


This is restated later as Corollary \ref{coro:main4}.
In the rest of Section \ref{sec:relaxing KL} we investigate several consequences such as the non-degeneracy of many new matroids. A matroid $\M$ is said to be \emph{non-degenerate} if its Kazhdan--Lusztig polynomial has degree $\lfloor\frac{\rk(\M)-1}{2}\rfloor$. We are able to extend one of the main results of \cite{ferroni-vecchi}.

\begin{teo}\label{main5}
    If a matroid $\M$ has a free subset, then it is non-degenerate.
\end{teo}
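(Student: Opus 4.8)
The plan is to reduce the general statement to the relaxation machinery developed in the earlier sections and to the known non-degeneracy of uniform matroids. Recall that a matroid $\M$ has a free subset precisely when it arises as a relaxation $\M = \widetilde{\N}$ of some matroid $\N$ along a stressed hyperplane $H$ (this is the characterization announced in the outline of Section~\ref{sec:stressed-hyp}); let $k = \rk(\M)$ and let $h = |H|$ be the size of that stressed hyperplane, so $h \geq k$. By Theorem~\ref{main2} (restated as Theorem~\ref{thm:kl-for-relax}) we have
\[
    P_{\M}(t) = P_{\N}(t) + p_{k,h}(t),
\]
and since $P_{\N}(t)$ has non-negative coefficients by Theorem~\ref{thm:braden-huh}, it suffices to show that the correction term $p_{k,h}(t)$ already has a nonzero coefficient in degree $\left\lfloor \frac{k-1}{2}\right\rfloor$. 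In other words, the whole problem collapses to understanding the single universal polynomial $p_{k,h}(t)$.

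The next step is to use the explicit description of $p_{k,h}(t)$ in terms of Kazhdan--Lusztig invariants of uniform matroids, which is promised immediately after the statement of Theorem~\ref{main2}. Concretely, relaxing a stressed hyperplane of size $h$ in a rank-$k$ matroid locally looks like passing through the uniform matroid $\U_{k,h}$ on the hyperplane: one expects a formula of the shape $p_{k,h}(t) = P_{\U_{k,h+1}}(t) - (\text{something supported in lower degree})$, or more precisely $p_{k,h}(t)$ expressed through the $\tau$-invariants $P_{\U_{k,n}}$. Since $\U_{k,n}$ is non-degenerate for all $n > k$ (its Kazhdan--Lusztig polynomial has degree exactly $\left\lfloor\frac{k-1}{2}\right\rfloor$, by the formula of \cite{kazhdan-uniform}), the leading term of $p_{k,h}(t)$ in degree $\left\lfloor\frac{k-1}{2}\right\rfloor$ should be inherited directly from $P_{\U_{k,h+1}}(t)$, and one checks that the subtracted terms contribute only in strictly smaller degrees. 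Thus the coefficient of $t^{\lfloor (k-1)/2\rfloor}$ in $p_{k,h}(t)$ is strictly positive.

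Combining the two steps: $\deg P_{\M}(t) \geq \deg p_{k,h}(t) = \left\lfloor\frac{k-1}{2}\right\rfloor$, and since $\deg P_{\M}(t) \leq \left\lfloor\frac{k-1}{2}\right\rfloor$ always holds (this is the general degree bound for Kazhdan--Lusztig polynomials of matroids, from \cite{elias-proudfoot}), we get equality, i.e.\ $\M$ is non-degenerate. The main obstacle, and the only place requiring genuine computation, is the middle step: verifying that the explicit closed form of $p_{k,h}(t)$ really does have a strictly positive coefficient in the top degree $\left\lfloor\frac{k-1}{2}\right\rfloor$ — this amounts to an inequality among binomial-type coefficients coming from the uniform-matroid formulas, and the parity of $k$ (even versus odd) will likely need to be handled separately, as is typical for the middle coefficient of a palindromic-adjacent invariant. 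Everything else is a formal consequence of the relaxation formula and the degree bound.
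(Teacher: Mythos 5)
Your overall strategy is exactly the paper's: by Proposition \ref{prop:characterization relaxed matroids} a free subset means $\M=\widetilde{\N}$ for a matroid $\N$ of the same rank $k$ with a stressed hyperplane of size $h\ge k$; then $P_{\M}(t)=P_{\N}(t)+p_{k,h}(t)$ by Theorem \ref{thm:kl-for-relax}, non-negativity of $P_{\N}(t)$ comes from Theorem \ref{thm:braden-huh}, and the whole statement reduces to showing $[t^{\lfloor (k-1)/2\rfloor}]p_{k,h}(t)>0$. The gap is precisely at that last point, which is the only nontrivial content. You propose to read the positivity off from $p_{k,h}(t)=P_{\U_{k,h+1}}(t)-P_{\U_{k-1,h}}(t)$ (Corollary \ref{cor:p_(k,h) in terms of uniform matroids}) by claiming that the subtracted term is supported in strictly smaller degree; that is true for odd $k$, but false for even $k$: in that case $\deg P_{\U_{k-1,h}}(t)=\lfloor(k-2)/2\rfloor=\lfloor(k-1)/2\rfloor$, so the subtraction does reach the top degree, and one must prove a strict inequality between the leading coefficients of two uniform Kazhdan--Lusztig polynomials. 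You flag the parity issue but leave it as ``one checks'' and ``an inequality among binomial-type coefficients,'' so the proof is not complete as written.

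The paper closes exactly this hole combinatorially rather than by comparing the uniform formulas: Corollary \ref{p_kh} shows $p_{k,h}(t)=\sum_i \bskyt(h-k+2,i,k-2i+1)\,t^i$, which is manifestly non-negative and has a strictly positive coefficient in degree $\lfloor(k-1)/2\rfloor$ (the relevant skew shapes admit at least one legal filling since $h-k+2\ge 2$ and the rightmost column has length $2$ or $3$); this is what Remark \ref{degree of p,q,z} records and what the proof of Corollary \ref{free subset implies nondegeneracy} invokes. If you prefer to stay with the uniform-matroid route, then for even $k$ you must verify $[t^{(k-2)/2}]P_{\U_{k,h+1}}(t)>[t^{(k-2)/2}]P_{\U_{k-1,h}}(t)$ explicitly from the closed formulas of \cite{kazhdan-uniform} (or, equivalently, via Proposition \ref{prop:bskyt and skyt}); without that, or the tableau argument, the degree claim for $p_{k,h}(t)$ remains unproven.
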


This is restated later as Corollary \ref{free subset implies nondegeneracy}. Since it is conjectured in \cite[Conjecture 22]{bansal-pendavingh} that asymptotically all matroids have a free basis, in particular, that would imply that asymptotically all of them have a free subset and are thus non-degenerate. We believe that proving that the class of matroids having a free subset is predominant might be within reach. In fact, in \cite{pendavingh-vanderpol} the same is speculated about free bases.

Within the framework of palindromic polynomials, and due to the fact that the $Z$-polynomial of a matroid is always palindromic, in Section \ref{sec:gamma polynomials} we study a new invariant, which we call the \emph{$\gamma$-polynomial} of a matroid. It encodes the same information as the $Z$-polynomial, while having half of its degree. This is motivated by the plethora of results that exist in the literature regarding the \emph{$\gamma$-positivity} of palindromic polynomials \cite{athanasiadis}. We say that a matroid is \emph{$\gamma$-positive} if all the coefficients of its $\gamma$-polynomial are non-negative.

We propose the following conjecture, which is weaker than the real-rootedness conjecture for the $Z$-polynomial but is stronger than the unimodality of the coefficients of $Z_{\M}(t)$, which has been established in \cite[Theorem 1.2]{braden-huh}.

\begin{conj}
    Matroids are $\gamma$-positive.
\end{conj}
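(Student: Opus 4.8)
The final assertion is stated as a conjecture, so what follows is a program rather than a complete proof; several of its steps are carried out in the body of the paper for specific families. The first reduction is to \emph{connected, loopless} matroids. The $Z$-polynomial is multiplicative under direct sums, $Z_{\M_1\oplus\M_2}(t)=Z_{\M_1}(t)\,Z_{\M_2}(t)$ (this follows from the multiplicativity of $P$ together with the fact that the flats of a direct sum are products of flats), and a product of two palindromic polynomials of formal degrees $d_1,d_2$ is palindromic of formal degree $d_1+d_2$ with $\gamma$-polynomial equal to the product of the two $\gamma$-polynomials, since dividing by $(1+t)^{d}$ and substituting $y=t/(1+t)^2$ turns the $\gamma$-expansion into an honest polynomial identity that respects products. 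Hence the class of $\gamma$-positive matroids is closed under direct sums, and as loops and coloops contribute only the trivial factor $\gamma=1$, one may assume $\M$ connected and loopless.

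Next I would settle the conjecture for \emph{paving} matroids using Theorem~\ref{main3}: $Z_\M(t)=Z_{\U_{k,n}}(t)-\sum_{h\ge k}\lambda_h\,z_{k,h}(t)$, where all of $Z_\M$, $Z_{\U_{k,n}}$ and $z_{k,h}$ are palindromic of formal degree $k=\rk(\M)$. On the linear space of polynomials satisfying $t^{k}f(1/t)=f(t)$ the $\gamma$-coordinates form a linear coordinate system, so this identity transforms into $\gamma(Z_\M)=\gamma(Z_{\U_{k,n}})-\sum_h\lambda_h\,\gamma(z_{k,h})$. The plan then splits in two: (i) prove $\gamma$-positivity of the uniform matroid $\U_{k,n}$ directly from the known closed formula for $Z_{\U_{k,n}}(t)$, ideally by exhibiting each $\gamma$-coefficient as a manifestly nonnegative count of tableaux as in Section~\ref{sec:tableaux}; and (ii) control the ``error'' vectors $\gamma(z_{k,h})$, which have negative entries, by combining the explicit formulas for $z_{k,h}$ obtained from Theorem~\ref{main2} with the numerical constraints that the $\lambda_h$ must satisfy (the flat-counting identities of a paving matroid, together with $k\le h\le n-1$). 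The sparse paving case is precisely the situation where only the single term $h=k$ survives, and there the inequality is manageable.

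For the remaining families named in the abstract --- projective geometries, thagomizer matroids and the relevant graphic matroids --- I would proceed by computing $Z_\M(t)$ explicitly (through the defining recursion or a generating function) and reading off its $\gamma$-vector. In these cases real-rootedness, when available, is a convenient shortcut: a palindromic polynomial with nonnegative coefficients and only real roots is automatically $\gamma$-positive, because its quadratic factors have the form $(1+t)^2+c\,t$ with $c\ge 0$ and a product of such polynomials expands with nonnegative $\gamma$-coefficients.

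The genuine obstacle is the general case. There is no structural operation reducing an arbitrary matroid to paving ones, and the relaxation of Theorem~\ref{main1} and Theorem~\ref{main2} only interpolates within restricted families, so none of the steps above applies beyond them. Unimodality of $Z_\M(t)$ is obtained in \cite{braden-huh} from a Hard Lefschetz--type input in the combinatorial Hodge theory of matroids, and $\gamma$-positivity is strictly stronger --- it sits between unimodality and the (also open) real-rootedness of $Z_\M(t)$. Establishing it for all matroids therefore seems to demand either a combinatorial model for the $\gamma$-coefficients valid in complete generality, or a genuine strengthening of the available Lefschetz package; producing such a model is, I expect, the hardest part, and it is one of the motivations for the explicit formulas and tableaux interpretations developed in this paper.
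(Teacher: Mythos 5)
The statement you chose is a conjecture: the paper does not prove it, and you correctly present a program rather than a proof. Your program essentially coincides with the paper's own partial results: multiplicativity of the $\gamma$-polynomial under direct sums, the paving identity $\gamma_{\M}(t)=\gamma_{\U_{k,n}}(t)-\sum_{h\geq k}\lambda_h\, g_{k,h}(t)$ of Corollary \ref{coro:paving-gamma-relax}, the sparse paving case settled by combining the closed formula of Theorem \ref{teo:gamma-positivity-uniform} with the bound of Proposition \ref{prop:bound-num-circuit-hyp} (Theorem \ref{thm:sparsepaving-gammapositive}), the real-rootedness shortcut of Proposition \ref{prop:implications-gamma} for projective geometries and for fans, wheels and whirls, and explicit computation for thagomizer matroids and $\K_{2,n}$. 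Two caveats. First, your description of the correction terms is backwards: $\gamma(z_{k,h})=g_{k,h}(t)$ has \emph{nonnegative} coefficients (Proposition \ref{prop:ghk}); the threat to positivity comes from the minus sign in front of $\sum_h \lambda_h g_{k,h}(t)$, not from negative entries of $g_{k,h}(t)$ itself. Second, your step (ii) for arbitrary paving matroids --- controlling all the $\lambda_h$ simultaneously through flat-counting constraints --- is not carried out in the paper: only the sparse paving case (a single term $h=k$, with the known bound on the number of circuit-hyperplanes) is proved, and $\gamma$-positivity of general paving matroids is left open, so treating that step as ``manageable'' overstates what is currently known. Your assessment of the fully general case --- that it sits strictly between the unimodality obtained from the Lefschetz package of \cite{braden-huh} and the real-rootedness conjecture for $Z_{\M}(t)$, with no reduction available --- agrees with the paper's own discussion.
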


We find a closed expression for the $\gamma$-polynomial of uniform matroids and we use it to deduce a closed expression as in Theorem \ref{main3} for the $\gamma$-polynomial of all paving matroids. 

\begin{teo}\label{main6}
    Let $\M$ be a paving matroid of rank $k$ and cardinality $n$. Suppose $\M$ has exactly $\lambda_h$ (stressed) hyperplanes of cardinality $h$. Then,
    \begin{align*}
        \gamma_{\M}(t)=\gamma_{\U_{k,n}}(t)-\sum_{h\geq k}\lambda_h \cdot g_{k,h}(t)
    \end{align*}
    where $g_{k,h}(t)$ is a polynomial with non-negative coefficients, depending only on $k$ and $h$.
\end{teo}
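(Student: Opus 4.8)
The plan is to deduce Theorem~\ref{main6} from Theorem~\ref{thm:formulas-paving} by applying the linear map that sends a palindromic polynomial to its $\gamma$-polynomial. Recall that if $f(t)=\sum_{i=0}^{k}a_it^i$ has coefficient sequence symmetric about $k/2$ (that is, $a_i=a_{k-i}$ for all $i$), then there are unique integers $\gamma_0,\dots,\gamma_{\lfloor k/2\rfloor}$ with $f(t)=\sum_{j}\gamma_j\,t^j(1+t)^{k-2j}$, and the assignment $f\mapsto\sum_j\gamma_j t^j$ is $\mathbb{Z}$-linear on the space of such $f$. Now $Z_{\M}(t)$ for a rank-$k$ matroid is palindromic of degree $k$ with constant term (hence leading coefficient) equal to $1$, and the same holds for $Z_{\tm}(t)$, since relaxing a stressed hyperplane does not change the rank (Theorem~\ref{thm:relaxation-of-stressed-hyp}). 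Consequently $z_{k,h}(t)=Z_{\tm}(t)-Z_{\M}(t)$ has coefficient sequence symmetric about $k/2$ and degree at most $k-1$; define $g_{k,h}(t)$ to be its $\gamma$-polynomial taken with reference degree $k$. Since all of $Z_{\M}(t)$, $Z_{\U_{k,n}}(t)$ and $z_{k,h}(t)$ lie in the degree-$k$ symmetric space, applying the $\gamma$-map to the identity of Theorem~\ref{thm:formulas-paving} yields at once
\[
\gamma_{\M}(t)=\gamma_{\U_{k,n}}(t)-\sum_{h\geq k}\lambda_h\cdot g_{k,h}(t),
\]
and $g_{k,h}(t)$ depends only on $k$ and $h$ because $z_{k,h}(t)$ does (Theorem~\ref{thm:kl-for-relax}).

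It remains to prove that $g_{k,h}(t)$ has non-negative coefficients. First I would pin down $z_{k,h}(t)$ explicitly by evaluating the relaxation on the smallest available example: the matroid $\M_0=\U_{k-1,h}\oplus\U_{1,1}$ has the ground set $H$ of its $\U_{k-1,h}$ summand as a stressed hyperplane of size $h$ (it is a flat of rank $k-1$, since the coloop lies outside $\operatorname{cl}(H)$, and $\M_0|_H=\U_{k-1,h}$, so every $k$-subset of $H$ is a circuit), and a direct count of bases shows that the corresponding relaxation is $\widetilde{\M_0}=\U_{k,h+1}$. Using that $Z$ is multiplicative over direct sums and that $Z_{\U_{1,1}}(t)=1+t$, Theorem~\ref{thm:kl-for-relax} then gives
\[
z_{k,h}(t)=Z_{\U_{k,h+1}}(t)-(1+t)\,Z_{\U_{k-1,h}}(t).
\]
Next, observe that multiplying the defining $\gamma$-expansion of $Z_{\U_{k-1,h}}(t)$ (reference degree $k-1$) by $(1+t)$ produces a $\gamma$-expansion with reference degree $k$; by uniqueness, the $\gamma$-polynomial of $(1+t)Z_{\U_{k-1,h}}(t)$ with reference degree $k$ is again $\gamma_{\U_{k-1,h}}(t)$. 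Hence
\[
g_{k,h}(t)=\gamma_{\U_{k,h+1}}(t)-\gamma_{\U_{k-1,h}}(t).
\]

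The crux — and the step I expect to be the main obstacle — is therefore the coefficient-wise inequality $\gamma_{\U_{k-1,h}}(t)\leq\gamma_{\U_{k,h+1}}(t)$. Note that it does \emph{not} suffice to know $z_{k,h}(t)\geq 0$ coefficient-wise, since the $\gamma$-expansion of a non-negative palindromic polynomial can have negative coefficients. To establish the inequality I would use the closed formula for $\gamma_{\U_{k,n}}(t)$ obtained earlier in Section~\ref{sec:gamma polynomials}: expressing both $\gamma$-polynomials through binomial coefficients, the inequality becomes an explicit comparison that can be verified term by term. Alternatively, and perhaps more transparently, one can invoke the combinatorial model for the coefficients of $\gamma_{\U_{k,n}}(t)$ developed in Section~\ref{sec:tableaux} in terms of fillings of (skew) Young tableaux, and realize $g_{k,h}(t)$ as the generating function of those fillings of the shape associated with $\U_{k,h+1}$ that are not already accounted for by $\U_{k-1,h}$; this exhibits $g_{k,h}(t)$ as a non-negative combination of monomials and completes the proof.
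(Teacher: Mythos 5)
Your proposal is correct and takes essentially the same route as the paper: apply the ($\gamma$-)linearity on the space of degree-$k$ palindromic polynomials to the paving $Z$-formula, identify $g_{k,h}(t)=\gamma_{\U_{k,h+1}}(t)-\gamma_{\U_{k-1,h}}(t)$ from $z_{k,h}(t)=Z_{\U_{k,h+1}}(t)-(1+t)Z_{\U_{k-1,h}}(t)$ via the prototype relaxation (you use $\U_{k-1,h}\oplus\U_{1,1}$, which even avoids the paper's simplification step for $\U_{k-1,h}\oplus\U_{1,n-h}$), and prove non-negativity by a term-by-term comparison of the closed formula of Theorem \ref{teo:gamma-positivity-uniform}, exactly as in Proposition \ref{prop:ghk}. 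The only caveat is your suggested ``alternative'' tableau argument: Section \ref{sec:tableaux} provides tableau interpretations for $p_{k,h}(t)$, $q_{k,h}(t)$ and $z_{k,h}(t)$ but no such model for $\gamma_{\U_{k,n}}(t)$ or $g_{k,h}(t)$, so that route is not actually available; your primary argument, however, stands on its own.
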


This statement is a consequence of our Corollary \ref{coro:paving-gamma-relax} and Proposition \ref{prop:ghk} . To support our conjecture we address and discuss the following families of matroids.

\begin{teo}\label{main7}
    The following families of matroids are $\gamma$-positive.
    \begin{itemize}
        \item Sparse paving matroids. In particular, uniform matroids.
        \item Projective geometries.
        \item Thagomizer matroids.
        \item Complete bipartite graphs of the form $\K_{2,n}$.
        \item Fans, wheels and whirls.
    \end{itemize}
\end{teo}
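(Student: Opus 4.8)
The plan is to argue family by family, reducing $\gamma$-positivity in each case to the non-negativity of an explicit finite list of integers. Throughout I use that $Z_{\M}(t)$ is palindromic of degree $\rk(\M)$, so that the $\gamma$-polynomial is the unique $\gamma_{\M}(t)=\sum_{i\ge 0}\gamma_i t^i$ satisfying $Z_{\M}(t)=\sum_{i\ge 0}\gamma_i\,t^i(1+t)^{\rk(\M)-2i}$, and that the passage $Z_{\M}\mapsto\gamma_{\M}$ is linear. For the sparse paving family I would run everything through the machinery of Section~\ref{sec:relaxing KL} and Section~\ref{sec:gamma polynomials} together with the tableau models of Section~\ref{sec:tableaux}; for the remaining families I would start from closed formulas for the $Z$-polynomial and extract $\gamma_{\M}(t)$ directly.

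\emph{Sparse paving (in particular uniform) matroids.} A sparse paving matroid $\M$ of rank $k$ on $n$ elements is paving, and, its dual being paving as well, all of its stressed hyperplanes have cardinality exactly $k$; if $\M$ has $\lambda$ of them, Theorem~\ref{main6} collapses to
\[
\gamma_{\M}(t)=\gamma_{\U_{k,n}}(t)-\lambda\cdot g_{k,k}(t),
\]
and it suffices to show the right-hand side has non-negative coefficients. I would first establish that $\gamma_{\U_{k,n}}(t)$ is $\gamma$-positive, using its closed form from Section~\ref{sec:gamma polynomials} together with the combinatorial interpretation of Section~\ref{sec:tableaux} that realizes each coefficient as the number of fillings of a fixed (skew) Young tableau obeying prescribed conditions. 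The key point is that the same model applies to an arbitrary sparse paving $\M$: to each circuit-hyperplane $H_j$ there corresponds an explicit subfamily $\mathcal{T}_j$ of those fillings, the correction $\lambda\cdot g_{k,k}$ accounts exactly for $\sum_j|\mathcal{T}_j|$, and the defining property of sparse paving matroids --- any two circuit-hyperplanes share at most $k-2$ elements --- forces the $\mathcal{T}_j$ to be pairwise disjoint. Hence the coefficient of $t^i$ in $\gamma_{\M}(t)$ equals the number of fillings outside $\bigcup_j\mathcal{T}_j$, a non-negative integer; the uniform case is $\lambda=0$.

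\emph{Projective geometries, thagomizer matroids, $\K_{2,n}$, fans, wheels and whirls.} For each of these the $Z$-polynomial is known in closed form --- for $\mathrm{PG}(r,q)$ exploiting that the lattice of flats is the subspace lattice, for thagomizer matroids from \cite{gedeon-thagomizer,equivariant-thagomizer}, for $\K_{2,n}$ from a direct computation with the cycle matroid, and for fans, wheels and whirls from \cite{wheels-whirls}. The routine is uniform: substitute the formula into the defining identity for $\gamma_{\M}(t)$ and solve for the $\gamma_i$. For projective geometries I expect the $\gamma_i$ to emerge as positive combinations of Gaussian binomial coefficients, so that the statement becomes a $q$-binomial identity to be proved by induction on $r$ (deletion/contraction of a point) or by a generating-function manipulation; for thagomizer matroids and $\K_{2,n}$ I expect visibly non-negative binomial expressions; and for the fans/wheels/whirls family the $\gamma_i$ form a concrete integer sequence whose non-negativity I would confirm either from a direct closed form or by matching it with a tableau count as in Section~\ref{sec:tableaux}.

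The main obstacle is twofold. First, the formula of Theorem~\ref{main6} used for sparse paving matroids involves a \emph{subtraction}, so $\gamma$-positivity there is not formal: it genuinely rests on the pairwise disjointness of the subfamilies $\mathcal{T}_j$, equivalently on the fact that relaxing all circuit-hyperplanes simultaneously cannot overshoot $\gamma_{\U_{k,n}}(t)$ coefficientwise. Second, among the closed-form families the projective-geometry case is the one whose underlying identity is least transparent, and it is where I would expect to spend most of the effort on careful $q$-analog bookkeeping. Once the relevant $Z$-polynomial formulas are in hand, the other families should be essentially mechanical.
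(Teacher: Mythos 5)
There is a genuine gap, and it sits in the case that carries most of the weight: sparse paving matroids. Your plan is to realize each coefficient of $\gamma_{\U_{k,n}}(t)$ as a count of tableau fillings, attach to each circuit-hyperplane $H_j$ a subfamily $\mathcal{T}_j$ of size $[t^i]g_{k,k}(t)$, and conclude by pairwise disjointness of the $\mathcal{T}_j$. But no such model is constructed (nor does the paper supply one: the tableau interpretations of Section \ref{sec:tableaux} are for $P$, $Q$, $Z$ and for $p_{k,h}$, $q_{k,h}$, $z_{k,h}$, while the positive formula \eqref{eq:gamma-uniform} for $\gamma_{\U_{k,n}}$ is proved as a binomial identity, not as a tableau count), and the claimed mechanism cannot work as stated: pairwise disjointness of the $\mathcal{T}_j$ would only give $\lambda\cdot[t^i]g_{k,k}(t)\leq[t^i]\gamma_{\U_{k,n}}(t)$ if one also knew that the union of the $\mathcal{T}_j$ fits inside the ambient set of fillings, which is exactly the quantitative inequality at stake. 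Since $g_{k,k}(t)$ depends only on $k$ and the subtraction is $\lambda\cdot g_{k,k}(t)$, non-negativity genuinely requires an upper bound on the number $\lambda$ of circuit-hyperplanes; the ``intersection in at most $k-2$ elements'' property enters only through such a bound. The paper's proof of Theorem \ref{thm:sparsepaving-gammapositive} invokes $\lambda\leq\binom{n}{k}\min\{\tfrac{1}{k+1},\tfrac{1}{n-k+1}\}$ (Proposition \ref{prop:bound-num-circuit-hyp}) and then establishes the resulting inequality \eqref{ineq:target2} coefficient by coefficient via the estimates of the appendix (Lemmas \ref{lem:identity1}--\ref{lem:identity4}, Propositions \ref{prop:key_i_geq_2} and \ref{prop:key_i_geq_1}). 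Your sketch contains no bound on $\lambda$ and no substitute for these inequalities, so the central case is not proved.

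For the remaining families your route also stops short of a proof where it matters. For projective geometries you ``expect'' the $\gamma_i$ to be positive combinations of Gaussian binomials; the paper itself leaves the existence of such a combinatorial formula as an open question and instead deduces $\gamma$-positivity from the known real-rootedness of $Z_{\operatorname{PG}(k-1,q)}(t)$ \cite{proudfoot-zeta} via Proposition \ref{prop:implications-gamma}; the same shortcut is used for fans, wheels and whirls via \cite{wheels-whirls}. For thagomizer matroids the paper derives $\gamma_{\T_n}(t)=1+t\sum_{k=1}^n\binom{n}{k}\gamma_{\U_{k-1,k}}(t)$ (Proposition \ref{prop: thagomizer gamma positive}) and for $\K_{2,n}$ it invokes $Z_{\T_n}(t)=Z_{\K_{2,n}}(t)$ from Braden--Vysogorets; your direct-computation plan for these two is plausible, but as written these cases too are expectations rather than arguments, and none of them repairs the missing sparse paving inequality.
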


Each of these families is addressed in Section \ref{sec:gamma-pos1} and Section \ref{sec:gamma-pos2}. In particular, this provides evidence of the real-rootedness conjecture for the $Z$-polynomial.

Finally, in Section \ref{sec:tableaux} we extend some of the results by Lee, Nasr and Radcliffe \cite{lee-nasr-radcliffe-rho-removed,lee-nasr-radcliffe-sparse} and we give combinatorial interpretations of the coefficients of our polynomials $p_{k,h}(t)$, $q_{k,h}(t)$ and $z_{k,h}(t)$. This shows that they have non-negative coefficients, as they count the number of fillings in certain Young tableaux and skew Young tableaux.

\begin{obs}\label{rem:wilf-zeilberger}
    In order to enhance the readability of this paper and avoid slowing unnecessarily the flow of ideas, the combinatorial identities that consist on (possibly double) sums of expressions involving binomial coefficients are proved only by citing that a computer software for simplification or verification of such expressions can do the work. We do have proofs by hand of several of the identities we claim, but we have decided not to include them, as they are rather long and involve using several hypergeometric transformations. Also, we will omit writing the code here. It will be publicly available in the first author's webpage. In particular, we refer to the monograph \cite{wilf-zeilberger} by Petkov\v{s}ek, Wilf and Zeilberger, where \texttt{Maple} and \texttt{Mathematica} usage, implementations and examples are discussed in full detail.
\end{obs}

\section{A review of terminology}\label{sec:a review of terminology}

\subsection{Matroids} 

In this subsection we recall the basic notions in matroid theory and establish the notation we will use throughout the paper. For any undefined concept that may possibly appear, we refer to Oxley's book on matroid theory \cite{oxley}.

\begin{defi}
    A \emph{matroid} $\M$ is a pair $(E,\mathscr{B})$ where $E$ is a finite set and $\mathscr{B}\subseteq 2^E$ is a family of subsets of $E$ that satisfies the following two conditions.
    \begin{enumerate}[(a)]
        \item $\mathscr{B}\neq \varnothing$.
        \item If $B_1\neq B_2$ are members of $\mathscr{B}$ and $a\in B_1\smallsetminus B_2$, then there exists an element $b\in B_2\smallsetminus B_1$ such that $(B_1\smallsetminus \{a\})\cup \{b\}\in \mathscr{B}$.
    \end{enumerate}
\end{defi}

Condition (b) will be referred to as the \emph{basis-exchange-property}. The set $E$ is usually called the \emph{ground set} and the members of $\mathscr{B}$ the \emph{bases} of $\M$. A basic example of a matroid is given by \emph{uniform matroids}. We write $\U_{k,n}$ for the uniform matroid of rank $k$ and cardinality $n$. Rigorously, $\U_{k,n}$ is defined by $E=\{1,\ldots, n\}$ and $\mathscr{B} = \{ B\subseteq E: |B| = k\}$. The uniform matroid of rank $n$ with $n$ elements $\U_{n,n}$ will be customarily denoted by $\B_n$ and referred to as the \emph{Boolean matroid} of rank $n$. The matroid $\U_{0,0}$, the only matroid having as ground set the empty set, will be referred to as the \emph{empty matroid}.

\begin{defi}
    Let $\M=(E,\mathscr{B})$ be a matroid.
    \begin{itemize}
        \item An \emph{independent set} is a set $I\subseteq E$ contained in some $B\in \mathscr{B}$. If a set is not independent, we will say that it is \emph{dependent}. We write $\mathscr{I}(\M)$ to denote the family of all independent subsets of $\M$. 
        \item A \emph{circuit} of $\M$ is a minimal dependent set. The family of all circuits of $\M$ will be denoted by $\mathscr{C}(\M)$.
        \item The \emph{rank} of an arbitrary subset $A\subseteq E$ is given by
            \[ \rk(A) = \max_{I\, \in \,\mathscr{I}(\M)}\, |A\cap I|.\]
        We say that the rank of $\M$, denoted by $\rk(\M)$, is just $\rk(E)$.
        \item A \emph{flat} of $\M$ is a set $F$ with the property that adjoining new elements to $F$ strictly increases its rank. The family of all flats of $\M$ will be denoted by $\mathscr{L}(\M)$. A flat of rank $\rk(\M)-1$ is said to be a \emph{hyperplane}.
        \item If $e\in E$ is such that $\rk(\{e\}) = 0$, we say that $e$ is a \emph{loop}.
    \end{itemize}
\end{defi}

The objects defined above satisfy nice properties, we refer to \cite[Chapter 1]{oxley} for an overview of all such properties. One that we will use customarily is the \emph{submodular inequality} for the rank. In other words, if $\M=(E,\mathscr{B})$ is a matroid, then
    \[ \rk(A_1) + \rk(A_2) \geq \rk(A_1\cup A_2) + \rk(A_1\cap A_2),\]
for every $A_1,A_2\subseteq E$. 

Another of such properties that we will use is the so-called \emph{independence augmentation property}, which states that for two independent sets $I_1$ and $I_2$, if $|I_2|>|I_1|$, then there is an element $x\in I_2\smallsetminus I_1$ such that $I_1\cup \{x\}$ is an independent set. Also, another important property is that every subset $A$ is contained in a unique inclusion-wise minimal flat, called the flat \emph{spanned} by $A$.

Matroids admit a notion of duality. More precisely, if $\M=(E,\mathscr{B})$ is a matroid, the family 
        \[ \mathscr{B}^* = \{E\smallsetminus B: B\in\mathscr{B}\}\]
is the set of bases of another matroid, $\M^*=(E,\mathscr{B}^*)$. Note the operation $\M\mapsto \M^*$ is an involution, and therefore we refer to $\M^*$ as \emph{the dual of $\M$}. For example, the dual of the matroid $\U_{k,n}$ is the matroid $\U_{n-k,n}$.

\begin{obs}\label{rem:circuits-cohyperplanes}
    An important property is that the circuits of $\M$ are exactly the complements of the hyperplanes of $\M^*$. Likewise, the hyperplanes of $\M$ are the complements of the circuits of $\M^*$. We will say that $H$ is a circuit-hyperplane of $\M$ when it is at the same time a circuit and a hyperplane. Notice that in such a case, its complement would be a circuit-hyperplane of $\M^*$.
\end{obs}

We say that two matroids $\M=(E_1,\mathscr{B}_1)$ and $\N=(E_2,\mathscr{B}_2)$ are \emph{isomorphic} if there is a bijection $\varphi:E_1\to E_2$ such that $\varphi(B)\in \mathscr{B}_2$ if and only if $B\in\mathscr{B}_1$. In this case we write 
    \[ \M \cong \N.\]

Another basic operation is the \emph{direct sum} of matroids. If $\M_1=(E_1,\mathscr{B}_1)$ and $\M_2=(E_2,\mathscr{B}_2)$ are matroids, their direct sum is defined as the matroid $\M_1\oplus\M_2$ that has ground set $E_1\sqcup E_2$ and set of bases $\mathscr{B} = \{B_1\sqcup B_2:B_1\in\mathscr{B}_1, B_2\in\mathscr{B}_2\}$.

A matroid $\M$ is said to be \emph{connected} if for every pair of distinct elements of the ground set, there exists a circuit containing both of them. This is equivalent to a matroid being indecomposable, in the sense that it is not isomorphic to the direct sum of two or more non-empty matroids.

\subsection{Paving matroids}

We now review the basic facts for the classes of paving and sparse paving matroids.

\begin{defi}
    A matroid $\M$ of rank $k$ is said to be \emph{paving} if all the circuits of $\M$ have size at least $k$. If $\M$ and $\M^*$ are paving, we say that $\M$ is \emph{sparse paving}.
\end{defi}

In other words, a paving matroid $\M$ is such that all the subsets of cardinality $\rk(\M)-1$ are independent. As for sparse paving matroids, we have the following alternative characterization.

\begin{prop}
    A matroid $\M$ of rank $k$ is sparse paving if and only if each subset of cardinality $k$ is either a circuit-hyperplane or a basis.
\end{prop}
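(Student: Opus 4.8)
The plan is to prove both implications of the equivalence directly from the definitions, using matroid duality as the main organizing tool.

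First I would prove the forward direction: assume $\M$ is sparse paving of rank $k$, i.e. both $\M$ and $\M^*$ are paving. Take any subset $S \subseteq E$ with $|S| = k$. If $S$ is independent, then since $|S| = k = \rk(\M)$, it is a basis. If $S$ is dependent, it contains a circuit $C \subseteq S$; since $\M$ is paving, $|C| \geq k$, forcing $C = S$, so $S$ itself is a circuit. It remains to see $S$ is also a hyperplane. The complement $E \sm S$ has $|E \sm S| = n - k = \rk(\M^*)$. Since $\M^*$ is paving, every $(n-k-1)$-subset is independent in $\M^*$; I would argue $E \sm S$ is actually a basis of $\M^*$, equivalently that $S$ is a basis of $\M$ — but wait, $S$ is a circuit, hence dependent. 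The cleaner route is Remark \ref{rem:circuits-cohyperplanes}: a circuit of $\M$ is the complement of a hyperplane of $\M^*$, so $E \sm S$ is a hyperplane of $\M^*$. Dually then $S$ is a hyperplane of $\M$ provided $E \sm S$ is a circuit of $\M^*$; and $E \sm S$ is a dependent set of size $\rk(\M^*)$ that — since $\M^*$ is paving — must be a circuit (it contains a circuit of $\M^*$ of size $\geq \rk(\M^*) = |E \sm S|$). So $S$ is a circuit-hyperplane. Conversely, if no circuit were contained in $S$... but we assumed $S$ dependent, so that case does not arise; thus every $k$-subset is a basis or a circuit-hyperplane.

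Next I would prove the converse: assume every $k$-subset of $E$ is either a circuit-hyperplane or a basis. I must show $\M$ and $\M^*$ are both paving. For $\M$: let $C$ be a circuit; I want $|C| \geq k$. If $|C| \leq k - 1$, extend $C$ to a $k$-subset $S \supseteq C$; then $S$ is dependent (it contains the circuit $C$), so by hypothesis $S$ is a circuit-hyperplane, in particular a circuit — but then $C \subsetneq S$ with both circuits contradicts minimality of circuits (a circuit cannot properly contain another dependent set). If $|C| = k$ then $|C| \geq k$ already. Hence $\M$ is paving. For $\M^*$: a circuit of $\M^*$ is the complement of a hyperplane of $\M$. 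Every hyperplane $F$ of $\M$ has $\rk(F) = k-1$, so $F$ contains no $k$-subset that is a basis; I would need to bound $|F|$... actually the right statement is that $|E \sm F| \geq n - k = \rk(\M^*)$ for every hyperplane $F$, equivalently $|F| \leq k$ — hmm, that is not obviously forced. The better approach: a circuit $C^*$ of $\M^*$ of size $\leq n - k - 1$ would have complement $H = E \sm C^*$ of size $\geq k+1$; since $H$ is a hyperplane of $\M$, $\rk(H) = k - 1 < k$, so $H$ is dependent and contains a $k$-subset $S$. By hypothesis $S$ is a circuit-hyperplane or basis; it cannot be a basis since $S \subseteq H$ and $\rk(H) = k-1$; so $S$ is a circuit-hyperplane, hence a hyperplane, so $\rk(S) = k-1$ and the flat spanned by $S$ equals $S$. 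But $S \subseteq H$ and $H$ is also a flat of rank $k-1$, and a flat containing $S$ of the same rank as the closure of $S$ must equal the closure of $S$, which is $S$; this forces $H = S$, contradicting $|H| \geq k+1 > k = |S|$. Therefore $\M^*$ is paving too.

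The main obstacle I expect is bookkeeping the duality translation correctly — keeping straight which statements about $\M$ (circuits, hyperplanes, ranks) correspond to which statements about $\M^*$ via Remark \ref{rem:circuits-cohyperplanes}, and in particular handling the case analysis when a $k$-subset sits inside a hyperplane. The key leverage points are: (i) minimality of circuits rules out a circuit properly containing another dependent set; (ii) a set that is simultaneously a hyperplane is a flat, so it equals its own closure, which pins down its size relative to contained $k$-subsets; and (iii) the circuit–cohyperplane duality lets the "$\M^*$ paving" half be deduced from the same $k$-subset hypothesis applied to hyperplanes of $\M$. Once these three observations are in place, both directions are short.
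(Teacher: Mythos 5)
Your proof is correct. The paper itself states this proposition without proof (it is a standard characterization of sparse paving matroids), and your route---handling the $k$-subsets directly for the forward direction and using the circuit--cohyperplane duality of Remark \ref{rem:circuits-cohyperplanes} plus the ``nested flats of equal rank coincide'' argument for the converse---is a complete and standard way to prove it. Two small polish points: in the forward direction, state explicitly why $E \sm S$ is dependent in $\M^*$ (either because it is the complement of the circuit $S$, hence a hyperplane of $\M^*$ of rank $n-k-1$ but cardinality $n-k$, or because the dependent $k$-set $S$ contains no basis of $\M$); and the false start about $E \sm S$ being a basis of $\M^*$ should simply be removed, since the corrected argument that follows it is the one that works.
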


Although the condition of being paving or sparse paving might seem to be quite restrictive, it is conjectured by Mayhew et al. \cite[Conjecture 1.6]{mayhew} that asymptotically all matroids are sparse paving. More precisely, for each $n$ let us denote by $\operatorname{mat}(n)$ the number of matroids with ground set $\{1,\ldots,n\}$, and $\operatorname{sp}(n)$ the number of sparse paving matroids among them. Then, it is conjectured that
    \[ \lim_{n\to \infty} \frac{\operatorname{sp}(n)}{\operatorname{mat}(n)} = 1.\]
A result due to Pendavingh and van der Pol \cite{pendavingh-vanderpol} supports the previous conjecture by stating that the following limit does hold:
    \[ \lim_{n\to \infty} \frac{\log\operatorname{sp}(n)}{\log\operatorname{mat}(n)} = 1\]
This is why paving matroids and sparse paving matroids are natural candidates for trying to prove or disprove conjectures in matroid theory.

\subsection{The Kazhdan--Lusztig framework} 

A matroid $\M$ is said to be \emph{simple} if $\M$ does not contain loops nor circuits of size $2$ (i.e. no pair of \emph{parallel} elements). As ``being parallel'' can be seen to be an equivalence relation, it can be proved that all matroids $\M$ admit a \emph{simplification}: if there are loops, we can remove them from the ground set, and if there are parallel elements we can pick one representative for each equivalence class. The simplification of $\M$ will be denoted by $\si(\M)$. It is direct from the definition that $\si(\M)$ is always a simple matroid.

If $\M$ is a matroid, the family of all flats $\mathscr{L}(\M)$ when partially ordered with respect to set-inclusion is a geometric lattice\footnote{This means that $\mathscr{L}(\M)$ is a ranked, semimodular, atomistic lattice.}. Even more remarkably, geometric lattices and simple matroids are in one-to-one correspondence. As a consequence, we can study matroids using tools from poset theory. As for a ranked poset there is a notion of \emph{characteristic polynomial} \cite{stanley}, we can define the characteristic polynomial $\chi_{\M}$ of a matroid $\M$ to be just the characteristic polynomial of its lattice of flats.

As intervals in a geometric lattice are themselves geometric lattices, we can define new matroids as follows.\footnote{Here we are using the notation of \cite{braden-huh}. It is important to remark that in some articles $\M_F$ and $\M^F$ are defined the opposite way.}

\begin{defi}
    Let $\M=(E,\mathscr{B})$ be a matroid and fix a flat $F\in \mathscr{L}(\M)$. We define:
    \begin{enumerate}[(a)]
        \item $\M_F$ the matroid with ground set $E\smallsetminus F$ and with family of flats given by the sets $F'\smallsetminus F$ for all the flats $F'\in \mathscr{L}(\M)$ such that $F'\supseteq F$. This is also called the \emph{contraction of $\M$ by $F$}.  
        \item $\M^F$ the matroid with ground set $F$, whose flats are the flats contained in $F$. This is also called the \emph{localization of $\M$ at $F$}.
    \end{enumerate}
\end{defi}

As we stated before, the fact that these two objects are indeed matroids follows from the fact that we are essentially just looking at the intervals in $\mathscr{L}(\M)$ given by $[F,E]$ and $[\varnothing, F]$ respectively.

We now state the two results that let us apply Kazhdan--Lusztig--Stanley theory to matroids.

\begin{teo}[{\cite[Theorem 2.2]{elias-proudfoot}}]\label{PM}
    There is a unique way to assign to each matroid $\M$ a polynomial $P_{\M}(t)\in \mathbb{Z}[t]$ such that the following three conditions hold:
    \begin{enumerate}
        \item If $\rk(\M) = 0$, then $P_{\M}(t) = 1$ when $\M$ is empty, and $P_{\M}(t)=0$ otherwise.
        \item If $\rk(\M) > 0$, then $\deg P_{\M}(t) < \frac{\rk(\M)}{2}$.
        \item For every $\M$, the following holds:
            \[ t^{\rk(\M)} P_{\M}(t^{-1}) = \sum_{F\in \mathscr{L}(\M)} \chi_{\M^F}(t) P_{\M_F}(t).\]
    \end{enumerate}
\end{teo}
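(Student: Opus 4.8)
The plan is to argue by induction on $\rk(\M)$, in the spirit of Elias and Proudfoot; the statement is in fact a special instance of the general existence--uniqueness principle for Kazhdan--Lusztig--Stanley polynomials, so one may either reprove it by hand or deduce it from that principle. The rank-zero cases are dictated by condition (1), so assume $k:=\rk(\M)\ge 1$. (Loops may be discarded at the outset: they do not change $\mathscr{L}(\M)$ as a poset, nor any of the characteristic polynomials or minors $\M_F$, $\M^F$ appearing in (3), so the equation we are about to write down is literally the same for $\M$ and for $\M$ with its loops deleted.) Thus we may assume $\varnothing\in\mathscr{L}(\M)$. Now expand the sum in (3): the flat $F=\varnothing$ contributes $\chi_{\M^{\varnothing}}(t)\,P_{\M_{\varnothing}}(t)=P_{\M}(t)$, since $\M^{\varnothing}$ is the empty matroid and $\M_{\varnothing}=\M$, while every other flat satisfies $\rk(\M_F)=k-\rk(F)<k$, so $P_{\M_F}(t)$ is already determined by induction. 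Hence (3) is equivalent to the single linear equation
\begin{equation}\label{eq:PM-plan-reduced}
 t^{k}P_{\M}(t^{-1})-P_{\M}(t)=\Psi_{\M}(t),\qquad
 \Psi_{\M}(t):=\sum_{\varnothing\neq F\in\mathscr{L}(\M)}\chi_{\M^{F}}(t)\,P_{\M_F}(t),
\end{equation}
where $\Psi_{\M}$ is, by the inductive hypothesis, a fixed polynomial; its $F=E$ term is $\chi_{\M}(t)$, monic of degree $k$, and this is the unique term of that degree, so $\deg\Psi_{\M}=k$.

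\textbf{Uniqueness and reduction of existence to a symmetry.} If $P$ has degree $<k/2$, then $P(t)$ is supported in degrees $\{0,\dots,\lfloor(k-1)/2\rfloor\}$ and $t^{k}P(t^{-1})$ in degrees $\{\lceil(k+1)/2\rceil,\dots,k\}$, two disjoint ranges. Therefore $P\mapsto t^{k}P(t^{-1})-P(t)$ is injective on polynomials of degree $<k/2$, which immediately gives uniqueness in \eqref{eq:PM-plan-reduced}, and, together with the base case $k=0$, uniqueness in the theorem. The only possible solution is
\[
 P_{\M}(t):=-\!\!\sum_{0\le i<k/2}\big([t^{i}]\Psi_{\M}\big)\,t^{i},
\]
and a short coefficient comparison shows this $P_{\M}$ solves \eqref{eq:PM-plan-reduced} as soon as $\Psi_{\M}$ is \emph{anti-palindromic}, i.e.
\begin{equation}\label{eq:PM-plan-antipal}
 t^{k}\,\Psi_{\M}(t^{-1})=-\,\Psi_{\M}(t).
\end{equation}
Note that \eqref{eq:PM-plan-antipal} automatically kills the coefficients of $\Psi_{\M}$ in the ``forbidden'' middle degrees: there are none when $k$ is odd, and when $k$ is even the single middle coefficient satisfies $[t^{k/2}]\Psi_{\M}=-[t^{k/2}]\Psi_{\M}=0$. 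So the existence half reduces entirely to proving \eqref{eq:PM-plan-antipal}.

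\textbf{The crux: proving \eqref{eq:PM-plan-antipal}.} Starting from $t^{k}\Psi_{\M}(t^{-1})=\sum_{\varnothing\neq F}\big(t^{\rk(F)}\chi_{\M^{F}}(t^{-1})\big)\big(t^{\,k-\rk(F)}P_{\M_F}(t^{-1})\big)$, I would rewrite the second factor using condition (3) for the lower-rank matroid $\M_F$ (legitimate by induction): identifying the flats of $\M_F$ with the flats $G\in\mathscr{L}(\M)$ with $G\supseteq F$, it becomes $\sum_{G\supseteq F}\chi_{[F,G]}(t)\,P_{\M_G}(t)$, where $\chi_{[F,G]}$ is the characteristic polynomial of the interval $[F,G]\subseteq\mathscr{L}(\M)$ (itself the lattice of flats of a matroid). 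Interchanging the order of summation,
\[
 t^{k}\Psi_{\M}(t^{-1})=\sum_{\varnothing\ne G}P_{\M_G}(t)\cdot\Bigg(\sum_{\varnothing\ne F\le G} t^{\rk(F)}\chi_{[\varnothing,F]}(t^{-1})\,\chi_{[F,G]}(t)\Bigg).
\]
The inner sum, once the missing $F=\varnothing$ term $\chi_{[\varnothing,G]}(t)=\chi_{\M^{G}}(t)$ is added back, equals $\delta_{\varnothing,G}$ by the reciprocity identity for characteristic polynomials of geometric lattices: the interval functions $[F,G]\mapsto\chi_{[F,G]}(t)$ and $[F,G]\mapsto t^{\rk[F,G]}\chi_{[F,G]}(t^{-1})$ are mutually inverse in the incidence algebra of $\mathscr{L}(\M)$ — equivalently, $\chi$ is a $\mathcal{P}$-kernel for $\mathcal{P}=\mathscr{L}(\M)$ in Stanley's sense — which is proved by Möbius inversion (or by deletion--contraction). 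Hence the inner sum is $\delta_{\varnothing,G}-\chi_{\M^{G}}(t)=-\chi_{\M^{G}}(t)$ for $G\ne\varnothing$, and therefore $t^{k}\Psi_{\M}(t^{-1})=-\sum_{\varnothing\ne G}\chi_{\M^{G}}(t)P_{\M_G}(t)=-\Psi_{\M}(t)$, which is \eqref{eq:PM-plan-antipal}.

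The step I expect to be the main obstacle is precisely establishing the reciprocity/kernel identity for characteristic polynomials of geometric lattices — everything else is bookkeeping with degrees. The natural alternative, which avoids reproving it in this form, is to invoke Stanley's general existence theorem for Kazhdan--Lusztig--Stanley polynomials attached to an arbitrary $\mathcal{P}$-kernel, after verifying that the characteristic-polynomial assignment is a kernel of the required type and that the degree normalization in condition (2) is the one used there.
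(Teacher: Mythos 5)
The paper itself does not prove this theorem: it is quoted verbatim from Elias--Proudfoot--Wakefield, so there is no internal proof to compare against. Your argument is, in substance, the standard Kazhdan--Lusztig--Stanley existence--uniqueness scheme that the cited source follows: uniqueness comes from the degree bound because $P(t)$ and $t^{k}P(t^{-1})$ are supported in disjoint degree ranges, and existence reduces to the anti-palindromicity of $\Psi_{\M}$, which is exactly the statement that the characteristic function is a $P$-kernel, i.e.\ that $[F,G]\mapsto\chi_{[F,G]}(t)$ and $[F,G]\mapsto t^{\rk[F,G]}\chi_{[F,G]}(t^{-1})$ are mutually inverse in the incidence algebra of $\mathscr{L}(\M)$. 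Your M\"obius-inversion verification of that convolution identity and the interchange-of-summation computation deducing $t^{k}\Psi_{\M}(t^{-1})=-\Psi_{\M}(t)$ are correct, so the loopless case, which is the real content, is complete and follows the same route as the cited proof.

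The one step I would not accept as written is the parenthetical reduction to loopless matroids. For the three conditions in the statement to be mutually consistent one must take $\chi_{\N}=0$ whenever $\N$ has a loop (with the purely lattice-theoretic $\chi$, condition (3) applied to a nonempty rank-zero matroid already forces $P_{\M}=1$, contradicting condition (1)). Under that convention your claim that deleting loops changes none of the characteristic polynomials in (3) fails: every flat of a loopy matroid contains all of its loops, so every localization $\M^{F}$ has a loop and every $\chi_{\M^{F}}$ in the sum vanishes. Consequently the recursion forces $P_{\M}=0$ for every loopy $\M$ of positive rank, not $P_{\M}=P_{\M\smallsetminus L}$ (where $L$ is the set of loops) as your reduction would assign. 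The repair is immediate---for loopy $\M$ the right-hand side of (3) is identically zero, so $P_{\M}=0$ is the unique polynomial compatible with (2), in agreement with (1)---but the parenthetical as stated gives the wrong value in this degenerate case and should be replaced by that observation; the rest of the argument stands.
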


\begin{teo}[{\cite[Theorem 1.2]{gao-xie}}]\label{QM}
   There is a unique way to assign to each matroid $\M$ a polynomial $Q_{\M}(t)\in \mathbb{Z}[t]$ such that the following three conditions hold:
    \begin{enumerate}
        \item If $\rk(\M) = 0$, then $Q_{\M}(t) = 1$ when $\M$ is empty, and $Q_{\M}(t)=0$ otherwise.
        \item If $\rk(\M) > 0$, then $\deg Q_{\M}(t) < \frac{\rk(\M)}{2}$.
        \item For every $\M$, the following holds:
            \[ (-t)^{\rk(\M)} Q_{\M}(t^{-1}) = \sum_{F\in \mathscr{L}(\M)} (-1)^{\rk(F)}Q_{\M^F}(t) t^{\rk(\M) - \rk (F)}\chi_{\M_F}(t^{-1}).\]
    \end{enumerate}    
\end{teo}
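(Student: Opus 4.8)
Write $k=\rk(\M)$; the plan here is a bare degree count. I would induct on $k$, the case $k=0$ being condition~(1). For $k>0$, in the right-hand side of~(3) the summand indexed by $F=E$ is $(-1)^{k}Q_{\M}(t)$, since $\M_{E}=\U_{0,0}$ has $\chi_{\U_{0,0}}(t)=1$ and $\M^{E}=\M$; every other summand is indexed by a flat $F\neq E$, for which $\rk(\M^{F})=\rk(F)<k$, so $Q_{\M^{F}}(t)$ is already known by induction. Calling the resulting known polynomial $R(t)$, condition~(3) reads $(-1)^{k}\bigl(t^{k}Q_{\M}(t^{-1})-Q_{\M}(t)\bigr)=R(t)$. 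Because $\deg Q_{\M}(t)<k/2$, the polynomials $t^{k}Q_{\M}(t^{-1})$ and $Q_{\M}(t)$ are supported in disjoint ranges of degrees, above and below $k/2$, so the coefficients of $R(t)$ in degrees $<k/2$ recover $Q_{\M}(t)$ uniquely; this also displays the recursion computing $Q_{\M}$.

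\textbf{Existence.} I would dispatch matroids with a loop first: every flat of such a matroid contains all its loops, and a short induction from~(1) and~(3) then forces $Q_{\M}=0$. Assume hereafter that $\M$ is loopless, so $\varnothing\in\mathscr{L}(\M)$, and define $Q_{\M}(t)$ by recursion on $k$: for $k=0$, $Q_{\M}(t)=1$; for $k>0$, put
\[
    R_{\M}(t):=\sum_{\substack{F\in\mathscr{L}(\M)\\ F\neq E}}(-1)^{\rk(F)}\,Q_{\M^{F}}(t)\,t^{k-\rk(F)}\chi_{\M_{F}}(t^{-1})
\]
using the $Q_{\M^{F}}$ already built, and let $Q_{\M}(t)$ be the polynomial of degree $<k/2$ with $(-1)^{k}(t^{k}Q_{\M}(t^{-1})-Q_{\M}(t))=R_{\M}(t)$. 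Then conditions~(1) and~(2) hold by fiat and~(3) is exactly this relation, so the whole matter reduces to the existence of such a $Q_{\M}$. Since $t^{k}f(t^{-1})-f(t)$ always equals the negative of its own degree-$k$ reversal, and conversely any polynomial with that property is uniquely $t^{k}f(t^{-1})-f(t)$ for some $f$ with $\deg f<k/2$, the polynomial $Q_{\M}$ exists if and only if
\[
    R_{\M}(t)=-t^{k}R_{\M}(t^{-1}).
\]

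\textbf{The palindromy.} To prove this I would pass to the incidence algebra $\mathcal{I}$ of the finite poset $\mathscr{L}(\M)$ over $\mathbb{Z}[t]$, interpreting the entry at an interval $[F,G]$ as a polynomial attached to the minor of $\M$ that $[F,G]$ determines, and equipped with convolution, unit $\delta$, zeta element $\zeta$, M\"obius element $\mu=\zeta^{-1}$, the rank element $E_{F,G}(t)=t^{\rk(G)-\rk(F)}$, the degree-reversing algebra automorphism $\overline{\alpha}_{F,G}(t):=t^{\rk(G)-\rk(F)}\alpha_{F,G}(t^{-1})$, and the characteristic element $\chi$, whose $(F,G)$-entry is the characteristic polynomial of the corresponding minor. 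Since $\chi=\mu E$, which is just the flat expansion $\chi_{F,G}(t)=\sum_{F\le H\le G}\mu(F,H)t^{\rk(G)-\rk(H)}$, and $\overline{E}=\zeta$, and since $\overline{(\ \cdot\ )}$ is an algebra homomorphism, one computes $\overline{\chi}\,\chi=\overline{\mu}\,\overline{E}\,\mu\,E=\overline{\mu}\,(\zeta\mu)\,E=\overline{\mu}\,E=\delta$, the last step being nothing but $\sum_{F\le H\le G}\mu(F,H)=\delta_{F,G}$. Then, expanding $-t^{k}R_{\M}(t^{-1})$, carrying out the substitution $t\mapsto t^{-1}$, and invoking condition~(3) for each $\M^{F}$ (legitimate since $\rk(F)<k$) to rewrite $(-t)^{\rk(F)}Q_{\M^{F}}(t^{-1})$ as a sum over the subflats of $F$, then interchanging the order of summation, the inner sums become the entries $(\overline{\chi}\,\chi)_{H,E}$; these vanish for $H\neq E$, the surviving $H=E$ contribution cancels the term carrying $Q_{\M}(t)$ itself, and what is left is exactly $R_{\M}(t)$. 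This establishes the palindromy and completes the construction.

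\textbf{Main obstacle.} Honestly, this statement has no deep obstacle: uniqueness is a degree count, the kernel identity $\overline{\chi}\,\chi=\delta$ is elementary M\"obius bookkeeping, and existence follows formally from it. The one step demanding genuine care is the palindromy computation, where the three operations in~(3) --- the inversion $t\mapsto t^{-1}$, the signs $(-1)^{\rk(F)}$, and the exponent shifts $t^{\rk(\M)-\rk(F)}$ --- act simultaneously and must conspire into a single identity; this is precisely the bookkeeping that the incidence-algebra reformulation is designed to absorb. The genuinely hard results in this circle of ideas are not the existence of $Q_{\M}$ but statements like the positivity in Theorem~\ref{thm:braden-huh}.
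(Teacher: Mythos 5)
The paper itself does not prove this statement; it is quoted verbatim from Gao--Xie, so there is no internal proof to compare against. Judged on its own, your argument is correct and is essentially the standard Kazhdan--Lusztig--Stanley-style proof: uniqueness by the degree-support splitting of $t^{k}Q_{\M}(t^{-1})$ versus $Q_{\M}(t)$, reduction of existence to the anti-palindromy $R_{\M}(t)=-t^{k}R_{\M}(t^{-1})$ (using that $f\mapsto t^{k}f(t^{-1})-f(t)$ bijects polynomials of degree $<k/2$ onto anti-palindromic polynomials of degree $\leq k$, the middle coefficient vanishing automatically), and the kernel identity $\overline{\chi}\,\chi=\delta$ in the incidence algebra, which you verify correctly via $\chi=\mu E$, $\overline{E}=\zeta$, and $(\overline{\mu}E)_{F,G}=t^{\rk(G)-\rk(F)}\sum_{F\leq H\leq G}\mu(F,H)=\delta_{F,G}$. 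The handling of matroids with loops is also fine, since every flat contains all loops and condition (1) kills the rank-zero localization.

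One small slip in the narration of the final computation: since $R_{\M}$ omits the flat $F=E$, after substituting the recursion for each $\M^{F}$ and interchanging sums, the inner sum attached to a subflat $H$ runs over $H\leq F\lneq E$ and therefore equals $(\overline{\chi}\,\chi)_{H,E}-\overline{\chi}_{H,E}$; the kernel entry vanishes because $H\neq E$ (no $H=E$ term ever occurs, and $Q_{\M}(t)$ itself never enters), and the leftover terms $-\overline{\chi}_{H,E}$ weighted by $(-1)^{\rk(H)}Q_{\M^{H}}(t)$ reassemble exactly into $-R_{\M}(t)$, giving $t^{k}R_{\M}(t^{-1})=-R_{\M}(t)$ as desired. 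So the sentence about ``the surviving $H=E$ contribution cancelling the term carrying $Q_{\M}(t)$'' describes a cancellation that is not actually present in your setup, but this is an expository blemish, not a gap; the intended identity and all the ingredients are right.
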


The polynomials $P_{\M}(t)$ and $Q_{\M}(t)$ arising from the above two results are called the \emph{Kazhdan--Lusztig} and the \emph{inverse Kazhdan--Lusztig polynomials} of the matroid $\M$, respectively. The name for $Q_{\M}(t)$ comes from the fact that this is the inverse of $P_{\M}(t)$ (up to a sign) with respect to the convolution product in the incidence algebra of $\mathscr{L}(\M)$. In \cite[Theorem 1.2 and Theorem 1.4]{braden-huh} Braden et al. proved that these two polynomials have non-negative coefficients, in particular resolving conjectures posed by Elias, Proudfoot and Wakefield \cite{elias-proudfoot} and Gao and Xie \cite{gao-xie}.

Another important invariant was introduced by Proudfoot et al. in \cite{proudfoot-zeta}.

\begin{defi}
    The \emph{$Z$-polynomial} of a matroid $\M$ is defined by
        \[ Z_{\M}(t) = \sum_{F\in \mathscr{L}(\M)} t^{\rk(F)} P_{\M_F}(t).\]
\end{defi}

Since the $Z$-polynomial is a manifestly positive sum of multiples of Kazhdan--Lusztig polynomials of matroids, its coefficients are non-negative as well. In \cite[Theorem 1.2]{braden-huh}, it is proved that the $Z$-polynomial of a matroid has unimodal coefficients\footnote{A sequence $a_0, \ldots, a_d$ is said to be \emph{unimodal} if there exists some index $i$ such that \[ a_0\leq \cdots \leq a_{i-1}\leq a_i\geq a_{i+1}\geq \cdots \geq a_d.\]}.

Notice that $P_{\M}(t)$, $Q_{\M}(t)$ and $Z_{\M}(t)$ are defined using the lattice of flats. This implies that whenever $\M$ is loopless, $P_{\M}(t) = P_{\si(\M)}(t)$, and analogously for $Q_{\M}(t)$ and $Z_{\M}(t)$. Although determining these polynomials is computationally very expensive in general, there exist some basic results that lighten the work. One of such results is that $P_{\M_1\oplus \M_2}(t) = P_{\M_1}(t)\cdot P_{\M_2}(t)$, and analogously for $Q_{\M_1\oplus\M_2}(t)$ and $Z_{\M_1\oplus\M_2}(t)$. On the other hand, explicit formulas exist for some classes of matroids such as the so-called \emph{thagomizer} matroids \cite{gedeon-thagomizer}, \emph{fan}, \emph{wheel} and \emph{whirl} matroids \cite{wheels-whirls}, and uniform matroids \cite{kazhdan-uniform, gao-xie}. Recently, by exploiting the formulas for uniform matroids and the notion of circuit-hyperplane relaxation, in \cite{ferroni-vecchi} explicit formulas for $P_{\M}(t)$, $Q_{\M}(t)$ and $Z_{\M}(t)$ when $\M$ belongs to the large family of all sparse paving matroids are derived. Using a different approach, some combinatorial interpretations are provided for the coefficients of $P_{\M}(t)$ in \cite{lee-nasr-radcliffe-rho-removed} and \cite{lee-nasr-radcliffe-sparse}, under the assumption that $\M$ is sparse paving with disjoint circuit-hyperplanes or any sparse paving matroid, respectively. 

Two of the most important open problems in this framework are the real-rootedness conjectures for $P_{\M}(t)$ \cite[Conjecture 3.2]{gedeonsurvey} and $Z_{\M}(t)$ \cite[Conjecture 5.1]{proudfoot-zeta} for arbitrary matroids. The relevance of these problems relies on the fact that a positive solution might reveal drastic differences with respect to the Coxeter setting, since it is known that every polynomial with non-negative integer coefficients and constant term equal to 1 is the Kazhdan--Lusztig polynomial of a suitable pair of elements of some symmetric group \cite{polo,caselli}. 

As of today, the only partial results towards these conjectures are the real-rootedness of $P_{\M}(t)$ and $Z_{\M}(t)$ for uniform matroids of corank at most $15$ \cite[Theorem 1.4 and Theorem 1.6]{kazhdan-uniform} and for sparse paving matroids with ground sets of cardinality at most $30$ \cite[Proposition 1.8]{ferroni-vecchi}.

\section{Stressed hyperplanes}\label{sec:stressed-hyp}

\subsection{The set up}

If one has a matroid $\M=(E,\mathscr{B})$, it is reasonable to ask under which conditions it is possible to add a new member $A\subseteq E$ to the family $\mathscr{B}$ so that $\mathsf{N}=(E,\mathscr{B}\sqcup \{A\})$ is again a matroid. Even more generally, if we wanted to add a family of subsets $A_1,\ldots,A_s$ to the family $\mathscr{B}$, we ask ourselves what conditions we can impose on them in order to guarantee that $\mathscr{B}\sqcup \{A_1,\ldots,A_s\}$ is again the family of bases of a matroid.

The operation of \emph{circuit-hyperplane relaxation} is one way\footnote{In fact, essentially the \emph{only} way of adding exactly one basis, according to a result by Truemper \cite{truemper}.} of constructing new matroids from old ones by adjoining one extra basis \cite[Proposition 1.5.14]{oxley}. This operation is among the most basic tools in matroid theory. In order to extend and generalize this operation, we introduce some terminology. 

\begin{defi}
    Let $\M$ be a matroid of rank $k$. A hyperplane $H$ of $\M$ is said to be \emph{stressed} if all the subsets of $H$ of cardinality $k$ are circuits.
\end{defi}

Later, in Proposition \ref{prop:uniform-example}, we will see a prototypical family of matroids having a stressed hyperplane.

\begin{obs}
    A flat that can be obtained as a union of circuits is said to be \emph{cyclic}. A stressed hyperplane of cardinality at least $k$ is therefore a cyclic hyperplane. The converse is, however, not true. This fits nicely into the study of the lattice of cyclic flats. In what follows, we will pursue a path hinted by Bonin and de Mier \cite{bonin-demier} (see the last paragraph of Section 3 in that paper).
\end{obs}

\begin{ex}
    Consider a matroid $\M$ of rank $k$ with a circuit-hyperplane $H$; since $H$ is a circuit, $|H| = \rk(H)+1$, and since it is a hyperplane, $\rk(H) = k-1$. Hence $|H|=k$ and the only subset of $H$ of cardinality $k$ is $H$ itself, which was initially assumed to be a circuit. In other words, the notion of stressed hyperplanes covers the case of circuit-hyperplanes. Of course, this notion is more general: consider the  matroid $\M$ having ground set $\{1,\ldots, 7\}$ and rank $3$, depicted in Figure \ref{fig:matroid-with-stressed-hyperplane} using the conventions of Oxley \cite[Chapter 1]{oxley}. The set $H=\{1,2,3,4\}$ is clearly a hyperplane but it is not a circuit. It is stressed because each of its subsets of cardinality $3$ is a circuit. Furthermore, observe that $\M$ is not paving, because it has a pair of parallel elements, i.e., a circuit of size $2$ given by $\{6,7\}$.
\end{ex}

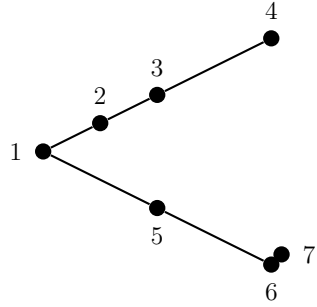
\begin{figure}[th]
    \centering
	\begin{tikzpicture}  
	[scale=0.75,auto=center,every node/.style={circle,scale=0.8, fill=black, inner sep=2.7pt}] 
	\tikzstyle{edges} = [thick];
	
	\node[label=left:$1$] (a1) at (0,0) {};  
	\node[label=above:$2$] (a2) at (2/2,1/2)  {};
	\node[label=above:$3$] (a3) at (4/2,2/2)  {};  
	\node[label=above:$4$] (a4) at (8/2,4/2) {};
	\node[label=below:$5$] (a5) at (4/2,-2/2)  {};  
	\node[label=below:$6$] (a6) at (8/2,-4/2)  {};    
	\node[label=right:$7$] (a7) at (8/2+0.18,-4/2+0.18) {};
	
	\draw[edges] (a1) -- (a2); 
	\draw[edges] (a2) -- (a3);  
	\draw[edges] (a3) -- (a4);  
	\draw[edges] (a1) -- (a5);  
	\draw[edges] (a5) -- (a6);
	
	\end{tikzpicture} \caption{A matroid with a stressed hyperplane.}\label{fig:matroid-with-stressed-hyperplane}
\end{figure}

It turns out that, in general, the presence of a stressed hyperplane $H$ such that $|H|\geq \rk(\M)$ provides a way of transitioning from the matroid $\M$ into another matroid with more bases.

\begin{teo}\label{thm:relaxation-of-stressed-hyp}
    Let $\M=(E,\mathscr{B})$ be a matroid of rank $k$. If $H$ is a stressed hyperplane of $\M$, then the set
        \[ \widetilde{\mathscr{B}} = \mathscr{B} \sqcup \left\{ S \subseteq H : |S| = k\right\},\]
    is the family of bases of a matroid $\widetilde{\M} = (E,\widetilde{\mathscr{B}})$. 
\end{teo}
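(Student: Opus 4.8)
The plan is to verify directly the basis-exchange property for $\widetilde{\mathscr{B}}$. Write $\mathscr{N} = \{S \subseteq H : |S| = k\}$, so $\widetilde{\mathscr{B}} = \mathscr{B} \sqcup \mathscr{N}$, and first observe that this is a disjoint union: no $k$-subset of $H$ can be a basis of $\M$, since $\rk(H) = k-1$ forces every subset of $H$ to have rank at most $k-1$. Note also $\widetilde{\mathscr{B}} \neq \varnothing$ since $\mathscr{B} \neq \varnothing$, and every member of $\widetilde{\mathscr{B}}$ has cardinality exactly $k$. It remains to check: given distinct $B_1, B_2 \in \widetilde{\mathscr{B}}$ and $a \in B_1 \smallsetminus B_2$, there is $b \in B_2 \smallsetminus B_1$ with $(B_1 \smallsetminus \{a\}) \cup \{b\} \in \widetilde{\mathscr{B}}$.

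I would split into cases according to where $B_1$ and $B_2$ come from. \textbf{Case 1: $B_1, B_2 \in \mathscr{B}$.} If the original exchange in $\M$ already produces a member of $\mathscr{B}$ we are done; the only danger is if the exchanged set lands in $\mathscr{N}$ instead, but that is harmless since $\mathscr{N} \subseteq \widetilde{\mathscr{B}}$. So this case follows immediately from $\M$ being a matroid. \textbf{Case 2: $B_1, B_2 \in \mathscr{N}$.} Then $B_1, B_2 \subseteq H$, so any $b \in B_2 \smallsetminus B_1$ gives $(B_1 \smallsetminus \{a\}) \cup \{b\} \subseteq H$ of cardinality $k$, hence in $\mathscr{N} \subseteq \widetilde{\mathscr{B}}$; and $B_2 \smallsetminus B_1 \neq \varnothing$ since $B_1 \neq B_2$ have equal size. \textbf{Case 3: $B_1 \in \mathscr{N}$, $B_2 \in \mathscr{B}$.} Since $B_1 \subseteq H$ but $B_2 \not\subseteq H$, there is some $b \in B_2 \smallsetminus H \subseteq B_2 \smallsetminus B_1$; then $(B_1 \smallsetminus \{a\}) \cup \{b\}$ is a $k$-set, and I claim it lies in $\mathscr{B}$. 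Indeed $B_1 \smallsetminus \{a\}$ is a $(k-1)$-subset of $H$, hence independent in $\M$ (as $\rk(H) = k-1$ and paving-type reasoning: any $(k-1)$-subset of a rank-$(k-1)$ flat... actually here we need that $B_1\smallsetminus\{a\}$ is independent, which holds because $B_1$ is a circuit, so every proper subset of $B_1$ is independent). Now $(B_1 \smallsetminus \{a\}) \cup \{b\}$ has $b \notin H$; since $H$ is a flat, $\rk(H \cup \{b\}) = k$, and a $k$-element independent set spanning is a basis — I would argue that $(B_1\smallsetminus\{a\})\cup\{b\}$ is independent via the submodular inequality or the independence augmentation property applied to $B_1 \smallsetminus \{a\}$ and a basis of $H \cup \{b\}$, and being a $k$-set it is then a basis of $\M$. \textbf{Case 4: $B_1 \in \mathscr{B}$, $B_2 \in \mathscr{N}$.} This is the delicate case. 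Here $B_1 \not\subseteq H$ and $B_2 \subseteq H$; I want $b \in B_2 \smallsetminus B_1$ with $(B_1 \smallsetminus \{a\}) \cup \{b\} \in \widetilde{\mathscr{B}}$, and depending on $a$ the exchanged set may need to land in $\mathscr{B}$ or in $\mathscr{N}$. If $B_1 \smallsetminus \{a\} \subseteq H$, then any $b \in B_2 \smallsetminus (B_1\smallsetminus\{a\})$ works landing in $\mathscr{N}$ (and such $b$ exists since $|B_2| = k > k-1 = |B_1 \smallsetminus\{a\}|$, with $b \ne a$ automatic as $a\notin H \supseteq B_2$ or handled directly). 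If $B_1 \smallsetminus \{a\} \not\subseteq H$, I need to find $b \in B_2 \smallsetminus B_1$ with $(B_1\smallsetminus\{a\})\cup\{b\} \in \mathscr{B}$; here I would apply the basis-exchange / independence-augmentation property inside $\M$ between the independent set $B_1 \smallsetminus \{a\}$ and a basis of $\M$ containing $B_2$ — but $B_2$ itself is dependent, so instead I take a maximal independent subset $I$ of $B_2$ (size $k-1$) extended to a basis, and argue combinatorially that a suitable $b \in B_2$ can be added to $B_1 \smallsetminus \{a\}$.

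The main obstacle I anticipate is precisely Case 4 when the exchanged set should remain in $\mathscr{B}$: one must rule out that all candidate elements $b \in B_2 \smallsetminus B_1$ fail, i.e. produce dependent sets. The clean way is probably to use rank submodularity: since $H$ is a flat of rank $k-1$, for $x \notin H$ we have $\rk(H \cup \{x\}) = k$; combined with the fact that $B_1 \smallsetminus \{a\}$ has rank $k-1$ and $B_1$ has rank $k$, one shows the element $a$ "points outside $H$" in a controlled way, and a careful count of how $B_1 \smallsetminus \{a\}$ meets $H$ versus $E \smallsetminus H$ lets one locate the needed $b$. Alternatively, one can sidestep the casework entirely by exhibiting $\widetilde{\mathscr{B}}$ as the set of bases via the rank function: define $\widetilde{\rk}(A) = \rk(A)$ unless $A \subseteq H$ and $|A| \ge k$, in which case $\widetilde{\rk}(A) = k$; then check that $\widetilde{\rk}$ is a (normalized, unit-increase, submodular) rank function whose maximal-rank subsets are exactly $\widetilde{\mathscr{B}}$. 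Verifying submodularity of $\widetilde{\rk}$ reduces to a handful of cases depending on which of $A_1, A_2, A_1 \cap A_2, A_1 \cup A_2$ lie inside $H$ with $\ge k$ elements, and each case is short because the only way $\widetilde{\rk}$ differs from $\rk$ is by bumping $k-1$ up to $k$. I would present whichever of the two routes is shorter; the rank-function route is likely cleaner and I would lead with it, falling back on the basis-exchange verification only if a subtlety arises.
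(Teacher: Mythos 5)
Your overall strategy---verifying basis exchange directly, split into four cases according to whether $B_1,B_2$ lie in $\mathscr{B}$ or among the $k$-subsets of $H$---is the same as the paper's, and your Cases 1, 2, 3 and the subcase of Case 4 with $B_1\smallsetminus\{a\}\subseteq H$ are correct as sketched. The genuine gap is exactly the step you flag yourself: Case 4 with $B_1\smallsetminus\{a\}\not\subseteq H$, where the exchanged set must land back in $\mathscr{B}$. The fix you propose (extend a maximal independent subset $I\subseteq B_2$ to a basis $B'$ of $\M$ and augment $B_1\smallsetminus\{a\}$ against $B'$) does not do the job as stated: augmentation only produces some element of $B'\smallsetminus(B_1\smallsetminus\{a\})$, which may lie in $B'\smallsetminus B_2$ and is then useless, and ``a careful count of how $B_1\smallsetminus\{a\}$ meets $H$'' is a hope rather than an argument. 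The idea the paper uses here, and which is missing from your sketch, is a two-step move: apply submodularity to $X=(B_1\smallsetminus\{a\})\cup B_2$ and $Y=H$; since $B_1\smallsetminus\{a\}\not\subseteq H$ and $H$ is a flat of rank $k-1$, one has $\rk(X\cup Y)=k$, while $\rk(X\cap Y)\geq \rk(B_2)=k-1$ (stressedness enters: $B_2$ is a circuit), whence $\rk\bigl((B_1\smallsetminus\{a\})\cup B_2\bigr)=k$. This set therefore contains a basis $B_3$ of $\M$ with $a\in B_1\smallsetminus B_3$, and the ordinary exchange between $B_1$ and $B_3$ produces $y\in B_3\smallsetminus B_1\subseteq B_2\smallsetminus B_1$ with $(B_1\smallsetminus\{a\})\cup\{y\}\in\mathscr{B}$. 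Without this (or an equivalent) argument the case that carries the whole theorem is unproved.

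Your fallback route via the rank function is viable in principle---it amounts to establishing Proposition \ref{prop:relaxing_ranks} first and then reading off the bases---but as written it too is only a plan, and the nontrivial submodularity cases are precisely where the stressed hypothesis must be consumed: when $A_1\cup A_2\subseteq H$ has at least $k$ elements but neither $A_1$ nor $A_2$ does, you need that every subset of $H$ of cardinality at most $k-1$ is independent, so that ranks equal cardinalities and $|A_1|+|A_2|\geq k+|A_1\cap A_2|$ closes the case; when $A_1\cap A_2\subseteq H$ has at least $k$ elements while $A_1,A_2\not\subseteq H$, you need that a $k$-subset of $H$ has rank $k-1$ and spans $H$, so that $\rk(A_1)=\rk(A_2)=k$. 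Both facts come from $H$ being stressed, and the conclusion can genuinely fail for a hyperplane that is not stressed, so until either these cases are written out or Case 4 is completed as above, the proposal does not yet constitute a proof.
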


\begin{proof}
    Since $H$ is a hyperplane, we have that $\rk(H) = k - 1$. If $|H|=k-1$ then there is nothing to prove because $\widetilde{\mathscr{B}}=\mathscr{B}$. Let us assume that $|H|\geq k$. Observe that as $H$ is stressed, if $S$ is a subset of $H$ of cardinality $k$, it must be a circuit, so we have $\rk(S) = k-1$.
    
    To prove that $\widetilde{\mathscr{B}}$ is the set of bases of a matroid we have to check that it verifies the basis-exchange-property. Let us consider two members $B_1$ and $B_2$ of $\widetilde{\mathscr{B}}$ and an element $x\in B_1\smallsetminus B_2$. We have four cases:
    \begin{itemize}
        \item If $B_1,B_2\in \mathscr{B}$. Here there is nothing to do, because the exchange property between $B_1$ and $B_2$ in the matroid $\M$ extends to $\widetilde{\mathscr{B}}$.
        \item If $B_1\in \mathscr{B}$ and $B_2\subseteq H$ with $|B_2|=k$. Let us call $X = (B_1\smallsetminus\{x\})\cup B_2$ and $Y=H$. Observe that $X\cup Y = (B_1\smallsetminus\{x\})\cup H$ and $X\cap Y = ((B_1\smallsetminus \{x\})\cap H) \cup B_2$. By the submodularity of the rank function of $\M$, we have the inequality
            \[ \rk(X) + \rk(Y) \geq \rk(X\cup Y) + \rk(X\cap Y).\]
        \begin{itemize}
            \item[{\tiny $\blacktriangleright$}] If $B_1\smallsetminus\{x\}\subseteq H$, then choosing any $y\in B_2\smallsetminus B_1$, we have that $(B_1\smallsetminus\{x\})\cup\{y\}$ is a subset of cardinality $k$ of $H$, and thus belongs to $\widetilde{\mathscr{B}}$, and the proof ends.
            \item[{\tiny $\blacktriangleright$}] If $B_1\smallsetminus\{x\}\not\subseteq H$, then $\rk(X\cup Y) = \rk((B_1\smallsetminus\{x\})\cup H)=k$, as we are adding a new element to the flat $H$ which initially had rank $k-1$. Hence, the inequality above translates into
                \[ \rk((B_1\smallsetminus\{x\})\cup B_2) + (k-1) \geq k + \rk(X\cap Y),\]
            and since $X\cap Y\supseteq B_2$ in particular its rank is at least $k-1$. So                 \[ \rk((B_1\smallsetminus\{x\})\cup B_2) \geq k.\]
            Note that this inequality is in fact an equality, as $k$ is the rank of $\M$. Hence, there is a basis $B_3$ of $\M$ contained in $(B_1\smallsetminus\{x\})\cup B_2$. Note that $B_3\neq B_1$ since $x\notin B_2$ by assumption, and so there is an element $y\in B_3\smallsetminus B_1$ such that $(B_1\smallsetminus\{x\})\cup\{y\}\in \mathscr{B}$ by the basis-exchange-property. Note that $B_3\smallsetminus B_1\subseteq B_2\smallsetminus B_1$, and so $y$ is in fact an element of $B_2\smallsetminus B_1$ as desired.
        \end{itemize}
        \item If $B_1\subseteq H$, $|B_1|=k$ and $B_2\in\mathscr{B}$, as $B_1\smallsetminus\{x\}$ is an independent set of cardinality $k-1$ in $\M$. By the independence augmentation property for a matroid, there exists a $y\in B_2\smallsetminus (B_1\smallsetminus\{x\})$ so that $B_3=(B_1\smallsetminus\{x\})\cup\{y\}$ is a basis for $\M$. 
        \item If $B_1, B_2\subseteq H$ with $|B_1|=|B_2|=k$. In this case, by choosing any $y\in B_2\smallsetminus B_1$ we can form a set $(B_1\smallsetminus\{x\})\cup\{y\}\subseteq H$ which has cardinality $k$ and thus belongs to $\widetilde{\mathscr{B}}$.\qedhere
    \end{itemize}
\end{proof}

This operation of changing circuits contained in hyperplanes into bases will be referred to as \emph{stressed hyperplane relaxation}. If $H$ is a stressed hyperplane in $\M$ and $\widetilde{\M}$ is the matroid constructed as above, we will say that we have \emph{relaxed} $H$ and that $\widetilde{\M}$ is a \emph{relaxation} of $\M$. 

We make the brief comment that relaxing a stressed hyperplane of cardinality $\rk(\M)-1$ (in other words, an independent hyperplane) by definition does not change the matroid $\M$. As will become clear when we study paving matroids, we can ignore such hyperplanes and focus only on those that have cardinality at least $\rk(\M)$, in order to guarantee that our matroid indeed changes when we do a relaxation.

Now we give a characterization of the matroids that arise by performing a stressed hyperplane relaxation. In other words, it is possible to describe an intrinsic property of a matroid that reveals that it actually comes from the relaxation of a stressed hyperplane in another matroid.

\begin{prop}\label{prop:characterization relaxed matroids}
    Let $\M=(E,\mathscr{B})$ be a matroid of rank $k$. Assume that $A$ is a subset of $E$ with the following three properties.
    \begin{itemize}
        \item $A\neq E$.
        \item The set $\mathscr{B}'=\{B'\subseteq A: |B'|=k\}$ is a proper subset of $\mathscr{B}$.
        \item For every $x\in E\smallsetminus A$ and every $B'\in\mathscr{B}'$, the set $B' \cup \{x\}$ is a circuit of $\M$.
    \end{itemize}
    Then $\mathscr{B}\smallsetminus \mathscr{B}'$ is the set of bases of a matroid $\N$. Moreover, $A$ is a stressed hyperplane in $\N$ and $\M = \widetilde{\N}$.
\end{prop}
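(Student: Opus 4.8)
The strategy is threefold: (i) check that $\mathscr{B}'':=\mathscr{B}\smallsetminus\mathscr{B}'$ satisfies the basis-exchange-property, so that it is the basis family of a matroid $\N$; (ii) verify that $A$ is a stressed hyperplane of $\N$; (iii) unwind Theorem~\ref{thm:relaxation-of-stressed-hyp} to conclude $\M=\widetilde{\N}$. We may assume $\mathscr{B}'\neq\varnothing$, equivalently $|A|\geq k$ (otherwise $\N=\M$ and there is nothing of substance to prove); in that case $\rk_\M(A)=k$ and every subset of $A$ of size $k-1$ is independent in $\M$, since it sits inside a $k$-subset of $A$, which by the second hypothesis is a basis of $\M$.

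For (i), $\mathscr{B}''\neq\varnothing$ because $\mathscr{B}'\subsetneq\mathscr{B}$. Given $B_1,B_2\in\mathscr{B}''$ and $x\in B_1\smallsetminus B_2$, apply the basis-exchange-property in $\M$ to find $y\in B_2\smallsetminus B_1$ with $B_3:=(B_1\smallsetminus\{x\})\cup\{y\}\in\mathscr{B}$. If $B_3\notin\mathscr{B}'$ we are done, so suppose $B_3\subseteq A$. Then $B_1\smallsetminus\{x\}\subseteq A$, and since $B_1\notin\mathscr{B}'$ this forces $x\notin A$. Because $B_2\notin\mathscr{B}'$, we may choose $z\in B_2\smallsetminus A$; one checks $z\in B_2\smallsetminus B_1$ (if $z\in B_1$ then $z\in B_1\smallsetminus\{x\}\subseteq A$ or $z=x$, and neither is possible). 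Now $B_3\in\mathscr{B}'$ and $z\in E\smallsetminus A$, so by the third hypothesis $B_3\cup\{z\}$ is a circuit of $\M$; deleting $y\in B_3$ from this circuit shows that $(B_3\smallsetminus\{y\})\cup\{z\}=(B_1\smallsetminus\{x\})\cup\{z\}$ is independent in $\M$, hence --- having size $k$ --- a basis of $\M$, and it is not contained in $A$. Thus $(B_1\smallsetminus\{x\})\cup\{z\}\in\mathscr{B}''$ with $z\in B_2\smallsetminus B_1$, which establishes the exchange property; let $\N$ be the resulting matroid, so that $\rk(\N)=k$.

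For (ii), every $k$-subset of $A$ lies in $\mathscr{B}'$, hence is dependent in $\N$. Conversely, let $T\subseteq A$ with $|T|=k-1$ and let $x\in E\smallsetminus A$ (such $x$ exists since $A\neq E$); choosing any $a\in A\smallsetminus T$, the set $B':=T\cup\{a\}$ is a $k$-subset of $A$, so $B'\in\mathscr{B}'$ and, by the third hypothesis, $B'\cup\{x\}$ is a circuit of $\M$; deleting $a$ from it shows that $T\cup\{x\}$ is independent in $\M$, hence a basis of $\M$ not contained in $A$, i.e.\ a member of $\mathscr{B}''$. It follows that every $(k-1)$-subset of $A$ is independent in $\N$ while every $k$-subset is dependent, so the $k$-subsets of $A$ are exactly its circuits and $\rk_\N(A)=k-1$; moreover, for each $x\in E\smallsetminus A$ the set $A\cup\{x\}$ contains a basis $T\cup\{x\}$ of $\N$, so $A$ is a flat of $\N$. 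Since $\rk(\N)=k$, this makes $A$ a stressed hyperplane of $\N$. Finally, for (iii), Theorem~\ref{thm:relaxation-of-stressed-hyp} applied to this stressed hyperplane gives $\widetilde{\mathscr{B}}_\N=\mathscr{B}''\sqcup\{S\subseteq A:|S|=k\}=(\mathscr{B}\smallsetminus\mathscr{B}')\sqcup\mathscr{B}'=\mathscr{B}$, so $\widetilde{\N}=\M$.

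The delicate point is step (i), specifically the case where the exchange carried out in $\M$ lands a basis $B_3$ inside $A$; the way out is that $B_2\in\mathscr{B}''$ cannot be contained in $A$, so it must contribute an element $z\notin A$, and then the third hypothesis upgrades $B_3$ to a genuine replacement basis $(B_1\smallsetminus\{x\})\cup\{z\}$ lying outside $A$. Everything else is a matter of carefully tracking which subsets of $A$ remain independent after deleting $\mathscr{B}'$.
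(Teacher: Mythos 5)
Your proof is correct and follows essentially the same route as the paper: the exchange axiom for $\mathscr{B}\smallsetminus\mathscr{B}'$ is rescued in the problematic case by picking an element of $B_2$ outside $A$ and using the hypothesis that adjoining it to a member of $\mathscr{B}'$ creates a circuit, and the stressed-hyperplane verification likewise matches the paper's argument. The only cosmetic differences are that you inline the key exchange observation instead of isolating it as a displayed identity, and you spell out the degenerate case $\mathscr{B}'=\varnothing$ and the final identification $\M=\widetilde{\N}$, which the paper leaves implicit.
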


\begin{proof}
    If we consider any basis $B'\in \mathscr{B}'$ and $x\notin A$ then, by the third assumption, we have that $B'\cup \{x\}$ is a circuit. Thus, removing any other element yields a rank $k$ independent set in $\M$. In other words, 
    \begin{equation}\label{eq:exchange}(B'\smallsetminus \{b'\}) \cup \{x\}\in \mathscr{B}\smallsetminus \mathscr{B}' \end{equation}
    for every $b' \in B'$ and $x \notin A$.
    
    To show that $\mathscr{B}\smallsetminus \mathscr{B}'$ is the family of bases of a matroid, as $\mathscr{B}\smallsetminus\mathscr{B}'\neq\varnothing$ by the second assumption, we only need to prove that the basis-exchange-property holds. To this end, consider two distinct bases $B_1,B_2 \in \mathscr{B}\smallsetminus \mathscr{B}'$ and an element $a\in B_1\smallsetminus B_2$. Since $B_1$ and $B_2$ are bases in $\M$, by applying the basis-exchange-property in this matroid, we have that there exists $b' \in B_2\smallsetminus B_1$ such that
    \[ \left(B_1\smallsetminus \{a\}\right) \cup \{b'\} \in \mathscr{B}.\]
    If $\left(B_1\smallsetminus \{a\}\right) \cup \{b'\} \notin \mathscr{B}' $, then there is nothing to prove. Henceforth, we will assume that $(B_1\smallsetminus\{a\})\cup\{b'\}\in \mathscr{B}'$. Observe that this implies that there is some $B'\in\mathscr{B}'$ such that
    \begin{equation}\label{eq:removal}
          B_1\smallsetminus \{a\} = B'\smallsetminus \{b'\}.
    \end{equation}

    Now, since $B_2\notin\mathscr{B}'$, in particular $B_2\smallsetminus A \neq \varnothing$, because of the first and the second assumption. Let us pick any $x\in B_2\smallsetminus A$. By \eqref{eq:exchange}, it follows that 
        \[ (B'\smallsetminus\{b'\})\cup \{x\}\in \mathscr{B}\smallsetminus\mathscr{B}'.\]
    Combining this with equation \eqref{eq:removal} shows that there exists an $x\in B_2\smallsetminus B_1$ such that $(B_1\smallsetminus\{a\})\cup\{x\}\in \mathscr{B}\smallsetminus\mathscr{B}'$, and hence the basis-exchange-property holds within the family $\mathscr{B}\smallsetminus\mathscr{B}'$.
        
    To finish, it remains to show that $A$ is a stressed hyperplane in the matroid $\N =(E,\mathscr{B} \smallsetminus \mathscr{B}')$.
    \begin{itemize}
        \item Let us prove that $A$ is a hyperplane of $\N$. Choose any basis $B'\in\mathscr{B}'$. Let us pick any $x\notin A$ and $b'\in B'$. By \eqref{eq:exchange}, if we call $B=(B'\smallsetminus\{b'\})\cup\{x\}$, we have $B\in \mathscr{B}\smallsetminus\mathscr{B}'$. In particular, notice that $B'\smallsetminus \{b'\}\subseteq B\in \mathscr{B}\smallsetminus\mathscr{B}'$, and hence it is an independent set in $\N$. In other words, $\rk_\N(B'\smallsetminus\{b'\}) = k-1$ where $\rk_\N$ stands for the rank function in the matroid $\N$. Since $B'\smallsetminus\{b'\}\subseteq B'\subseteq A$, we obtain that $\rk_\N(A)\geq k-1$. The second assumption in the statement of Proposition \ref{prop:characterization relaxed matroids} implies that $A$ contains no basis of $\N$, so $\rk_\N(A) = k-1$. Also, since $A\cup\{x\}\supseteq B\in\mathscr{B}\smallsetminus\mathscr{B}'$, we have that $\rk_\N(A\cup\{x\}) = k$. Since $x\notin A$ was arbitrary, we get that $A$ is indeed a hyperplane in $\N$.
        \item Observe that every $B'\in\mathscr{B}'$ is a circuit in $\N$. This is implicit in the preceding paragraph, as for every $b'\in B'$, we know by \eqref{eq:exchange} that $B'\smallsetminus\{b'\}$ is contained in a basis of $\N$ or, equivalently, is independent.\qedhere
    \end{itemize}
\end{proof}

\begin{defi}
    A subset $A$ as in the preceding result, will be called a \emph{free subset} of $\M$.
\end{defi}

Assume that $\M$ is a matroid of rank $k$ having a free subset $A$ of cardinality $k$. It follows from the second condition that in fact $A$ has to be a basis. Moreover, the third condition implies that $A$ is a free basis in the sense of \cite[Definition 4.1]{ferroni-vecchi}. This is why we use the word ``free'' to describe such subsets.

\subsection{Structural properties}

If $\widetilde{\M}$ is a relaxation of $\M$, many of the properties of $\M$ are still present in $\widetilde{\M}$. For example, their rank functions differ only on a ``small'' list of subsets.

\begin{prop}\label{prop:relaxing_ranks}
    Let $\M$ be a matroid of rank $k$ and let $H$ be a stressed hyperplane. If $\widetilde{\M}$ denotes the relaxed matroid, then the rank function $\widetilde{\rk}$ of $\widetilde{\M}$ is given by
        \[ \widetilde{\rk}(A) = \left\{ \begin{array}{ll} \rk(A) + 1 & \text{if $A\subseteq H$ and $|A| \geq k$}\\ \rk(A) & \text{otherwise}\end{array}\right.\]
    where $\rk$ is the rank function of $\M$.
\end{prop}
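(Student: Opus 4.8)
The plan is to compute $\widetilde{\rk}(A)$ directly from the definition of rank as the size of the largest independent set contained in $A$, comparing independent sets in $\widetilde{\M}$ with those in $\M$. The key observation is that $\mathscr{I}(\widetilde{\M}) = \mathscr{I}(\M) \cup \{S \subseteq H : |S| = k\}$, since the only new bases are the $k$-subsets of $H$ and adding these bases enlarges the family of independent sets by exactly their subsets, all of which (being of size $\le k$) were already independent in $\M$ unless they have size exactly $k$ and lie in $H$. So the genuinely new independent sets of $\widetilde{\M}$ are precisely the $k$-element subsets of $H$.

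From here I would split into the two cases of the claimed formula. First, suppose $A \not\subseteq H$ or $|A| < k$. Any independent set $I$ of $\widetilde{\M}$ with $I \subseteq A$ satisfies $|I| \le \rk(\M) = k$; if $|I| = k$ and $I$ were a new independent set, then $I \subseteq H$, forcing $A \supseteq I$ to meet $H$ in a $k$-set, but if $A \not\subseteq H$ this does not immediately contradict anything — so I need to be slightly careful. The cleaner route: if $|A| < k$ then every subset of $A$ has size $< k$ hence is old, so $\widetilde{\rk}(A) = \rk(A)$. If instead $A \not\subseteq H$ but $|A| \ge k$: a new independent set $I \subseteq A$ has $I \subseteq H$ and $|I| = k$; but then $I$ is a basis of $\widetilde{\M}$, and I claim $\rk(A) = k$ already in $\M$ in this situation — indeed pick $x \in A \smallsetminus H$; since $H$ is a flat of rank $k-1$, $\rk_\M(H \cup \{x\}) = k$, and $H \cup \{x\} \supseteq I \cup \{x\} \subseteq A$ has an $\M$-basis, so actually I should argue $\rk_\M(A) = k$ by noting $A$ contains $I$ (an $\M$-independent set of size $k-1$ minus... ) — more simply, $I$ is a circuit in $\M$ of size $k$, so $I \smallsetminus \{b\}$ is $\M$-independent of size $k-1$ for any $b \in I$, and $(I \smallsetminus \{b\}) \cup \{x\} \subseteq A$ is $\M$-independent of size $k$ by the exchange/augmentation argument already used in the proof of Theorem \ref{thm:relaxation-of-stressed-hyp} (or because $H$ is a hyperplane). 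Hence $\rk_\M(A) = k = \widetilde{\rk}(A)$, and the value is unchanged.

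Second, suppose $A \subseteq H$ and $|A| \ge k$. Since $H$ is a stressed hyperplane, every $k$-subset of $A \subseteq H$ is a circuit of $\M$, so $\rk_\M(A) = k-1$ (it contains no basis of $\M$ but contains independent sets of size $k-1$, e.g. a $(k-1)$-subset of any $k$-subset). In $\widetilde{\M}$, however, every $k$-subset of $A$ is a basis, so $\widetilde{\rk}(A) \ge k$; and $\widetilde{\rk}(A) \le \widetilde{\rk}(\widetilde{\M}) = \rk(\widetilde{\M})$, which equals $k$ because $\widetilde{\M}$ has the same ground set and rank $k$ (the new bases have size $k$, so the rank does not increase). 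Thus $\widetilde{\rk}(A) = k = \rk_\M(A) + 1$.

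The only mildly delicate point — the part I'd call the main obstacle, though it is not severe — is handling the case $A \not\subseteq H$, $|A| \ge k$, where one must rule out that a ``new'' independent set somehow records a rank bigger than $\rk_\M(A)$; this amounts to showing $\rk_\M(A) = k$ there, which follows cleanly from $H$ being a hyperplane together with the circuit structure of $A \cap H$, exactly as in the submodularity argument of Theorem \ref{thm:relaxation-of-stressed-hyp}. Everything else is bookkeeping with the definition of rank, and I would present it as the short case analysis above.
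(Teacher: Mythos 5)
Your proof is correct and follows essentially the same route as the paper's: both rest on the observation that the only new independent sets of $\widetilde{\M}$ are the $k$-subsets of $H$, and both use that $H$ is a flat of rank $k-1$ (so adjoining an element outside $H$ to an independent subset of $H$ raises the rank) to see that the rank is unchanged when $A\not\subseteq H$. The paper packages this as a contrapositive---assuming $\rk(A)<\widetilde{\rk}(A)$ and deducing $A\subseteq H$ and $|A|\geq k$---whereas you run a direct three-case analysis, but the mathematical content is the same.
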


\begin{proof}
    Observe that $\rk(A)\leq \widetilde{\rk}(A)$ for each $A$, as $\widetilde{\M}$ contains all the bases of $\M$. Assume that $A$ is a set with $\rk(A) < \widetilde{\rk}(A)$. By using the definition of the rank functions of both matroids, we have
    \[ \max_{I\in \mathscr{I}(\M)} |A\cap I| < \max_{\substack{S\subseteq H\\|S|=k}} |A\cap S|.\]
    In particular, we can choose $S\subseteq H$ with $|S|=k$ (and hence $S$ is a circuit of $\M$) such that $|A\cap S| > |A\cap I|$ for all independent sets $I$ of $\M$. Let us prove that $S\subseteq A$. If we choose any $x\in S$, we have that $S\smallsetminus\{x\}$ is independent. Because of how we chose $S$, it follows that
        \[|A\cap (S\smallsetminus\{x\})| < |A\cap S|.\]
    which implies that $x\in A$, and we have $S\subseteq A$ as we claimed. Hence this shows that $|A|\geq k$ when $\rk(A)<\widetilde{\rk}(A)$. 
    
    Now we prove that we also have $A\subseteq H$ when $\rk(A)<\widetilde{\rk}(A)$. To this end, observe that since $\rk(S) = k - 1$ and $S\subseteq A$, we must have $\rk(A) \geq k-1$. Also, since $\rk(A) < \widetilde{\rk}(A) \leq k$, we obtain that $\rk(A) = k - 1$ and $\widetilde{\rk}(A) = k$. Assume that $A\not\subseteq H$, and take $x\in A\smallsetminus H$. Since $x\notin H$, $\rk(S\cup\{x\})=k$, as the flat spanned by $S\cup\{x\}$ is $E$, because the flat spanned by $S$ is the hyperplane $H$. Since $S\subseteq A$ and $x\in A$, we obtain that $S\cup\{x\}\subseteq A$ and  
        \[k=\rk(S\cup\{x\})\leq \rk(A) = k-1,\]
    which is a contradiction. It follows that $A\subseteq H$. In summary, we have proved that the strict inequality $\rk(A)< \widetilde{\rk}(A)$ holds only for the subsets $A\subseteq H$ of cardinality at least $k$, as was claimed.
\end{proof}

It is natural to ask what the stressed hyperplanes of an already relaxed matroid are. The next results provide a proof that, in fact, after relaxing one stressed hyperplane, the remaining stressed hyperplanes of the original matroid continue to be stressed in the new matroid.

\begin{prop}\label{prop:hyperplane_intersection}
    Let $\M$ be a matroid of rank $k$ with two distinct stressed hyperplanes $H_1$ and $H_2$. Then $|H_1\cap H_2|\leq k -2$.
\end{prop}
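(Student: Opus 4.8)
The plan is to argue by contradiction: assume $|H_1\cap H_2|\ge k-1$ and produce a forbidden nested pair of circuits inside the stressed hyperplane $H_1$.

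First I would pin down the rank of the intersection. Since an arbitrary intersection of flats is again a flat, $F:=H_1\cap H_2$ is a flat of $\M$. As $H_1$ and $H_2$ are distinct and both have rank $k-1$, we cannot have $F=H_1$ (that would force $H_1\subseteq H_2$ and hence $H_1=H_2$), so $F\subsetneq H_1$. Picking any $g\in H_1\setminus F$ and using that $F$ is a flat yields $\rk(F)<\rk(F\cup\{g\})\le \rk(H_1)=k-1$, hence $\rk(F)\le k-2$.

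Next I would extract a small circuit contained in $F$. Let $I$ be a maximal independent subset of $F$, so $|I|=\rk(F)\le k-2$. Under the contradiction hypothesis $|F|\ge k-1>|I|$, so there is some $e\in F\setminus I$; by maximality of $I$ the set $I\cup\{e\}$ is dependent, hence contains a circuit $C$ with $|C|\le |I|+1\le k-1$. In particular $C\subseteq F\subseteq H_1$ and $C$ is dependent, so $H_1$ is dependent and therefore $|H_1|\ge \rk(H_1)+1=k$.

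Finally I would invoke the stressed condition: since $C\subseteq H_1$, $|C|\le k-1$ and $|H_1|\ge k$, we may choose $S$ with $C\subseteq S\subseteq H_1$ and $|S|=k$; because $H_1$ is stressed, $S$ is a circuit, but $|C|\le k-1<k=|S|$ forces $C\subsetneq S$, contradicting that the circuit $S$ is a minimal dependent set. The only genuine subtlety — the step I expect to require the most care — is that this proposition is stated for an arbitrary matroid, not a paving one, so $F$ may a priori contain circuits of many sizes; the argument has to single out a circuit of size at most $k-1$, and that is exactly what the ``maximal independent subset plus one element'' device delivers. Everything else is routine bookkeeping with the rank axioms, needing only that flats are closed under intersection and that a proper subflat has strictly smaller rank.
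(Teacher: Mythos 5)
Your proof is correct and is essentially the paper's argument: both first observe that $F=H_1\cap H_2$ is a flat properly contained in $H_1$, hence $\rk(F)\leq k-2$, and then use the stressed condition on $H_1$ to convert this rank bound into the cardinality bound $|F|\leq k-2$. Your circuit-extraction-and-extension step is just a careful verification of the fact the paper invokes tersely (subsets of a stressed hyperplane of size at most $k-1$ are independent, while those of size at least $k$ have rank $k-1$), so the route is the same, merely spelled out in more detail.
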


\begin{proof}
    Since $H_1$ and $H_2$ are distinct hyperplanes, their intersection $F=H_1\cap H_2$ is a flat strictly contained in both of them. In particular, $\rk(F)<\rk (H_1)=k-1$. Since $H_1$ is stressed, its subsets of size greater than or equal to $k$ have rank $k-1$. Hence, the only possibility is that $|H_1\cap H_2| = |F|\leq k - 2$.
\end{proof}

\begin{prop}
    Let $\M$ be a matroid of rank $k$ with two distinct stressed hyperplanes $H_1$ and $H_2$. If $\widetilde{\M}$ is the matroid obtained from $\M$ after relaxing $H_1$, then $H_2$ is a stressed hyperplane in $\widetilde{\M}$.
\end{prop}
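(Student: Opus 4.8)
The plan is to push everything through the explicit formula for the rank function $\widetilde{\rk}$ of $\widetilde{\M}$ given in Proposition \ref{prop:relaxing_ranks}, using the bound on $|H_1\cap H_2|$ from Proposition \ref{prop:hyperplane_intersection} to decouple $H_2$ from the relaxation. The key observation is that the relaxation only alters the rank of subsets of $H_1$ of size at least $k$: since $H_1$ and $H_2$ are distinct stressed hyperplanes, Proposition \ref{prop:hyperplane_intersection} gives $|H_1\cap H_2|\leq k-2$, so any $A\subseteq H_2$ that is also contained in $H_1$ satisfies $|A|\leq k-2<k$ and hence falls outside the exceptional family in Proposition \ref{prop:relaxing_ranks}. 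Consequently $\widetilde{\rk}(A)=\rk(A)$ for every $A\subseteq H_2$. This single fact is the crux; everything else is bookkeeping, and I do not foresee a genuine obstacle beyond remembering to invoke Proposition \ref{prop:hyperplane_intersection} to rule out interference from $H_1$.

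First I would check that $H_2$ is still a hyperplane of $\widetilde{\M}$. By the observation above, $\widetilde{\rk}(H_2)=\rk(H_2)=k-1$. To see that $H_2$ is a flat of $\widetilde{\M}$, fix $x\in E\smallsetminus H_2$; then $\widetilde{\rk}(H_2\cup\{x\})\geq \rk(H_2\cup\{x\})=k$ because $H_2$ is a hyperplane of $\M$ and $\widetilde{\M}$ has at least as many bases as $\M$, while $\widetilde{\rk}\leq k$ everywhere. Hence $\widetilde{\rk}(H_2\cup\{x\})=k>\widetilde{\rk}(H_2)$ for every such $x$, so $H_2$ is a flat of rank $k-1$ in $\widetilde{\M}$, i.e. a hyperplane.

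Finally I would verify that $H_2$ is stressed in $\widetilde{\M}$. Let $S\subseteq H_2$ with $|S|=k$. Since $H_2$ is stressed in $\M$, the set $S$ is a circuit of $\M$, so $\rk(S)=k-1$ and $\rk(S\smallsetminus\{x\})=k-1$ for every $x\in S$. By the first step these ranks are unchanged in $\widetilde{\M}$, giving $\widetilde{\rk}(S)=k-1<|S|$ and $\widetilde{\rk}(S\smallsetminus\{x\})=k-1=|S\smallsetminus\{x\}|$ for all $x\in S$. Thus $S$ is dependent in $\widetilde{\M}$ while each of its proper subsets of size $k-1$, and hence every proper subset, is independent in $\widetilde{\M}$; that is, $S$ is a circuit of $\widetilde{\M}$. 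Since $S$ was an arbitrary $k$-subset of $H_2$, the hyperplane $H_2$ is stressed in $\widetilde{\M}$, completing the proof.
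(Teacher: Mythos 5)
Your proof is correct and takes essentially the same route as the paper's: both invoke Proposition \ref{prop:hyperplane_intersection} together with Proposition \ref{prop:relaxing_ranks} to conclude that the relaxation of $H_1$ leaves the rank of $H_2$ and of all its subsets unchanged, and then deduce that $H_2$ is still a hyperplane and that its $k$-subsets are still circuits. The only cosmetic difference is that you establish flatness of $H_2$ directly from $\rk\leq\widetilde{\rk}\leq k$, whereas the paper argues by contradiction via the rank formula.
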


\begin{proof}
    For $i\in\{1,2\}$ consider $\mathscr{C}_i=\binom{H_i}{k}$, the $k$-subsets of $H_i$. That is, $\mathscr{C}_i$ is the set of circuits contained in hyperplane $H_i$. Observe that
    \begin{itemize}
        \item $H_2$ is a hyperplane in $\widetilde{\M}$. Since $H_2$ does not satisfy the conditions of Proposition \ref{prop:relaxing_ranks} for its rank to increase in $\widetilde{\M}$, we know that $\widetilde{\rk}(H_2) = k-1$.  Suppose $H_2$ is not a flat in $\widetilde{\M}$, and so $\widetilde{\rk}(H_2\cup\{x\})=k-1$ for some $x\notin H_2$. Then this would imply that $\rk(H_2\cup\{x\})=k-1$ again by Proposition \ref{prop:relaxing_ranks} since $H_2\cup\{x\}$ is not contained in $H_1$, which contradicts the fact that $H_2$ is a hyperplane in $\M$.
        \item The elements of $\mathscr{C}_2$ are circuits in $\widetilde{\M}$. By Proposition \ref{prop:hyperplane_intersection}, $\mathscr{C}_1\cap \mathscr{C}_2=\varnothing$. In particular, we can use Proposition \ref{prop:relaxing_ranks} to obtain that the members of $\mathscr{C}_2$ are still circuits in $\widetilde{\M}$, since their ranks do not change, and neither do the rank of their subsets.
    \end{itemize}
    In particular, the definition implies that $H_2$ is in fact stressed in $\widetilde{\M}$, as desired.
\end{proof}

Let us now give a description of how the family of flats of a matroid changes when one applies this operation.

\begin{prop}\label{prop:flats}
    Let $\M$ be a matroid of rank $k$ and let $H$ be a stressed hyperplane. If $\widetilde{\M}$ is the relaxed matroid, then
        \begin{equation}\label{eq:flats} \mathscr{L}(\widetilde{\M}) = \left(\mathscr{L}(\M) \smallsetminus \{H\}\right) \sqcup \left\{ A \subseteq H : |A| = k - 1\right\}.\end{equation}
\end{prop}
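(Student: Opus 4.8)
The plan is to establish the set equality in \eqref{eq:flats} by a double inclusion, using the rank formula from Proposition \ref{prop:relaxing_ranks} as the main engine. First I would recall what needs to be checked: a subset $F \subseteq E$ is a flat of $\widetilde{\M}$ precisely when $\widetilde{\rk}(F \cup \{x\}) > \widetilde{\rk}(F)$ for every $x \in E \smallsetminus F$. Since the rank functions $\rk$ and $\trk$ agree except on subsets of $H$ of cardinality at least $k$ (where $\trk = \rk + 1$), most of the verification is bookkeeping about how adding a single element $x$ can move a set in or out of the exceptional family $\{A \subseteq H : |A| \geq k\}$.

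For the inclusion $\supseteq$, I would argue in two parts. If $F \in \mathscr{L}(\M) \smallsetminus \{H\}$, I claim $F$ is still a flat of $\widetilde{\M}$: for $x \notin F$, if neither $F$ nor $F \cup \{x\}$ lies in the exceptional family then $\trk$ agrees with $\rk$ on both and we are done; the only way the exceptional family interferes is if $F \cup \{x\} \subseteq H$ with $|F \cup \{x\}| \geq k$, which forces $F \subseteq H$ with $\rk(F) \le k-1$, and since $F$ is a flat properly contained in the hyperplane $H$ we get $\rk(F) \le k-2$, so $|F| \le k-2$ (as $F \subseteq H$ is then independent of size $\rk(F)$ — here one uses that $H$ stressed means every $\le k-1$ subset is independent), making $|F \cup \{x\}| \le k-1 < k$, a contradiction. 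Hence the exceptional family never matters for such $F$ and $\trk(F\cup\{x\}) = \rk(F\cup\{x\}) > \rk(F) = \trk(F)$. If instead $|A| = k-1$ and $A \subseteq H$, then $A$ is independent in $\M$ with $\rk(A) = k-1$, and $\trk(A) = k-1$ as well since $|A| < k$; for any $x \notin A$ we have $|A \cup \{x\}| = k$, and either $A \cup \{x\} \subseteq H$, giving $\trk(A\cup\{x\}) = \rk(A\cup\{x\})+1 = (k-1)+1 = k$, or $A \cup \{x\} \not\subseteq H$, giving $\trk(A\cup\{x\}) = \rk(A\cup\{x\}) = k$ because adjoining an element outside $H$ to the flat $H$-subset $A$ spans $E$. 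Either way $\trk(A \cup \{x\}) = k > k-1 = \trk(A)$, so $A$ is a flat of $\widetilde{\M}$.

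For the reverse inclusion $\subseteq$, take a flat $F$ of $\widetilde{\M}$. If $\trk(F) = \rk(F)$ and $F$ is not in the exceptional family, I would check $F$ is a flat of $\M$: for $x \notin F$, the set $F \cup \{x\}$ can fail to satisfy $\rk(F \cup \{x\}) > \rk(F)$ only if it too has $\trk = \rk$, but then $\trk(F\cup\{x\}) = \rk(F\cup\{x\}) \le \rk(F) = \trk(F)$ contradicts $F$ being a flat of $\widetilde{\M}$; and if $F \cup \{x\}$ is in the exceptional family while $F$ is not, then $F \subseteq H$, $|F| = k-1$, and $\rk(F) = k-1$, landing us in the second piece of the right-hand side. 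The remaining case is $F$ itself in the exceptional family, i.e. $F \subseteq H$, $|F| \ge k$; then picking $x \in H \smallsetminus F$ (possible since $|H| > |F| \ge k$... or if $F = H$ handle directly) shows $F \cup \{x\} \subseteq H$ is also exceptional so $\trk(F \cup \{x\}) = \rk(F\cup\{x\})+1$ and $\trk(F) = \rk(F)+1$, while $\rk(F\cup\{x\}) = \rk(F) = k-1$ since both span $H$; hence $\trk(F\cup\{x\}) = \trk(F)$, contradicting that $F$ is a flat of $\widetilde{\M}$ — unless $F = H$, but $H$ has $\trk(H) = k$ and $\trk(H \cup \{x\}) = \rk(H\cup\{x\}) = k$ for $x \notin H$, so $H$ is also not a flat of $\widetilde{\M}$. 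This exhausts the cases.

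The main obstacle I anticipate is not conceptual but organizational: keeping the case analysis airtight, in particular making sure the cardinality bounds interact correctly with the ``$|A| \ge k$ vs.\ $|A| = k-1$'' thresholds, and correctly invoking that a stressed hyperplane $H$ has all its subsets of size $\le k-1$ independent (so a flat $F \subsetneq H$ satisfies $|F| = \rk(F) \le k-2$, which is what prevents $F \cup \{x\}$ from sneaking into the exceptional family in the $\supseteq$ direction). A clean way to package this would be to first record the small lemma that every proper subflat of $H$ has cardinality at most $k-2$, and that every subset of $H$ of size exactly $k-1$ is a flat of $\widetilde{\M}$ spanning nothing bigger inside $H$; with those two facts isolated, the double inclusion becomes a short argument.
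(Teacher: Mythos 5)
Your proposal is correct and follows essentially the same route as the paper: a double inclusion driven entirely by the rank formula of Proposition \ref{prop:relaxing_ranks}, with the key auxiliary fact that any flat of $\M$ properly contained in the stressed hyperplane $H$ has cardinality at most $k-2$ (the paper phrases this as ``the only flat of $\M$ contained in $H$ of cardinality at least $k$ is $H$ itself''). Your explicit check that $H$ and its exceptional subsets are not flats of $\widetilde{\M}$ is a welcome touch of completeness, but it does not change the substance of the argument.
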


\begin{proof}
    Let $F$ be a flat of $\widetilde{\M}$ that is not a flat of $\M$.  We claim that $\widetilde{\rk}(F) = \rk(F)$. Indeed, if it was not the case then, by Proposition \ref{prop:relaxing_ranks}, we would have that $F\subseteq H$ and $|F|\geq k$. Both conditions imply that $\widetilde{\rk}(F)=k$ since every $k$-subset of $H$ is a basis in $\widetilde{\M}$, and since $F$ is a flat, $F$ has to be the ground set, which cannot happen as $F$ was not a flat of $\M$.
    
    Now, since $F$ is \emph{not} a flat of $\M$, we know that there exists some $x\notin F$ such that $\rk(F\cup\{x\})=\rk(F)$. Since $F$ is a flat in $\widetilde{\M}$, it follows that
        \[ \widetilde{\rk}(F\cup\{x\}) > \widetilde{\rk}(F) = \rk(F) = \rk(F\cup\{x\}).\]
    Using Proposition \ref{prop:relaxing_ranks} again, we have that $F\cup \{x\} \subseteq H$ and $|F\cup\{x\}|\geq k$. Notice that we must have $|F\cup\{x\}| = k$, because otherwise it would be the case that $|F|\geq k$ and also $F\subseteq H$, which yields to a contradiction as in the first paragraph. Hence, $F$ has to be a subset of cardinality $k-1$ of $H$. So we have proved the inclusion $\subseteq$ in equation \eqref{eq:flats}.
    
    Let us prove the other inclusion. Choose a flat $F\in \mathscr{L}(\M)\smallsetminus \{H\}$. Consider any element $x\notin F$. We have that $\rk(F) < \rk(F\cup\{x\})$. Also,
        \[ \widetilde{\rk}(F) \leq \rk(F\cup\{x\}) \leq \widetilde{\rk}(F\cup\{x\}).\]
   Assume that $\widetilde{\rk}(F) = \widetilde{\rk}(F\cup\{x\})$. 
    The double inequality above gives that $\widetilde{\rk}(F) = \rk(F\cup\{x\})>\rk(F)$. By Proposition \ref{prop:relaxing_ranks}, it follows that $F\subseteq H$ and $|F|\geq k$. This is impossible, because the only flat of $\M$ contained in $H$ and having cardinality at least $k$ is $H$ itself, and we assumed $F\in \mathscr{L}(\M)\smallsetminus \{H\}$. It follows that $\widetilde{\rk}(F) < \widetilde{\rk}(F\cup\{x\})$ which since $x\notin F$ was arbitrary implies that $F$ is a flat of $\widetilde{\M}$.
    
    Now, choose $F\subseteq H$ such that $|F|=k-1$. Since all the subsets of size $k$ of $H$ are independent in $\widetilde{\M}$, in particular, $\widetilde{\rk}(F) = k-1$. If we choose any element $x\notin F$, we have two cases. 
    \begin{itemize}
        \item If $x\in H$, then $F\cup\{x\}$ is a $k$-subset of $H$, and is thus independent in $\widetilde{\M}$. This says that $\widetilde{\rk}(F\cup\{x\})>\widetilde{\rk}(F)$.
        \item If $x\notin H$, then $\widetilde{\rk}(H\cup\{x\})\geq \rk(H\cup\{x\})=k$, because $H$ is a hyperplane in $\M$. In particular $\widetilde{\rk}(H\cup\{x\})=k$, and since $\widetilde{\rk}(F) = k - 1$ and $F\subseteq H$. Since the flat spanned by $F$ in $\M$ is $H$ and $x\notin H$, we have that $k=\rk(F\cup\{x\}) \leq \widetilde{\rk}(F\cup\{x\})$, so the inequality $\widetilde{\rk}(F\cup\{x\})>\widetilde{\rk}(F)$ holds, as $k > k-1$.
    \end{itemize}
    It follows that in either case $\widetilde{\rk}(F\cup\{x\})>\widetilde{\rk}(F)$, which proves that $F$ is a flat of $\widetilde{\M}$ and the proof is complete.
\end{proof}

The following result provides the prototypical example of a matroid with a stressed hyperplane. Its statement introduces some new notation and terminology that will be useful to elaborate the theory that leads to the proofs of our main results.

\begin{prop}\label{prop:uniform-example}
    The matroid $\V_{k,h,n} = \U_{k-1,h}\oplus \U_{1,n-h}$ is a matroid of rank $k$, cardinality $n$ having a stressed hyperplane of cardinality $h$. Also, the relaxed matroid $\widetilde{\V}_{k,h,n}$ has the following property
        \[ \si\left(\widetilde{\V}_{k,h,n}\right) \cong \U_{k,h+1}.\]
\end{prop}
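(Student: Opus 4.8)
The plan is to verify each of the three claims in turn, all of which follow by directly unwinding the definitions of direct sum, uniform matroid, stressed hyperplane, and simplification.

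First I would identify the ground set and bases of $\V_{k,h,n} = \U_{k-1,h}\oplus \U_{1,n-h}$. Writing the ground set as $E = A \sqcup B$ with $|A| = h$ and $|B| = n - h$, the bases are precisely the sets of the form $A' \sqcup B'$ where $A' \subseteq A$ with $|A'| = k-1$ and $B' \subseteq B$ with $|B'| = 1$. Hence every basis has cardinality $k$, so $\rk(\V_{k,h,n}) = k$, and the cardinality of the ground set is $n$, as claimed. The rank of an arbitrary subset $X$ is $\min(|X\cap A|, k-1) + \min(|X \cap B|, 1)$.

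Next I would check that $A$ is a stressed hyperplane. Using the rank formula, $\rk(A) = k-1$, and for any $x \in B$ we have $\rk(A \cup \{x\}) = (k-1) + 1 = k$, so $A$ is a flat of rank $k - 1 = \rk(\V_{k,h,n}) - 1$, i.e.\ a hyperplane. To see it is stressed we must show every $k$-subset $S \subseteq A$ is a circuit: indeed $\rk(S) = \min(|S|, k-1) = k-1 < |S|$ so $S$ is dependent, and removing any element $s \in S$ gives $\rk(S \smallsetminus \{s\}) = k-1 = |S \smallsetminus \{s\}|$, so $S \smallsetminus \{s\}$ is independent; thus $S$ is a minimal dependent set. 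This also shows $|A| = h \geq k$ is exactly the regime where the relaxation is nontrivial (if $h < k$ there are no such $S$ and $\widetilde{\V}_{k,h,n} = \V_{k,h,n}$, consistent with the statement since then $\si(\U_{k-1,h}\oplus\U_{1,n-h})\cong\U_{k,h+1}$ degenerates appropriately — though I would simply assume $h \geq k$, or note this is the only interesting case).

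For the last claim I would compute the bases of $\widetilde{\V}_{k,h,n}$ using Theorem \ref{thm:relaxation-of-stressed-hyp}: they are the original bases $A'\sqcup B'$ (with $|A'| = k-1$, $|B'|=1$) together with all $k$-subsets $S \subseteq A$. Now I examine parallel classes and loops in $\widetilde{\V}_{k,h,n}$. An element $b \in B$ is neither a loop nor parallel to anything else: $\{b\}$ is independent (extend by a $(k-1)$-subset of $A$), and for $b \neq b'$ in $B$, $\{b, b'\}$ is independent (it extends to a basis $A'\sqcup\{b\}\sqcup\cdots$ — wait, bases have exactly one element of $B$). Let me instead argue: $\{b,b'\}$ with $b,b'\in B$ is dependent since $\rk_{\widetilde\M}(\{b,b'\}) = 1$ (this follows from Proposition \ref{prop:relaxing_ranks}, as $\{b,b'\}\not\subseteq A$), so $B$ forms a single parallel class of size $n - h$, while each element of $A$ is its own singleton class. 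Therefore $\si(\widetilde{\V}_{k,h,n})$ has ground set $A \cup \{\ast\}$ of size $h + 1$, where $\ast$ is a chosen representative of $B$. Its bases are: the $k$-subsets of $A$, and the sets $A' \cup \{\ast\}$ with $A' \subseteq A$, $|A'| = k-1$ — that is, exactly all $k$-subsets of the $(h+1)$-element set $A \cup \{\ast\}$. Hence $\si(\widetilde{\V}_{k,h,n}) \cong \U_{k, h+1}$.

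The computations are entirely routine; the only point requiring a modicum of care is the identification of the parallel class structure of $\widetilde{\V}_{k,h,n}$ in the last step — one must check that no element of $B$ becomes parallel to an element of $A$ after relaxation and that the entire set $B$ collapses to a single point — but this is immediate from Proposition \ref{prop:relaxing_ranks}. I expect no genuine obstacle.
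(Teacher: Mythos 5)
Your argument is correct, and for the main part of the statement it diverges from the paper's proof in an interesting way. The first two claims (rank, cardinality, and the fact that $A=E_1$ is a stressed hyperplane) are verified exactly as in the paper, by unwinding the direct sum and the rank function. For the isomorphism $\si(\widetilde{\V}_{k,h,n})\cong \U_{k,h+1}$, however, the paper never touches bases: it computes the full lattice of flats of $\V_{k,h,n}$, applies Proposition \ref{prop:flats} to read off the flats of the relaxation, and then exhibits an order-preserving bijection onto the lattice of flats of $\U_{k,h+1}$, relying on the correspondence between simple matroids and geometric lattices. You instead work at the level of bases: Theorem \ref{thm:relaxation-of-stressed-hyp} gives the bases of $\widetilde{\V}_{k,h,n}$, Proposition \ref{prop:relaxing_ranks} identifies the parallel classes ($B$ collapses to one point, elements of $A$ stay separate and non-parallel to $B$), and the restriction to $A\cup\{\ast\}$ is visibly $\U_{k,h+1}$ because its independent $k$-sets are all $k$-subsets. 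Your route is more elementary and makes the ``extra point'' $\ast$ coming from $E_2$ completely explicit, at the cost of the parallel-class bookkeeping (including the small $k=2$ case where two elements of $A$ are parallel in $\V$ but not in $\widetilde{\V}$, which your appeal to Proposition \ref{prop:relaxing_ranks} does cover); the paper's route gets the simplification for free from the flat-lattice description it has just established and avoids discussing parallelism at all. Both arguments share the same implicit restriction to the nondegenerate regime ($h\geq k$, $k\geq 2$), so that is not a defect of your proposal; the momentary slip about $\{b,b'\}$ being independent is corrected within your own text and does not affect the final argument.
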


\begin{proof}
    Notice that the considerations on the rank and the cardinality of $\V_{k,h,n}$ are consequences of the definition of the direct sum of matroids. Now, let us label the ground set of $\V_{k,h,n}$ as $E=\{1,\ldots,n\}$ such that $E_1=\{1,\ldots,h\}$ is the ground set of $\U_{k-1,h}$ and $E_2=\{h+1,\ldots,n\}$ is the ground set of $\U_{1,n-h}$. 
    
    We claim that $E_1$ is a stressed hyperplane. This follows readily from the fact that it is a flat of rank $k-1$ and any subset $S\subseteq E_1$ of cardinality $k$ is a circuit when considered as a subset of $\U_{k-1,h}$.
    
    Now, to prove that the simplification of the matroid $\widetilde{\V}_{k,h,n}$ is isomorphic to the uniform matroid $\U_{k,h+1}$ we have to look at the flats first.
    
    The flats of $\V_{k,h,n}$ are exactly the disjoint unions of a flat of $\U_{k-1,h}$ and a flat of $\U_{1,n-h}$. In other words, $F$ is a flat of $\V_{k,h,n}$ if and only if 
        \[ |F\cap E_1| \in \{0,1,\ldots, k-2,h\}\;\;\text{ and } \;\; |F\cap E_2|\in \{0,n-h\}.\]
    Thus, by Proposition \ref{prop:flats}, the flats $\widetilde{F}\in \mathscr{L}(\widetilde{\V}_{k,h,n})$ have to satisfy either
       \[
        |\widetilde{F}\cap E_1| \in \{0, 1,\ldots, k-2,k-1\} \;\;\text{ and }\;\; |\widetilde{F}\cap E_2| = 0, \]
        \[
        \text{ or }|\widetilde{F}\cap E_1| \in \{0, 1,\ldots, k-2,h\} \;\;\text{ and }\;\; |\widetilde{F}\cap E_2| = n-k. \]
       
    Notice that the set $E_2$ is an atom of this lattice of flats. The remaining $h$ atoms are the elements of $E_1$. Moreover, if we label the elements of $E_1$ as $\overline{1},\ldots,\overline{h}$ and label the atom $E_2$ as $\overline{h+1}$, we can construct an order-preserving bijection from the lattice of flats of $\widetilde{\V}_{k,h,n}$ to the family of subsets of $\{\overline{1},\ldots,\overline{h+1}\}$ having cardinalities in $\{0,\ldots,k-1,h+1\}$. The latter is just isomorphic to the lattice of flats of $\U_{k,h+1}$, which implies that the simplification of $\widetilde{\V}_{k,h,n}$ is isomorphic to $\U_{k,h+1}$, as desired.
\end{proof}

\begin{obs}
    The reader might object to the introduction of the parameter $n$ in the above example, since in the end, $\widetilde{\V}_{k,h,n}$ is just the uniform matroid $\U_{k,h+1}$ with some extra (parallel) elements. However, from a geometric point of view, the matroids $\widetilde{\V}_{k,h,n}$ are in some sense the ``pieces'' that one is gluing to the base polytope of a matroid $\M$ of rank $k$ and cardinality $n$ when relaxing a stressed hyperplane of size $h$. Particularly, this suggests an extension of a result by Ferroni \cite{ferroni-minimal}, that shows that the circuit-hyperplane relaxation consists geometrically of stacking the base polytope of the matroid $\widetilde{\V}_{k,k,n}$ on a facet of the base polytope of a matroid $\M$ of rank $k$ and cardinality $n$. This line of research will be explored further in future work. In a paper by the second author and his collaborators \cite{grwc}, an alternative presentation for the matroid $\widetilde{\V}_{k,h,n}$ is achieved by a description as a lattice path matroid. They also provide several formulas and results regarding the Ehrhart polynomial for paving matroids and $\widetilde{\V}_{k,h,n}$.
\end{obs}

\subsection{The Tutte invariant and related facts}

The \emph{Tutte polynomial} of a matroid is an important invariant that encodes many fundamental features of the matroid. Concretely, the Tutte polynomial of $\M=(E,\mathscr{B})$ is the bivariate polynomial defined by
    \begin{equation}\label{eq:tutte-def}
        T_{\M}(x,y) = \sum_{A\subseteq E} (x-1)^{\rk(E) - \rk(A)}(y-1)^{|A|-\rk(A)}.
    \end{equation}

Although it is not clear from the definition, it can be proved that the Tutte polynomial always has non-negative coefficients. Much of the relevance of this polynomial comes from the fact that it is the most general ``deletion-contraction'' invariant. The $f$-vector and the $h$-vector of several simplicial complexes constructed from $\M$ are obtained via evaluating the Tutte polynomial adequately. For a survey of applications of the Tutte polynomial in combinatorics, see \cite{oxley-brylawski} and \cite{ardila-tutte}. 

\begin{prop}\label{prop:tutte-relax}
    Let $\M$ be a matroid of rank $k$ having a stressed hyperplane $H$ of cardinality $h$. The Tutte polynomial of the relaxed matroid $\widetilde{\M}$ is given by
    \[ T_{\widetilde{\M}}(x,y) = T_{\M}(x,y) + (x+y-xy) \cdot \sum_{j=k}^h \binom{h}{j}(y-1)^{j-k}.\]
\end{prop}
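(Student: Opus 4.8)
The plan is to compute $T_{\widetilde{\M}}(x,y)$ directly from the rank-generating definition \eqref{eq:tutte-def} and subtract $T_{\M}(x,y)$ term by term. Since the sum in \eqref{eq:tutte-def} ranges over all subsets $A\subseteq E$, and since by Proposition \ref{prop:relaxing_ranks} the rank functions $\rk$ and $\widetilde{\rk}$ agree except on the subsets $A\subseteq H$ with $|A|\geq k$, the difference $T_{\widetilde{\M}}(x,y)-T_{\M}(x,y)$ collapses to a sum over exactly those subsets:
\begin{equation*}
    T_{\widetilde{\M}}(x,y)-T_{\M}(x,y) = \sum_{\substack{A\subseteq H\\ |A|\geq k}} \left[(x-1)^{k-\widetilde{\rk}(A)}(y-1)^{|A|-\widetilde{\rk}(A)} - (x-1)^{k-\rk(A)}(y-1)^{|A|-\rk(A)}\right].
\end{equation*}
First I would record that for such an $A$ we have $\rk(A)=k-1$ (since $H$ is a stressed hyperplane, every $k$-subset of $H$ is a circuit, so $H$ and all its large subsets have rank $k-1$) and $\widetilde{\rk}(A)=k$ (by Proposition \ref{prop:relaxing_ranks}). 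So each bracketed term becomes $(x-1)^{0}(y-1)^{|A|-k} - (x-1)^{1}(y-1)^{|A|-k+1} = (y-1)^{|A|-k}\bigl[1-(x-1)(y-1)\bigr]$.

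Next I would expand $1-(x-1)(y-1) = 1 - (xy - x - y + 1) = x+y-xy$, which is precisely the prefactor appearing in the statement. Then, grouping the subsets $A\subseteq H$ with $|A|\geq k$ by their cardinality $j$, there are $\binom{h}{j}$ of them for each $j$ with $k\leq j\leq h$, each contributing $(y-1)^{j-k}$. Summing gives
\begin{equation*}
    T_{\widetilde{\M}}(x,y)-T_{\M}(x,y) = (x+y-xy)\sum_{j=k}^{h}\binom{h}{j}(y-1)^{j-k},
\end{equation*}
which is the claimed formula.

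There is essentially no hard part here: the argument is a bookkeeping exercise once Proposition \ref{prop:relaxing_ranks} is in hand. The only point requiring a moment's care is confirming that the \emph{only} subsets on which the two rank functions differ are exactly those $A\subseteq H$ with $|A|\geq k$ — both the inclusion (those sets do change rank) and the exclusion (no other set changes rank) are supplied by Proposition \ref{prop:relaxing_ranks}, so the symmetric difference of the two rank-generating sums is supported precisely there. One should also note the degenerate case $h=k-1$: then the sum over $j$ from $k$ to $h$ is empty and the formula correctly gives $T_{\widetilde{\M}}=T_{\M}$, consistent with the fact that relaxing an independent hyperplane does nothing.
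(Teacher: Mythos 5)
Your proof is correct and follows essentially the same route as the paper's: both apply Proposition \ref{prop:relaxing_ranks} to localize the difference of the two rank-generating sums to the subsets $A\subseteq H$ with $|A|\geq k$, compute each term as $(y-1)^{|A|-k}\bigl(1-(x-1)(y-1)\bigr)$, and group by cardinality to obtain the binomial sum. Your added remarks on the exclusion direction and the degenerate case $h=k-1$ are fine but not substantively different from the paper's argument.
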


\begin{proof}
    If $\trk$ is the rank function on $\tm$ and $\rk$ is the rank function on $\M$, by Proposition \ref{prop:relaxing_ranks} these two functions agree everywhere except on the sets of size at least $k$ contained in $H$. Hence, we can manipulate the Tutte polynomial for $\tm$ in the following way.
\begin{align*}
    T_{\tm}(x,y) - T_{\M}(x,y)
    &=\sum_{\substack{A\subseteq H\\ |A|\geq k}}(x-1)^{\trk(E)-\trk(A)}(y-1)^{|A|-\trk(A)}\\
    &\;\;- \sum_{\substack{A\subseteq H\\ |A|\geq k}}(x-1)^{\rk(E)-\rk(A)}(y-1)^{|A|-\rk(A)}\\
    &= \sum_{\substack{A\subseteq H\\ |A|\geq k}}(x-1)^{0}(y-1)^{|A|-k} - \sum_{\substack{A\subseteq H\\ |A|\geq k}}(x-1)^{1}(y-1)^{|A|-k+1}\\
    &=\left(1-(x-1)(y-1)\right)\cdot \sum_{\substack{A\subseteq H\\ |A|\geq k}}(y-1)^{|A|-k}\\
    &= \left(x+y-xy\right) \cdot \sum_{j=k}^{h}\binom{h}{j}(y-1)^{j-k}.\qedhere
\end{align*}
\end{proof}

As the next result will show, one of the consequences of the preceding result is that after relaxing a stressed hyperplane one always ends up obtaining a connected matroid. In \cite[Proposition 3]{crapo-beta}, Crapo proved that $\M$ is connected if and only if the coefficient of the monomial $x^1y^0$ in the Tutte polynomial of $\M$ is strictly positive. The coefficient of this monomial is known in the literature as the \emph{$\beta$-invariant} and is denoted by $\beta(\M)$. Whenever $f(t)$ is a polynomial in the variable $t$, we will denote by $[t^m]f(t)$ the coefficient of $t^m$ in $f(t)$. Analogously, $[x^iy^j]T_{\M}(x,y)$ denotes the coefficient of the monomial $x^iy^j$ in the Tutte polynomial of $\M$.

\begin{coro}\label{coro:connected}
    If $\M$ is a matroid having a stressed hyperplane $H$ such that $|H|\geq\rk(\M)$, then the relaxed matroid $\widetilde{\M}$ is connected.
\end{coro}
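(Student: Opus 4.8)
The plan is to compute the $\beta$-invariant $\beta(\widetilde{\M}) = [x^1y^0]T_{\widetilde{\M}}(x,y)$ and verify it is strictly positive, since by Crapo's criterion \cite[Proposition 3]{crapo-beta} this is equivalent to $\widetilde{\M}$ being connected. The key input is Proposition \ref{prop:tutte-relax}, which gives
\[
T_{\widetilde{\M}}(x,y) = T_{\M}(x,y) + (x+y-xy)\cdot \sum_{j=k}^h \binom{h}{j}(y-1)^{j-k}.
\]
So first I would write $\beta(\widetilde{\M}) = \beta(\M) + [x^1y^0]\Big((x+y-xy)\sum_{j=k}^h \binom{h}{j}(y-1)^{j-k}\Big)$.

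Next I would extract the relevant coefficient from the correction term. The factor $x+y-xy$ contributes to the monomial $x^1y^0$ only through its $x$ summand (the $y$ and $-xy$ terms carry positive powers of $y$), so I need the $y^0$-coefficient of $\sum_{j=k}^h \binom{h}{j}(y-1)^{j-k}$. Setting $y=0$ in that sum gives $\sum_{j=k}^h \binom{h}{j}(-1)^{j-k}$. This alternating sum is a standard partial-sum-of-binomials identity: $\sum_{j=k}^h \binom{h}{j}(-1)^{j-k} = (-1)^{h-k}\binom{h-1}{h-k} = \binom{h-1}{k-1}$ (equivalently, it telescopes via $\binom{h-1}{k-1}$). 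In any case the value is $\binom{h-1}{k-1}$, which is a strictly positive integer whenever $h \geq k \geq 1$. Hence the correction term adds $\binom{h-1}{k-1} \geq 1$ to the $\beta$-invariant.

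Since $\beta(\M) \geq 0$ always (the Tutte polynomial has non-negative coefficients, as recalled after \eqref{eq:tutte-def}), we conclude $\beta(\widetilde{\M}) = \beta(\M) + \binom{h-1}{k-1} \geq 1 > 0$, and therefore $\widetilde{\M}$ is connected. I should also dispatch the edge case: the hypothesis $|H| = h \geq \rk(\M) = k$ guarantees $h \geq k$, so the sum $\sum_{j=k}^h$ is non-empty and the binomial $\binom{h-1}{k-1}$ is well-defined and positive; when $h = k$ exactly, the relaxation is a circuit-hyperplane relaxation and the formula still gives $\binom{k-1}{k-1} = 1$, consistent with the classical fact that such a relaxation yields a connected matroid.

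The only mildly technical point is the evaluation of the alternating binomial sum $\sum_{j=k}^h \binom{h}{j}(-1)^{j-k}$; this is routine (it is the classical identity for partial sums of a row of Pascal's triangle, provable by induction on $h$ or by the finite-difference interpretation), so I do not expect any real obstacle here. In the spirit of Remark \ref{rem:wilf-zeilberger} one could simply cite a symbolic verification, but in this case the identity is elementary enough to state outright.
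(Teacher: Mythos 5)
Your proposal is correct and follows essentially the same route as the paper: apply Proposition \ref{prop:tutte-relax}, extract the $x^1y^0$ coefficient via Crapo's criterion, evaluate the alternating sum $\sum_{j=k}^h(-1)^{j-k}\binom{h}{j}=\binom{h-1}{k-1}$, and use $\beta(\M)\geq 0$. Only a cosmetic slip: your intermediate rewriting $(-1)^{h-k}\binom{h-1}{h-k}$ has the wrong sign when $h-k$ is odd, but the stated value $\binom{h-1}{k-1}$ of the sum is correct, so the argument stands.
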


\begin{proof}
    Assume that $\rk(\M)=k$ and that $|H|=h$. By Proposition \ref{prop:tutte-relax}, we have that
    \begin{align*}
        \beta(\widetilde{\M}) &= [x^1y^0]T_{\widetilde{\M}}(x,y)\\
        &= [x^1y^0]\left(T_{\M}(x,y) + (x+y-xy) \cdot \sum_{j=k}^h \binom{h}{j}(y-1)^{j-k}\right)\\
        &= \beta(\M) + [y^0]\sum_{j=k}^h\binom{h}{j} (y-1)^{j-k}\\
        &= \beta(\M) + \sum_{j=k}^h (-1)^{j-k}\binom{h}{j}\\
        &= \beta(\M) + \binom{h-1}{k-1},
    \end{align*}
    where in the last step we used the identity $\binom{h-1}{k-1} = \sum_{j=k}^{h} (-1)^{j-k} \binom{h}{j}$ which can be proved by induction on $k$.
    In particular, since $\beta(\M)\geq 0$ and $\binom{h-1}{k-1}>0$, it follows that $\beta(\widetilde{\M})>0$, which proves that $\widetilde{\M}$ is connected.
\end{proof}

An equivalent rewording of the preceding result is that every matroid having a free subset is connected.

The Tutte polynomial of a matroid encodes the characteristic polynomial.
    \begin{equation}\label{eq:tutte-char}
        \chi_{\M}(t) = (-1)^{\rk(\M)}T_{\M}(1-t,0).
    \end{equation}
Hence, Proposition \ref{prop:tutte-relax} tells us how the characteristic polynomial of a matroid changes when relaxing a stressed hyperplane.

\begin{cor}\label{prop:relax_char}
    Let $\M$ be a matroid of rank $k$ having a stressed hyperplane of cardinality $h$. The characteristic polynomial of the relaxation $\widetilde{\M}$ is given by
    \[\chi_{\widetilde{\M}}(t)=\chi_{\M}(t)+(-1)^k(1-t) \binom{h-1}{k-1}.\]
\end{cor}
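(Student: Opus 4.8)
The plan is to derive the formula for $\chi_{\widetilde{\M}}(t)$ directly from Proposition \ref{prop:tutte-relax} together with the specialization \eqref{eq:tutte-char}. Since $\rk(\widetilde{\M}) = \rk(\M) = k$ (the rank is unchanged by Proposition \ref{prop:relaxing_ranks}, as the ground set has rank $k$ in both matroids), we have $\chi_{\widetilde{\M}}(t) = (-1)^k T_{\widetilde{\M}}(1-t, 0)$. Applying Proposition \ref{prop:tutte-relax} with $x = 1-t$ and $y = 0$ gives
\[
    \chi_{\widetilde{\M}}(t) = (-1)^k\left( T_{\M}(1-t,0) + \bigl((1-t) + 0 - 0\bigr)\cdot \sum_{j=k}^h \binom{h}{j}(-1)^{j-k}\right).
\]
The first term is exactly $(-1)^k T_{\M}(1-t,0) = \chi_{\M}(t)$, so it remains only to simplify the correction term.

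For the correction term, the factor $x + y - xy$ evaluates at $(1-t, 0)$ to $1-t$, and the sum $\sum_{j=k}^h (-1)^{j-k}\binom{h}{j}$ is precisely the alternating binomial sum that was already shown to equal $\binom{h-1}{k-1}$ in the proof of Corollary \ref{coro:connected}. Putting these together yields
\[
    \chi_{\widetilde{\M}}(t) = \chi_{\M}(t) + (-1)^k (1-t)\binom{h-1}{k-1},
\]
which is the claimed identity. I would simply cite the earlier computation of the alternating sum rather than redo it.

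Honestly, there is no real obstacle here: the statement is a formal consequence of the Tutte polynomial formula, and the only piece of genuine content — the evaluation of $\sum_{j=k}^h (-1)^{j-k}\binom{h}{j} = \binom{h-1}{k-1}$ — has already been established. If one wanted to be self-contained one could note this identity follows by induction on $k$ (or from the fact that $\sum_{j\ge 0}(-1)^j\binom{h}{j} = 0$ for $h \ge 1$ after reindexing and splitting off the initial terms), but since it appears in the proof of Corollary \ref{coro:connected} a pointer suffices. The one small thing to be careful about is confirming that the rank is unchanged, so that the sign $(-1)^{\rk}$ in \eqref{eq:tutte-char} is the same for both $\M$ and $\widetilde{\M}$; this is immediate from Proposition \ref{prop:relaxing_ranks} since $E \not\subseteq H$ (as $H$ is a proper flat), hence $\widetilde{\rk}(E) = \rk(E) = k$.
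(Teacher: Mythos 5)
Your proposal is correct and follows essentially the same route as the paper: specialize the Tutte polynomial identity of Proposition \ref{prop:tutte-relax} at $(x,y)=(1-t,0)$ via \eqref{eq:tutte-char} and invoke the alternating binomial identity $\sum_{j=k}^h(-1)^{j-k}\binom{h}{j}=\binom{h-1}{k-1}$ already used for Corollary \ref{coro:connected}. The extra remark that the rank is unchanged is a harmless (and correct) precaution that the paper leaves implicit.
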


\begin{proof}
    Using \eqref{eq:tutte-char} and Proposition \ref{prop:tutte-relax}, we have
    \begin{align*}
        \chi_{\widetilde{\M}}(t) &= (-1)^kT_{\widetilde{\M}}(1-t,0)\\
        &= (-1)^k\left( T_{\M}(1-t,0) + (1-t)\cdot\sum_{j=k}^h \binom{h}{j}(-1)^{j-k}\right)\\
        &= \chi_{\M}(t) + (-1)^k(1-t)\binom{h-1}{k-1},
    \end{align*}
    where in the last step we used again the identity $\sum_{j=k}^h (-1)^{j-k}\binom{h}{j}=\binom{h-1}{k-1}$.
\end{proof}

\subsection{Paving matroids}

The goal now is to prove that the class of paving matroids behaves particularly well with respect to the notions we have introduced in this section. The motivation is that in a paving matroid all the hyperplanes are stressed, as we shall see. It is a consequence of the definitions that relaxing a circuit-hyperplane in a sparse paving matroid yields another sparse paving matroid (with one extra basis with respect to the original). In this extended case taking any paving matroid $\M$ and relaxing a stressed hyperplane yields a new paving matroid $\widetilde{\M}$. 

\begin{prop}\label{prop:paving}
    If $\M$ is a paving matroid of rank $k$ then all its hyperplanes $H$ are stressed. Also, any relaxation $\widetilde{\M}$ is again a paving matroid.
\end{prop}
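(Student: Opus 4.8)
The plan is to establish the two assertions in sequence, handling first the claim that every hyperplane of a paving matroid is stressed, and then using this together with earlier structural results to deduce that a relaxation of a paving matroid is again paving. For the first assertion, let $\M$ be paving of rank $k$ and let $H$ be a hyperplane, so $\rk(H)=k-1$. I would take an arbitrary subset $S\subseteq H$ with $|S|=k$ and argue it must be a circuit: since $\rk(S)\le\rk(H)=k-1<|S|$, the set $S$ is dependent, so it contains a circuit $C$. By the definition of a paving matroid every circuit has size at least $k$, hence $|C|\ge k=|S|$, forcing $C=S$. Thus $S$ is itself a circuit, and since $S$ was an arbitrary $k$-subset of $H$, the hyperplane $H$ is stressed. (If $|H|<k$ there are no such subsets and the condition is vacuously satisfied.)

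For the second assertion, let $\widetilde{\M}$ be obtained from the paving matroid $\M$ by relaxing a stressed hyperplane $H$; by the first part such an $H$ exists whenever $\M$ has a hyperplane, and if $|H|<k$ there is nothing to do since $\widetilde{\M}=\M$. Assume $|H|\ge k$. I want to show every circuit of $\widetilde{\M}$ has size at least $k$, equivalently (recalling that a paving matroid of rank $k$ is exactly one in which every $(k-1)$-subset of the ground set is independent) that every subset $T\subseteq E$ with $|T|=k-1$ is independent in $\widetilde{\M}$. Since $\widetilde{\mathscr{B}}\supseteq\mathscr{B}$, the rank function only goes up: $\widetilde{\rk}(T)\ge\rk(T)$. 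But $\M$ is paving, so $\rk(T)=k-1=|T|$, i.e. $T$ is independent in $\M$, hence also independent in $\widetilde{\M}$. Therefore $\widetilde{\M}$ has no circuit of size $\le k-1$, so $\widetilde{\M}$ is paving. (Alternatively one can invoke Proposition~\ref{prop:relaxing_ranks}: the rank of a $(k-1)$-subset is unchanged by relaxation since the exceptional sets all have cardinality $\ge k$, giving the same conclusion directly.)

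I do not anticipate a genuine obstacle here; the statement is essentially a bookkeeping exercise once the right characterization of paving matroids is in hand, namely that $\M$ of rank $k$ is paving if and only if all $(k-1)$-subsets of $E$ are independent, which is recorded in the excerpt right after the definition of paving matroid. The only point requiring a little care is the degenerate case $|H|\le k-1$, where $H$ is an independent hyperplane, relaxation is trivial, and both claims hold without content; this is exactly the situation the authors flagged when they remarked that one may restrict attention to stressed hyperplanes of cardinality at least $\rk(\M)$. If one wanted a cleaner exposition, one could simply note that the whole statement is immediate from Proposition~\ref{prop:relaxing_ranks} and the $(k-1)$-subset characterization, but spelling out the circuit argument for the first half makes the proof self-contained.
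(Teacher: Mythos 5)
Your proof is correct and follows essentially the same route as the paper's: show each $k$-subset of a hyperplane is a circuit using the paving hypothesis (the paper argues via $\rk(S)=k-1$ with all proper subsets independent, you via minimality of a contained circuit of size $\ge k$ -- the same idea), and then observe that independence of $(k-1)$-subsets is preserved under relaxation since the bases only increase. No gaps; the degenerate case $|H|<k$ is handled the same way in both arguments.
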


\begin{proof}
    Observe that a hyperplane of cardinality less than $k$ is tautologically stressed. Consider a hyperplane $H$ of cardinality at least $k$ in $\M$. A subset $S\subseteq H$ has rank $\rk(S) \leq \rk(H) = k -1$, and if we choose $S$ so that $|S| = k$, then $\rk(S) \geq k-1$ because $\M$ is paving. It follows that $\rk(S) = k - 1$. Again, since $\M$ is paving, any proper subset of $S$ is independent, so that in particular $S$ is a circuit, and as $S$ is arbitrary, it follows that $H$ is stressed. 
    
    Now, if $\widetilde{\M}$ is obtained from $\M$ via relaxing a stressed hyperplane $H$, then as we added only a few bases when we passed from $\M$ to $\widetilde{\M}$, we have $\mathscr{I}(\M) \subseteq \mathscr{I}(\widetilde{\M})$. As $\mathscr{I}(\M)$ already contained all the subsets of cardinality $k-1$ of the ground set, it follows that so does $\widetilde{\M}$, and hence it is paving as well.
\end{proof}

\begin{coro}\label{coro:uniform-paving}
    Let $\M$ be a paving matroid of rank $k$ and cardinality $n$. After relaxing all the (stressed) hyperplanes of $\M$ of cardinality at least $k$, we obtain the uniform matroid $\U_{k,n}$.
\end{coro}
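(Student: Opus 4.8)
The plan is to iterate the stressed-hyperplane relaxation, tracking the hyperplanes of cardinality at least $k$, and to use Proposition~\ref{prop:flats} to see that each relaxation removes exactly one of them. By Proposition~\ref{prop:paving}, every hyperplane of a paving matroid is stressed, and every relaxation of a paving matroid is again paving of the same rank and cardinality, so throughout the procedure we stay inside the class of rank-$k$, cardinality-$n$ paving matroids, and every relaxation is legitimate. Thus it suffices to prove two things: (i) relaxing a single hyperplane $H$ with $|H|\geq k$ leaves all the other hyperplanes of cardinality $\geq k$ untouched and removes $H$ from this list, so the list strictly shrinks; and (ii) a paving matroid of rank $k$ and cardinality $n$ with \emph{no} hyperplane of cardinality $\geq k$ must be $\U_{k,n}$. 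Granting these, one relaxes the finitely many hyperplanes of cardinality $\geq k$ in any order, each step decreasing their number by one, until none remain, and then invokes (ii).

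For (i), I first record the elementary observation that in a paving matroid of rank $k$ every set of cardinality $\geq k-1$ has rank $\geq k-1$; contrapositively, a flat of rank $\leq k-2$ has cardinality $\leq k-2$, so the only flat of $\M$ contained in $H$ of cardinality $\geq k$ is $H$ itself. Now apply Proposition~\ref{prop:flats}: $\mathscr{L}(\widetilde{\M})=(\mathscr{L}(\M)\smallsetminus\{H\})\sqcup\{A\subseteq H:|A|=k-1\}$. The new flats have cardinality $k-1<k$, hence never belong to the list. For a flat $F\neq H$ that is a hyperplane of $\M$ with $|F|\geq k$, Proposition~\ref{prop:hyperplane_intersection} (both $F$ and $H$ being stressed hyperplanes of the paving matroid $\M$) gives $|F\cap H|\leq k-2$, so $F\not\subseteq H$, and then Proposition~\ref{prop:relaxing_ranks} yields $\widetilde{\rk}(F)=\rk(F)=k-1$; being still a flat of $\widetilde{\M}$ by Proposition~\ref{prop:flats}, such $F$ is still a hyperplane of cardinality $\geq k$ in $\widetilde{\M}$. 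Conversely, any flat $F$ of $\widetilde{\M}$ with $|F|\geq k$ lies in $\mathscr{L}(\M)\smallsetminus\{H\}$ by Proposition~\ref{prop:flats}, and if it has rank $k-1$ in $\widetilde{\M}$ then its rank cannot have increased (otherwise $F\subseteq H$ with $|F|\geq k$, impossible by the first sentence of this paragraph), so $\rk(F)=k-1$ already. Hence the hyperplanes of cardinality $\geq k$ of $\widetilde{\M}$ are exactly those of $\M$ with $H$ deleted, and in particular their number drops by exactly one.

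For (ii), let $\M'$ be paving of rank $k$ and cardinality $n$ with every hyperplane of cardinality $\leq k-1$. Every $(k-1)$-subset of the ground set is independent because $\M'$ is paving; the claim is that every $k$-subset is independent as well. If not, some $k$-set $S$ is dependent, hence contains a circuit; since circuits of $\M'$ have size $\geq k$, necessarily $S$ itself is a circuit and $\rk(S)=k-1$, so the flat spanned by $S$ is a hyperplane of cardinality $\geq|S|=k$, contradicting the hypothesis. Therefore every $k$-subset of $E$ is a basis and $\M'\cong\U_{k,n}$. (In particular, if $\M$ has no hyperplane of cardinality $\geq k$ to begin with, then already $\M\cong\U_{k,n}$, which is the base case of the iteration.)

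I do not anticipate a real obstacle: the corollary is essentially a repackaging of Propositions~\ref{prop:relaxing_ranks}, \ref{prop:hyperplane_intersection}, \ref{prop:flats} and~\ref{prop:paving}. The only point requiring mild care is verifying that the number of hyperplanes of cardinality $\geq k$ \emph{strictly} decreases under a relaxation, so that ``relaxing all of them'' is a well-defined, terminating procedure — and that is precisely the content of the bookkeeping in step (i).
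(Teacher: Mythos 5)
Your proposal is correct and follows essentially the same route as the paper: relax the finitely many hyperplanes of cardinality at least $k$ one at a time (staying paving by Proposition~\ref{prop:paving}), and observe that the resulting paving matroid has all hyperplanes of cardinality $k-1$, hence no dependent $k$-sets, hence is $\U_{k,n}$. The paper's proof is just terser, leaving the termination/bookkeeping of step (i) implicit (it is also covered by the earlier proposition that the remaining stressed hyperplanes stay stressed after a relaxation), while you verify it directly from Propositions~\ref{prop:relaxing_ranks}, \ref{prop:hyperplane_intersection} and \ref{prop:flats}.
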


\begin{proof}
    If we relax all the hyperplanes of $\M$ of cardinality at least $k$, we end up obtaining a paving matroid $\mathsf{N}$ of rank $k$ such that all of its hyperplanes have cardinality $k-1$. This implies that there are no dependent sets of cardinality $k$, which amounts to say that $\mathsf{N}\cong\U_{k,n}$.
\end{proof}

\section{Relaxations from the Kazhdan--Lusztig perspective}\label{sec:relaxing KL}

\subsection{The Kazhdan--Lusztig theory of relaxations}

Now that we know that the relaxation of stressed hyperplanes has nice consequences for the Tutte polynomial, the characteristic polynomial and the lattice of flats, it is natural to ask if there are consequences on further invariants of matroids. In this section we will see that it is the case for the Kazhdan--Lusztig framework. The following is the fundamental result.

\begin{teo}\label{thm:kl-for-relax}
    For every pair of integers $k,h\geq 1$ there exist polynomials $p_{k,h}(t)$, $q_{k,h}(t)$ and $z_{k,h}(t)$ with integer coefficients, having the following property: for every matroid $\M$ of rank $k$ having a stressed hyperplane of cardinality $h$,
    \begin{align*}
        P_{\tm}(t) &= P_{\M}(t)+p_{k,h}(t),\\
        Q_{\tm}(t) &= Q_{\M}(t)+q_{k,h}(t),\\
        Z_{\tm}(t) &= Z_{\M}(t)+z_{k,h}(t),
    \end{align*}
    where $\widetilde{\M}$ denotes the corresponding relaxation of $\M$.
\end{teo}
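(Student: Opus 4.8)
The plan is to induct on the rank $k$, comparing the Kazhdan--Lusztig--Stanley recursions of Theorems \ref{PM} and \ref{QM} for $\M$ and $\tm$ (and the defining sum of $Z$) term by term over the lattice of flats, and then using the uniqueness statement built into that theory to package the difference into a polynomial depending only on $k$ and $h$. The structural input is Proposition \ref{prop:flats}: passing from $\M$ to $\tm$ deletes the flat $H$ from $\mathscr{L}(\M)$ and adjoins all $(k-1)$-subsets of $H$. The first step is to identify the relevant local matroids. Since $H$ is stressed, $\M^H$ has rank $k-1$ with all of its $(k-1)$-subsets independent, so $\M^H\cong\U_{k-1,h}$; a rigidity we use throughout is the consequence that the flats of $\M$ properly contained in $H$ are exactly the subsets of $H$ of size $\le k-2$, each of which is independent and has Boolean localization $\B_j$ with $\chi_{\B_j}(t)=(t-1)^j$. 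Moreover $\M_H$ is loopless of rank $1$, so $P_{\M_H}=Q_{\M_H}=1$; and each newly created flat $A$ satisfies $\tm^A\cong\B_{k-1}$ with $\tm_A$ loopless of rank $1$, so $P_{\tm_A}=Q_{\tm_A}=1$.

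Next I would analyze a common flat $F\in\mathscr{L}(\M)\sm\{H\}$. Since every flat of $\M$ inside $H$ has at most $k-2$ elements, no new flat of $\tm$ can be contained in any $F\neq E$, so $\tm^F=\M^F$ for all such $F$; likewise, if $F\not\subseteq H$ then no new flat of $\tm$ can contain $F$, so $\tm_F=\M_F$ and $F$ contributes identically to the two recursions. If instead $F\subsetneq H$, write $\rk(F)=|F|=j\le k-2$; then $H\sm F$ is a stressed hyperplane of $\M_F$ of cardinality $h-j$, and comparing the flats of $\tm_F$ with those of the relaxation of $\M_F$ along $H\sm F$ (Proposition \ref{prop:flats} applied to $\M_F$) shows these two matroids coincide. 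By the inductive hypothesis in rank $k-j<k$ one then has $P_{\tm_F}(t)-P_{\M_F}(t)=p_{k-j,\,h-j}(t)$, and analogously for $Q$ and $Z$; since $\chi_{\M^F}(t)=(t-1)^{j}$ and there are $\binom{h}{j}$ flats $F\subseteq H$ with $j$ elements, the total contribution of the flats strictly below $H$ depends only on $k$ and $h$.

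Subtracting the recursion of $\M$ from that of $\tm$, singling out $F=\varnothing$ (which contributes $P_{\tm}-P_{\M}$ itself) and using Corollary \ref{prop:relax_char} for the term $F=E$, one is left with an identity of the form
\[ t^{k}\bigl(P_{\tm}(t^{-1})-P_{\M}(t^{-1})\bigr)=\bigl(P_{\tm}(t)-P_{\M}(t)\bigr)+R_{k,h}(t), \]
where $R_{k,h}(t)$ is explicit in $k$, $h$, the Kazhdan--Lusztig invariants of uniform matroids, and the already-constructed polynomials $p_{k-j,h-j}(t)$ with $1\le j\le k-2$; an entirely parallel identity holds for $Q_{\tm}-Q_{\M}$, while the sum defining $Z$ gives directly $Z_{\tm}(t)-Z_{\M}(t)=\bigl(\binom{h}{k-1}-1\bigr)t^{k-1}+\sum_{j=0}^{k-2}\binom{h}{j}t^{j}p_{k-j,h-j}(t)$. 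Since $\deg\bigl(P_{\tm}-P_{\M}\bigr)<k/2$ and the homogeneous equation $t^kg(t^{-1})=g(t)$ admits no nonzero solution of degree $<k/2$, the displayed identity determines $P_{\tm}-P_{\M}$ uniquely; I then \emph{define} $p_{k,h}(t)$, $q_{k,h}(t)$ and $z_{k,h}(t)$ (in this order) to be these solutions, which depend only on $k$ and $h$ by construction, completing the induction (the cases $k\le 2$ being trivial, as $P$ and $Q$ are then constant). Evaluating everything at the model matroid $\V_{k,h,n}$ of Proposition \ref{prop:uniform-example}, whose relaxation simplifies to $\U_{k,h+1}$, yields the closed forms $p_{k,h}=P_{\U_{k,h+1}}-P_{\U_{k-1,h}}$, and analogously for $q_{k,h}$ and $z_{k,h}$.

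The main obstacle is the structural bookkeeping of the middle step: for a general (non-paving) $\M$ one must argue carefully about which flats of $\M$ and of $\tm$ lie below and above a given flat, and in particular establish that relaxation commutes with contraction at any flat $F\subseteq H$. Everything hinges on the rigidity $\M^H\cong\U_{k-1,h}$, which is a genuine use of the hypothesis that $H$ is \emph{stressed} and not merely a cyclic hyperplane. Once that is settled, the rest is routine manipulation of the three recursions, aided by the identity $\sum_{j=k}^{h}(-1)^{j-k}\binom{h}{j}=\binom{h-1}{k-1}$ already used in Corollary \ref{coro:connected}.
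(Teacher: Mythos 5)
Your proposal is correct and follows essentially the same route as the paper: induction on the rank, subtracting the defining Kazhdan--Lusztig--Stanley recursions (and the defining sum for $Z$), using Proposition \ref{prop:flats} and Corollary \ref{prop:relax_char} to compare contributions flat by flat, applying the inductive hypothesis to $\tm_F$ as the relaxation of $H\smallsetminus F$ in $\M_F$ for flats $F\subseteq H$, and finally evaluating at $\V_{k,h,n}$ to get the closed forms. The only differences are cosmetic: you make explicit the injectivity of $g(t)\mapsto t^k g(t^{-1})-g(t)$ on polynomials of degree $<k/2$ and the resulting formula for $z_{k,h}$ in terms of the $p_{k-j,h-j}$, both of which the paper leaves implicit or defers.
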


\begin{proof}
    Observe that the matroids $\M$ and $\tm$ always have the same rank.
    We proceed as in \cite[Theorem 3.6]{ferroni-vecchi} by induction on the rank of the matroids, $k$. For a matroid  $\M$ of rank $k=1$ and cardinality $n$, having a stressed hyperplane of cardinality $h$ means it contains exactly $h\geq 1$ loops. This implies that $P_{\M}(t)=0$. When we relax this stressed hyperplane, we obtain the matroid $\tm = \U_{1,n}$, hence $P_{\tm}(t) = 1$. This means that $p_{k,h}(t)=1$.
    
    Now, let us write down the defining relations for the Kazhdan--Lusztig polynomials of $\tm$ and $\M$:
    \[t^kP_{\tm}(t\inv)-P_{\tm}(t)=\sum_{\substack{F\in \L(\tm)\\F\neq \varnothing}}\chi_{\tm^F}(t)P_{\tm_F}(t)\]
    and
    \[t^kP_{\M}(t\inv)-P_{\M}(t)=\sum_{\substack{F\in \L(\M)\\F\neq \varnothing}}\chi_{\M^F}(t)P_{\M_F}(t).\]
    
    Subtracting the right-hand-side of the second equation from the right-hand-side of the first, we get an expression consisting on four terms:
    
    \begin{align*}
    &\underbrace{\sum_{\substack{F\subseteq H\\ |F|=k-1}}\chi_{\tm^F}(t)P_{\tm_F}(t)}_{(1)}-\underbrace{\chi_{\M^H}(t)P_{\M_H}(t)}_{(2)} +\underbrace{\chi_{\tm}(t)-\chi_{\M}(t)}_{(3)}\\
    &\qquad+\underbrace{\sum_{\substack{F\in \L(\M)\\ F\neq \varnothing, H, E}}\left(\chi_{\tm^F}(t)P_{\tm_F}(t)-\chi_{\M^F}(t)P_{\M_F}(t)\right)}_{(4)}.
    \end{align*}
    
    Let us show that each of these terms does not depend on $\M$, and only depends on $h$ and $k$. The items below correspond to the labeled terms above. In what follows, we will take advantage of the fact that as $H$ is stressed, every subset of size at most $k-1$ of $H$ is independent.
    \begin{enumerate}
        \item $F$ is independent in $\tm$ since $|F|=k-1$, and so $\L(\widetilde{\M}^F)$ is isomorphic to the Boolean algebra on $k-1$ elements. On the other hand, $\tm_F$ is a rank 1 matroid since $F$ is independent on $\tm$ of size $k-1$, so $P_{\tm_F}(t)=1$.
        \item Since $\rk(H)=k-1$, it follows that $\rk(\M_H)=1$ and so $P_{\M_H}(t)=1$. Also, $\M^H\cong \U_{k-1,h}$.
        \item By Proposition \ref{prop:relax_char}, $\chi_{\tm}(t)-\chi_{\M}(t)=(1-t)(-1)^k \binom{h-1}{k-1}$.
        \item In this case, note that $\tm^F=\M^F$, since any flat of size at most $k-1$ in $\M$ is already in $\tm$, by Proposition \ref{prop:flats}. If $F\nsubseteq H$, then $\tm_F=\M_F$, and so terms in this sum where $F\nsubseteq H$ vanish. Otherwise, if $F\subseteq H$, note that $\M^F$ is the Boolean algebra on $|F|$ elements and $\tm_F$ is obtained via relaxing $H\smallsetminus F$ in $\M_F$. Hence, the terms where $F\subseteq H$ may be rewritten as $\chi_{\B_{|F|}}(t)\cdot(P_{\widetilde{\N}}(t)-P_\N(t))$, where $\N=\M_F$, and $\widetilde{\N}$ is relaxation of $H\smallsetminus F$ in $\M_F$. Because $F\neq \varnothing$, we have $\rk(\N)<\rk (\M)$, and so by induction $P_{\widetilde{\N}}(t)-P_{\N}(t)$ is a polynomial only depending on $h$ and $k$.
    \end{enumerate}

    The proof for $Q_{\M}(t)$ is very similar.
    Let us write the defining recursion for $\M$
        \[
        (-t)^kQ_{\M}(t^{-1}) = \sum_{F\in\mathscr{L}(\M)}(-1)^{\rk (F)}Q_{\M^F}(t)t^{k-\rk( F)}\chi_{\M_F}(t^{-1}),
        \]
    and the analogue for $\tm$. Subtracting the second equation from the first we get

    \begin{align*}
        \sum_{\substack{F\subseteq H\\ |F|=k-1}}(-1)^{k-1}tQ_{\tm^F}(t)\chi_{\tm_F}(t^{-1})
        -(-1)^{k-1}tQ_{\M^H}(t)\chi_{\M_H}(t^{-1})
        +t^k\left(\chi_{\tm}(t^{-1})-\chi_\M(t^{-1}) \right)\\
        +\sum_{\substack{F\in \L(\M)\\ F\neq \varnothing, H, E}}\left((-1)^{\rk( F)}t^{k-\rk( F)}(Q_{\tm^F}(t)\chi_{\tm_F}(t^{-1}) - Q_{\M^F}(t)\chi_{\M_F}(t^{-1}))\right).
    \end{align*}
    Similar observations to the ones made for $p_{k,h}(t)$ let us show the independence from $\M$ and $n$.
    
    Finally, we address the $Z$-polynomial by writing
    \[
        Z_{\M}(t) = \sum_{\substack{F\in\mathscr{L}(\M)}}t^{\rk(F)}P_{\M_F}(t)
    \]
    and the analogue defining recursion for $\tm$. Subtracting the second equation from the first, on the right-hand-side we obtain
    \[
        \sum_{\substack{F\subseteq H\\ |F| = k-1}}t^{k-1}P_{\tm_F}(t) -t^{k-1}P_{\M_H}(t) + \sum_{\substack{F\in \L(\M)\\ F\neq H}}t^{\rk(F)}\left(P_{\tm_F}(t) - P_{\M}(t) \right).
    \]
    From here, with observations similar to the ones made before, we deduce the independence from $\M$ and $n$.
\end{proof}

Since now all three polynomials do not depend on the matroid we start from, as long as they satisfy the conditions on having rank $k$ and a stressed hyperplane of cardinality $h$, we can take advantage of the example we explored in Proposition \ref{prop:uniform-example}.

\begin{cor}\label{cor:p_(k,h) in terms of uniform matroids}
    The polynomials $p_{k,h}(t)$, $q_{k,h}(t)$ and $z_{k,h}(t)$ in Theorem \ref{thm:kl-for-relax} are given by
    \begin{align*}
        p_{k,h}(t) &= P_{\U_{k,h+1}}(t) - P_{\U_{k-1,h}}(t),\\
        q_{k,h}(t) &= Q_{\U_{k,h+1}}(t) - Q_{\U_{k-1,h}}(t),\\
        z_{k,h}(t) &= Z_{\U_{k,h+1}}(t) - (1+t)Z_{\U_{k-1,h}}(t).
    \end{align*}
\end{cor}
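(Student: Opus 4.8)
The plan is to leverage the matroid-independence established in Theorem~\ref{thm:kl-for-relax}: since $p_{k,h}(t)$, $q_{k,h}(t)$ and $z_{k,h}(t)$ depend only on $k$ and $h$, it is enough to compute the three differences $P_{\widetilde{\M}}-P_{\M}$, $Q_{\widetilde{\M}}-Q_{\M}$ and $Z_{\widetilde{\M}}-Z_{\M}$ for a single conveniently chosen matroid $\M$ of rank $k$ carrying a stressed hyperplane of cardinality $h$. The obvious candidate is $\M=\V_{k,h,n}=\U_{k-1,h}\oplus\U_{1,n-h}$ from Proposition~\ref{prop:uniform-example}; here one takes any $n\geq h+1$ so that $\U_{1,n-h}$ is a genuine rank-$1$ matroid, and one recalls that $E_1$ is a stressed hyperplane of size $h$ and that $\si(\widetilde{\V}_{k,h,n})\cong\U_{k,h+1}$.

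First I would evaluate the invariants of $\V_{k,h,n}$ itself. Multiplicativity of $P$, $Q$ and $Z$ over direct sums, combined with the elementary facts $P_{\U_{1,m}}(t)=Q_{\U_{1,m}}(t)=1$ and $Z_{\U_{1,m}}(t)=1+t$ for $m\geq 1$ (the lattice of flats of a rank-$1$ loopless matroid is a two-element chain, so its $Z$-polynomial is $t^0\cdot 1+t^1\cdot 1$), yields
\[
    P_{\V_{k,h,n}}(t)=P_{\U_{k-1,h}}(t),\qquad Q_{\V_{k,h,n}}(t)=Q_{\U_{k-1,h}}(t),\qquad Z_{\V_{k,h,n}}(t)=(1+t)\,Z_{\U_{k-1,h}}(t).
\]

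Next I would treat the relaxation $\widetilde{\V}_{k,h,n}$. The key point is that it is loopless: for $k\geq 2$ the matroid $\V_{k,h,n}$ is already loopless and relaxing a stressed hyperplane only adjoins bases; for $k=1$ the relaxation converts every element of the loop set $E_1$ (the ground set of the $\U_{0,h}$ summand) into a single-element basis, so no loop survives (indeed $\widetilde{\V}_{1,h,n}=\U_{1,n}$). Since $P$, $Q$ and $Z$ are invariant under simplification for loopless matroids, Proposition~\ref{prop:uniform-example} gives $P_{\widetilde{\V}_{k,h,n}}(t)=P_{\U_{k,h+1}}(t)$, and likewise for $Q$ and $Z$. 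Applying Theorem~\ref{thm:kl-for-relax} with $\M=\V_{k,h,n}$ and subtracting the displayed formulas then produces exactly the three asserted identities.

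The whole argument is essentially bookkeeping, so I do not expect a serious obstacle; the only places that warrant a moment of attention are the two extreme cases. At $h=k-1$ (an independent hyperplane, for which the relaxation does nothing) all three right-hand sides must vanish, which is consistent because $\U_{k,k}$ and $\U_{k-1,k-1}$ are Boolean, with $P=Q=1$ and $Z_{\B_j}(t)=(1+t)^j$. In the rank-$1$ case one must reconcile $P_{\U_{0,h}}(t)=0=Q_{\U_{0,h}}(t)$ with direct-sum multiplicativity (the product is $0$, as it should be) and confirm that $\widetilde{\V}_{1,h,n}$ is genuinely loopless before invoking simplification-invariance; verifying $Z_{\U_{1,m}}(t)=1+t$ straight from the definition of the $Z$-polynomial is the last elementary check.
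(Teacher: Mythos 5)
Your proposal is correct and follows essentially the same route as the paper: instantiate Theorem \ref{thm:kl-for-relax} at $\M=\V_{k,h,n}=\U_{k-1,h}\oplus\U_{1,n-h}$, use multiplicativity of $P$, $Q$, $Z$ over direct sums together with $Z_{\U_{1,n-h}}(t)=1+t$, and identify $P_{\widetilde{\V}_{k,h,n}}$, $Q_{\widetilde{\V}_{k,h,n}}$, $Z_{\widetilde{\V}_{k,h,n}}$ with those of $\U_{k,h+1}$ via Proposition \ref{prop:uniform-example}. Your additional checks (looplessness of the relaxation, the $k=1$ and $h=k-1$ edge cases) are consistent with, but not needed beyond, the paper's argument.
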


\begin{proof}
    By Proposition \ref{prop:uniform-example}, the matroid $\V_{k,h,n}$ has rank $k$, cardinality $n$ and a stressed hyperplane of cardinality $h$. Also, the simplification of the relaxation $\widetilde{\V}_{k,h,n}$ is isomorphic to the uniform matroid $\U_{k,h+1}$. In particular, using this and Theorem \ref{thm:kl-for-relax}, we obtain
        \begin{align*}
            P_{\U_{k,h+1}}(t) &= P_{\widetilde{\V}_{k,h,n}}(t)\\
            &=P_{\V_{k,h,n}}(t) + p_{k,h}(t)\\
            &=P_{\U_{k-1,h}}(t)\cdot P_{\U_{1,n-h}}(t) + p_{k,h}(t)\\
            &= P_{\U_{k-1,h}}(t)+p_{k,h}(t),
        \end{align*}
    where we used that $P_{\M_1\oplus\M_2}(t) = P_{\M_1}(t)\cdot P_{\M_2}(t)$ for all matroids. The proof for $q_{k,h}(t)$ is entirely analogous. For the $Z$-polynomial, we have to change slightly the last step, as $Z_{\U_{1,n-h}}(t)$ is equal to $t+1$ for every rank $1$ uniform matroid.
\end{proof}

Notice how all these results are in concordance with the ones found for sparse paving matroids, where $h=k$ and $H$ is a circuit-hyperplane \cite{ferroni-vecchi}.

\begin{obs}\label{degree of p,q,z}
    In Corollary \ref{p_kh}, Corollary \ref{q_kh} and Proposition \ref{z_kh}, we will give a combinatorial interpretation for $p_{k,h}(t)$, $q_{k,h}(t)$ and $z_{k,h}(t)$ by looking at some Young tableaux and skew Young tableaux. As a consequence of that, we will show that, for every $k\leq h$, the polynomials $p_{k,h}(t)$, $q_{k,h}(t)$ and $z_{k,h}(t)$ have non-negative coefficients and their degrees are, respectively, $\deg p_{k,h}(t) = \deg q_{k,h}(t) = \lfloor\frac{k-1}{2}\rfloor$ and $\deg z_{k,h}(t) = k-1$.
\end{obs}

\subsection{Kazhdan--Lusztig polynomials for paving matroids}

Since paving matroids are particularly well-behaved under the stressed hyperplane relaxation, as a consequence of Corollary \ref{coro:uniform-paving}, we obtain formulas for the Kazhdan--Lusztig polynomial, the inverse Kazhdan--Lusztig polynomial and the $Z$-polynomial of paving matroids. Specifically, the formulas depend only on the cardinality of the ground set, the rank and the number of hyperplanes of each size it has.

\begin{teo}\label{thm:formulas-paving}
    Let $\M$ be a paving matroid of rank $k$ and cardinality $n$. Suppose $\M$ has exactly $\lambda_h$ (stressed) hyperplanes of cardinality $h$. Then
    \begin{align*}
        P_{\M}(t)=P_{\U_{k,n}}(t)-\sum_{h\geq k} \lambda_h\cdot p_{k,h}(t),\\
        Q_{\M}(t)=Q_{\U_{k,n}}(t)-\sum_{h\geq k} \lambda_h\cdot q_{k,h}(t),\\
        Z_{\M}(t)=Z_{\U_{k,n}}(t)-\sum_{h\geq k} \lambda_h\cdot z_{k,h}(t).
    \end{align*}
\end{teo}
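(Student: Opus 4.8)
The plan is to derive this as an immediate consequence of Corollary~\ref{coro:uniform-paving} together with Theorem~\ref{thm:kl-for-relax} (equivalently, Theorem~\ref{main2}), by iterating the single-relaxation formula over all hyperplanes of $\M$ of cardinality at least $k$. First I would observe that, by Proposition~\ref{prop:paving}, every hyperplane of $\M$ is stressed, and that relaxing one hyperplane of cardinality $h$ changes each of $P_\M(t)$, $Q_\M(t)$, $Z_\M(t)$ by adding the universal correction terms $p_{k,h}(t)$, $q_{k,h}(t)$, $z_{k,h}(t)$, which depend only on $k$ and $h$ and not on the ambient matroid. Since by Corollary~\ref{coro:uniform-paving} relaxing all hyperplanes of cardinality at least $k$ turns $\M$ into $\U_{k,n}$, performing these relaxations one at a time and summing the increments yields
\[
    P_{\U_{k,n}}(t) = P_{\M}(t) + \sum_{h\geq k}\lambda_h\cdot p_{k,h}(t),
\]
and analogously for $Q$ and $Z$; rearranging gives the three claimed identities.

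The one genuine subtlety — and the step I would be most careful about — is making the iteration rigorous, i.e.\ checking that the increments can legitimately be accumulated. The issue is that Theorem~\ref{thm:kl-for-relax} applies to a single relaxation of a single stressed hyperplane, so to chain $\sum_h \lambda_h$ relaxations I must know that after relaxing one hyperplane the remaining hyperplanes of $\M$ (of cardinality $\geq k$) are still stressed hyperplanes of the new matroid, with unchanged cardinality, so that the next application of Theorem~\ref{thm:kl-for-relax} is valid with the \emph{same} value of $h$. This is exactly what Proposition~\ref{prop:hyperplane_intersection} and the proposition immediately following it provide: distinct stressed hyperplanes $H_1,H_2$ satisfy $|H_1\cap H_2|\leq k-2$, and $H_2$ remains a stressed hyperplane (with its cardinality unchanged, since by Proposition~\ref{prop:relaxing_ranks} only subsets of $H_1$ grow in rank, and $H_2$ is not among them) in the relaxation of $H_1$. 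Hence the list of hyperplanes of cardinality $\geq k$ that still need to be relaxed is precisely the original list minus the one just handled, the parameter $h$ attached to each is invariant along the process, and the correction contributed at each step is $p_{k,h}(t)$ (resp.\ $q_{k,h}$, $z_{k,h}$) for the appropriate $h$. Since each correction depends only on $k$ and $h$, the order of relaxation is irrelevant and the total correction is $\sum_{h\geq k}\lambda_h\cdot p_{k,h}(t)$.

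Having set this up, the argument is a short induction on $\sum_{h\geq k}\lambda_h$, the number of ``large'' hyperplanes: the base case $\sum_{h\geq k}\lambda_h=0$ is Corollary~\ref{coro:uniform-paving} itself (then $\M\cong\U_{k,n}$ and all three formulas reduce to trivialities), and the inductive step relaxes one fixed large hyperplane $H$ of cardinality $h_0$, applies Theorem~\ref{thm:kl-for-relax} to get $P_{\widetilde{\M}}(t)=P_{\M}(t)+p_{k,h_0}(t)$ etc., notes that $\widetilde{\M}$ is again paving with the same $k$, $n$ and with multiset of large-hyperplane sizes obtained from that of $\M$ by deleting one copy of $h_0$, and invokes the inductive hypothesis on $\widetilde{\M}$:
\[
    P_{\U_{k,n}}(t) = P_{\widetilde{\M}}(t) + \sum_{h\geq k}\widetilde{\lambda}_h\cdot p_{k,h}(t)
    = P_{\M}(t) + p_{k,h_0}(t) + \sum_{h\geq k}\widetilde{\lambda}_h\cdot p_{k,h}(t)
    = P_{\M}(t) + \sum_{h\geq k}\lambda_h\cdot p_{k,h}(t),
\]
and symmetrically for $Q_{\M}(t)$ and $Z_{\M}(t)$. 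Rearranging the three displays yields the statement. No hard computation is involved; the only thing one truly has to verify is the bookkeeping of which hyperplanes survive as stressed hyperplanes with unchanged cardinality after each relaxation, which is handled by the structural results of Section~\ref{sec:stressed-hyp} already cited above.
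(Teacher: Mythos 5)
Your proof is correct and follows essentially the same route as the paper: the paper also relaxes all hyperplanes of cardinality at least $k$ (via Corollary \ref{coro:uniform-paving}) and accumulates the increments $p_{k,h}(t)$, $q_{k,h}(t)$, $z_{k,h}(t)$ from Theorem \ref{thm:kl-for-relax} to get $P_\M(t)+\sum_{h\geq k}\lambda_h\, p_{k,h}(t)=P_{\U_{k,n}}(t)$ and its analogues. The only difference is that you spell out the induction and the bookkeeping (remaining hyperplanes stay stressed, no new large hyperplanes appear) that the paper leaves implicit, which is a harmless elaboration.
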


\begin{proof}
    Since $\M$ is paving, according to Corollary \ref{coro:uniform-paving}, after relaxing all the hyperplanes of cardinality at least $k$, we obtain the uniform matroid $\U_{k,n}$. In particular,
        \[ P_\M(t) + \sum_{h\geq k} \lambda_h \cdot p_{k,h}(t) = P_{\U_{k,n}}(t),\]
    from which the result follows for $P_{\M}(t)$. An entirely analogous proof shows the corresponding statement for $Q_{\M}(t)$ and $Z_{\M}(t)$.
\end{proof}

To see the formulas ``explicitly'', it is enough to remark once again that $P_{\M}(t)$, $Q_{\M}(t)$ and $Z_{\M}(t)$ admit closed expressions for all uniform matroids, as shown in  \cite[Theorem 1.3 and Theorem 1.6]{kazhdan-uniform} and \cite[Theorem 3.3]{gao-xie}. As we saw above, $p_{k,h}(t)$, $q_{k,h}(t)$ and $z_{k,h}(t)$ can be obtained from them.

The preceding result supports a conjecture posed by Gedeon and stated in \cite[Conjecture 2]{lee-nasr-radcliffe-sparse}, namely that the Kazhdan--Lusztig polynomial of $\U_{k,n}$ is coefficient-wise bigger than the Kazhdan--Lusztig polynomial of every matroid of rank $k$ and cardinality $n$. Moreover, we have proved that the same phenomenon is true for the inverse Kazhdan--Lusztig and the $Z$-polynomial.

\begin{coro}\label{coro:main4}
    If $\M$ is a paving matroid of rank $k$ and cardinality $n$, then $P_{\M}(t)$, $Q_{\M}(t)$ and $Z_{\M}(t)$ are coefficient-wise smaller than $P_{\U_{k,n}}(t)$, $Q_{\U_{k,n}}(t)$ and $Z_{\U_{k,n}}(t)$, respectively.
\end{coro}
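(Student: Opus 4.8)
The plan is to reduce the statement to the non-negativity of the correction polynomials $p_{k,h}(t)$, $q_{k,h}(t)$ and $z_{k,h}(t)$, together with the non-negativity of the hyperplane counts $\lambda_h$. By Theorem \ref{thm:formulas-paving}, for a paving matroid $\M$ of rank $k$ and cardinality $n$ with $\lambda_h$ stressed hyperplanes of cardinality $h$, we have
\[
    P_{\U_{k,n}}(t) - P_{\M}(t) = \sum_{h\geq k} \lambda_h \cdot p_{k,h}(t),
\]
and analogously for $Q$ and $Z$. Since each $\lambda_h$ is a non-negative integer (it counts a set of hyperplanes), it suffices to know that $p_{k,h}(t)$, $q_{k,h}(t)$ and $z_{k,h}(t)$ have non-negative coefficients for all $h \geq k$; then each term on the right-hand side is a polynomial with non-negative coefficients, hence so is the sum, which gives the desired coefficient-wise inequality.

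The coefficient-wise non-negativity of $p_{k,h}(t)$, $q_{k,h}(t)$ and $z_{k,h}(t)$ is precisely what is announced in Remark \ref{degree of p,q,z} and will be established in Section \ref{sec:tableaux} (Corollary \ref{p_kh}, Corollary \ref{q_kh} and Proposition \ref{z_kh}), where these polynomials are given combinatorial interpretations counting fillings of certain (skew) Young tableaux. So the proof of the corollary is a one-line deduction: cite Theorem \ref{thm:formulas-paving} to get the displayed difference, cite the tableaux interpretations to conclude that the summands are coefficient-wise non-negative, and observe $\lambda_h \geq 0$.

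Alternatively, one can avoid forward references by invoking instead the closed expressions of Corollary \ref{cor:p_(k,h) in terms of uniform matroids}, namely $p_{k,h}(t) = P_{\U_{k,h+1}}(t) - P_{\U_{k-1,h}}(t)$ and its analogues, and appealing to the known monotonicity properties of the Kazhdan--Lusztig invariants of uniform matroids (for instance, that $P_{\U_{k,n}}(t)$ is coefficient-wise non-decreasing in $n$, and that passing from $\U_{k-1,h}$ to $\U_{k,h+1}$ only increases coefficients), which follow from the explicit formulas of \cite{kazhdan-uniform} and \cite{gao-xie}. The $Z$-polynomial case requires slightly more care because of the factor $(1+t)$ in $z_{k,h}(t) = Z_{\U_{k,h+1}}(t) - (1+t)Z_{\U_{k-1,h}}(t)$, so one would want to check the inequality $Z_{\U_{k,h+1}}(t) \geq (1+t) Z_{\U_{k-1,h}}(t)$ coefficient-wise directly from the uniform formulas.

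The main obstacle, if one insists on a self-contained argument, is the non-negativity of the correction polynomials — in particular the $z_{k,h}(t)$ case, where the subtracted term is multiplied by $(1+t)$. Establishing this cleanly is essentially the content of Section \ref{sec:tableaux}; it is not a routine manipulation and genuinely relies either on the combinatorial models developed there or on non-trivial identities among the uniform Kazhdan--Lusztig invariants. Given the structure of the paper, the cleanest route is to defer to those later results, so the corollary itself is immediate once Theorem \ref{thm:formulas-paving} and Remark \ref{degree of p,q,z} are in hand.
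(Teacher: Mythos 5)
Your proposal is correct and matches the paper's own argument: the corollary is deduced directly from Theorem \ref{thm:formulas-paving} together with the non-negativity of $p_{k,h}(t)$, $q_{k,h}(t)$ and $z_{k,h}(t)$ announced in Remark \ref{degree of p,q,z} (and proved via the tableaux interpretations of Section \ref{sec:tableaux}), with $\lambda_h\geq 0$ being immediate. The alternative route you sketch through the uniform-matroid formulas is not needed, and your assessment that the real content lies in the later non-negativity results is accurate.
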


\begin{proof}
    This is now a direct consequence of Remark \ref{degree of p,q,z} and Theorem \ref{thm:formulas-paving}.
\end{proof}

\subsection{Non-degeneracy} Now we will show how Theorem \ref{thm:kl-for-relax} can be used to answer questions about the degrees of the polynomials $P_{\M}(t)$, $Q_{\M}(t)$ and $Z_{\M}(t)$, which are of much interest in the framework of Kazhdan--Lusztig theory for matroids, as they might suggest interlacing properties for their roots. A matroid $\M$ is said to be \emph{non-degenerate} if $P_{\M}(t)$ has degree $\lfloor\frac{\rk(\M)-1}{2}\rfloor$. Gedeon et al. posed the following conjecture.

\begin{conj}[\cite{gedeonsurvey}]\label{conj:non-degeneracy}
    Every connected regular matroid is non-degenerate.
\end{conj}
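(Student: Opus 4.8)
Since $P_{\M}(t)$ has non-negative coefficients by Theorem \ref{thm:braden-huh} and, for $\rk(\M)=k\geq 1$, degree strictly below $k/2$, the matroid $\M$ is non-degenerate exactly when the coefficient of the extremal power $t^{\ell}$, with $\ell:=\lfloor\frac{k-1}{2}\rfloor$, is strictly positive. So the whole of Conjecture \ref{conj:non-degeneracy} amounts to showing $[t^{\ell}]P_{\M}(t)>0$ for connected regular $\M$. Two remarks explain why this is delicate and why the methods of the present paper do not close the case by themselves. First, the free-subset criterion (Corollary \ref{free subset implies nondegeneracy}) is useless here: a $3$-connected graphic matroid on many elements has no stressed hyperplane of cardinality $\geq k$, hence no free subset, so it is not a relaxation of any matroid and Theorem \ref{thm:kl-for-relax} has nothing to act on. Second, the hypothesis of regularity is essential and must enter a genuine positivity estimate, since the minimal matroids $T_{k,n}$ are connected with $P_{T_{k,n}}(t)=1$ (hence degenerate for $n$ large), and non-regular connected spikes furnish further degenerate examples; connectedness alone is simply not enough.

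The plan is to induct using Seymour's decomposition theorem for regular matroids (see \cite{oxley}): a connected regular matroid is graphic, cographic, isomorphic to the sporadic matroid $R_{10}$, or a $2$- or $3$-sum of two smaller regular matroids. This is precisely where the regularity hypothesis gets consumed. The matroid $R_{10}$ has rank $5$, and its Kazhdan--Lusztig polynomial can be computed by hand, verifying $\deg P_{R_{10}}(t)=2$. For the graphic base case $\M=\M(G)$ with $G$ $2$-connected I would expand the defining recursion of Theorem \ref{PM},
\[
t^{k}P_{\M}(t^{-1})-P_{\M}(t)=\sum_{\varnothing\neq F\in\mathscr{L}(\M)}\chi_{\M^{F}}(t)\,P_{\M_{F}}(t),
\]
and read off the coefficient of $t^{k-\ell}$: on the left it equals $[t^{\ell}]P_{\M}(t)$, because $k-\ell$ exceeds $\deg P_{\M}(t)$ and so the subtracted $P_{\M}(t)$ contributes nothing in that degree. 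Each localization $\M^{F}$ is the graphic matroid of a connected subgraph of $G$, so $\chi_{\M^{F}}(t)$ has the alternating sign pattern dictated by Rota's theorem (see \cite{stanley}), and each contraction $\M_{F}$ is graphic of strictly smaller rank; one then inducts on $|E(G)|$ while tracking, summand by summand, the sign of each contribution to the coefficient of $t^{k-\ell}$, the goal being that no total cancellation is possible. The cographic base case is handled by the same expansion applied to a cographic presentation; one cannot reduce it to the graphic case by matroid duality, because $P_{\M^{*}}(t)$ and $P_{\M}(t)$ are unrelated in general.

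The crux, and the step I expect to be by far the hardest, is to propagate the positivity of $[t^{\ell}]P$ across $2$-sums and $3$-sums, for which no closed formula for the Kazhdan--Lusztig polynomial is available. One would need at least a one-sided inequality expressing $[t^{\ell}]P_{\M}(t)$ for a $k$-sum as a manifestly non-negative combination of lower-rank Kazhdan--Lusztig data of the two pieces, exploiting that a $k$-sum glues them along a common restriction of rank $k-1$ and that the lattice of flats of the sum is built from those of the summands. Should this prove intractable, a parallel route is to bypass the decomposition entirely: a regular matroid is realizable over $\mathbb{Q}$, so by \cite{elias-proudfoot} the polynomial $P_{\M}(t)$ is the intersection cohomology Poincar\'e polynomial of the reciprocal plane $X_{\mathcal{A}}$, and it would be enough to produce a single nonzero class in cohomological degree $2\ell$; connectedness of $\M$ together with the integrality of the realization ought to give such a class geometrically. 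Pinning down exactly which feature of connected regular matroids forces this non-vanishing --- the feature that rules out the counterexamples $T_{k,n}$ and the non-regular degenerate spikes --- is the heart of the problem, and I would not expect either route to be a routine calculation.
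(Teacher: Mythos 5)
You have not proved this statement, and neither does the paper: it is quoted from \cite{gedeonsurvey} as a conjecture, and the paper explicitly says it remains open. The only contact the paper makes with it is Corollary \ref{free subset implies nondegeneracy} (a free subset forces non-degeneracy) together with the proposition showing that the regular matroids possessing a free subset are exactly the minimal matroids $\widetilde{\V}_{k,k,n}$ --- i.e.\ the paper's technique settles only a very thin slice of the conjecture, and the authors say so. Your proposal is likewise not a proof but a program, and its two decisive steps carry no argument. For the graphic and cographic base cases you propose to extract $[t^{\ell}]P_{\M}(t)$, $\ell=\lfloor\frac{k-1}{2}\rfloor$, from the recursion of Theorem \ref{PM} and to show ``no total cancellation is possible''; but the right-hand side is an alternating-sign object (the characteristic polynomials $\chi_{\M^F}$ alternate by rank), and controlling that cancellation in the extremal degree is precisely the open non-degeneracy problem --- nothing in your outline identifies a surviving positive contribution, and no induction hypothesis is formulated that would make the sign-tracking close. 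For the crux you name yourself, propagating positivity of the top coefficient across $2$- and $3$-sums in Seymour's decomposition, you supply no formula, no inequality, and no candidate lemma; no such one-sided expression is currently known. The fallback route (exhibiting a nonzero intersection cohomology class of the reciprocal plane in degree $2\ell$ using realizability over $\mathbb{Q}$) is again a restatement of the desired conclusion, not an argument.

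There is also a factual error that undercuts your framing. The minimal matroid $\T_{k,n}\cong\widetilde{\V}_{k,k,n}$ is graphic, hence regular, and connected; by the paper's own Corollary \ref{free subset implies nondegeneracy} it is non-degenerate, and indeed its simplification is $\U_{k,k+1}$, so $P_{\T_{k,n}}(t)=P_{\U_{k,k+1}}(t)$, which has degree $\lfloor\frac{k-1}{2}\rfloor$ and is not equal to $1$ for $k\geq 3$. If your claim that $P_{\T_{k,n}}(t)=1$ (hence degenerate) were true, it would be a counterexample to the very conjecture you set out to prove, since $\T_{k,n}$ is connected and regular; so this example cannot serve to show that ``connectedness alone is not enough'' --- for that one needs genuinely non-regular degenerate examples. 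In summary: the statement has no proof in the paper to compare against, and your proposal, while a reasonable sketch of how one might attack it (Seymour decomposition plus base cases), leaves every essential positivity step unproved.
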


This conjecture remains open, but it is important to point out that the class of regular matroids is extremely restrictive; in fact, the size of the class of representable matroids becomes negligible as the size of the ground set approaches infinity, as proved by Nelson \cite{nelson}. On the other hand, notice that as a consequence of Theorem \ref{thm:kl-for-relax} and Corollary \ref{p_kh}, we obtain the following result.

\begin{cor}\label{free subset implies nondegeneracy}
    If a matroid $\M$ has a free subset, then it is non-degenerate.
\end{cor}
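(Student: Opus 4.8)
The plan is to realize $\M$ as a stressed-hyperplane relaxation and then read the top coefficient of $P_{\M}(t)$ off Theorem~\ref{thm:kl-for-relax}. Let $A$ be a free subset of $\M$, and put $k=\rk(\M)$ and $h=|A|$. Since $\{B'\subseteq A:|B'|=k\}$ is a nonempty (proper) subfamily of the bases of $\M$, we have $h\geq k$, and in particular $k\geq 1$. By Proposition~\ref{prop:characterization relaxed matroids}, $A$ is then a stressed hyperplane of cardinality $h$ in a matroid $\N$ of rank $k$ with $\M=\widetilde{\N}$.

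The next step is to apply Theorem~\ref{thm:kl-for-relax} to $\N$ and its stressed hyperplane $A$, which gives $P_{\M}(t)=P_{\widetilde{\N}}(t)=P_{\N}(t)+p_{k,h}(t)$, and then to compare the two summands in top degree. On the one hand, by Theorem~\ref{thm:braden-huh} the polynomial $P_{\N}(t)$ has non-negative coefficients, and by the defining degree bound in Theorem~\ref{PM}(2) it has degree strictly less than $k/2$, hence at most $\lfloor\frac{k-1}{2}\rfloor$; the same a priori bound holds for $P_{\M}(t)$. On the other hand, since $k\leq h$, Remark~\ref{degree of p,q,z}---which rests on the tableaux interpretation of Corollary~\ref{p_kh}---tells us that $p_{k,h}(t)$ has non-negative coefficients and degree exactly $\lfloor\frac{k-1}{2}\rfloor$, so its coefficient of $t^{\lfloor(k-1)/2\rfloor}$ is strictly positive. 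Therefore the coefficient of $t^{\lfloor(k-1)/2\rfloor}$ in $P_{\M}(t)$ is a non-negative number plus a strictly positive number, so it is positive; together with the bound $\deg P_{\M}(t)\leq\lfloor\frac{k-1}{2}\rfloor$ this forces $\deg P_{\M}(t)=\lfloor\frac{\rk(\M)-1}{2}\rfloor$, i.e.\ $\M$ is non-degenerate.

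Essentially all the content of this argument is imported rather than produced in the proof itself, so there is no genuine obstacle internal to it. The delicate point is upstream: the assertion that $p_{k,h}(t)$ has degree precisely $\lfloor\frac{k-1}{2}\rfloor$, and not something smaller. This amounts to the non-vanishing of a specific top coefficient, and it is exactly what Corollary~\ref{p_kh} (via the explicit tableaux model of Section~\ref{sec:tableaux}) will deliver. A self-contained alternative would be to derive it from $p_{k,h}(t)=P_{\U_{k,h+1}}(t)-P_{\U_{k-1,h}}(t)$ in Corollary~\ref{cor:p_(k,h) in terms of uniform matroids} together with the known closed form for the Kazhdan--Lusztig polynomials of uniform matroids, but that is a more technical estimate that I would rather avoid by citing the combinatorial interpretation.
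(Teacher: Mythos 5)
Your proof is correct and follows essentially the same route as the paper: use Proposition \ref{prop:characterization relaxed matroids} to write $\M=\widetilde{\N}$, apply Theorem \ref{thm:kl-for-relax} to get $P_{\M}(t)=P_{\N}(t)+p_{k,h}(t)$, invoke non-negativity from Theorem \ref{thm:braden-huh}, and read off the degree from Remark \ref{degree of p,q,z} (i.e.\ Corollary \ref{p_kh}). The only difference is that you spell out the degree-bound bookkeeping and the positivity of the top coefficient a bit more explicitly than the paper does.
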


\begin{proof}
    By Proposition \ref{prop:characterization relaxed matroids}, we know that a matroid $\M$ of rank $k$ having a free subset of size $h$ is obtained after relaxing a stressed hyperplane of size $h$ in another matroid $\N$ of rank $k$. We know that the coefficients of $P_{\N}(t)$ are non-negative by Theorem \ref{thm:braden-huh}. Also, by Remark \ref{degree of p,q,z}, the polynomial $p_{k,h}(t)$ has degree $\lfloor \frac{k-1}{2} \rfloor$, hence the degree of $P_{\M}(t)$ has to be $\lfloor \frac{k-1}{2} \rfloor$.
\end{proof}

As was mentioned in \cite{ferroni-vecchi}, the proportion of matroids on $E=\{1,\ldots,n\}$ having a free basis is presumably $100\%$ of them as $n\to\infty$. In particular, we speculate that almost all matroids have a free subset. This conjecture is weaker than the one asserting the predominance of sparse paving matroids. Such belief is supported also by \cite[Section 4.1]{pendavingh-vanderpol}. 

A related question that one might ask is how many of the cases from Conjecture \ref{conj:non-degeneracy} are covered by Corollary \ref{free subset implies nondegeneracy}. In \cite[Proposition 6.1]{ferroni-vecchi} it was proved that there are very few regular matroids having a free basis. What causes this class to be small is that regular matroids are binary, and hence the family of circuits must satisfy properties that are too restrictive (see \cite[Theorem 9.1.2]{oxley}). Unfortunately, even if the relaxation of stressed hyperplanes is more general than the circuit-hyperplane relaxation, it still does not behave well with the property of being regular (in particular, binary). To be precise, one has the following result.

\begin{prop}
    Let $\M=(E,\mathscr{B})$ be a regular matroid having a free subset. Then $\M$ is graphic, and is obtained from a cycle graph with at least two edges by repeatedly adding a possibly empty set of parallel edges to one of the edges of the cycle, i.e. $\M\cong \widetilde{\V}_{k,k,n}$ for some $k$ and $n$. 
\end{prop}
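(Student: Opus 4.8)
The plan is to combine the characterization of relaxed matroids (Proposition \ref{prop:characterization relaxed matroids}) with the structure theory of binary and regular matroids. First, suppose $\M$ has a free subset $A$; by Proposition \ref{prop:characterization relaxed matroids} there is a matroid $\N$ of rank $k$ with a stressed hyperplane $H=A$ of cardinality $h$ such that $\M=\widetilde{\N}$, and by Corollary \ref{coro:connected} the matroid $\M$ is connected. I would first rule out the degenerate case $h = k-1$, where relaxing does nothing, so assume $h\geq k$. Now comes the main structural point: I want to show that if $\M$ is binary, then $\M$ cannot contain too many circuits sharing a large common part. Concretely, the $k$-subsets of $H$ are all \emph{bases} of $\M$ by construction, while they become circuits after un-relaxing; more to the point, look at the subsets of $H$ of size $k-1$: by Proposition \ref{prop:flats} these are exactly the ``new'' flats created by the relaxation, i.e. rank $(k-1)$-flats of $\M$ that are not flats of $\N$. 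The key obstruction: in a binary matroid, the symmetric difference of two circuits is a disjoint union of circuits. If $|H|\geq k+1$, pick three distinct $k$-subsets $S_1, S_2, S_3$ of $H$ contained in a common $(k+1)$-subset; in $\N$ these are all circuits, and $S_1\triangle S_2$ has size $2$, forcing a $2$-circuit inside $H$ in $\N$ — but then $\N$, and hence $\M$, would have parallel elements inside $H$, contradicting (depending on how one sets things up) either the flat structure or, after simplification, reducing $h$. Iterating / cleaning this argument should force that, up to parallel elements, the stressed hyperplane $H$ has exactly $k$ essential points and the whole matroid has exactly $k+1$ essential points arranged so that any $k$ of them form a basis. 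That is precisely the statement $\si(\M)\cong \U_{k,k+1}$, which by Proposition \ref{prop:uniform-example} (with $h=k$) is $\si(\widetilde{\V}_{k,k,n})$; unwinding, $\M\cong\widetilde{\V}_{k,k,n}$.

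Once $\si(\M)\cong\U_{k,k+1}$ is established, identifying $\M$ as graphic and as the claimed ``cycle with parallel classes'' matroid is routine: $\U_{k,k+1}$ is the cycle matroid of the $(k+1)$-cycle $C_{k+1}$, which is graphic and regular, and adding parallel elements corresponds exactly to subdividing — no, to duplicating — edges of $C_{k+1}$, equivalently, replacing one or more edges by parallel bundles. So $\M$ is the cycle matroid of the multigraph obtained from $C_{k+1}$ by adding parallel copies of some edges, and since $\M=\widetilde{\V}_{k,k,n}$ forces all the multiplicity to sit on a \emph{single} edge (only one new basis was added to $\V_{k,k,n}=\U_{k-1,k}\oplus\U_{1,n-k}$ in the relevant relaxation, and the $\U_{1,n-k}$ summand is exactly the parallel class), we get the precise graph-theoretic description in the statement.

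The main obstacle, and the part I would be most careful with, is the step forcing $|H|=k$ up to parallelism. It is tempting to just invoke ``binary matroids have no $U_{2,4}$ minor,'' but one has to be precise about \emph{where} the excluded minor lives: the relevant restriction here is $\M|_{H}$ (or $\N|_H$), which is a rank $(k-1)$ matroid all of whose $k$-subsets (in $\M$) are bases, i.e. $\M|_H\cong\U_{k-1,h}$ — wait, that is only true after knowing $H$ is independent-to-rank-$(k-1)$, which it is since $H$ is a flat of rank $k-1$ and all its $(k-1)$-subsets are independent in $\M$ by Proposition \ref{prop:flats}. So $\M|_H$ has rank $k-1$ on $h$ elements with every $(k-1)$-subset independent, hence $\M|_H$ is paving; being binary and having all $(k-1)$-subsets independent, if $h\geq k+1$ then $\M|_H$ restricted suitably contains $\U_{2,4}$ unless $k=2$ and the element multiplicities collapse — this case analysis (small $k$, parallel elements versus genuine points) is the delicate bookkeeping. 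I would organize it by passing to $\si(\M|_H)$: it is a simple binary paving matroid of rank $k-1$, so it is either $\U_{k-1,k-1}$ (Boolean, giving $h=k-1$, excluded) or, if it has $k$ points, $\U_{k-1,k}$, and it cannot have $\geq k+1$ simple points because a simple binary matroid with a $\U_{2,4}$-free restriction of corank $\geq 2$ forces a short circuit; ruling out the $\geq k+1$ case is exactly the $\U_{2,4}$-exclusion, done carefully. Everything after that is assembling the direct-sum decomposition and translating to graphs, which is bookkeeping rather than insight.
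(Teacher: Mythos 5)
Your central obstruction argument is carried out in the wrong matroid, and this is a genuine gap. You apply the binary symmetric-difference property (and, in the final paragraph, the ``simple binary paving'' reasoning) to circuits of $\N$, respectively to $\N|_H$, but regularity is a hypothesis on $\M=\widetilde{\N}$ only; nothing guarantees that the un-relaxed matroid $\N$ is binary, and relaxation does not preserve binarity in either direction (the non-Fano matroid is a relaxation of the Fano matroid), so the forced $2$-circuit lives in a matroid about which you know nothing. The $k$-subsets of $H=A$ are circuits of $\N$ but bases of $\M$, and you also misstate the rank: $\rk_{\M}(A)=k$ since $A$ contains bases of $\M$; it is $\N|_H$ that has rank $k-1$. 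The repair is to argue inside $\M$ itself: either apply the symmetric-difference property to the circuits $B'\cup\{x\}$ of $\M$ supplied directly by the definition of a free subset (two such circuits with $|B_1'\triangle B_2'|=2$ would force a circuit of size at most $2$ inside $A$, impossible for $k\geq 2$ since all $k$-subsets of $A$ are bases), or, as the paper does, simply observe that $\M|_A\cong \U_{k,h}$ is a restriction of the regular matroid $\M$, hence has no $\U_{2,4}$ minor, which forces $k\in\{0,1,h-1,h\}$; the paper then settles each case, reducing $k=h-1$ and $k=h$ to the earlier result that a regular matroid with a free basis is $\widetilde{\V}_{k,k,n}$ \cite[Proposition 6.1]{ferroni-vecchi}, and obtaining $\U_{1,n}$ when $k=1$.

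The endgame is also circular. Even granting $\si(\M)\cong\U_{k,k+1}$, this does not yield $\M\cong\widetilde{\V}_{k,k,n}$: the cycle $C_{k+1}$ with two distinct edges doubled is regular and graphic, its simplification is $\U_{k,k+1}$, yet its parallel elements do not sit on a single edge (consistently, it has no free subset). Your justification ``since $\M=\widetilde{\V}_{k,k,n}$ forces all the multiplicity to sit on a single edge'' assumes exactly what is to be proved. What pins down the single parallel class is the free-subset hypothesis used once more: if $A$ is a free basis and $x,y\notin A$, then $A\cup\{x\}$ and $A\cup\{y\}$ are circuits of the binary matroid $\M$, so their symmetric difference $\{x,y\}$ is a disjoint union of circuits and $x,y$ are parallel; alternatively one cites the free-basis result above, which is the route the paper takes. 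Finally, the cases $k\leq 1$ and $h=k+1$ (which the paper handles by exhibiting free bases inside $A$ and invoking the same citation) are only waved at in your sketch.
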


\begin{proof}
    Assume that $|E|=n$ and that $A$ is a free subset of cardinality $h$. Notice that the matroid $\M^A$ is isomorphic to $\U_{k,h}$. Also, $\M$ is connected, according to Corollary \ref{coro:connected}. By \cite[Theorem 10.1.1]{oxley}, as $\M$ is assumed to be regular, $\U_{2,4}$ cannot be a minor of $\M$. In particular $\U_{2,4}$ cannot be a minor of $\U_{k,h}$. Hence, we must have $k\in\{0,1,h-1,h\}$.
    \begin{itemize}
        \item If $k=0$, then $\mathscr{B} = \{\varnothing\}$. It is impossible for a matroid of rank $0$ to contain a free subset, so we discard this case.
        \item If $k=1$, as $\M$ is connected (and hence does not contain loops), we automatically have that all the subsets of cardinality $1$ of $E$ are independent, and that every pair of them is parallel. In other words, we just have $\M\cong \U_{1,n}$, and $n\geq 2$ as $\U_{1,1}$ does not have free subsets. Such a matroid is as described in the statement.
        \item If $k=h-1$, let us call $\mathscr{B}' = \binom{A}{k}$. We have that $A$ itself is a circuit, as the removal of any of its elements yields an independent set (a basis, actually). We claim that all the bases $B\in\mathscr{B}'$ are free bases. Let us pick any such $B$, and call $x$ the only element such that $B\cup\{x\}=A$. Observe that for every element not in $B$ we have that it is either $x$ or it lies in the complement of $A$. In the first case, we already know that $B\cup\{x\}=A$ is a circuit, whereas in the second, as $A$ is a free subset, we have that adding any element not in $A$ to $B$ gives a circuit. In particular, we have that $\M$ has free bases, and the result is now a consequence of \cite[Proposition 6.1]{ferroni-vecchi}.
        \item If $k=h$, then $A$ is a free basis, and the conclusion follows again by \cite[Proposition 6.1]{ferroni-vecchi}.\qedhere
    \end{itemize}
\end{proof}

\begin{obs}
    The previous result tells us that the class of regular matroids with a free subset is very small. To be more explicit, what the preceding proposition says is that this class coincides with the class of regular matroids having a free basis. Also, the prototypical matroid within this family, i.e., $\widetilde{\V}_{k,k,n}$, was already studied in \cite{ferroni-minimal} and \cite{ferroni-vecchi} using the notation $\T_{k,n}$. 
\end{obs}

\section{\texorpdfstring{$\gamma$}{gamma}-polynomials}\label{sec:gamma polynomials}

\subsection{Palindromicity and \texorpdfstring{$\gamma$}{gamma}-positivity} 

Palindromic polynomials are ubiquitous objects in combinatorics. Therefore, there are diverse techniques to approach problems such as proving that they are real-rooted or that their coefficients are unimodal. 

One of the most fundamental properties concerning the $Z$-polynomial is dictated by its palindromicity. Precisely, in \cite[Proposition 2.3]{proudfoot-zeta}, Proudfoot et al. proved that for every matroid $\M$
    \begin{equation}\label{eq:palindromic}
        Z_{\M}(t) = t^{\rk(\M)}\cdot Z_{\M}(t^{-1}).
    \end{equation}

In \cite[Theorem 1.2]{braden-huh} Braden et al. proved that the coefficients $Z_{\M}(t)$ are non-negative and unimodal. The techniques employed to achieve a proof of the aforementioned facts rely on the validity of the Hard Lefschetz theorem in a certain module constructed from the matroid. 

On the other hand, a powerful (and arguably less ``algebraic'') tool to prove the non-negativity and unimodality of a palindromic polynomial is provided by the notion of \emph{$\gamma$-positivity} or \emph{$\gamma$-non-negativity}. This concept has attracted considerable attention in the last years. Two main basic references are \cite[Section 7.3]{branden} by Br\"and\'en, and the survey \cite{athanasiadis} by Athanasiadis, which addresses several important applications.

We will review some of the fundamental facts and definitions in this setting so that the present article is entirely self-contained. The first step is to state a basic result that allows one to encode a palindromic polynomial inside a new polynomial with half of the number of terms.

\begin{prop}
    If $f(t)\in \mathbb{Z}[t]$ is a palindromic polynomial of degree $d$, then there exist integers $\gamma_0,\ldots,\gamma_{\lfloor \frac{d}{2}\rfloor}$ such that
    \begin{equation}\label{eq:gamma-exp} 
        f(t) = \sum_{i=0}^{\lfloor\frac{d}{2}\rfloor} \gamma_i t^i (1+t)^{d-2i}.
    \end{equation}
\end{prop}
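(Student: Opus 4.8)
The plan is to prove this by induction on the degree $d$, or equivalently by repeatedly peeling off the extreme coefficients. A palindromic polynomial of degree $d$ means $f(t)=\sum_{j=0}^d a_j t^j$ with $a_j=a_{d-j}$ for all $j$. The key observation is that the polynomials $t^i(1+t)^{d-2i}$ for $i=0,\ldots,\lfloor d/2\rfloor$ are themselves palindromic of degree $d$, and they form a triangular family with respect to the ``lowest degree term'': the polynomial $t^i(1+t)^{d-2i}$ has lowest-degree term $t^i$ with coefficient $1$ and no terms of degree less than $i$. Hence one can solve for the $\gamma_i$ greedily.

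Concretely, first I would set $\gamma_0 = a_0$ and consider $f(t)-\gamma_0(1+t)^d$. This is still palindromic (a difference of palindromic polynomials of the same degree), and its coefficient of $t^0$ (and hence of $t^d$) vanishes, so it is divisible by $t$; by palindromicity it is in fact divisible by $t$ and has degree $\leq d-1$, and being palindromic with vanishing constant term it equals $t\cdot g(t)$ where $g(t)$ is palindromic of degree $d-2$. Then apply the inductive hypothesis to $g(t)$ to write $g(t)=\sum_{i=0}^{\lfloor (d-2)/2\rfloor}\gamma_{i+1}' t^i(1+t)^{d-2-2i}$, and multiply through by $t$ and add back $\gamma_0(1+t)^d$. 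Re-indexing gives the desired expression for $f(t)$, with $\gamma_i := \gamma_{i-1}'$ for $i\geq 1$. The base cases $d=0$ and $d=1$ are immediate: $f(t)=a_0$ and $f(t)=a_0(1+t)$ respectively. One must also check that the integers stay integers, which is clear since each step only involves subtracting an integer multiple of $(1+t)^m$ and dividing by $t$ a polynomial already known to have integer coefficients and zero constant term.

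The one subtlety I would be careful about is justifying that $f(t)-\gamma_0(1+t)^d$, after dividing by $t$, yields a polynomial that is palindromic of degree exactly $d-2$ (or identically zero, in which case we are done). Palindromicity of degree $d$ of $h(t):=f(t)-\gamma_0(1+t)^d$ means $t^d h(1/t)=h(t)$; with $h$ having zero constant and leading coefficients, writing $h(t)=t\cdot g(t)$ with $\deg g\leq d-2$, the functional equation becomes $t^d\cdot\frac{1}{t}g(1/t)=t g(t)$, i.e. $t^{d-2}g(1/t)=g(t)$, which is exactly palindromicity of $g$ of degree $d-2$. This bookkeeping is the only place where one needs to think, and it is routine; the rest is a clean induction. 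This argument also shows the $\gamma_i$ are \emph{uniquely} determined, though uniqueness is not asked for in the statement.
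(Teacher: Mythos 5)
Your argument is correct, but it is genuinely different from what the paper does: the paper gives no proof at all and simply cites Gal (Proposition 2.1.1 of \cite{gal}). Your greedy/triangular induction --- subtract $\gamma_0(1+t)^d$ with $\gamma_0=a_0$, divide by $t$, recurse --- is the standard elementary way to establish existence of the $\gamma$-vector, and it buys two things the bare citation does not: it shows directly that the $\gamma_i$ stay in $\mathbb{Z}$ when $f\in\mathbb{Z}[t]$, and it gives uniqueness for free, since the polynomials $t^i(1+t)^{d-2i}$ are triangular with respect to the lowest-degree term. One small imprecision to tidy up: after subtracting $\gamma_0(1+t)^d$ and dividing by $t$, the quotient $g$ need not have degree exactly $d-2$, only degree at most $d-2$ together with the functional equation $t^{d-2}g(1/t)=g(t)$; for instance $f(t)=1+4t+5t^2+4t^3+t^4$ gives $g(t)=-t$. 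So the statement you induct on should be phrased as: every $g$ with $t^{m}g(1/t)=g(t)$ (hence $\deg g\le m$) lies in the integer span of $t^i(1+t)^{m-2i}$ for $0\le i\le\lfloor m/2\rfloor$. With that formulation your induction, including the bookkeeping you already spelled out, goes through verbatim; this is a cosmetic fix, not a gap.
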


\begin{proof}
    See \cite[Proposition 2.1.1]{gal}.
\end{proof}

\begin{defi}
    Let $f(t)$ be a palindromic polynomial of degree $d$. If $\gamma_0,\ldots,\gamma_{\lfloor\frac{d}{2}\rfloor}$ are as in equation \eqref{eq:gamma-exp}, we define the \emph{$\gamma$-polynomial} associated to $f$ by
        \[\gamma(f,t) = \sum_{i=0}^{\lfloor\frac{d}{2}\rfloor} \gamma_i t^i.\]
\end{defi}

If $f(t)$ is a palindromic polynomial of degree $d$, we will say that $f(t)$ is \emph{$\gamma$-positive} if all the coefficients of $\gamma(f,t)$ are non-negative.

We have the following important result, which establishes links between properties of $f(t)$ and properties of $\gamma(f,t)$. This was stated first by Gal \cite{gal}; we essentially reproduce the proof here for the sake of completeness. Sometimes we abuse notation and omit the variable $t$ by writing $f$ instead of $f(t)$ and $\gamma(f)$ instead of $\gamma(f,t)$.

\begin{prop}\label{prop:implications-gamma}
    Let $f$ be a palindromic polynomial of degree $d$ with positive coefficients. We have the following strict implications.
    \[\gamma(f) \text{ is negative real-rooted} \iff f \text{ is real-rooted} \Longrightarrow f \text{  is $\gamma$-positive} \Longrightarrow f \text{ is unimodal.}\]
\end{prop}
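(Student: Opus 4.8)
The plan is to verify the three assertions in turn, moving from left to right, and treating the chain of implications as three separate statements together with two counterexamples witnessing that the implications are strict.

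First I would handle the equivalence $\gamma(f)$ is negative real-rooted $\iff f$ is real-rooted. The substitution $t \mapsto \frac{u}{(1+u)^2}$ or, more symmetrically, writing $f(t) = (1+t)^d \, \gamma\!\left(f, \tfrac{t}{(1+t)^2}\right)$ after clearing denominators, transforms the roots of $\gamma(f)$ into the roots of $f$. Concretely, if $f(t) = \sum_i \gamma_i t^i (1+t)^{d-2i}$, then dividing by $(1+t)^d$ (legitimate away from $t = -1$, which is never a root of a palindromic $f$ with positive coefficients) gives $f(t)/(1+t)^d = \gamma\!\left(f, \tfrac{t}{(1+t)^2}\right)$. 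The rational map $\varphi(t) = \tfrac{t}{(1+t)^2}$ sends $(-\infty, 0)$ onto $(-\infty, 0]$, and for each $s < 0$ the preimage $\varphi^{-1}(s)$ consists of two negative reals (the discriminant of $s(1+t)^2 = t$, i.e.\ $st^2 + (2s-1)t + s = 0$, is $(2s-1)^2 - 4s^2 = 1 - 4s > 0$). Counting with multiplicity, $\gamma(f)$ having all $\lfloor d/2 \rfloor$ roots in $(-\infty, 0)$ corresponds exactly to $f$ having all $d$ roots in $(-\infty, 0)$ (if $d$ is odd, one root of $f$ is forced to be $-1$, which is the image of $t = -1$; this bookkeeping needs a brief check). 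This gives both directions of the equivalence.

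Next, real-rootedness of $f$ implies $\gamma$-positivity: since $f$ has positive coefficients and is real-rooted, all its roots are negative, so the previous paragraph shows $\gamma(f)$ is negative real-rooted; but a real-rooted polynomial with all roots negative and positive leading coefficient has all coefficients of the same sign, hence $\gamma(f)$ has non-negative coefficients. For the last implication, $\gamma$-positivity implies unimodality: each summand $t^i(1+t)^{d-2i}$ in \eqref{eq:gamma-exp} is itself a palindromic unimodal polynomial (its coefficients are a shifted row of Pascal's triangle, symmetric about degree $d/2$), and a non-negative linear combination of palindromic unimodal polynomials all centered at the same point $d/2$ is again palindromic and unimodal. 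I would state this last fact as a short lemma or cite the standard reference in \cite{athanasiadis} or \cite{gal}.

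Finally, for strictness of the two one-directional implications I would exhibit explicit small examples: a $\gamma$-positive polynomial that is not real-rooted (e.g.\ $(1+t)^4 + t(1+t)^2 + t^2$, or any $f$ with $\gamma(f)$ having a positive non-real root such as $\gamma(f,t) = t^2 + t + 1$, which forces $f$ to have non-real roots via the correspondence above) and a unimodal palindromic polynomial that is not $\gamma$-positive (e.g.\ $1 + t + t^2 + t^3 + t^4 + t^5 + t^6 = (1+t)^6 - 5t(1+t)^4 + \cdots$; more simply $1 + 2t + 2t^2 + \cdots$ — one computes the $\gamma$-vector and checks a negative entry appears). I expect the main obstacle to be the careful root-counting in the rational substitution for the equivalence, particularly handling the parity of $d$ and the possible loss or gain of the root at $t=-1$, and making sure multiplicities are tracked correctly; the other three implications are essentially formal once the right lemma about sums of $t^i(1+t)^{d-2i}$ is isolated. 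I would lean on \cite{gal} and \cite{athanasiadis} to keep the exposition brief, since the result is classical and the excerpt only asks to reproduce it for self-containedness.
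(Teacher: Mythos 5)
Your proposal is correct and covers everything the statement needs, but it reaches the two substantive steps by a genuinely different route than the paper. For the equivalence, the paper also starts from the identity $f(t)=\gamma\bigl(f,\tfrac{t}{(1+t)^2}\bigr)(1+t)^d$, but its converse direction is a complex-algebraic trick: if $z$ is a root of $f$ with $\tfrac{z}{(1+z)^2}$ negative real, it writes $\tfrac{z}{(1+z)^2}=\bigl(\sqrt{z}+\sqrt{z^{-1}}\bigr)^{-2}$ and argues that $\sqrt{z}$ must be purely imaginary, forcing $z$ to be a negative real. You instead invert the rational map by solving the real quadratic $s(1+t)^2=t$, whose discriminant $1-4s>0$ for $s<0$ shows that preimages of negative roots of $\gamma(f)$ are negative reals; this is more elementary, at the cost of the multiplicity and endpoint bookkeeping you flag. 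Note that the bookkeeping can be avoided entirely: any root $z\neq -1$ of $f$ maps to a root of $\gamma(f)$, and any root $s$ of $\gamma(f)$ (necessarily $s\neq 0$ since $\gamma_0=f(0)>0$) pulls back to a root of $f$ different from $-1$, which settles both directions without counting roots of $\gamma(f)$ or worrying whether its degree equals $\lfloor d/2\rfloor$. For real-rootedness implying $\gamma$-positivity, the paper argues directly, pairing the roots $r$ and $1/r$ and expanding $(t+r)(t+\tfrac1r)=(1+t)^2+(r+\tfrac1r-2)t$ with the AM--GM inequality; you deduce it from the equivalence together with the fact that a negative-real-rooted polynomial has coefficients of one sign, which is a shorter logical path once the equivalence is established. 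The final implication and the strictness examples are essentially the paper's (its Example uses $t^4+4t^3+7t^2+4t+1$ and $t^2+t+1$).

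Two small corrections to your sketch. First, $-1$ \emph{can} be a root of a palindromic $f$ with positive coefficients: it always is when $d$ is odd (e.g.\ $1+t$), and it can occur with even multiplicity when $d$ is even (e.g.\ $(1+t)^2$); drop that parenthetical, since dividing by $(1+t)^d$ is an identity of rational functions away from $t=-1$ and needs no such claim. Second, in the $\gamma$-positivity step, anchor the common sign of the coefficients of $\gamma(f)$ on its constant term $\gamma_0=f(0)>0$ rather than on the positivity of its leading coefficient, which you have not justified.
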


\begin{proof}
    For the first ``if and only if'', notice that
    \begin{equation}\label{eq:identity-gamma-p}
        f(t) = \gamma\left(f,\frac{t}{(1+t)^2}\right)\cdot (1+t)^d.
    \end{equation}
    If $f$ is real-rooted so is $\gamma(f)$. Moreover, as $f$ is assumed to have positive coefficients, all the roots of $f$ are negative, and thus so are all the roots of $\gamma(f)$. On the other hand, let us assume that $\gamma(f)$ has only negative real roots. Assume that $z$ is a complex number such that $f(z)=0$. We want to prove that $z$ is a negative real number. If $z=-1$, then there is nothing to prove. Otherwise, by the negative real-rootedness of $\gamma(f)$, it follows that $\frac{z}{(1+z)^2}\in\mathbb{R}^{-}$. By noticing that $\frac{z}{(1+z)^2} = \left(\frac{1}{\sqrt{z}+\sqrt{z^{-1}}}\right)^2$, we obtain that $\sqrt{z}+\sqrt{z^{-1}}$ is a pure imaginary number. However, for every complex number, the real part of $\sqrt{z}$ and the real part of $\sqrt{z^{-1}}$ have the same sign. As in our case their sum has real part zero, it means that actually both of them were pure imaginary numbers. In particular $\sqrt{z}$ is a pure imaginary number, which tells us that $z$ is a negative real number. 
    
    For the second implication, let us assume that $f$ is real-rooted. As before, since the coefficients of $f$ are positive, all the roots of $f$ must be negative. Also, as $f$ was assumed to be palindromic, we may pair the zeros of $f$ into groups of the form $r$ and $\frac{1}{r}$ and write
        \[ f(t) = A(t+1)^{\varepsilon}\prod_{i=1}^{\lfloor\frac{d}{2}\rfloor} (t+r_i)(t+\tfrac{1}{r_i}),\] 
    where $\varepsilon=0,1$ according to the parity of $d$ and $A$ is some constant. Observe that $(t+r_i)(t+\frac{1}{r_i}) = (1+t)^2 + (r_i+\frac{1}{r_i}-2)t$, which is a non-negative\footnote{As $r_i$ is positive, we may use the inequalities between the arithmetic and geometric mean and obtain that $1 \leq \sqrt{r_i \cdot \tfrac{1}{r_i}}\leq \frac{r_i+\frac{1}{r_i}}{2}$, from where it follows that $r_i+\frac{1}{r_i}-2\geq 0$.} linear combination of the polynomials $t^0(1+t)^2$ and $t^1(1+t)^0$. After multiplying all such factors, this property still holds, and thus $\gamma(f,t)$ has positive coefficients.
    
    The last implication follows directly from the fact that a positive sum of the unimodal palindromic polynomials $t^i(t+1)^{d-2i}$ (all of which can be thought as having ``degree $d$'', completing with zeros accordingly) will be again a palindromic unimodal polynomial.
\end{proof}

\begin{ex}
    Consider the polynomial $f_1(t) = t^4 + 4t^3 + 7t^2 + 4t + 1$. It is not difficult to show that $\gamma(f_1,t) = t^2+1$. In particular, $f_1$ is $\gamma$-positive but not real-rooted. On the other hand, if we take $f_2(t)=t^2+t+1$, we have that $f_2$ is unimodal but it is not $\gamma$-positive because $\gamma(f_2,t) = -t+1$.
\end{ex}

\subsection{The \texorpdfstring{$\gamma$}{gamma}-polynomial of a matroid}

At this point we can introduce the following definition, which makes sense since the $Z$-polynomial of a matroid is palindromic.

\begin{defi}
    We define the $\gamma$-\emph{polynomial} of a matroid $\M$ to be the polynomial
        \[ \gamma_{\M}(t) = \gamma(Z_{\M}, t).\]
\end{defi}

The main reason for introducing this new family of polynomials comes from the fact that we believe that they all have non-negative coefficients. In other words, we conjecture the following.

\begin{conj}\label{conj:gamma-positivity}
    For every matroid $\M$, the polynomial $\gamma_{\M}(t)$ has non-negative coefficients.
\end{conj}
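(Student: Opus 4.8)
The plan is to split the conjecture into the part that the machinery of this paper already delivers and the genuinely open general case, being explicit about where the difficulty lies. A preliminary reduction: since $Z_{\M_1\oplus\M_2}(t)=Z_{\M_1}(t)\,Z_{\M_2}(t)$ and each factor is palindromic (of degree $\rk(\M_1)$, $\rk(\M_2)$ respectively), expanding both in the basis $\{t^i(1+t)^{d-2i}\}$ and multiplying shows that $\gamma$-vectors multiply by convolution, so $\gamma_{\M_1\oplus\M_2}(t)=\gamma_{\M_1}(t)\,\gamma_{\M_2}(t)$. Since a product of polynomials with non-negative coefficients again has non-negative coefficients, and since loops do not affect the lattice of flats, it suffices to treat connected (in particular loopless) matroids; this also shows $\gamma$-positivity is closed under direct sums.

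For the families listed in Theorem~\ref{main7} I would proceed by explicit computation. For paving matroids one starts from Theorem~\ref{main6}, $\gamma_{\M}(t)=\gamma_{\U_{k,n}}(t)-\sum_{h\geq k}\lambda_h\,g_{k,h}(t)$ with each $g_{k,h}(t)$ having non-negative coefficients, and must show the whole difference stays non-negative. In the sparse paving case only $h=k$ occurs and the circuit-hyperplanes are pairwise disjoint, so $\lambda_k$ is bounded in terms of $k$ and $n$; one then checks coefficient-wise that $\lambda_k\,g_{k,k}(t)\le\gamma_{\U_{k,n}}(t)$ using the closed form for $\gamma_{\U_{k,n}}(t)$ together with the combinatorial interpretation of $g_{k,k}(t)$ coming from Section~\ref{sec:tableaux}, possibly aided by the identity-checking software of Remark~\ref{rem:wilf-zeilberger}. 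For projective geometries, thagomizer matroids, $\K_{2,n}$, and fans, wheels and whirls, the $Z$-polynomials are available in closed form in the literature, so one computes the $\gamma$-expansion directly (or, better, exhibits a manifestly positive combinatorial model for $\gamma_{\M}(t)$ in each case) and verifies positivity.

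For the general conjecture the honest statement is that a uniform argument seems to require a new ingredient, and I would pursue one of three routes. (a) \emph{Combinatorial}: extend the tableaux interpretations of Section~\ref{sec:tableaux} from $p_{k,h}$, $q_{k,h}$, $z_{k,h}$ to a direct enumerative model for $\gamma_{\M}(t)$ itself, turning non-negativity into a counting statement; the obstruction is that no combinatorial formula is known for $Z_{\M}(t)$ of an arbitrary matroid. (b) \emph{Hodge-theoretic}: refine the Hard Lefschetz package underlying Theorem~\ref{thm:braden-huh} on the intersection cohomology module of the matroid so as to produce, beyond the single Lefschetz operator that yields unimodality, a second commuting Lefschetz operator (or an explicit $t^i(1+t)^{d-2i}$-graded decomposition whose multiplicities are the $\gamma_i$), in analogy with how $\gamma$-positivity is proved for flag spheres and for Kazhdan--Lusztig--Stanley polynomials in the Coxeter setting; the obstruction is identifying the correct geometric or algebraic object playing the role of the ``boundary''. (c) \emph{Inductive via relaxation}: since relaxing a stressed hyperplane changes $Z_{\M}$ in the controlled way of Theorem~\ref{main6}, and every paving matroid reaches a uniform one by such moves, one could try an induction that preserves $\gamma$-positivity at each step; here the obstruction is the sign, namely that $Z_{\widetilde{\M}}(t)-Z_{\M}(t)$ is coefficient-wise non-negative but there is no a priori reason its $\gamma$-vector is, so one needs a finer monotonicity statement linking the $\gamma$-vectors of $\M$ and $\widetilde{\M}$.

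I expect route (b) to be the most promising for the full conjecture, with the construction of the secondary Lefschetz structure being the main obstacle; routes via explicit formulas and Section~\ref{sec:tableaux} should suffice for the families in Theorem~\ref{main7}. I note finally that by Proposition~\ref{prop:implications-gamma} the conjecture would also follow from the (harder) real-rootedness conjecture for $Z_{\M}(t)$, equivalently from negative real-rootedness of $\gamma_{\M}(t)$, so any of the above should be attempted first in the weaker $\gamma$-positive form, which is what Conjecture~\ref{conj:gamma-positivity} asks.
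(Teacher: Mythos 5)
You have correctly recognized that this statement is an open conjecture: the paper itself offers no proof of it, only supporting evidence, so no blind attempt could be checked against a ``paper proof'' here. Your assessment of where the difficulty lies is accurate, and your plan for the partial results is essentially the paper's own route: the direct-sum multiplicativity of $\gamma$ is Proposition \ref{prop:implications-gamma}'s companion result in Section \ref{sec:gamma polynomials}; the sparse paving case is handled exactly as you suggest, by bounding $\lambda$ and comparing $\lambda\, g_{k,k}(t)$ coefficient-wise with $\gamma_{\U_{k,n}}(t)$ (Theorem \ref{thm:sparsepaving-gammapositive}); thagomizer matroids get an explicit positive formula (Proposition \ref{prop: thagomizer gamma positive}); $\K_{2,n}$ follows from Braden--Vysogorets; and projective geometries, fans, wheels and whirls follow from known real-rootedness of their $Z$-polynomials via Proposition \ref{prop:implications-gamma}. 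Your closing remark that real-rootedness of $Z_{\M}(t)$ would imply the conjecture is also exactly the paper's framing.

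Two corrections to your sketch of the sparse paving case. First, circuit-hyperplanes of a sparse paving matroid are \emph{not} pairwise disjoint in general; Proposition \ref{prop:hyperplane_intersection} only gives $|H_1\cap H_2|\leq k-2$. The bound on $\lambda$ used in the paper is Proposition \ref{prop:bound-num-circuit-hyp}, namely $\lambda \leq \binom{n}{k}\min\{\tfrac{1}{k+1},\tfrac{1}{n-k+1}\}$, which is imported from the literature rather than derived from disjointness. Second, the resulting coefficient-wise inequality is not a routine identity verifiable by the Wilf--Zeilberger machinery of Remark \ref{rem:wilf-zeilberger}: because no closed form exists for the relevant binomial sum, the paper has to prove a chain of explicit estimates (the appendix, Lemmas \ref{lem:elementary inequalities}--\ref{lem:identity4} and Propositions \ref{prop:key_i_geq_2}, \ref{prop:key_i_geq_1}), treating the linear coefficient separately from the coefficients of degree at least two. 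So the sparse paving case is genuinely more delicate than your plan suggests, though the strategy is the same. The general conjecture remains open in the paper, as you say.
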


In \cite[Conjecture 5.1]{proudfoot-zeta} Proudfoot et al. conjectured that the $Z$-polynomial of every matroid $\M$ is real-rooted. By Proposition \ref{prop:implications-gamma}, this assertion is stronger than our conjecture. However, we believe that our conjecture might be much more manageable. We point out that many identities satisfied by $P_{\M}(t)$ are also satisfied by $\gamma_\M(t)$; later we will see some instances of this, for example in Remark \ref{obs:resemblance}. 

Also, again by Proposition \ref{prop:implications-gamma}, having a proof of the fact that the coefficients of $\gamma_{\M}(t)$ are positive would immediately provide a different proof of the unimodality of the coefficients of $Z_{\M}(t)$, which until now can only be obtained by the Hard Lefschetz Theorem on a module over the graded M\"{o}bius algebra of $\M$.

\subsection{Basic properties of the \texorpdfstring{$\gamma$}{gamma}-polynomial}

Since the $\gamma$-polynomial of a matroid is defined in terms of the $Z$-polynomial, it is reasonable to expect that it inherits many good properties.

\begin{prop}
    Let $\M = \M_1\oplus \M_2$ be a direct sum of matroids. Then
        \[ \gamma_{\M}(t) = \gamma_{\M_1}(t) \cdot \gamma_{\M_2}(t).\]
\end{prop}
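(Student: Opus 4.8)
The plan is to bootstrap from the multiplicativity of the $Z$-polynomial, $Z_{\M_1\oplus\M_2}(t)=Z_{\M_1}(t)\cdot Z_{\M_2}(t)$, via the substitution formula obtained by plugging $t\mapsto \tfrac{t}{(1+t)^2}$ into the $\gamma$-expansion \eqref{eq:gamma-exp}. First I would record the bookkeeping facts: $Z_{\M}(t)$ is palindromic (by \eqref{eq:palindromic}) of degree exactly $\rk(\M)$ — the leading coefficient is $1$, coming from the term indexed by the flat $F=E$ in the defining sum of $Z_{\M}(t)$ — so that $\gamma_{\M}(t)$ is well defined; and that $\rk(\M_1\oplus\M_2)=\rk(\M_1)+\rk(\M_2)$, which is immediate from the description of the bases of a direct sum. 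Write $d_i=\rk(\M_i)$ and $d=d_1+d_2$, so $\deg Z_{\M_i}=d_i$ and $\deg Z_{\M_1\oplus\M_2}=d$.

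Next, from \eqref{eq:gamma-exp} applied to any palindromic polynomial $f$ of degree $e$ one gets the identity $f(t)=\gamma\!\left(f,\tfrac{t}{(1+t)^2}\right)(1+t)^{e}$ (this is \eqref{eq:identity-gamma-p}; it is pure algebra and needs no positivity hypothesis). Applying this to $Z_{\M_1}$, $Z_{\M_2}$ and $Z_{\M_1\oplus\M_2}$, and using $Z_{\M_1\oplus\M_2}=Z_{\M_1}Z_{\M_2}$, we obtain
\[
\gamma\!\left(Z_{\M_1\oplus\M_2},\tfrac{t}{(1+t)^2}\right)(1+t)^{d}
=\gamma\!\left(Z_{\M_1},\tfrac{t}{(1+t)^2}\right)(1+t)^{d_1}\cdot\gamma\!\left(Z_{\M_2},\tfrac{t}{(1+t)^2}\right)(1+t)^{d_2}.
\]
Since $d=d_1+d_2$, cancelling the common factor $(1+t)^{d}$ shows that the rational functions $\gamma(Z_{\M_1\oplus\M_2},u)$ and $\gamma(Z_{\M_1},u)\,\gamma(Z_{\M_2},u)$ agree after the substitution $u=\tfrac{t}{(1+t)^2}$.

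Finally, since $t\mapsto \tfrac{t}{(1+t)^2}$ is a non-constant rational map, its image is an infinite subset of the base field; two polynomials in $u$ that coincide on an infinite set coincide as polynomials, hence $\gamma(Z_{\M_1\oplus\M_2},u)=\gamma(Z_{\M_1},u)\gamma(Z_{\M_2},u)$, i.e. $\gamma_{\M_1\oplus\M_2}(t)=\gamma_{\M_1}(t)\cdot\gamma_{\M_2}(t)$. There is no serious obstacle here; the only points requiring a little care are that the computation is being carried out at the level of rational functions (one may instead clear denominators and argue with polynomials directly), and that the exponents of $(1+t)$ cancel exactly, which is precisely why the additivity of the rank — hence of the degrees — is needed. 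One could alternatively give a direct combinatorial argument expanding the product of basis polynomials $t^{i}(1+t)^{d_1-2i}$ and $t^{j}(1+t)^{d_2-2j}$, but the substitution argument is cleaner and isolates exactly what is used.
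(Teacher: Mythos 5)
Your argument is correct and follows essentially the same route as the paper: both proofs apply the identity \eqref{eq:identity-gamma-p} to $Z_{\M_1}$, $Z_{\M_2}$ and $Z_{\M_1\oplus\M_2}$, invoke the multiplicativity of the $Z$-polynomial, and cancel $(1+t)^{\rk(\M_1)+\rk(\M_2)}$. Your closing step (deducing equality of the $\gamma$-polynomials from agreement under the substitution $u=\tfrac{t}{(1+t)^2}$) simply makes explicit the uniqueness of the $\gamma$-expansion that the paper leaves implicit.
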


\begin{proof}
    Assume that $\M_1$ has rank $k_1$ and that $\M_2$ has rank $k_2$. Then $\rk(\M) = k_1+k_2$. Hence, using \eqref{eq:identity-gamma-p}, we see that
        \begin{align*}
            Z_{\M_1}(t) &= \gamma_{\M_1}\left(\tfrac{t}{(t+1)^2}\right)(1+t)^{k_1}\\
            Z_{\M_2}(t) &= \gamma_{\M_2}\left(\tfrac{t}{(t+1)^2}\right)(1+t)^{k_2}.
        \end{align*}
    In particular since $Z_{\M}(t) = Z_{\M_1}(t)\cdot Z_{\M_2}(t)$, we see that
        \[ Z_{\M}(t) = \gamma_{\M_1}\left(\tfrac{t}{(t+1)^2}\right)\gamma_{\M_2}\left(\tfrac{t}{(t+1)^2}\right)(1+t)^{k_1+k_2}.\]
    Writing $Z_{\M}(t) = \gamma_{\M}\left(\tfrac{t}{(t+1)^2}\right)(1+t)^{k_1+k_2}$, we obtain the equality of the statement.
\end{proof}

Naturally, a version of Theorem \ref{thm:kl-for-relax} also holds for the $\gamma$-polynomial as well.

\begin{teo}
    For every pair of integers $k,h\geq 1$ there exists a polynomial $g_{k,h}(t)$ with integer coefficients, having the following property: for every matroid $\M$ of rank $k$ having a stressed hyperplane of cardinality $h$,
    \begin{align*}
        \gamma_{\tm}(t) &= \gamma_{\M}(t)+g_{k,h}(t),
    \end{align*}
    where $\widetilde{\M}$ denotes the corresponding relaxation of $\M$.
\end{teo}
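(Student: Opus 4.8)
The plan is to derive this directly from Theorem~\ref{thm:kl-for-relax}, using that---once the ambient degree is fixed---passing from a palindromic polynomial to its $\gamma$-polynomial is a \emph{linear} operation. First I would observe that $\rk(\tm)=\rk(\M)=k$, and that the summand indexed by $F=E$ in the definition of the $Z$-polynomial forces $\deg Z_{\M}(t)=\deg Z_{\tm}(t)=k$ with leading coefficient $1$ (no other flat has rank $k$, and for $F\neq E$ the degree of $t^{\rk(F)}P_{\M_F}(t)$ is strictly below $k$); by \eqref{eq:palindromic} the constant term is $1$ as well. Thus $Z_{\M}(t)$ and $Z_{\tm}(t)$ are palindromic of the \emph{same} degree $k$.

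Next I would apply the expansion \eqref{eq:gamma-exp} with $d=k$ to each of $Z_{\M}(t)$ and $Z_{\tm}(t)$. Since both have integer coefficients, this produces integers $\gamma_0,\dots,\gamma_{\lfloor k/2\rfloor}$ and $\widetilde{\gamma}_0,\dots,\widetilde{\gamma}_{\lfloor k/2\rfloor}$ with $Z_{\M}(t)=\sum_i \gamma_i\,t^i(1+t)^{k-2i}$ and $Z_{\tm}(t)=\sum_i \widetilde{\gamma}_i\,t^i(1+t)^{k-2i}$, so that $\gamma_{\M}(t)=\sum_i\gamma_i t^i$ and $\gamma_{\tm}(t)=\sum_i\widetilde{\gamma}_i t^i$ (all sums over $0\le i\le \lfloor k/2\rfloor$). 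Subtracting the two expansions and invoking Theorem~\ref{thm:kl-for-relax},
\[
 z_{k,h}(t)\;=\;Z_{\tm}(t)-Z_{\M}(t)\;=\;\sum_{i=0}^{\lfloor k/2\rfloor}\bigl(\widetilde{\gamma}_i-\gamma_i\bigr)\,t^i(1+t)^{k-2i}.
\]
I would then set $g_{k,h}(t):=\gamma_{\tm}(t)-\gamma_{\M}(t)=\sum_i(\widetilde{\gamma}_i-\gamma_i)t^i$, which has integer coefficients and degree at most $\lfloor k/2\rfloor$, and which satisfies $\gamma_{\tm}(t)=\gamma_{\M}(t)+g_{k,h}(t)$ by construction.

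To finish I must check that $g_{k,h}(t)$ does not depend on the chosen matroid $\M$. The polynomials $\{t^i(1+t)^{k-2i}\}_{0\le i\le\lfloor k/2\rfloor}$ have pairwise distinct lowest-degree terms, hence are linearly independent, so the displayed identity determines each $\widetilde{\gamma}_i-\gamma_i$ uniquely from $z_{k,h}(t)$; equivalently, by \eqref{eq:identity-gamma-p} one has $z_{k,h}(t)=g_{k,h}\!\bigl(\tfrac{t}{(1+t)^2}\bigr)(1+t)^k$, which recovers $g_{k,h}$ from $z_{k,h}$. Since $z_{k,h}(t)$ depends only on $k$ and $h$ by Theorem~\ref{thm:kl-for-relax}, so does $g_{k,h}(t)$. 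Finally, plugging the matroid $\V_{k,h,n}$ of Proposition~\ref{prop:uniform-example} into the identity, exactly as in the proof of Corollary~\ref{cor:p_(k,h) in terms of uniform matroids}, and using the multiplicativity of the $\gamma$-polynomial under direct sums established above together with $\gamma(Z_{\U_{1,n-h}},t)=1$ and $\si(\widetilde{\V}_{k,h,n})\cong\U_{k,h+1}$, gives the explicit value $g_{k,h}(t)=\gamma_{\U_{k,h+1}}(t)-\gamma_{\U_{k-1,h}}(t)$.

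The only delicate point is bookkeeping rather than substance: one must be careful that $z_{k,h}(t)$, although of honest degree strictly smaller than $k$, is palindromic with respect to the degree $k$ shared by $Z_{\M}(t)$ and $Z_{\tm}(t)$, so that applying $\gamma(-)$ term-by-term to the difference is legitimate. All the genuine content is already packaged in Theorem~\ref{thm:kl-for-relax}.
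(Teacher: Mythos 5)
Your proposal is correct and follows essentially the same route as the paper: the paper's proof simply notes that $\gamma_{\M}$ is determined linearly from $Z_{\M}$ (given the common degree $k=\rk(\M)=\rk(\tm)$) and that $Z_{\tm}-Z_{\M}=z_{k,h}$ depends only on $k$ and $h$ by Theorem \ref{thm:kl-for-relax}, which is exactly what you spell out, with the degree check and the linear-independence of $\{t^i(1+t)^{k-2i}\}$ made explicit. Your final computation of $g_{k,h}(t)=\gamma_{\U_{k,h+1}}(t)-\gamma_{\U_{k-1,h}}(t)$ is a correct bonus that the paper records separately as Proposition \ref{prop:ghk}.
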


\begin{proof}
    This follows directly from the definition of the $\gamma$-polynomial and that $Z_{\widetilde{\M}}(t)-Z_{\M}(t)$ depends only on $h$ and $k$, by Theorem \ref{thm:kl-for-relax}.
\end{proof}

As a consequence of this result, we obtain a version of Theorem \ref{thm:formulas-paving} for the $\gamma$-polynomial. Later we will see explicit expressions for the polynomials $g_{k,h}(t)$, so that we can calculate the $\gamma$-polynomials of all paving matroids as well.

\begin{coro}\label{coro:paving-gamma-relax}
    Let $\M$ be a paving matroid of rank $k$ and cardinality $n$. Assume that for each $h\geq k$, $\M$ has exactly $\lambda_h$ (stressed) hyperplanes of cardinality $h$. Then
    \begin{align*}
        \gamma_{\M}(t)=\gamma_{\U_{k,n}}(t)-\sum_{h\geq k}\lambda_h \cdot g_{k,h}(t).
    \end{align*}
\end{coro}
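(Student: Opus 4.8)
The plan is to combine the previously established formulas for the Kazhdan--Lusztig data of paving matroids with the defining relation of the $\gamma$-polynomial. By Theorem~\ref{thm:formulas-paving}, for a paving matroid $\M$ of rank $k$ and cardinality $n$ with exactly $\lambda_h$ hyperplanes of cardinality $h$, one has
\[
Z_{\M}(t)=Z_{\U_{k,n}}(t)-\sum_{h\geq k}\lambda_h\cdot z_{k,h}(t).
\]
Since both $Z_{\M}(t)$ and $Z_{\U_{k,n}}(t)$ are palindromic of degree $k$ (palindromicity is \eqref{eq:palindromic}, and $\rk(\U_{k,n})=k=\rk(\M)$), the difference $\sum_{h\geq k}\lambda_h z_{k,h}(t)$ is itself palindromic of degree at most $k$. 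The key point is that the map $f\mapsto\gamma(f,t)$ is linear on the space of palindromic polynomials of a fixed ``formal degree'' $k$: this is immediate from the uniqueness of the expansion \eqref{eq:gamma-exp} once we agree to write every such polynomial as $\sum_{i=0}^{\lfloor k/2\rfloor}\gamma_i t^i(1+t)^{k-2i}$, padding with zero coefficients where needed. (For $z_{k,h}(t)$, which by Remark~\ref{degree of p,q,z} has degree $k-1<k$, one simply regards it as a palindromic-up-to-the-ambient-degree-$k$ summand; alternatively one invokes \eqref{eq:identity-gamma-p}, which makes the additivity manifest.)

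Concretely, I would argue via the substitution identity \eqref{eq:identity-gamma-p}. Writing $u=\tfrac{t}{(1+t)^2}$, we have $Z_{\N}(t)=\gamma_{\N}(u)\cdot(1+t)^{k}$ for every matroid $\N$ of rank $k$, and I would \emph{define} $g_{k,h}(t)$ by $z_{k,h}(t)=g_{k,h}\!\left(\tfrac{t}{(1+t)^2}\right)(1+t)^{k}$ — this is well-defined because, as noted in the theorem preceding Corollary~\ref{coro:paving-gamma-relax}, $Z_{\tm}(t)-Z_{\M}(t)=z_{k,h}(t)$ is a difference of two rank-$k$ palindromic polynomials, hence palindromic of formal degree $k$, hence admits a $\gamma$-expansion. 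Then plugging the formula for $Z_{\M}(t)$ into $Z_{\M}(t)=\gamma_{\M}(u)(1+t)^{k}$ and dividing through by $(1+t)^{k}$ gives
\[
\gamma_{\M}(u)=\gamma_{\U_{k,n}}(u)-\sum_{h\geq k}\lambda_h\cdot g_{k,h}(u),
\]
and since $u=\tfrac{t}{(1+t)^2}$ takes infinitely many values this is an identity of polynomials in $u$, which after renaming the variable is exactly the claimed formula.

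The main obstacle — really the only subtlety — is the bookkeeping of degrees: one must be careful that the $\gamma$-expansion of $z_{k,h}(t)$ is taken with respect to the ambient degree $k$ (the common rank) and not with respect to its own degree $k-1$, since the $\gamma$-polynomial of a palindromic polynomial depends on which ``centered'' degree one expands against. Phrasing everything through \eqref{eq:identity-gamma-p} with the fixed exponent $(1+t)^{k}$ sidesteps this cleanly and makes the linearity automatic. I would also remark, as the statement of the companion result does, that the nonnegativity of the coefficients of $g_{k,h}(t)$ is a separate matter — it is not needed for Corollary~\ref{coro:paving-gamma-relax} itself and is deferred to the explicit computation (Proposition~\ref{prop:ghk}); here we only need that $g_{k,h}(t)$ is a well-defined polynomial depending solely on $k$ and $h$, which follows from Theorem~\ref{thm:kl-for-relax} together with the preceding theorem on the $\gamma$-analogue of the relaxation formula.
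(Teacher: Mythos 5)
Your proof is correct. It differs slightly in route from what the paper intends: the paper obtains Corollary \ref{coro:paving-gamma-relax} by iterating the $\gamma$-analogue of the relaxation theorem stated just before it (each relaxation of a stressed hyperplane of size $h$ adds $g_{k,h}(t)$ to the $\gamma$-polynomial), combined with Corollary \ref{coro:uniform-paving}, exactly mirroring the proof of Theorem \ref{thm:formulas-paving}; you instead take the already-proved $Z$-level identity $Z_{\M}(t)=Z_{\U_{k,n}}(t)-\sum_{h\geq k}\lambda_h z_{k,h}(t)$ and push it through the substitution identity \eqref{eq:identity-gamma-p}, using linearity of the $\gamma$-expansion at the fixed ambient degree $k$. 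Both arguments rest on the same underlying facts (Theorem \ref{thm:kl-for-relax} plus the relaxation of paving matroids to $\U_{k,n}$), so the content is equivalent; what your version buys is that the iteration never has to be redone at the $\gamma$-level, at the cost of the degree bookkeeping you correctly flag: $z_{k,h}(t)$ has degree $k-1$, so its $\gamma$-expansion must be taken against the common rank $k$ (equivalently, defined by $z_{k,h}(t)=g_{k,h}\bigl(\tfrac{t}{(1+t)^2}\bigr)(1+t)^{k}$), and your identification of this $g_{k,h}$ with the paper's $g_{k,h}=\gamma_{\tm}-\gamma_{\M}$ via \eqref{eq:identity-gamma-p} is exactly what makes the two definitions agree. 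The paper's route keeps everything at the level of rank-$k$ matroid invariants and so never meets this subtlety, and it also feeds directly into Proposition \ref{prop:ghk}; your route is a clean change-of-basis argument and equally valid.
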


\subsection{A positive formula for uniform matroids}

We will support Conjecture \ref{conj:gamma-positivity} by proving that it is true for all sparse paving matroids. The first step is to provide a manifestly positive formula for the $\gamma$-polynomial of a uniform matroid. Also, as a consequence of our formulas, we will be able to prove that the polynomials $g_{k,h}(t)$ have positive coefficients.

\begin{teo}\label{teo:gamma-positivity-uniform}
    All uniform matroids $\U_{k,n}$ are $\gamma$-positive. Moreover, the constant term is always $1$, and for $i>0$ the $i$-th coefficient of $\gamma_{\U_{k,n}}(t)$ is
    \begin{equation} \label{eq:gamma-uniform}
    [t^i]\gamma_{\U_{k,n}}(t) = \frac{1}{k-i}\binom{k-i}{i} \sum_{j=i}^{k-1}(k-j)\binom{j-1}{i-1}\binom{n-k+j-1}{j}.\end{equation}
\end{teo}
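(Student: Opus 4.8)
The plan is to start from the known closed formula for the $Z$-polynomial of a uniform matroid, apply the $\gamma$-transform, and then massage the resulting binomial sum into the manifestly positive shape on the right-hand side of \eqref{eq:gamma-uniform}. Concretely, recall from \cite[Theorem 3.3]{gao-xie} (or the equivalent expression in \cite{proudfoot-zeta}) that $Z_{\U_{k,n}}(t)$ has an explicit hypergeometric description; substituting $t \mapsto t/(1+t)^2$ and multiplying by $(1+t)^k$ as in \eqref{eq:identity-gamma-p} produces $\gamma_{\U_{k,n}}(t)$ after extracting coefficients. The first step is therefore purely bookkeeping: write $Z_{\U_{k,n}}(t) = \sum_{j} c_j t^j$ with the $c_j$ given explicitly, and use the identity $\sum_j c_j t^j (1+t)^{\,?}$ together with the standard expansion $\frac{t^j}{(1+t)^{2j}}(1+t)^k = t^j (1+t)^{k-2j}$ to read off that $[t^i]\gamma_{\U_{k,n}}(t) = \sum_{j} c_j \binom{k-2j}{i-j}$. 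This reduces the theorem to a single binomial-coefficient identity relating $\sum_j c_j \binom{k-2j}{i-j}$ to the claimed double sum.

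Next I would verify that identity. Following Remark \ref{rem:wilf-zeilberger}, the cleanest route is to feed both sides to a Wilf--Zeilberger-type creative-telescoping implementation: both sides, viewed as functions of $i$ (with $k,n$ as parameters), should satisfy the same first-order recurrence, and they agree at $i=0$ where the constant term is $1$ (this matches the fact that $Z_{\U_{k,n}}(0)=1$, so the $\gamma$-polynomial also has constant term $1$). Establishing that the constant term equals $1$ separately is easy and worth isolating: it follows from $\gamma(f,0)$ equalling the top (equivalently, bottom) coefficient of the palindrome $f$, which for $Z_{\U_{k,n}}$ is $1$. For $i>0$ one checks the recurrence; alternatively one can prove the identity by hand using a Vandermonde-type convolution after reindexing, but since the paper has already committed to citing \cite{wilf-zeilberger} for such sums, that is the intended path.

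Finally, once the identity is in hand, $\gamma$-positivity is immediate: every factor on the right-hand side of \eqref{eq:gamma-uniform} is a non-negative integer or, in the case of $\frac{1}{k-i}\binom{k-i}{i}$, a non-negative rational (in fact it is the Catalan-like number $\frac{1}{k-i}\binom{k-i}{i}$, which is a non-negative integer for $0 \le i \le k$ by the standard cycle-lemma argument), and the inner sum is a sum of products of binomial coefficients with $j$ ranging over $i \le j \le k-1$, each term non-negative. Hence $[t^i]\gamma_{\U_{k,n}}(t) \ge 0$ for all $i$, and since the Boolean matroid case and $k=0,1$ are degenerate but consistent, all uniform matroids are $\gamma$-positive. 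The main obstacle is the middle step: getting the post-substitution coefficient extraction into exactly the stated closed form, i.e.\ proving the binomial identity $\sum_j c_j \binom{k-2j}{i-j} = \frac{1}{k-i}\binom{k-i}{i}\sum_{j=i}^{k-1}(k-j)\binom{j-1}{i-1}\binom{n-k+j-1}{j}$; this is where the hypergeometric transformations (or the computer verification) do the real work, and choosing the right form of $Z_{\U_{k,n}}(t)$ to start from will significantly affect how painful that computation is.
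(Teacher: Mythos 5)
Your overall strategy --- reduce \eqref{eq:gamma-uniform} to a binomial-coefficient identity against the known closed formula for $Z_{\U_{k,n}}(t)$, verify that identity by creative telescoping as licensed by Remark \ref{rem:wilf-zeilberger}, and read off $\gamma$-positivity because the closed form is manifestly non-negative --- is the same as the paper's. However, your key bookkeeping step is wrong, and the identity you propose to hand to the computer is false. Equation \eqref{eq:identity-gamma-p} goes in the direction $Z_{\U_{k,n}}(t)=\gamma_{\U_{k,n}}\bigl(\tfrac{t}{(1+t)^2}\bigr)\,(1+t)^k$: the substitution $t\mapsto t/(1+t)^2$ followed by multiplication by $(1+t)^k$ turns the $\gamma$-polynomial into the $Z$-polynomial, not the other way around. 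The resulting coefficient relation is $[t^m]Z_{\U_{k,n}}(t)=\sum_{i}\gamma_i\binom{k-2i}{m-i}$; your proposed extraction $[t^i]\gamma_{\U_{k,n}}(t)=\sum_j c_j\binom{k-2j}{i-j}$ has the roles of the $c_j$ and the $\gamma_i$ interchanged, and the genuine inverse transform carries alternating signs, so it cannot have this shape. Concretely, $Z_{\U_{2,n}}(t)=1+nt+t^2$ has $\gamma_1=n-2$ (which is what \eqref{eq:gamma-uniform} gives), whereas your sum evaluates to $n+2$; a WZ check of your identity would simply fail, and agreement at $i=0$ cannot rescue a recurrence argument for a false identity.

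The repair is exactly what the paper does: since the polynomials $t^i(1+t)^{k-2i}$ for $0\le i\le\lfloor\frac{k}{2}\rfloor$ form a basis for palindromic polynomials of degree $k$, the $\gamma$-expansion is unique, so it suffices to substitute the claimed values \eqref{eq:gamma-uniform} into the \emph{forward} relation and verify $[t^m]Z_{\U_{k,n}}(t)=\sum_{i=0}^m\binom{k-2i}{m-i}\gamma_i$ against the formula for $Z_{\U_{k,n}}(t)$ from \cite[Theorem 1.6]{kazhdan-uniform}; that (double-sum) identity is the one checked by computer, and \cite{WZinventiones} is the relevant method since one side is a double sum. Your treatment of the constant term ($\gamma_0=Z_{\U_{k,n}}(0)=1$) and the positivity conclusion are fine, except that your parenthetical claim that $\frac{1}{k-i}\binom{k-i}{i}$ is an integer is incorrect (for $k=6$, $i=2$ it equals $\tfrac{3}{2}$); non-negativity of this rational factor is all that positivity requires.
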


\begin{proof}
    In \cite[Theorem 1.6]{kazhdan-uniform} Gao et al. derived a formula for the coefficients of the $Z$-polynomial of the uniform matroid. 
    
    Showing that the formula in the statement does provide the coefficients of the $\gamma$-polynomial of $\U_{k,n}$ is equivalent to proving that
        \[ Z_{\U_{k,n}}(t) = \sum_{i=0}^k \gamma_i t^i(1+t)^{k-2i},\]
    or, more compactly,
        \[ [t^m] Z_{\U_{k,n}}(t) = \sum_{i=0}^m \binom{k-2i}{m-i}\gamma_i.\]
    where $\gamma_i$ is the right-hand-side of equation \eqref{eq:gamma-uniform} for $i>0$ and is $1$ for $i=0$.
    
    In other words, we reduced the problem to the verification of an identity involving two sums of products of binomial coefficients. The computer proof is omitted, cf. Remark \ref{rem:wilf-zeilberger}. It is worth pointing out that in this case \cite{WZinventiones} provides the right method, as one of the expressions is actually a double sum.
\end{proof}

\begin{obs}\label{rem:gamma-for-corank1}
    The formula we obtained above implies that when $n=k+1$, the $\gamma$-polynomial of $\U_{k,k+1}$ is given by
    \begin{equation}
         [t^i]\gamma_{\U_{k,k+1}}(t) = \frac{1}{k-i}\binom{k-i}{i} \sum_{j=i}^{k-1}(k-j)\binom{j-1}{i-1}.
    \end{equation}
    Using the identities $k\sum_{j=i}^{k-1}\binom{j-1}{i-1} = k\binom{k-1}{i}$ and $\sum_{j=i}^{k-1} j\binom{j-1}{i-1} = \sum_{j=i}^{k-1}i\binom{j}{i}=i \binom{k}{i+1}$, and that $k\binom{k-1}{i} - i\binom{k}{i+1} = \binom{k}{i+1}$, we obtain the following simpler expression
    \begin{equation}
         [t^i]\gamma_{\U_{k,k+1}}(t) = \frac{1}{k-i}\binom{k-i}{i} \binom{k}{i+1}.
    \end{equation}
\end{obs}

\begin{prop}\label{prop:ghk}
    The following formula for $g_{k,h}(t)$ holds:
        \[ g_{k,h}(t) = \gamma_{\U_{k,h+1}}(t) - \gamma_{\U_{k-1,h}}(t).\]
    In particular, $g_{k,h}(t)$ has non-negative coefficients and degree $\lfloor\frac{k}{2}\rfloor$.
\end{prop}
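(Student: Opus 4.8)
The plan is to combine the closed formula for $g_{k,h}(t)$ coming from the $Z$-polynomial with the additivity of the $\gamma$-operation on polynomials of the same degree. First I would recall from Corollary \ref{cor:p_(k,h) in terms of uniform matroids} that $z_{k,h}(t) = Z_{\U_{k,h+1}}(t) - (1+t)Z_{\U_{k-1,h}}(t)$, and that by Theorem \ref{thm:kl-for-relax} (in its $\gamma$-version) we have $Z_{\widetilde{\M}}(t) - Z_{\M}(t) = z_{k,h}(t)$ and $\gamma_{\widetilde{\M}}(t) - \gamma_{\M}(t) = g_{k,h}(t)$ for every matroid $\M$ of rank $k$ with a stressed hyperplane of size $h$. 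The key observation is that $\rk(\widetilde{\M}) = \rk(\M) = k$, so both $Z_{\widetilde{\M}}(t)$ and $Z_{\M}(t)$ are palindromic of degree $k$; writing each in the form $\gamma(\,\cdot\,,\tfrac{t}{(1+t)^2})(1+t)^k$ via \eqref{eq:identity-gamma-p} and subtracting shows $z_{k,h}(t) = g_{k,h}\!\left(\tfrac{t}{(1+t)^2}\right)(1+t)^k$. On the other hand, $Z_{\U_{k,h+1}}(t)$ is palindromic of degree $k$ while $Z_{\U_{k-1,h}}(t)$ is palindromic of degree $k-1$, so $(1+t)Z_{\U_{k-1,h}}(t)$ is palindromic of degree $k$, and using \eqref{eq:identity-gamma-p} for $\U_{k-1,h}$ at degree $k-1$ together with the extra factor $(1+t)$ gives $(1+t)Z_{\U_{k-1,h}}(t) = \gamma_{\U_{k-1,h}}\!\left(\tfrac{t}{(1+t)^2}\right)(1+t)^k$. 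Substituting both expansions into $z_{k,h}(t) = Z_{\U_{k,h+1}}(t) - (1+t)Z_{\U_{k-1,h}}(t)$ and cancelling the common factor $(1+t)^k$ yields $g_{k,h}(u) = \gamma_{\U_{k,h+1}}(u) - \gamma_{\U_{k-1,h}}(u)$ after the substitution $u = \tfrac{t}{(1+t)^2}$, which suffices since this substitution is generically invertible on polynomials.

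Once the formula $g_{k,h}(t) = \gamma_{\U_{k,h+1}}(t) - \gamma_{\U_{k-1,h}}(t)$ is established, the remaining assertions follow from explicit coefficient comparison using Theorem \ref{teo:gamma-positivity-uniform}. For non-negativity of the coefficients, I would compare $[t^i]\gamma_{\U_{k,h+1}}(t)$ with $[t^i]\gamma_{\U_{k-1,h}}(t)$ term by term using formula \eqref{eq:gamma-uniform}: both are of the shape $\tfrac{1}{k-i}\binom{k-i}{i}$ (respectively $\tfrac{1}{k-1-i}\binom{k-1-i}{i}$) times a sum of products of binomials, and the claim reduces to a single binomial-coefficient inequality which, following Remark \ref{rem:wilf-zeilberger}, can be checked either by hand via standard hypergeometric manipulations or by certified computer verification. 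For the degree statement, note that $\gamma_{\U_{k,h+1}}(t)$ has degree $\lfloor k/2\rfloor$ while $\gamma_{\U_{k-1,h}}(t)$ has degree $\lfloor (k-1)/2\rfloor \leq \lfloor k/2\rfloor$; it then remains to verify that the top coefficient $[t^{\lfloor k/2\rfloor}]g_{k,h}(t)$ is strictly positive, which when $k$ is odd is immediate since then $\lfloor (k-1)/2\rfloor < \lfloor k/2\rfloor$ and the top coefficient equals $[t^{\lfloor k/2\rfloor}]\gamma_{\U_{k,h+1}}(t) > 0$, and when $k$ is even requires checking that $[t^{k/2}]\gamma_{\U_{k,h+1}}(t) > [t^{k/2}]\gamma_{\U_{k-1,h}}(t)$, again a binomial inequality.

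The main obstacle I anticipate is purely at the level of the binomial-coefficient estimates needed for strict positivity of the coefficients of $g_{k,h}(t)$: the formula in \eqref{eq:gamma-uniform} is a (normalized) double sum, so the difference $[t^i]\gamma_{\U_{k,h+1}}(t) - [t^i]\gamma_{\U_{k-1,h}}(t)$ does not obviously telescope, and one has to either find a combinatorial model that makes the non-negativity transparent or invoke the Wilf--Zeilberger machinery as in Remark \ref{rem:wilf-zeilberger}. The structural part of the argument — that the $\gamma$-operation is additive across the relaxation because rank is preserved, and that an extra factor of $(1+t)$ in the $Z$-polynomial corresponds exactly to leaving the $\gamma$-polynomial unchanged while increasing the nominal degree by one — is routine given \eqref{eq:identity-gamma-p} and the formulas already in hand.
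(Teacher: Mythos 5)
Your proposal is correct and follows essentially the same route as the paper: the formula is read off from $z_{k,h}(t) = Z_{\U_{k,h+1}}(t) - (1+t)Z_{\U_{k-1,h}}(t)$ via the expansion \eqref{eq:identity-gamma-p} (the $(1+t)$ factor absorbing the rank drop, exactly as you argue), and non-negativity comes from comparing coefficients with Theorem \ref{teo:gamma-positivity-uniform}. Two small remarks. First, the comparison is easier than you anticipate: rewriting $\frac{1}{k-i}\binom{k-i}{i}=\frac{1}{i}\binom{k-i-1}{i-1}$, the two coefficient formulas become sums over the same index $j$ with the first dominating the second summand-by-summand (prefactor $\binom{k-i-1}{i-1}\geq\binom{k-i-2}{i-1}$, weight $k-j\geq k-1-j$, plus an extra $j=k-1$ term), so no hypergeometric or WZ work is needed. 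Second, your parity analysis in the degree argument is swapped: the degrees $\lfloor k/2\rfloor$ and $\lfloor (k-1)/2\rfloor$ coincide when $k$ is \emph{odd} (so that is the case requiring the strict inequality at the top coefficient, which follows from the extra $j=k-1$ term when $h\geq k$), while for $k$ even the subtracted polynomial has strictly smaller degree and positivity of the leading coefficient is immediate.
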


\begin{proof}
    The identity $z_{k,h}(t) = Z_{\U_{k,h+1}}(t) - (1+t) Z_{\U_{k-1,h}}(t)$ proves the first statement. For the second, notice that the constant term of $g_{k,h}(t)$ will be zero, and for $1\leq i\leq \lfloor\frac{k}{2}\rfloor$, we have
    \begin{align*}
        [t^i]\gamma_{\U_{k,h+1}}(t) &= \frac{1}{i}\binom{k-i-1}{i-1}\sum_{j=i}^{k-1}(k-j)\binom{j-1}{i-1}\binom{h-k+j}{j},\\
        [t^i]\gamma_{\U_{k-1,h}}(t) &= \frac{1}{i}\binom{k-i-2}{i-1}\sum_{j=i}^{k-2}(k-1-j)\binom{j-1}{i-1}\binom{h-k+j}{j}.
    \end{align*}
   Notice that a quick comparison term by term reveals that the first expression is greater than the second.
\end{proof}

\begin{obs}
    As a consequence of the preceding result, we have that if $\M$ is a $\gamma$-positive matroid of rank $k$ having a stressed hyperplane, then the relaxation $\widetilde{\M}$ is $\gamma$-positive too.
\end{obs}

\subsection{\texorpdfstring{$\gamma$}{Gamma}-positivity for sparse paving matroids}\label{sec:gamma-pos1}

In order to support Conjecture \ref{conj:gamma-positivity}, we will prove that it holds for all sparse paving matroids. The reason to approach this class and not all paving matroids originates in the fact that, in a sparse paving matroid, stressed hyperplanes and circuit-hyperplanes are exactly the same thing. In particular, we are able to leverage some well-known upper bounds for the maximum number of circuit-hyperplanes in a sparse paving matroid.

Particularly, if $\M$ is a sparse paving matroid of rank $k$ and cardinality $n$ having exactly $\lambda$ circuit-hyperplanes, Corollary \ref{coro:paving-gamma-relax} implies that
    \begin{equation}\label{eq:gamma-for-sparse-paving}
        \gamma_{\M}(t) = \gamma_{\U_{k,n}}
        (t) - \lambda\cdot g_{k,k}(t).
    \end{equation}

\begin{obs}\label{rem:formula_gkk}
    Notice that Proposition \ref{prop:ghk} tells us that $g_{k,k}(t) = \gamma_{\U_{k,k+1}}(t) - \gamma_{\U_{k-1,k}}(t)$. In particular, using Remark \ref{rem:gamma-for-corank1} and some simplifications, we obtain that 
        \[ g_{k,k}(t) = \sum_{i=1}^{\lfloor\frac{k}{2}\rfloor} \frac{2}{k-i-1}\binom{k-i-1}{i-1}\binom{k-1}{i+1} t^i.\]
\end{obs}

\begin{prop}\label{prop:bound-num-circuit-hyp}
    Let $\M$ be a sparse paving matroid of rank $k$ having $n$ elements. Then, the number of circuit-hyperplanes $\lambda$ of $\M$ satisfies:
        \begin{equation}\label{lambdakn} \lambda \leq \binom{n}{k}\min \left\{\frac{1}{k+1}, \frac{1}{n-k+1}\right\}.\end{equation}
\end{prop}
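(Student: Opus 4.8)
The plan is to recover this well-known bound by an elementary double-counting argument. The key structural input is that the circuit-hyperplanes of a sparse paving matroid of rank $k$ are $k$-subsets of the ground set $E$ that pairwise intersect in at most $k-2$ elements. Indeed, in a sparse paving matroid every circuit-hyperplane $H$ satisfies $|H|=k$ and is trivially a stressed hyperplane, so if $H_1\neq H_2$ are two circuit-hyperplanes, Proposition \ref{prop:hyperplane_intersection} gives $|H_1\cap H_2|\leq k-2$.

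First I would prove $\lambda\leq\frac{1}{n-k+1}\binom{n}{k}$. To each circuit-hyperplane $H$ assign the family of its $k$ subsets of cardinality $k-1$. If a $(k-1)$-subset $F$ belonged to two distinct circuit-hyperplanes $H_1,H_2$, then $F\subseteq H_1\cap H_2$ would force $|H_1\cap H_2|\geq k-1$, a contradiction. Hence these $\lambda$ families are pairwise disjoint subfamilies of the collection of all $\binom{n}{k-1}$ subsets of size $k-1$, so $k\lambda\leq\binom{n}{k-1}$. Since $\binom{n}{k-1}=\frac{k}{n-k+1}\binom{n}{k}$, this yields the first inequality.

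Next I would obtain $\lambda\leq\frac{1}{k+1}\binom{n}{k}$ by applying the previous step to the dual matroid. By definition $\M^*$ is again sparse paving, it has rank $n-k$ and cardinality $n$, and by Remark \ref{rem:circuits-cohyperplanes} its circuit-hyperplanes are exactly the complements of those of $\M$; in particular $\M^*$ has the same number $\lambda$ of circuit-hyperplanes. Applying the bound just proved to $\M^*$ gives $\lambda\leq\frac{1}{n-(n-k)+1}\binom{n}{n-k}=\frac{1}{k+1}\binom{n}{k}$. Alternatively one can run the same counting ``upward'' (when $n>k$): each circuit-hyperplane lies inside exactly $n-k$ subsets of size $k+1$, and no $(k+1)$-set can contain two distinct circuit-hyperplanes, since their intersection would then have size at least $2k-(k+1)=k-1$; this gives $(n-k)\lambda\leq\binom{n}{k+1}=\frac{n-k}{k+1}\binom{n}{k}$.

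Combining the two inequalities gives $\lambda\leq\binom{n}{k}\min\{\frac{1}{k+1},\frac{1}{n-k+1}\}$. I do not expect any genuine obstacle here: the only point worth isolating is the ``pairwise intersection at most $k-2$'' property of circuit-hyperplanes, which is already available as Proposition \ref{prop:hyperplane_intersection}, after which the statement reduces to the two short counting steps above together with the standard binomial identities $\binom{n}{k-1}=\frac{k}{n-k+1}\binom{n}{k}$ and $\binom{n}{k+1}=\frac{n-k}{k+1}\binom{n}{k}$.
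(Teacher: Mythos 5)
Your argument is correct, but it differs from the paper in an important practical sense: the paper does not prove this proposition at all, it simply cites Lemma 8.1 of the external reference \cite{ferroni-aim}, whereas you give a complete, self-contained double-counting proof. Your two counting steps are sound: since circuit-hyperplanes of a sparse paving matroid are $k$-sets that are stressed hyperplanes, Proposition \ref{prop:hyperplane_intersection} indeed gives pairwise intersections of size at most $k-2$, so no $(k-1)$-subset lies in two of them, giving $k\lambda\leq\binom{n}{k-1}=\frac{k}{n-k+1}\binom{n}{k}$; and the dual bound follows either from applying this to $\M^*$ (using Remark \ref{rem:circuits-cohyperplanes}, under which complementation is a bijection between the circuit-hyperplanes of $\M$ and of $\M^*$) or from your ``upward'' count into $(k+1)$-sets, where the inequality $|H_1\cap H_2|\geq 2k-(k+1)=k-1$ rules out two circuit-hyperplanes in a common $(k+1)$-set. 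The degenerate cases ($k=0$, $k=n$, or $n=k$ in the upward count) are harmless since then $\lambda=0$. What your approach buys is that the paper becomes independent of the cited lemma at this point, using only facts already established in Section \ref{sec:stressed-hyp}; what it does not buy is any strengthening --- sharper bounds of Graham--Sloane type (roughly $\lambda\leq\frac{1}{n}\binom{n}{k}$) exist, but the stated $\min$ bound is exactly what your elementary count yields and is all that the proof of Theorem \ref{thm:sparsepaving-gammapositive} requires.
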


\begin{proof}
    See \cite[Lemma 8.1]{ferroni-aim}. 
\end{proof}

\begin{teo}\label{thm:sparsepaving-gammapositive}
    Sparse paving matroids are $\gamma$-positive.
\end{teo}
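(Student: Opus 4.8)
The plan is to run everything through formula~\eqref{eq:gamma-for-sparse-paving}. If $\M$ is sparse paving of rank $k$ with $n$ elements and $\lambda$ circuit-hyperplanes, then $\gamma_{\M}(t)=\gamma_{\U_{k,n}}(t)-\lambda\, g_{k,k}(t)$. By Theorem~\ref{teo:gamma-positivity-uniform} the polynomial $\gamma_{\U_{k,n}}(t)$ has non-negative coefficients and constant term $1$, and by Proposition~\ref{prop:ghk} together with the explicit expression in Remark~\ref{rem:formula_gkk}, the polynomial $g_{k,k}(t)$ has non-negative coefficients and vanishing constant term. Hence the constant coefficient of $\gamma_{\M}(t)$ is automatically $1$, and $\gamma$-positivity of $\M$ is equivalent to the inequalities $\lambda\cdot[t^i]g_{k,k}(t)\le[t^i]\gamma_{\U_{k,n}}(t)$ for $1\le i\le\lfloor k/2\rfloor$.

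The first step is to eliminate $\lambda$ by invoking Proposition~\ref{prop:bound-num-circuit-hyp}: since $\lambda\le\binom{n}{k}\min\{\tfrac{1}{k+1},\tfrac{1}{n-k+1}\}$ and the coefficients of $g_{k,k}(t)$ are non-negative, it suffices to show
\[
\binom{n}{k}\min\Big\{\tfrac{1}{k+1},\tfrac{1}{n-k+1}\Big\}\cdot[t^i]g_{k,k}(t)\;\le\;[t^i]\gamma_{\U_{k,n}}(t)
\]
for all $k$, all $n\ge k$, and all $1\le i\le\lfloor k/2\rfloor$. Substituting the closed forms from Theorem~\ref{teo:gamma-positivity-uniform} and Remark~\ref{rem:formula_gkk} and simplifying via $\tfrac{1}{k-i}\binom{k-i}{i}=\tfrac1i\binom{k-i-1}{i-1}$ and $\sum_{j=i}^{k-1}(k-j)\binom{j-1}{i-1}=\binom{k}{i+1}$ (the value extracted in Remark~\ref{rem:gamma-for-corank1}), this collapses to a single explicit inequality in which one product of binomial coefficients is bounded by the finite sum $\sum_{j=i}^{k-1}(k-j)\binom{j-1}{i-1}\binom{n-k+j-1}{j}$.

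Verifying that inequality is the real content. Small coranks can be handled by hand: for $n=k$ one has $\lambda=0$ and nothing to prove, and for $n=k+1$ the bound forces $\lambda\le 1$, so $\gamma_{\M}(t)$ is either $\gamma_{\U_{k,k+1}}(t)$ or $\gamma_{\U_{k,k+1}}(t)-g_{k,k}(t)=\gamma_{\U_{k-1,k}}(t)$, both $\gamma$-positive by Theorem~\ref{teo:gamma-positivity-uniform}. For $n\ge k+2$ one splits according to which of $\tfrac{1}{k+1}$, $\tfrac{1}{n-k+1}$ realises the minimum; in the range $n\ge 2k$, where the minimum is $\tfrac{1}{n-k+1}$, both sides become polynomials in $n$ of degree $k-1$ for fixed $k$ and $i$ (on the left, $\tfrac{\binom nk}{n-k+1}=\tfrac{n(n-1)\cdots(n-k+2)}{k!}$), and a comparison of leading coefficients reduces to $k(i+1)\ge 2(k-1)$, which always holds, so the right-hand side dominates for all sufficiently large $n$. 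The finitely many remaining ranges of $n$ are then checked either by an explicit estimate that keeps several terms of the $j$-sum, or, in the spirit of Remark~\ref{rem:wilf-zeilberger}, by a computer-algebra certificate, the double sum involved being exactly the situation addressed by the method of~\cite{WZinventiones}. The main obstacle is making this last verification uniform in $k$ and $i$: small numerical examples show the gap between the two sides can be quite tight, so a single dominant term of the $j$-sum is not enough and one is forced either into a multi-term estimate or into the certificate.
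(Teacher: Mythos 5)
Your setup coincides exactly with the paper's: you use \eqref{eq:gamma-for-sparse-paving}, bound $\lambda$ via Proposition \ref{prop:bound-num-circuit-hyp}, substitute the formulas of Theorem \ref{teo:gamma-positivity-uniform} and Remark \ref{rem:formula_gkk}, and correctly reduce everything to the single inequality \eqref{ineq:target2}. The problem is that at precisely this point your argument stops being a proof. Your asymptotic comparison (keeping only the $j=k-1$ term of the sum and comparing leading coefficients in $n$, which indeed reduces to $k(i+1)\geq 2(k-1)$) only shows the inequality for $n$ large \emph{relative to a fixed pair $(k,i)$}; the leftover cases are not ``finitely many ranges of $n$'' in any absolute sense, since the threshold depends on $k$ and $i$ and there are infinitely many such pairs. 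You acknowledge this yourself (``the main obstacle is making this last verification uniform in $k$ and $i$''), and the two exits you propose do not close it: the unspecified ``multi-term estimate'' is exactly the missing content, and a WZ-type computer certificate in the spirit of Remark \ref{rem:wilf-zeilberger} certifies \emph{identities} (or finitely many instances), not an inequality over an infinite three-parameter family, unless you first extract a closed form or recurrence and then argue positivity by hand --- which is never done in your proposal.

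For comparison, the paper closes this gap with a uniform, fully explicit chain of estimates in the appendix: closed forms for the auxiliary sums $\sum_j j\binom{j-1}{i-1}\binom{n-k+j-1}{j}$ and $\sum_j\binom{j-1}{i-1}\binom{n-k+j-2}{j-1}$ (Lemmas \ref{lem:identity1} and \ref{lem:identity2}), a resulting lower bound for $\sum_j\binom{j-1}{i-1}\binom{n-k+j-1}{j}$ (Lemma \ref{lem:identity3}), and elementary inequalities in $c_{k,n}$ (Lemma \ref{lem:elementary inequalities} and Lemma \ref{lem:identity4}); these combine to give \eqref{ineq:target2} for all $i\geq 2$ once $n\geq 15$, with only the absolute range $n\leq 14$ left to finite inspection (Proposition \ref{prop:key_i_geq_2}), and the case $i=1$ is handled separately with two exact summation identities (Proposition \ref{prop:key_i_geq_1}). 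Something of this kind --- a lower bound on the full $j$-sum that is tight to within the $O(1/n)$ margin you observed, valid uniformly in $k$ and $i$ --- is what your proof still needs; without it, the theorem is not established.
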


\begin{proof}
   We will assume throughout the proof that $\M$ is a sparse paving matroid of rank $k$ and cardinality $n$ having exactly $\lambda$ circuit-hyperplanes. By equation \eqref{eq:gamma-for-sparse-paving}, we have
        \[ \gamma_{\M}(t) = \gamma_{\U_{k,n}}(t) - \lambda \cdot g_{k,k}(t).\]
    Let us fix $0\leq i\leq \lfloor\frac{k}{2}\rfloor$. Proving that $[t^i]\gamma_{\M}(t)$ is non-negative amounts to showing that
        \[ \lambda [t^i]g_{k,k}(t) \leq [t^i]\gamma_{\U_{k,n}}(t).\]
    Using Theorem \ref{teo:gamma-positivity-uniform} and Remark \ref{rem:formula_gkk}, this is equivalent to proving that
        \[ \frac{2\lambda}{k-i-1}\binom{k-i-1}{i-1}\binom{k-1}{i+1} \leq \frac{1}{k-i}\binom{k-i}{i} \sum_{j=i}^{k-1}(k-j)\binom{j-1}{i-1}\binom{n-k+j-1}{j}.\]
    If we write on the right-hand-side $\binom{k-i}{i} = \binom{k-i-1}{i-1}\frac{k-i}{i}$, we can cancel out a factor $\binom{k-i-1}{i-1}$ present in both sides, and approach the following simpler expression
    \[ \frac{2\lambda}{k-i-1}\binom{k-1}{i+1} \leq \frac{1}{i} \sum_{j=i}^{k-1}(k-j)\binom{j-1}{i-1}\binom{n-k+j-1}{j}.\]
    Observe that $\frac{k}{k-i-1}\binom{k-1}{i+1} =\binom{k}{i+1}$. Hence, the above is equivalent to
    \begin{equation}\label{ineq:target}
        \frac{2\lambda}{k}\binom{k}{i+1} \leq \frac{1}{i} \sum_{j=i}^{k-1}(k-j)\binom{j-1}{i-1}\binom{n-k+j-1}{j}.
    \end{equation}
    If we call $c_{k,n}=\max\{k,n-k\}+1$, by Proposition \ref{prop:bound-num-circuit-hyp}, we obtain that $\lambda \leq \frac{1}{c_{k,n}}\binom{n}{k}$, so that it suffices to prove
    \begin{equation}\label{ineq:target2}
        \frac{2}{kc_{k,n}}\binom{n}{k}\binom{k}{i+1} \leq \frac{1}{i} \sum_{j=i}^{k-1}(k-j)\binom{j-1}{i-1}\binom{n-k+j-1}{j}.
    \end{equation}
    This inequality is proved in Proposition \ref{prop:key_i_geq_2} in the appendix, under the assumption $i\geq 2$. The case $i=1$ is solved in Proposition \ref{prop:key_i_geq_1}.
\end{proof}

\subsection{Other \texorpdfstring{$\gamma$}{gamma}-positive matroids}\label{sec:gamma-pos2}

There are some classes of matroids that have been approached before in the study of Kazhdan--Lusztig polynomials. In particular, we mention a few of them.
\begin{itemize}
    \item The matroid having as ground set $E=\mathbb{F}_q^k\smallsetminus\{0\}$ and dependences dictated by the linear dependences over the finite field $\mathbb{F}_q$ \cite{proudfoot-zeta}. The simplification of this matroid is known in the literature as the \emph{projective geometry} $\operatorname{PG}(k-1,q)$ \cite[p. 161]{oxley}.
    \item Thagomizer matroids \cite{gedeon-thagomizer,equivariant-thagomizer}. These are graphic matroids that come from a complete tripartite graph with parts of sizes $1$, $1$ and $n$. Such a matroid is denoted by $\mathsf{T}_n$.
    \item The graphic matroid associated to the complete bipartite graph $\mathsf{K}_{2,n}$ \cite{gedeonsurvey}.
    \item Fans, wheels and whirl matroids \cite{wheels-whirls}.
    \item Braid matroids \cite{proudfoot-zeta,karn-wakefield}. The braid matroid $\mathsf{K}_n$ is the graphic matroid associated to the complete graph on $n$ vertices or, equivalently, the matroid associated to the Coxeter arrangement of type $\mathrm{A}_{n-1}$.
\end{itemize}

In \cite[Proposition 5.5]{proudfoot-zeta} Proudfoot et al. proved that the first of the above families is $Z$-real-rooted, by using techniques of interlacing of roots. Projective geometries are modular matroids and their importance comes from the fact that every matroid representable over a finite field is obtained as a restriction of a projective geometry. This resembles the fact that every graphic matroid can be obtained as a restriction of a braid matroid: just take the complete graph and delete the necessary edges. It seems to us that proving the real-rootedness of $Z_{\operatorname{PG}(k-1,q)}(t)$ is the best way to deduce the $\gamma$-positivity. However we leave as a question if there is a combinatorial formula that reveals the positivity of the $\gamma$-polynomial explicitly. To simplify future referencing, we can state the following.

\begin{prop}
    Projective geometries are $\gamma$-positive.
\end{prop}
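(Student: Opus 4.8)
The plan is to obtain $\gamma$-positivity as an immediate consequence of the real-rootedness of the $Z$-polynomial, rather than by exhibiting an explicit combinatorial formula. Proudfoot, Xu and Young proved in \cite[Proposition 5.5]{proudfoot-zeta}, via an interlacing argument, that $Z_{\operatorname{PG}(k-1,q)}(t)$ is real-rooted for every $k$ and every prime power $q$. So the task reduces to feeding this fact into Proposition \ref{prop:implications-gamma}.

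Concretely, first I would record the two elementary hypotheses required by Proposition \ref{prop:implications-gamma}. The polynomial $Z_{\operatorname{PG}(k-1,q)}(t)$ is palindromic of degree $k=\rk(\operatorname{PG}(k-1,q))$ by \eqref{eq:palindromic}. Moreover all of its coefficients are strictly positive: since $\operatorname{PG}(k-1,q)$ is simple, hence loopless, its constant term equals $P_{\operatorname{PG}(k-1,q)}(0)=1$; the leading coefficient is the coefficient coming from the flat $F=E$ in the defining sum of $Z_{\M}$, namely $t^{k}P_{\U_{0,0}}(t)=t^{k}$, so it is $1$; and by the unimodality of the coefficients of $Z_{\M}(t)$ established in \cite[Theorem 1.2]{braden-huh}, together with palindromicity, every intermediate coefficient is at least $1$. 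With these in hand, Proposition \ref{prop:implications-gamma} applied to $f=Z_{\operatorname{PG}(k-1,q)}$ gives that $f$ is $\gamma$-positive, i.e.\ $\gamma_{\operatorname{PG}(k-1,q)}(t)=\gamma(Z_{\operatorname{PG}(k-1,q)},t)$ has non-negative coefficients, which is exactly the assertion.

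I do not anticipate a genuine obstacle; the only points deserving care are checking that the hypotheses of Proposition \ref{prop:implications-gamma} are literally met (positivity, not just non-negativity, of the coefficients, and palindromicity) and that the real-rootedness result of \cite{proudfoot-zeta} is invoked in the full generality we need. An alternative, more self-contained route would be to extract a manifestly non-negative closed formula for $[t^i]\gamma_{\operatorname{PG}(k-1,q)}(t)$ from the known formula for $Z_{\operatorname{PG}(k-1,q)}(t)$ (this would also answer the question posed just before the statement, about a combinatorial witness for the positivity), but this is strictly harder and is not needed for the proposition as stated.
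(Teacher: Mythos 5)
Your proposal is correct and follows essentially the same route as the paper: the paper also deduces $\gamma$-positivity of projective geometries by combining the real-rootedness of $Z_{\operatorname{PG}(k-1,q)}(t)$ from \cite[Proposition 5.5]{proudfoot-zeta} with Proposition \ref{prop:implications-gamma}, and likewise leaves an explicit combinatorial formula for the $\gamma$-polynomial as an open question. Your verification of the hypotheses (palindromicity and strict positivity of the coefficients of $Z$) is a sound elaboration of what the paper leaves implicit.
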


As for the second of the families above, thagomizer matroids, we can prove that they are $\gamma$-positive by means of an explicit expression of its $\gamma$-polynomial.

\begin{prop}\label{prop: thagomizer gamma positive}
    The $\gamma$-polynomial of the thagomizer matroid $\mathsf{T}_n$ satisfies
        \[ \gamma_{\mathsf{T}_n}(t) = 1 + t\sum_{k=1}^{n} \binom{n}{k} \gamma_{\U_{k-1,k}}(t).\]
    In particular, $\mathsf{T}_n$ is $\gamma$-positive.
\end{prop}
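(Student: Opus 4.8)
The plan is to recast the assertion in terms of the $Z$-polynomial, compute $Z_{\mathsf{T}_n}(t)$ from the lattice of flats of $\mathsf{K}_{1,1,n}$, and then recognize the $\gamma$-expansion via a binomial identity. Since $\rk(\mathsf{T}_n)=n+1$, applying the substitution $t\mapsto \tfrac{t}{(1+t)^2}$ and multiplying by $(1+t)^{n+1}$ in \eqref{eq:identity-gamma-p} shows that the claimed formula for $\gamma_{\mathsf{T}_n}(t)$ is equivalent to
\[ Z_{\mathsf{T}_n}(t) \;=\; (1+t)^{n+1}\;+\;t\sum_{k=1}^{n}\binom{n}{k}(1+t)^{n-k}\,Z_{\U_{k-1,k}}(t),\]
where one uses $Z_{\U_{k-1,k}}(t)=\gamma_{\U_{k-1,k}}\!\big(\tfrac{t}{(1+t)^2}\big)(1+t)^{k-1}$, again by \eqref{eq:identity-gamma-p}.

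First I would compute $Z_{\mathsf{T}_n}(t)$ directly from $Z_{\mathsf{T}_n}(t)=\sum_{F\in\mathscr{L}(\mathsf{T}_n)}t^{\rk(F)}P_{(\mathsf{T}_n)_F}(t)$. The flats of $\mathsf{T}_n=M(\mathsf{K}_{1,1,n})$ are the partitions of the $n+2$ vertices whose blocks induce connected subgraphs; as the $n$ non-apex vertices form an independent set, such a partition is controlled by whether the two apex vertices lie in a common block. This yields two families. In the first, the apexes share a block; these flats are indexed by a subset $S\subseteq[n]$, have rank $|S|+1$, and their contractions are direct sums of copies of $\U_{1,2}$, so $P=1$ and their total contribution is $\sum_{j=0}^{n}\binom{n}{j}t^{j+1}=t(1+t)^n$. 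In the second, the apexes lie in distinct blocks; these are indexed by an ordered pair of disjoint subsets $S_a,S_b\subseteq[n]$, have rank $|S_a|+|S_b|$, and their contractions simplify to the smaller thagomizer matroid $\mathsf{T}_m$ with $m=n-|S_a|-|S_b|$. Summing the multinomial coefficients over $|S_a|+|S_b|=n-m$ gives $\binom{n}{m}2^{\,n-m}$, whence
\[ Z_{\mathsf{T}_n}(t)\;=\;t(1+t)^n\;+\;\sum_{m=0}^{n}\binom{n}{m}(2t)^{\,n-m}\,P_{\mathsf{T}_m}(t).\]

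It then remains to match this with the displayed $Z$-form. Here I would substitute the known closed formula for $P_{\mathsf{T}_m}(t)$ from \cite{gedeon-thagomizer} and the formula for $\gamma_{\U_{k-1,k}}(t)$ from Remark \ref{rem:gamma-for-corank1} (together with \eqref{eq:identity-gamma-p} to recover $Z_{\U_{k-1,k}}(t)$), extract coefficients of each power of $t$ on both sides, and reduce the equality to an identity between (double) sums of products of binomial coefficients, to be verified by computer as explained in Remark \ref{rem:wilf-zeilberger}. Once the formula for $\gamma_{\mathsf{T}_n}(t)$ is established, $\gamma$-positivity is immediate: by Theorem \ref{teo:gamma-positivity-uniform} every $\gamma_{\U_{k-1,k}}(t)$ has non-negative coefficients and the coefficients $\binom{n}{k}$ are non-negative, so $1+t\sum_{k=1}^{n}\binom{n}{k}\gamma_{\U_{k-1,k}}(t)$ is a manifestly non-negative combination of monomials. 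I expect the main obstacle to be this last binomial identity, since it forces one to carry along both the closed form of $P_{\mathsf{T}_m}(t)$ and that of $Z_{\U_{k-1,k}}(t)$ and then collapse a triple sum; a plausible alternative is to prove the $Z$-form by strong induction on $n$ using the recursion above, though that route still requires an independent handle on $P_{\mathsf{T}_m}(t)$.
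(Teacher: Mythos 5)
Your proposal is correct and follows essentially the same route as the paper: both compute $Z_{\mathsf{T}_n}(t)$ from the flats of $\mathsf{K}_{1,1,n}$ (your two families of flats, indexed by whether the apexes share a block, are exactly the paper's flats containing or not containing the edge $e$), both then substitute the closed formula for $P_{\mathsf{T}_m}(t)$ from \cite{gedeon-thagomizer} and verify the resulting binomial identity by computer as in Remark \ref{rem:wilf-zeilberger}, and both pass between the $Z$-form and the $\gamma$-form via \eqref{eq:identity-gamma-p}. The only differences are cosmetic (you state the equivalence with the $Z$-identity up front rather than converting at the end).
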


\begin{proof}
    In \cite[Section 3]{gedeon-thagomizer} we find the following characterization of the family of flats of the thagomizer $\T_n$. Call $e$ the edge in $\T_n$ connecting the two parts of size $1$ and, for every vertex in the part of size $n$, call the pair of edges adjacent to it a spike. Then, for every $i$ we have
    \begin{itemize}
        \item $\binom{n}{i}$ flats of rank $i+1$ containing $e$, which are made of $i$ spikes and the edge $e$. For such flats, $\left(\T_n \right)_F$ is isomorphic, after simplification, to a Boolean matroid $\B_{n-i-1}$.
        \item $\binom{n}{i}2^i$ flats of rank $i$ not containing $e$, which are obtained by taking exactly one edge from $i$ spikes. For such flats, $\left(\T_n \right)_F$ is isomorphic to a thagomizer matroid $\T_{n-i}$.
    \end{itemize}
    Putting all these pieces together, we reach the following expression for the $Z$-polynomial of $\mathsf{T}_n$,
        \begin{align*}
            Z_{\mathsf{T}_n}(t) =& \sum_{F\in \mathscr{L}(\T_n)}t^{\rk(F)}P_{(\T_n)_F}(t)\\
            =& \sum_{i=0}^n\binom{n}{i}t^{i+1}P_{\B_{n-i+1}}(t) + \sum_{i=0}^n\binom{n}{i}2^it^iP_{\T_{n-i}}(t)\\
            =&\sum_{i=0}^n \binom{n}{i}\left(t+2^i P_{\T_{n-i}}(t)\right) t^i,
        \end{align*}
    where we leveraged the fact that $P_{\B_n}(t)=1$ for all $n$. Also, \cite[Lemma 3.1]{gedeon-thagomizer} gives an explicit formula for the Kazhdan--Lusztig polynomial of thagomizer matroids, which, once substituted into the equation, yields an explicit hypergeometric expression for $Z_{\mathsf{T}_n}(t)$. Thus, having such a formula for $Z_{\T_n}(t)$, we can use a computer, in the lines of Remark \ref{rem:wilf-zeilberger}, to prove that it satisfies 
        \[ Z_{\T_n}(t) = (1+t)^{n+1} + t\sum_{k=1}^n \binom{n}{k} Z_{\U_{k-1,k}}(t) (1+t)^{n-k}.\]
    This can be rewritten as
        \[ \frac{Z_{\T_n}(t)}{(1+t)^{n+1}} = 1 + \frac{t}{(1+t)^2} \sum_{k=1}^n \binom{n}{k} \frac{Z_{\U_{k-1,k}}(t)}{(1+t)^{k-1}},\]
    and noticing that $\mathsf{T}_n$ has rank $n+1$, and using the identity of equation \eqref{eq:identity-gamma-p}, this is
        \[ \gamma_{\T_n}\left(\frac{t}{(1+t)^2}\right) = 1 + \frac{t}{(1+t)^2} \sum_{k=1}^n \binom{n}{k} \gamma_{\U_{k-1,k}}\left(\frac{t}{(1+t)^2}\right).\]
    After using the substitution $s = \frac{t}{(1+t)^2}$ we obtain the equality of the statement.
\end{proof}

\begin{obs}\label{obs:resemblance}
    This result resembles the identity  \cite[Corollary 3]{equivariant-thagomizer}, which gives the following compact expression for the Kazhdan--Lusztig polynomial of thagomizer matroids
    \[ P_{\mathsf{T}_n}(t) = 1 + t \sum_{k=1}^n \binom{n}{k} P_{\U_{k-1,k}}(t).\]
    In fact, it was this identity which hinted us the preceding expression for $\gamma_{\mathsf{T}_n}(t)$.
\end{obs}

Let us now consider the graphic matroid $\K_{2,n}$ given by the complete bipartite graph with parts $(2,n)$. Since $\mathsf{K}_{2,n}$ is obtained from $\mathsf{T}_n$ via the deletion of one edge, we can benefit from a result by Braden and Vysogorets \cite{braden-vysogorets}. The following is a direct consequence of their results.

\begin{prop}
    For every $n\geq 2$,
    \[ Z_{\T_n}(t) = Z_{\K_{2,n}}(t). \]
    In particular, $\K_{2,n}$ is $\gamma$-positive.
\end{prop}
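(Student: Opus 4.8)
The plan is to recognise $\K_{2,n}$ as a single‑element deletion of the thagomizer matroid and to feed this into the deletion formula for the $Z$‑polynomial established by Braden and Vysogorets \cite{braden-vysogorets}. Let $e$ be the edge of the thagomizer graph joining the two vertices that constitute the parts of size one; then the graph underlying $\K_{2,n}$ is exactly the thagomizer graph with $e$ removed. Since $n\geq 2$, this graph is connected, so $e$ is not a coloop (bridge) of $\mathsf{T}_n$, which is the hypothesis under which the Braden--Vysogorets deletion results apply. I would also record the two pieces of local data that the formula needs: contracting $e$ identifies the two small vertices and turns each of the $n$ spikes into a pair of parallel edges, so that $\si(\mathsf{T}_n/e)\cong\B_n$; consequently $P_{\mathsf{T}_n/e}(t)=1$ and $Z_{\mathsf{T}_n/e}(t)=(1+t)^n$.

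With this in hand, the proof consists of substituting into the Braden--Vysogorets expression for $Z_{\mathsf{T}_n\sm e}$ in terms of $Z_{\mathsf{T}_n}$ and contributions coming from $\mathsf{T}_n/e$ and the various localizations, and checking that, because $\si(\mathsf{T}_n/e)$ is Boolean, all of these correction terms cancel, leaving $Z_{\K_{2,n}}(t)=Z_{\mathsf{T}_n}(t)$. A more self‑contained way to organise the same computation, which I would include if the bare citation feels too terse, is to expand both $Z$‑polynomials directly over their lattices of flats. The flats of $\mathsf{T}_n$ are described in \cite[Section~3]{gedeon-thagomizer}, while the flats of $\K_{2,n}$ correspond to the partitions of its vertex set into parts inducing connected subgraphs; in either matroid every localization $(\,\cdot\,)_F$ is, after simplification, either a Boolean matroid or a smaller thagomizer $\mathsf{T}_{n-i}$, so its Kazhdan--Lusztig polynomial is $1$ or $P_{\mathsf{T}_{n-i}}(t)$. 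Matching the two expansions term by term collapses the identity $Z_{\mathsf{T}_n}(t)=Z_{\K_{2,n}}(t)$ to the single Kazhdan--Lusztig identity $P_{\K_{2,n}}(t)=P_{\mathsf{T}_n}(t)+t$, which is in turn exactly the instance of the Braden--Vysogorets deletion formula for $P$ in which $\si(\mathsf{T}_n/e)\cong\B_n$ annihilates every term but one. Finally, since $Z_{\K_{2,n}}(t)=Z_{\mathsf{T}_n}(t)$ forces $\gamma_{\K_{2,n}}(t)=\gamma_{\mathsf{T}_n}(t)$, the $\gamma$‑positivity of $\K_{2,n}$ is immediate from Proposition~\ref{prop: thagomizer gamma positive}.

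The delicate point, and the reason the statement is restricted to $n\geq 2$, is the vanishing of the correction terms: one must check that the only flat of $\mathsf{T}_n$ contributing a surviving term is the rank‑one flat $\{e\}$, whose localization is $\mathsf{T}_n/e$ with trivial Kazhdan--Lusztig polynomial, and which accounts precisely for the extra $t$ in $P_{\K_{2,n}}(t)=P_{\mathsf{T}_n}(t)+t$; every larger flat containing $e$ has a localization in which $e$ has become parallel to another element, so the corresponding correction is zero. For $n=1$ this bookkeeping breaks down (the analogous identity would read $P_{\K_{2,1}}(t)=P_{\mathsf{T}_1}(t)+t$, which fails since $\mathsf{T}_1\cong\U_{2,3}$ while $\K_{2,1}\cong\B_2$), so one genuinely needs $n\geq 2$; everything else is routine once the deletion formula and the identification $\si(\mathsf{T}_n/e)\cong\B_n$ are on the table.
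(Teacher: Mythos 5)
Your proposal is correct and follows essentially the same route as the paper: the paper's proof is simply a citation of the Braden--Vysogorets deletion results (\cite[Theorem 2.8]{braden-vysogorets}), which is exactly the machinery you invoke, viewing $\K_{2,n}$ as $\mathsf{T}_n\sm e$ with $\si(\mathsf{T}_n/e)\cong\B_n$ so that the correction terms vanish. Your extra bookkeeping (the identity $P_{\K_{2,n}}(t)=P_{\mathsf{T}_n}(t)+t$ and the failure at $n=1$) is consistent with the cited formula, so it only makes explicit what the paper leaves to the reference.
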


\begin{proof}
    See \cite[Theorem 2.8]{braden-vysogorets}.
\end{proof}

In \cite[Theorem 1.6]{wheels-whirls} the $Z$-polynomial of three matroids called fans, wheels and whirls were found. Two of these three matroids are actually graphic. Concretely, the fan matroid $\mathsf{F}_n$ is the graphic matroid associated to a graph with $n+1$ vertices, which is obtained by connecting a distinguished vertex to all the vertices in a path of length $n$. The wheel matroid $\mathsf{W}_n$ is the matroid of a graph with $n+1$ vertices, which is constructed by connecting a distinguished vertex to all the vertices in a cycle of length $n$. The whirl matroid $\mathsf{W}^n$ is obtained by relaxing the only circuit-hyperplane of the matroid $\mathsf{W}_n$.

According to a result by Lu, Xie and Yang \cite[Theorem 1.7]{wheels-whirls}, the $Z$-polynomials of these families of matroids are always real-rooted, so that by Proposition \ref{prop:implications-gamma} we know that they are $\gamma$-positive. It is desirable, however, to understand if a nice expression for their $\gamma$-polynomials exists. Fortunately, it is the case. By a result of Postnikov, Reiner and Williams \cite{postnikov-reiner-williams}, the $Z$-polynomial of the matroid $\mathsf{F}_n$ coincides with the $h$-polynomial of the \emph{cyclohedron}, and the $Z$-polynomial of the whirl matroid $\mathsf{W}^n$ coincides with the $h$-vector of the \emph{associahedron}. In particular, by \cite[Proposition 11.14 and Proposition 11.5]{postnikov-reiner-williams}, there already exist combinatorial interpretations for the coefficients of $\gamma_{\mathsf{F}_n}(t)$ and $\gamma_{\mathsf{W}^n}(t)$. See also \cite[Equation (64)]{athanasiadis} and the references mentioned below.

Notice that we can read directly the $\gamma$-polynomial of the wheel $\mathsf{W}_n$ from the $\gamma$-polynomial of $\mathsf{W}^n$, as one is a relaxation of the other.

\begin{prop}
    Fans, wheels and whirls are $\gamma$-positive.
\end{prop}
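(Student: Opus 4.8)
The plan is to obtain this as an immediate consequence of the real-rootedness of the three $Z$-polynomials in question, proved by Lu, Xie and Yang in \cite[Theorem 1.7]{wheels-whirls}, combined with the chain of implications recorded in Proposition \ref{prop:implications-gamma}. In other words, the only thing one really has to check is that the hypotheses of Proposition \ref{prop:implications-gamma}, namely palindromicity together with \emph{strictly} positive coefficients, are met by $Z_{\mathsf{F}_n}(t)$, $Z_{\mathsf{W}_n}(t)$ and $Z_{\mathsf{W}^n}(t)$; then real-rootedness forces $\gamma$-positivity for all three families simultaneously.

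Concretely I would proceed as follows. First, for any loopless matroid $\M$ the polynomial $Z_{\M}(t)$ is palindromic by \eqref{eq:palindromic}, its constant and leading coefficients both equal $1$, its coefficients are non-negative by Theorem \ref{thm:braden-huh}, and the whole sequence is unimodal by \cite[Theorem 1.2]{braden-huh}; hence every coefficient of $Z_{\M}(t)$ is at least $1$, so it has strictly positive coefficients. Applying this to $\mathsf{F}_n$, $\mathsf{W}_n$ and $\mathsf{W}^n$, and invoking \cite[Theorem 1.7]{wheels-whirls} to know these $Z$-polynomials are real-rooted, Proposition \ref{prop:implications-gamma} yields that $\gamma_{\mathsf{F}_n}(t)$, $\gamma_{\mathsf{W}_n}(t)$ and $\gamma_{\mathsf{W}^n}(t)$ all have non-negative coefficients. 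I would also record, for fans and whirls, that by the identifications of $Z_{\mathsf{F}_n}(t)$ with the $h$-polynomial of the cyclohedron and of $Z_{\mathsf{W}^n}(t)$ with that of the associahedron, the coefficients of $\gamma_{\mathsf{F}_n}(t)$ and $\gamma_{\mathsf{W}^n}(t)$ admit the explicit combinatorial interpretations of \cite[Proposition 11.14 and Proposition 11.5]{postnikov-reiner-williams} (see also \cite[Equation (64)]{athanasiadis}). For the wheel it is worth emphasizing that, since $\mathsf{W}^n$ is the relaxation of the unique circuit-hyperplane of $\mathsf{W}_n$, the relaxation formula for the $\gamma$-polynomial gives $\gamma_{\mathsf{W}_n}(t) = \gamma_{\mathsf{W}^n}(t) - g_{n,n}(t)$; this \emph{computes} $\gamma_{\mathsf{W}_n}(t)$ from $\gamma_{\mathsf{W}^n}(t)$, but since $g_{n,n}(t)$ has non-negative coefficients (Proposition \ref{prop:ghk}) it does not by itself establish non-negativity of $\gamma_{\mathsf{W}_n}(t)$, which is exactly why the real-rootedness input is needed in the wheel case.

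I do not expect a genuine obstacle here: all the weight is carried by the cited real-rootedness theorem and by Proposition \ref{prop:implications-gamma}, and the sole non-automatic point is the elementary verification that the relevant $Z$-polynomials have strictly positive coefficients. The only scenario in which the argument would become hard is if one insisted on avoiding \cite[Theorem 1.7]{wheels-whirls}: then one would have to write down the $\gamma$-expansions of $Z_{\mathsf{W}_n}(t)$ (and $Z_{\mathsf{F}_n}(t)$, $Z_{\mathsf{W}^n}(t)$) explicitly from the formulas of \cite[Theorem 1.6]{wheels-whirls} and prove non-negativity of their coefficients directly, in the spirit of what was done for uniform and sparse paving matroids.
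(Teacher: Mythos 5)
Your proposal is correct and takes essentially the same route as the paper: both deduce $\gamma$-positivity of fans, wheels and whirls from the real-rootedness of their $Z$-polynomials \cite[Theorem 1.7]{wheels-whirls} together with Proposition \ref{prop:implications-gamma}, with the paper likewise recording the cyclohedron/associahedron interpretations and the wheel--whirl relaxation only as supplementary remarks. Your extra verification that the $Z$-polynomials have strictly positive coefficients (constant and leading coefficients equal to $1$ plus unimodality), and your caveat that the relaxation identity $\gamma_{\mathsf{W}_n}(t)=\gamma_{\mathsf{W}^n}(t)-g_{n,n}(t)$ does not by itself give positivity for the wheel, are both correct refinements of details the paper leaves implicit.
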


Finally, the question of finding an expression for the $\gamma$-polynomial of the braid matroids remains widely open. The problem of calculating $P_{\M}(t)$ for a braid matroid $\M$ is addressed in \cite{karn-wakefield} and, according to \cite{proudfoot-zeta}, it was the main motivation for defining the $Z$-polynomial in the first place. Unfortunately, we were not able to guess any nice formulas for the $\gamma$-polynomial in this case.

\begin{problem}
    Provide a combinatorial formula for the $\gamma$-polynomial of braid matroids.
\end{problem}

It is important to observe that for $n\leq 6$, the $\gamma$-polynomial of the braid matroid $\mathsf{K}_n$ coincides with the $\gamma$-polynomial of the binary projective geometry $\operatorname{PG}(n-2,2)$. One should not be misguided by this fact, as these two polynomials differ for $n\geq 7$.

\section{Skew and Standard Young Tableaux}\label{sec:tableaux}

\subsection{The main tableaux} In this subsection we define tableaux-inspired objects, following the pace of \cite[Section 2]{lee-nasr-radcliffe-rho-removed}, and the notation of \cite{lee-nasr-radcliffe-sparse}. They will be used to provide combinatorial interpretations of the coefficients of some of the polynomials that have appeared so far. First, consider the Young diagram depicted in Figure \ref{fig:syt}.

\begin{center}
    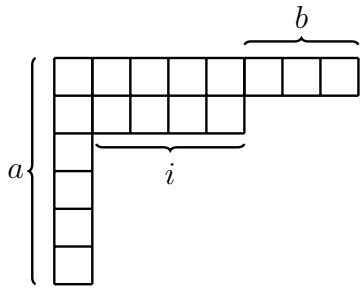
\begin{figure}[h]
        \begin{tikzpicture}[scale=0.5, line width=0.9pt]
            \draw (-1,0) grid (0,6);
            \draw[decoration={brace,raise=7pt},decorate]
            (-1,0) -- node[left=7pt] {$a$} (-1,6);
            
            \draw (0,4) grid (4,6);
            \draw[decoration={brace,mirror, raise=4pt},decorate]
            (0.1,4) -- node[below=7pt] {$i$} (4,4);
            
            \draw (4,5) grid (7,6);
            \draw[decoration={brace, raise=5pt},decorate]
            (4,6) -- node[above=7pt] {$b$} (7,6); 
        \end{tikzpicture}
        \caption{The ``Syt'' shape.}
        \label{fig:syt}
    \end{figure}
\end{center}

Let  $\Syt(a,i,b)$ be the set of standard Young tableaux of the above shape. Notice that the total number of boxes is $a+2i+b$. In other words, in each diagram, we are placing the numbers in $\{1,2,\dots, a+2i+b\}$ into the above diagram so that the rows and the columns strictly increase from left to right and from top to bottom, respectively. We let $\syt(a,i,b):=\#\Syt(a,i,b)$, that is, $\syt(a,i,b)$ is the number of Young tableaux with the above shape. We also define $\bSyt(a,i,b)$, a special subset of $\Syt(a,i,b)$ where the maximum entry is either at the bottom of the first or $(i+1)$-th column. Let $\bsyt(a,i,b):=\#\bSyt(a,i,b)$.

Now, we turn our attention to a different (but related) object. Consider the skew Young diagram in Figure \ref{fig:skyt}.

\begin{center}
    \begin{figure}[h]
        \begin{tikzpicture}[scale=0.5, line width=0.9pt]
            \draw (-1,0) grid (0,6);
            \draw[decoration={brace,raise=7pt},decorate]
            (-1,0) -- node[left=7pt] {$a$} (-1,6);
            \draw (0,4) grid (4,6);
            \draw[decoration={brace,mirror, raise=4pt},decorate]
            (0.1,4) -- node[below=7pt] {$i$} (5,4);
            \draw (4,4) grid (5,9);
            \draw[decoration={brace,mirror, raise=5pt},decorate]
            (5,4) -- node[right=7pt] {$b$} (5,9); 
        \end{tikzpicture}
    \caption{The ``Skyt'' shape.}
    \label{fig:skyt}
    \end{figure}
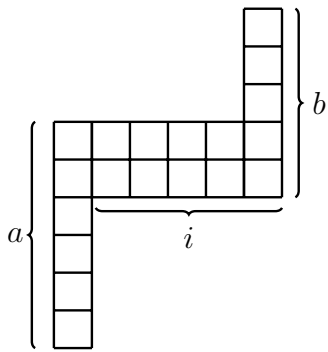
\end{center} 

Observe that the total number of boxes is exactly $a+2i+b-2$. We define a \emph{legal filling} of the above shape as a filling of the boxes using all the integers from $\{1, 2,\dots, a+2i+b-2\}$ in such a way that the values in the rows (respectively columns) strictly increase from left to right (respectively, from top to bottom). Note that this is the same restriction on the entries as mentioned above. We denote by $\Skyt(a,i,b)$ the set of all such legal fillings, and we denote $\skyt(a,i,b):=\#\Skyt(a,i,b)$. That is, $\Skyt(a,i,b)$ is the collection of fillings for the above skew Young diagram, and $\skyt(a,i,b)$ is the number of these tableaux.

For our skew tableaux to be well-defined, we require $a,b\geq 2$ and $i\geq 1$. To avoid undefined scenarios, we use the following conventions:
\begin{itemize}
    \item If $i=0$, then $\skyt(a,i,b)=1$.
    \item If $i>0$ and at least one of $a$ or $b$ is less than 2, then $\skyt(a,i,b)=0$.
\end{itemize} 

In analogy with what we did for the first shape we introduced, we now consider a subclass of the preceding skew Young tableaux, which we will denote $\bSkyt(a,i,b)$. This set is the subset of $\Skyt(a,i,b)$ so that 1 is always the entry at the top of the left-most column. The size of $\bSkyt(a,i,b)$ is denoted $\bskyt(a,i,b)$. By convention, $\bskyt(a,i,b)=0$ if $i=0$. 

\subsection{Enumeration of tableaux and identities} 

We now give two identities that will be used later to give combinatorial interpretations for the polynomials $p_{k,h}(t)$, $q_{k,h}(t)$ and $z_{k,h}(t)$. First, we have a lemma relating the fillings of the two diagrams mentioned above. 

\begin{lem}\label{lem:skew_to_standard}
We have
    \[\syt(a,i,b-2i-1)=\sum_{j=0}^b (-1)^{j+1}\binom{a+b-1}{b-j}\skyt(a,i,j-2i+1).\]
\end{lem}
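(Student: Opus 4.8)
The plan is to reduce the identity to an algebraic identity among binomial coefficients by computing both sides explicitly, and then to certify that identity in the spirit of Remark~\ref{rem:wilf-zeilberger}. For the left-hand side: the diagram in Figure~\ref{fig:syt} is the partition $\lambda=(i+b+1,\,i+1,\,1^{\,a-2})$, a \emph{double hook} (two genuine rows followed by a leg of single boxes), whose hook lengths can be read off by inspection, so the hook length formula gives
$$\syt(a,i,b)=\frac{(a+2i+b)!\,(b+1)}{(a+i+b)\,(a+i-1)\,(a-2)!\,(b+i+1)!\,i!}.$$
Specializing the third argument to $b-2i-1$ expresses the left-hand side of the Lemma as the single quotient $\dfrac{(a+b-1)!\,(b-2i)}{(a+b-i-1)\,(a+i-1)\,(a-2)!\,(b-i)!\,i!}$.

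Next I would obtain a comparable formula for $\skyt(a,i,b)$. The skew diagram in Figure~\ref{fig:skyt} is a $2\times(i+2)$ rectangle carrying a vertical strip of $b-2$ boxes above its top-right corner and a vertical strip of $a-2$ boxes below its bottom-left corner; transposing, it is the skew shape $(a+b-2,\,b^{\,i+1})/(b-2)^{\,i+1}$. The Aitken (dual Jacobi--Trudi) determinant for the number of its standard fillings then has size $i+2$, and because the inner shape $(b-2)^{\,i+1}$ is a rectangle, all but two rows and one column form a Toeplitz block; expanding along the exceptional rows and column collapses the determinant to a single sum of products of binomial coefficients in $a$, $b$, $i$. (Alternatively, the same formula can be extracted directly by conditioning on the entries of the two detached vertical strips and counting their interleavings with the central rectangle.)

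With both closed forms in hand, I would substitute them into the statement, clear factorials, and collapse the inner summation coming from $\skyt$ by Chu--Vandermonde; this turns the right-hand side into a single alternating sum in $j$, so the Lemma becomes the assertion that this sum equals the quotient found at the end of the first paragraph. That is exactly the type of (at worst double) hypergeometric identity covered by Remark~\ref{rem:wilf-zeilberger}: Zeilberger's algorithm certifies the single-index reduction, and the method of \cite{WZinventiones} handles the double-sum stage if the inner sum does not close up by hand.

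The step I expect to be the genuine obstacle is the second one: reading Figure~\ref{fig:skyt} correctly, carrying out the determinant collapse without sign or indexing slips, and recognizing the closed form it produces. For that reason I would, in parallel, attempt a purely combinatorial argument via the reflection principle: a legal filling of the straight shape $\Syt(a,i,b-2i-1)$ can be converted into a legal filling of a skew shape by ``folding'' its long first row into a vertical strip, and the configurations one must subtract off are indexed by the position $j$ at which the fold occurs, contributing the sign $(-1)^{j+1}$ and the multiplicity $\binom{a+b-1}{b-j}$ -- the binomial counting the shuffles of the two columns whose lengths are controlled by $a$ and by $b$. Making this fold-and-shuffle correspondence and its inclusion--exclusion precise would prove the identity directly and bijectively, bypassing the binomial-identity bookkeeping altogether.
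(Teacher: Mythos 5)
Your route is genuinely different from the paper's: the paper disposes of this lemma by citing \cite[Lemma 21]{lee-nasr-radcliffe-rho-removed}, whereas you propose a self-contained derivation (hook-length formula for $\syt$, an Aitken/dual Jacobi--Trudi determinant for $\skyt$, then a machine-certified binomial identity in the spirit of Remark \ref{rem:wilf-zeilberger}). Your first step is correct: the shape of $\Syt(a,i,b)$ is indeed the partition $(i+b+1,\,i+1,\,1^{a-2})$, and both your hook-length evaluation and its specialization at third argument $b-2i-1$ check out.

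The gap is in the second step, and it is exactly the one you flagged. You misread Figure \ref{fig:skyt}: since the paper states that the skew shape has $a+2i+b-2$ boxes, the parameter $i$ must count the columns of the central two-row block \emph{including} the bottom two cells of the height-$b$ column (the brace labelled $i$ runs under that column), so the central rectangle is $2\times(i+1)$, not $2\times(i+2)$; the column data are one column of height $a$, then $i-1$ columns of height $2$, then one column of height $b$. Consequently the conjugate shape is $(a+b-2,\,b^{i})/((b-2)^{i})$ and the Aitken determinant has size $i+1$; your shape $(a+b-2,\,b^{i+1})/((b-2)^{i+1})$ has $a+2i+b$ cells and would compute $\skyt(a,i+1,b)$, so the reduction as set up would produce a false identity (the WZ certification would simply fail). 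Beyond this, everything after the first step is promissory: the collapse of the determinant to a single sum, the Chu--Vandermonde step, and the final alternating identity are asserted rather than carried out, and you would also need to dispose of the terms $0\le j\le 2i$ of the sum, where $\skyt(a,i,j-2i+1)$ is only defined by the paper's conventions (it vanishes there for $i>0$), before the statement becomes a purely hypergeometric identity. The concluding ``fold-and-shuffle'' inclusion--exclusion is too vague to assess: no map, no sign-reversing involution, and no reason why the multiplicity should be $\binom{a+b-1}{b-j}$ is given. So the strategy is plausible and likely repairable, but as written the decisive steps are either wrong (the shape) or not yet done.
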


\begin{proof}
    This result follows from \cite[Lemma 21]{lee-nasr-radcliffe-rho-removed}.
\end{proof}

Now we provide two different results giving formulas for $\bskyt(a,i,b)$ and $\bsyt(a,i,b)$ in terms of $\skyt(a,i,b)$ and $\syt(a,i,b)$, respectively. 

\begin{prop}\label{prop:bskyt and skyt}
    \[\bskyt(a,i,b)=\skyt(a,i,b)-\skyt(a,i,b-1).\]
\end{prop}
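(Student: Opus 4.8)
The plan is to classify a legal filling of the skew shape $\Skyt(a,i,b)$ according to the cell occupied by its smallest entry $1$. Since $1$ is the global minimum of a legal filling, it must sit in a cell of the diagram having no cell of the diagram immediately above it and none immediately to its left; call such a cell an \emph{inner corner}. The first step is to check that, when $i\ge 1$ and $a,b\ge 2$, the diagram $\Skyt(a,i,b)$ has exactly two inner corners: the topmost cell $\alpha$ of the left-most column (the column of $a$ cells) and the topmost cell $\beta$ of the right-most column (the column of $b$ cells). Indeed, any cell lying strictly below the top of its own column has a cell directly above it, while any remaining cell other than $\alpha$ lies in one of the two central rows and has a cell of the diagram directly to its left; this leaves only $\alpha$ and $\beta$.

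Granting this, $\Skyt(a,i,b)$ is the disjoint union of the fillings with $1$ placed at $\alpha$ and those with $1$ placed at $\beta$. By definition the first family is exactly $\bSkyt(a,i,b)$, so it remains to identify the second family with $\Skyt(a,i,b-1)$. For this I would use the usual ``peeling'' bijection: from a legal filling of $\Skyt(a,i,b)$ with $1$ at $\beta$, delete the cell $\beta$ and subtract $1$ from every remaining entry. Deleting the minimum of a legal filling preserves legality, because all constraints involving $\beta$ had the form ``$\beta<{}$neighbour'' and simply disappear; and removing the topmost cell of the right-most column turns $\Skyt(a,i,b)$ into $\Skyt(a,i,b-1)$, so the output is a legal filling of $\Skyt(a,i,b-1)$. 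The inverse map adds $1$ to every entry of a legal filling of $\Skyt(a,i,b-1)$ and then places a new cell carrying $1$ on top of the right-most column: this is legal because the new cell has no cell to its left and the cell directly below it now carries an entry $\ge 2$. These two maps are mutually inverse, so the second family has size $\skyt(a,i,b-1)$ and
\[
  \skyt(a,i,b)=\bskyt(a,i,b)+\skyt(a,i,b-1),
\]
which rearranges to the claimed identity.

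It then remains to reconcile the degenerate cases with the conventions stated above. If $i=0$, then $\bskyt(a,0,b)=0$ while $\skyt(a,0,b)=\skyt(a,0,b-1)=1$, so both sides agree. If $i>0$ and $\min\{a,b\}\le 1$, then $\skyt(a,i,b)=0$ and $\skyt(a,i,b-1)=0$, and $\bskyt(a,i,b)=0$ because $\bSkyt(a,i,b)\subseteq\Skyt(a,i,b)=\varnothing$; again the identity reads $0=0$. The only remaining subtlety is $b=2$: here the right-most column does not reach above the two central rows, so its topmost cell has a cell to its left and fails to be an inner corner, no filling has $1$ at $\beta$, and every legal filling is automatically in $\bSkyt(a,i,b)$; this is consistent since $\skyt(a,i,b-1)=\skyt(a,i,1)=0$.

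The main obstacle is not a computation but this structural bookkeeping: verifying that $\alpha$ and $\beta$ really are the only cells that can receive the entry $1$, and that this description degenerates consistently at the boundary values of $a$, $i$ and $b$. Once that is settled the bijection and the case analysis are routine, and essentially the same argument — reading ``largest entry'' and ``outer corner'' in place of ``smallest entry'' and ``inner corner'' — will also prove the companion statement for $\bsyt$.
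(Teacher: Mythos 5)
Your argument is correct and is essentially the same as the paper's: classify legal fillings by whether the entry $1$ sits atop the left-most or the right-most column, identify the first family with $\bSkyt(a,i,b)$, and biject the second with $\Skyt(a,i,b-1)$ by deleting that top cell and shifting all entries by one. The only difference is that you also verify the degenerate conventions ($i=0$, $\min\{a,b\}\le 1$, $b=2$) explicitly, which the paper leaves implicit.
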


\begin{proof}
    Note that for every skew Young tableau in $\Skyt(a,i,b)$, the number 1 is either
    \begin{enumerate}[({Case} 1)]
        \item at the top of the left-most column, or 
        \item at the top of the right-most column. 
    \end{enumerate}
    
    In Case 1, these are exactly the members of $\bSkyt(a,i,b)$. In Case 2, observe that these are in bijection with the members of $\Skyt(a,i,b-1)$. Given a tableaux $\lambda\in \Skyt(a,i,b-1)$, we construct a tableaux $\widetilde{\lambda}\in \Skyt(a,i,b)$ satisfying Case 2 above. First, add 1 to each value in $\lambda$. Then, add a cell to the top of the right-most column and place the number 1 there. This gives the desired $\widetilde{\lambda}$, and hence, we have shown the desired result.
\end{proof}

\begin{prop}\label{prop:bsyt_and_syt}
    \[\bsyt(a,i,b)=\syt(a,i,b)-\syt(a,i,b-1).\]
\end{prop}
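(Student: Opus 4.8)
The plan is to mirror the bijective argument used for Proposition \ref{prop:bskyt and skyt}, but tracking the position of the \emph{largest} entry rather than the smallest one. First I would record the shape of the Young diagram underlying $\Syt(a,i,b)$ in column terms: the first column has $a$ cells, columns $2$ through $i+1$ have two cells each, and columns $i+2$ through $i+b+1$ have a single cell each. From this description one sees that, in the generic range $a\geq 3$, $i\geq 1$, $b\geq 1$, the diagram has exactly three removable corners: the bottom of column $1$, the rightmost cell of the second row (equivalently, the bottom of column $i+1$), and the rightmost cell of the first row.

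Next I would invoke the standard fact that in any standard Young tableau the maximal entry $N:=a+2i+b$ occupies a removable corner. This partitions $\Syt(a,i,b)$ into three classes according to which of the three corners contains $N$. By definition, $\bSyt(a,i,b)$ is precisely the union of the first two classes (max at the bottom of column $1$ or column $i+1$), so that $\bsyt(a,i,b)=\syt(a,i,b)-c$, where $c$ denotes the number of tableaux in which $N$ sits in the rightmost cell of the top row.

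Then I would exhibit a bijection between this third class and $\Syt(a,i,b-1)$: removing from such a tableau the cell occupied by $N$ deletes the last box of the top row, which turns the ``Syt$(a,i,b)$'' shape into the ``Syt$(a,i,b-1)$'' shape and leaves behind a valid standard filling; conversely, given any tableau in $\Syt(a,i,b-1)$, one appends a new box at the end of the top row and writes $N$ in it (no relabeling is needed, as $N$ is the largest value). Hence $c=\syt(a,i,b-1)$, and the claimed identity $\bsyt(a,i,b)=\syt(a,i,b)-\syt(a,i,b-1)$ follows immediately.

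The only delicate point — the ``main obstacle,'' though it is minor — is the bookkeeping of degenerate cases in which the three corners collapse: for instance $b=0$, where there is no top-row overhang, so $N$ can only lie at the bottom of column $1$ or at the end of row $2$ (both permitted), matching the fact that $\syt(a,i,-1)=0$ since the corresponding shape is not a partition; or small values of $a$, where the bottom of column $1$ ceases to be a corner. I would verify that in each of these cases both sides of the identity reduce consistently under the conventions already in force, so that the clean generic argument suffices to establish the statement as it is used in the sequel.
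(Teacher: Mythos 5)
Your argument is correct and is essentially the paper's own proof: both classify the tableaux in $\Syt(a,i,b)$ by the position of the maximal entry $a+2i+b$ (bottom of column $1$, bottom of column $i+1$, or end of the first row), identify the first two cases with $\bSyt(a,i,b)$, and biject the third case with $\Syt(a,i,b-1)$ by removing or appending the last cell of the top row. Your extra attention to the degenerate cases (small $a$ or $b$) is a harmless refinement of the same argument.
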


\begin{proof}
    Note that for every Young tableaux in $\Syt(a,i,b)$, the largest number, $a+2i+b$, is either
    \begin{enumerate}[({Case} 1)]
        \item at the bottom of the first column,
        \item at the bottom of the $(i+1)$-th column, or 
        \item in the right-most cell of the first row.
    \end{enumerate}
    
    Note that Cases 1 and 2 make up the members of $\bSyt(a,i,b)$. In Case 3, observe that these are in bijection with the members of $\Syt(a,i,b-1)$. Given a tableaux $\lambda\in \Syt(a,i,b-1)$, we construct a tableaux $\widetilde{\lambda}\in \Syt(a,i,b)$ satisfying Case 3. Add a cell at the right end of the first row in $\lambda$, and place $a+2i+b$ in this cell. This gives the desired $\widetilde{\lambda}$, and hence, we have shown the desired result.
\end{proof}





\subsection{Interpreting the Kazhdan--Lusztig coefficients}\label{sec:combinatorial interpretation}

One of the main results of \cite{lee-nasr-radcliffe-rho-removed}, is the following.

\begin{teo}{\cite[Theorem 2]{lee-nasr-radcliffe-rho-removed}}\label{teo:uniform combinatorial interpretation}
    The Kazhdan--Lusztig polynomial for $\U_{k,n}$ is 
    \[P_{\U_{k,n}}(t) = \sum_{i=0}^{\lfloor\frac{k-1}{2}\rfloor}\skyt(n-k+1,i,k-2i+1)\,t^i.\]
\end{teo}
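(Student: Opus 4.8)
The plan is to recognize the right-hand side as $P_{\U_{k,n}}(t)$ through the recursive characterization of Theorem~\ref{PM}, arguing by induction on the rank $k$. Put
\[
R_{k,n}(t):=\sum_{i\ge 0}\skyt(n-k+1,\,i,\,k-2i+1)\,t^{i},
\]
so that it is enough to show that each $R_{k,n}$ has degree below $k/2$ and satisfies the defining functional equation of Theorem~\ref{PM} once the lower-rank Kazhdan--Lusztig polynomials appearing in it are replaced (by the inductive hypothesis) by the corresponding $R$; the uniqueness built into Theorem~\ref{PM} then forces $R_{k,n}=P_{\U_{k,n}}$. The base cases $k=0,1$ and the degree bound are a matter of unwinding the conventions on $\skyt$: when $k=0$ and $n>0$ the index set is empty, so $R_{0,n}=0$; when $k=1$ only the $i=0$ term survives and equals $\skyt(n,0,2)=1$; and in general a summand with $i>0$ and $k-2i+1<2$ vanishes, so $\deg R_{k,n}\le\bigl\lfloor\tfrac{k-1}{2}\bigr\rfloor<\tfrac k2$.

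The substance is the functional equation. The flats of $\U_{k,n}$ are the subsets of size $j<k$ together with the whole ground set; for $|F|=j<k$ one has $\U_{k,n}^{F}\cong\B_{j}$, so $\chi_{\U_{k,n}^{F}}(t)=(t-1)^{j}$, and $(\U_{k,n})_{F}\cong\U_{k-j,\,n-j}$, while the top flat contributes $\chi_{\U_{k,n}}(t)$. Hence the equation to be verified for $R_{k,n}$ reads
\[
t^{k}R_{k,n}(t^{-1})=\sum_{j=0}^{k-1}\binom{n}{j}(t-1)^{j}\,R_{k-j,\,n-j}(t)+\chi_{\U_{k,n}}(t),
\]
and extracting the coefficient of each power of $t$ turns this into an identity between sums of products of $\skyt$-numbers and binomial coefficients. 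To prove that identity I would combine the two tableau tools in the paper. On the right-hand side the localization bookkeeping for each contracted matroid $\U_{k-j,n-j}$ produces exactly the non-skew ``Syt'' shapes, and Lemma~\ref{lem:skew_to_standard} re-expresses each $\syt$-number as an alternating binomial sum of $\skyt$-numbers; on the other hand, classifying the possible positions of the largest entry of a legal filling — the technique already used in the proofs of Propositions~\ref{prop:bskyt and skyt} and~\ref{prop:bsyt_and_syt} — yields recursions of the form $\skyt(a,i,b)=\skyt(a-1,i,b)+\skyt(a,i,b-1)+\cdots$. The goal is then to check that these relations are compatible with the displayed equation.

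The crux — and the step I expect to fight with — is exactly this compatibility. It is cleanest to settle it as a closed-form comparison. The ``Skyt'' diagram is the thin skew shape $\lambda/\mu$ with $\lambda=\bigl((i+1)^{b},1^{a-2}\bigr)$ and $\mu=(i^{b-2})$, namely a $2\times(i+1)$ rectangle carrying a vertical strip of length $b-2$ atop its last column and one of length $a-2$ below its first column. By the Lindström--Gessel--Viennot lemma (equivalently, by collapsing Aitken's determinant for skew shapes) $\skyt(a,i,b)$ equals a small determinant of binomial coefficients, in fact a short alternating sum of products of binomials. The remaining task is to verify that this expression agrees with the known formula for $[t^{i}]P_{\U_{k,n}}(t)$ coming from the uniform-matroid computations of \cite{kazhdan-uniform}; once both sides are written out explicitly this is a finite binomial-coefficient identity, which can be dispatched by the Wilf--Zeilberger machinery (cf.\ Remark~\ref{rem:wilf-zeilberger}) or, better, by a direct non-crossing-lattice-path bijection. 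Everything upstream of that identity is routine.
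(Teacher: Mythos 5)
The statement is quoted by the paper directly from \cite[Theorem 2]{lee-nasr-radcliffe-rho-removed}; the paper contains no internal proof to compare against, and your outline in fact mirrors the original Lee--Nasr--Radcliffe strategy: show that the tableau generating function satisfies the defining recursion of Theorem~\ref{PM} for $\U_{k,n}$ and invoke its uniqueness. Your setup is correct as far as it goes: the proper flats of $\U_{k,n}$ are precisely the subsets of size $j<k$, the localizations are Boolean with $\chi_{\B_j}(t)=(t-1)^j$, the contractions are $\U_{k-j,n-j}$, the degree bound follows from the conventions on $\skyt$, and the uniqueness argument (with the $j=0$ term on the right being $R_{k,n}$ itself) is the standard one. (A small slip: with your convention the $i=0$ term gives $R_{0,n}=\skyt(n+1,0,1)=1$, not $0$, for $n>0$; this is harmless because rank-$0$ matroids never enter the induction, but it is not what you wrote.)

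The genuine gap is that the decisive step is never carried out. The compatibility of the tableau recursions with the functional equation --- equivalently, the identity asserting that $\skyt(n-k+1,i,k-2i+1)$ coincides with the known closed form for $[t^i]P_{\U_{k,n}}(t)$ from \cite{kazhdan-uniform} --- \emph{is} the content of the theorem, and you only assert that it ``can be dispatched'' by Wilf--Zeilberger methods or a bijection, without ever writing the identity down. Moreover, the intermediate claim that $\skyt(a,i,b)$ is given by ``a small determinant \dots a short alternating sum of products of binomials'' is unsubstantiated: for the shape $\lambda/\mu$ with $\lambda=\bigl((i+1)^b,1^{a-2}\bigr)$ and $\mu=\bigl(i^{b-2}\bigr)$ the Aitken/Lindstr\"om--Gessel--Viennot determinant has size $i+1$ (after conjugation), which grows with the degree index, so extracting from it an expression comparable with the uniform-matroid formula is itself a nontrivial computation, not a formality. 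Even by the permissive standard of Remark~\ref{rem:wilf-zeilberger}, a proof that ends in a computer-verifiable hypergeometric identity must at least exhibit that identity explicitly; here it is absent, so the heart of the argument is missing --- it is exactly what \cite{lee-nasr-radcliffe-rho-removed} supplies.
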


In other words, the coefficients of the Kazhdan--Lusztig polynomial of all uniform matroids can be interpreted using the skew tableaux we introduced above. As a consequence of this statement, we obtain the following combinatorial description of the polynomial $p_{k,h}(t)$ appearing in Theorem \ref{thm:kl-for-relax}.

\begin{cor}\label{p_kh}
    For every $k,h \geq 1$, we have
        \[p_{k,h}(t)=\sum_{i=0}^{\lfloor\frac{k-1}{2}\rfloor} \bskyt(h-k+2,i,k-2i+1)\,t^i.\]
    In particular, $p_{k,h}(t)$ is a polynomial with non-negative coefficients of degree $\lfloor \frac{k-1}{2} \rfloor$.
\end{cor}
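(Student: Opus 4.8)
The plan is to combine the formula from Corollary~\ref{cor:p_(k,h) in terms of uniform matroids}, namely $p_{k,h}(t) = P_{\U_{k,h+1}}(t) - P_{\U_{k-1,h}}(t)$, with the tableaux interpretation for Kazhdan--Lusztig polynomials of uniform matroids recorded in Theorem~\ref{teo:uniform combinatorial interpretation}, and then recognize the resulting difference as the count $\bskyt$ via Proposition~\ref{prop:bskyt and skyt}.

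First I would write out both Kazhdan--Lusztig polynomials using Theorem~\ref{teo:uniform combinatorial interpretation}. For $\U_{k,h+1}$ this gives $P_{\U_{k,h+1}}(t) = \sum_{i\geq 0}\skyt(h-k+2,i,k-2i+1)\,t^i$, since $n-k+1 = (h+1)-k+1 = h-k+2$. For $\U_{k-1,h}$ it gives $P_{\U_{k-1,h}}(t) = \sum_{i\geq 0}\skyt(h-k+2,i,(k-1)-2i+1)\,t^i = \sum_{i\geq 0}\skyt(h-k+2,i,k-2i)\,t^i$, where I have used that here $n-k+1 = h-(k-1)+1 = h-k+2$ as well, so the first parameter agrees and only the third parameter drops by one. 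Subtracting coefficient-wise, the $t^i$ coefficient of $p_{k,h}(t)$ is
\[
\skyt(h-k+2,i,k-2i+1) - \skyt(h-k+2,i,k-2i).
\]
By Proposition~\ref{prop:bskyt and skyt}, with $a = h-k+2$ and $b = k-2i+1$, this difference is exactly $\bskyt(h-k+2,i,k-2i+1)$, which yields the claimed formula.

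For the final sentence of the statement, non-negativity of the coefficients is immediate since each coefficient is now a cardinality $\bskyt(h-k+2,i,k-2i+1)\geq 0$. For the degree claim, I would check that the top coefficient, at $i = \lfloor\frac{k-1}{2}\rfloor$, is strictly positive, while all coefficients with $i > \lfloor\frac{k-1}{2}\rfloor$ vanish. The vanishing for large $i$ follows from the degree bounds $\deg P_{\U_{k,h+1}}(t) < \frac{k}{2}$ and $\deg P_{\U_{k-1,h}}(t) < \frac{k-1}{2}$ from Theorems~\ref{PM}, so the difference has degree $< \frac{k}{2}$, i.e. degree $\leq \lfloor\frac{k-1}{2}\rfloor$. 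For strict positivity at $i = \lfloor\frac{k-1}{2}\rfloor$ one argues, as in Remark~\ref{degree of p,q,z}, that $\deg P_{\U_{k,h+1}}(t) = \lfloor\frac{k-1}{2}\rfloor$ whenever $h+1 > k$ (the uniform matroid is non-degenerate), whereas $\deg P_{\U_{k-1,h}}(t) \leq \lfloor\frac{k-2}{2}\rfloor < \lfloor\frac{k-1}{2}\rfloor$; hence the leading term of $P_{\U_{k,h+1}}(t)$ survives in the difference. The edge case $h = k-1$ (where $\widetilde{\M} = \M$ and $p_{k,k-1}(t)$ could a priori be zero) and the very small cases $k=1,2$ should be checked separately, but these are handled directly from the base-case computation in the proof of Theorem~\ref{thm:kl-for-relax}.

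The main obstacle I anticipate is bookkeeping the parameter shift correctly: one must be careful that in passing from $\U_{k,h+1}$ to $\U_{k-1,h}$ the first tableau parameter $n-k+1$ is genuinely unchanged (it is $h-k+2$ in both cases) while only $k$ decreases by one, so that Proposition~\ref{prop:bskyt and skyt} applies verbatim with the same $a$. A secondary point requiring care is the range of summation: the two sums in Theorem~\ref{teo:uniform combinatorial interpretation} have different upper limits ($\lfloor\frac{k-1}{2}\rfloor$ versus $\lfloor\frac{k-2}{2}\rfloor$), so when $k$ is odd there is an extra top term coming only from $P_{\U_{k,h+1}}(t)$; using the conventions for $\skyt$ when the third argument is small shows this causes no inconsistency, since $\skyt(h-k+2,i,k-2i)$ with $i=\lfloor\frac{k-1}{2}\rfloor$ and $k$ odd has third argument $1$ and contributes the ``boundary'' behavior encoded by $\bskyt$.
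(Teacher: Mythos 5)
Your proposal is correct and follows essentially the same route as the paper: express $p_{k,h}(t)=P_{\U_{k,h+1}}(t)-P_{\U_{k-1,h}}(t)$ via Corollary~\ref{cor:p_(k,h) in terms of uniform matroids}, expand both terms with Theorem~\ref{teo:uniform combinatorial interpretation} (noting the first tableau parameter is $h-k+2$ in both cases), align the summation ranges using the $\skyt$ conventions, and conclude with Proposition~\ref{prop:bskyt and skyt}. Your additional care about the degree claim and the edge case $h=k-1$ is sound (and in fact slightly more explicit than the paper, which implicitly assumes $h\geq k$ for the degree statement).
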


\begin{proof}
    Observe that 
    \begin{align*}
    p_{k,h}(t)&=P_{\U_{k,h+1}}(t)-P_{\U_{k-1,h}}(t)\\
           &=\sum_{i=0}^{\lfloor\frac{k}{2}\rfloor}\skyt(h-k+2,i,k-2i+1)\,t^i-\sum_{i=0}^{\lfloor\frac{k-1}{2}\rfloor}\skyt(h-k+2,i,k-2i)\,t^i
    \end{align*}
    where the first equality uses Corollary \ref{cor:p_(k,h) in terms of uniform matroids}, and the second uses Theorem~\ref{teo:uniform combinatorial interpretation}.

    Now, we claim we can change the bounds of the two summations to make them match. When $k$ is even, note that $i<\frac{k-1}{2}$ if and only if $i<\frac{k}{2}$. When $k$ is odd, note that substituting $i=\frac{k-1}{2}$ into $\skyt(h-k+2,i,k-2i)$ gives $\skyt(h-k+2,\frac{k-1}{2},1)=0$. 
    
    Hence, regardless of $k$ we have
    \begin{align*}
           &\sum_{i=0}^{\frac{k-1}{2}}\skyt(h-k+2,i,k-2i+1)\,t^i-\sum_{i=0}^{\frac{k-2}{2}}\skyt(h-k+2,i,k-2i)\,t^i\\
           &=\sum_{i=0}^{\lfloor\frac{k}{2}\rfloor}\skyt(h-k+2,i,k-2i+1)\,t^i-\sum_{i=0}^{\lfloor\frac{k}{2}\rfloor}\skyt(h-k+2,i,k-2i)\,t^i\\
           &=\sum_{i=0}^{\lfloor\frac{k}{2}\rfloor}\bskyt(h-k+2,i,k-2i+1)\,t^i,
    \end{align*}    
by Proposition~\ref{prop:bskyt and skyt}.
\end{proof}

\begin{obs}
    It is worth pointing out something subtle that occurs with the last equality in the prior proof in the case where $k$ is odd and $i=\frac{k-1}{2}$. Recall that for this $i$ we have $\skyt(h-k+2,i,k-2i)=0$. However, note that in this case $\skyt(h-k+2,i,k-2i+1)$ equals $\skyt(h-k+2,\frac{k-1}{2},2)$. Observe that for tableaux in $\Skyt(h-k+2,\frac{k-1}{2}, 2)$, the only possible location for the value 1 is at the top of the left-most column, since the top entry of the last column is the last entry of the first row. So $\Skyt(h-k+2,\frac{k-1}{2},2)=\bSkyt(h-k+2,\frac{k-1}{2},2)$, and hence $\skyt(h-k+2,\frac{k-1}{2},2)=\bskyt(h-k+2,\frac{k-1}{2},2)$. 
\end{obs}

Now, let us turn our attention to the inverse Kazhdan--Lusztig polynomial. We are able to get nice formulas for this polynomial as well. The first step is to state an interpretation for the coefficients of $Q_{\U_{k,n}}(t)$.

\begin{teo}
    The inverse Kazhdan--Lusztig polynomial of the uniform matroid $\U_{k,n}$ is
    \[ Q_{\U_{k,n}}(t) = \sum_{i=0}^{\lfloor\frac{k-1}{2}\rfloor}\syt(n-k+1,i,k-2i-1)\,t^i. \]
\end{teo}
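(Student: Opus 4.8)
The plan is to mimic the structure of the proof of Theorem~\ref{teo:uniform combinatorial interpretation} (i.e.\ \cite[Theorem 2]{lee-nasr-radcliffe-rho-removed}), but now matching the $\syt$-type tableaux to the \emph{inverse} Kazhdan--Lusztig polynomial. Concretely, I would start from a known closed formula for the coefficients of $Q_{\U_{k,n}}(t)$; such a formula is available in \cite[Theorem 3.3]{gao-xie} (and is referenced in the text right after Theorem~\ref{thm:formulas-paving}). Writing $[t^i]Q_{\U_{k,n}}(t)$ as the explicit binomial sum from that reference, the task reduces to verifying the identity
\[
    [t^i]Q_{\U_{k,n}}(t) = \syt(n-k+1,\,i,\,k-2i-1)
\]
for all $0\le i\le \lfloor\frac{k-1}{2}\rfloor$. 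The right-hand side is the number of standard Young tableaux of the ``Syt'' shape of Figure~\ref{fig:syt} with parameters $a=n-k+1$, and $b=k-2i-1$; by the hook length formula (or by the Lindström--Gessel--Viennot/jeu-de-taquin count used in \cite{lee-nasr-radcliffe-rho-removed}) this is itself an explicit product or alternating sum of binomial coefficients, so the whole statement becomes a binomial identity amenable to the Wilf--Zeilberger machinery invoked in Remark~\ref{rem:wilf-zeilberger}.

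A cleaner route, which I would try first, avoids computing $\syt$ by brute force: instead use Lemma~\ref{lem:skew_to_standard}, which expresses $\syt(a,i,b-2i-1)$ as an alternating binomial combination of the numbers $\skyt(a,i,j-2i+1)$. Since Theorem~\ref{teo:uniform combinatorial interpretation} already identifies $\skyt(n-k+1,i,k-2i+1)$ with $[t^i]P_{\U_{k,n}}(t)$, Lemma~\ref{lem:skew_to_standard} rewrites the claimed quantity $\syt(n-k+1,i,k-2i-1)$ as an explicit alternating sum of Kazhdan--Lusztig coefficients of uniform matroids $\U_{k,m}$ for varying $m$. So the statement is equivalent to the identity
\[
    [t^i]Q_{\U_{k,n}}(t) \;=\; \sum_{j} (-1)^{j+1}\binom{n-2k+2i+b'-1}{\,\cdot\,}\,[t^i]P_{\U_{k,\,m(j)}}(t),
\]
with the binomial and the index shift dictated precisely by Lemma~\ref{lem:skew_to_standard}. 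This is appealing because the inverse Kazhdan--Lusztig polynomial of a matroid is, by definition (Theorem~\ref{QM}) and by the incidence-algebra inversion, an explicit alternating sum of products of characteristic polynomials and $P$-polynomials; for uniform matroids all the relevant $\M_F$ and $\M^F$ are again uniform (or Boolean), so the defining recursion of $Q$ unwinds into exactly such an alternating sum. The bulk of the work is then to match the combinatorially produced alternating sum against the representation-theoretically produced one.

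The main obstacle is the bookkeeping in this matching step: the defining recursion for $Q_{\U_{k,n}}$ involves a double sum over flats indexed by their rank and cardinality, together with $\chi_{\U_{\cdot,\cdot}}$ factors that are themselves alternating sums of binomials, so after expansion one is comparing two multiply-nested alternating sums, and lining up the summation ranges (and checking the boundary terms where $\syt$ or $\skyt$ degenerate to $0$ or $1$ by the conventions fixed in Section~\ref{sec:tableaux}) is delicate. I expect that, as elsewhere in the paper, the final reconciliation of the two hypergeometric expressions is best discharged by a computer-algebra verification in the spirit of Remark~\ref{rem:wilf-zeilberger}, with the conceptual content being (i) the reduction via Lemma~\ref{lem:skew_to_standard} and Theorem~\ref{teo:uniform combinatorial interpretation}, and (ii) unwinding the $Q$-recursion over the (Boolean and uniform) localizations and contractions of $\U_{k,n}$. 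One should also double-check the degree bound: the sum runs to $\lfloor\frac{k-1}{2}\rfloor$, which matches $\deg Q_{\U_{k,n}}(t)<\frac{k}{2}$ from Theorem~\ref{QM}(2), and indeed for $i=\lfloor\frac{k-1}{2}\rfloor$ with $k$ even one has $b=k-2i-1=1$, a single-column tail, so $\syt(n-k+1,i,1)$ is still a genuine nonzero count — consistent with non-degeneracy of $\U_{k,n}$.
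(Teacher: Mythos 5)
Your ``cleaner route'' is essentially the paper's proof: it starts from \cite[Theorem 1.3]{gao-xie}, i.e.\ $Q_{\M}(t) = -\sum_{F\neq E}(-1)^{\rk(\M)-\rk(F)}Q_{\M^F}(t)P_{\M_F}(t)$, notes that every proper localization of $\U_{k,n}$ is Boolean so the sum collapses to $Q_{\U_{k,n}}(t)=\sum_{j=1}^{k}(-1)^{j+1}\binom{n}{k-j}P_{\U_{j,\,n-k+j}}(t)$, and then applies Theorem \ref{teo:uniform combinatorial interpretation} followed by Lemma \ref{lem:skew_to_standard} with $a=n-k+1$ and $b=k$. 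The only adjustments to your sketch are that the Kazhdan--Lusztig coefficients in the alternating sum belong to $\U_{j,\,n-k+j}$ (fixed corank $n-k$, varying rank $j$), not to $\U_{k,m}$ with varying cardinality, and that no Wilf--Zeilberger verification is needed at the end, since Lemma \ref{lem:skew_to_standard} performs the matching in a single step.
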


\begin{proof}
    Firstly, we use \cite[Theorem 1.3]{gao-xie} to write
    \[Q_{\U_{k,n}}(t) = -\sum_{F\neq [n]}(-1)^{\rk(\M)-\rk (F)}Q_{(\U_{k,n})^F}(t)P_{(\U_{k,n})_F}(t). \]
    Since $F$ can never be the ground set of $\U_{k,n}$, this means that $(\U_{k,n})^F$ is a boolean matroid for any $F$. Thus, combining similar terms, we have
    \[Q_{\U_{k,n}}(t) = \sum_{j=1}^k (-1)^{j+1}\binom{n}{k-j}P_{\U_{j,n-k+j}}(t),\]
    where $j$ ranges over flats so that $j=\rk (\M) -\rk(F)$, that is, the flats of rank $k-j$.
    
    Looking now at the coefficient $[t^i]Q_{\U_{k,n}}(t)$, using Theorem \ref{teo:uniform combinatorial interpretation} we obtain that
    \[[t^i]Q_{\U_{k,n}}(t) = \sum_{j=0}^k (-1)^{j+1}\binom{n}{k-j} \skyt(n-k+1,i,j-2i+1). \]
    Note we may allow the index $j$ to start at 0 since in this case $\skyt(n-k+1,i,j-2i+1)=0$.
    By Lemma \ref{lem:skew_to_standard} with $a=n-k+1$ and $b=k$, we get 
    \[[t^i]Q_{\U_{k,n}}(t) = \syt(n-k+1,i,k-2i-1),\]
    and the result follows.
\end{proof}

We point out that a different proof of the preceding result can be given, along the lines of \cite[Theorem 3.2]{gao-xie-yang}. On the other hand, in analogy with what we did for $p_{k,h}(t)$, we obtain an interpretation for $q_{k,h}(t)$.

\begin{cor}\label{q_kh}
    For every $k,h\geq 1$,
    \[ q_{k,h}(t) = \sum_{i=0}^{\lfloor\frac{k-1}{2}\rfloor}\bsyt(h-k+2,i,k-2i-1),t^i.\]
    In particular, $q_{k,h}(t)$ is a polynomial with non-negative coefficients of degree $\lfloor \frac{k-1}{2} \rfloor$.
\end{cor}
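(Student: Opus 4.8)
The plan is to repeat, almost verbatim, the argument used for Corollary \ref{p_kh}, now feeding in the formula for the inverse Kazhdan--Lusztig polynomial of a uniform matroid in place of the one for the Kazhdan--Lusztig polynomial. First I would invoke Corollary \ref{cor:p_(k,h) in terms of uniform matroids}, which gives
\[
q_{k,h}(t) = Q_{\U_{k,h+1}}(t) - Q_{\U_{k-1,h}}(t),
\]
and then substitute the expression $Q_{\U_{k,n}}(t)=\sum_{i=0}^{\lfloor\frac{k-1}{2}\rfloor}\syt(n-k+1,i,k-2i-1)\,t^i$ obtained just above. Taking $n=h+1$ in the first term and replacing $(k,n)$ by $(k-1,h)$ in the second, this produces
\[
q_{k,h}(t)=\sum_{i=0}^{\lfloor\frac{k-1}{2}\rfloor}\syt(h-k+2,i,k-2i-1)\,t^i-\sum_{i=0}^{\lfloor\frac{k-2}{2}\rfloor}\syt(h-k+2,i,k-2i-2)\,t^i .
\]

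Next I would align the two summation ranges, exactly as in the proof of Corollary \ref{p_kh}. If $k$ is even the two upper limits already coincide, and if $k$ is odd the second sum lacks only the term $i=\frac{k-1}{2}$, whose value is $\syt(h-k+2,\tfrac{k-1}{2},-1)=0$ because there is no standard Young tableau of a shape with a row of negative length (this is the natural boundary convention for $\syt$, parallel to the one already fixed for $\skyt$). Having arranged both sums to run over $0\leq i\leq\lfloor\frac{k-1}{2}\rfloor$, I would combine them termwise and apply Proposition \ref{prop:bsyt_and_syt} with $a=h-k+2$ and $b=k-2i-1$, which states precisely that $\syt(h-k+2,i,k-2i-1)-\syt(h-k+2,i,k-2i-2)=\bsyt(h-k+2,i,k-2i-1)$; this yields the claimed identity. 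Non-negativity of the coefficients is then automatic, since each $\bsyt(h-k+2,i,k-2i-1)$ is the cardinality of a set; and for the degree statement (in the relevant range $h\geq k$) it suffices to check that the top coefficient $\bsyt(h-k+2,\lfloor\frac{k-1}{2}\rfloor,k-2\lfloor\frac{k-1}{2}\rfloor-1)$ is strictly positive, which one sees by exhibiting a single standard filling of the corresponding shape.

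I do not anticipate any genuine obstacle: the argument is structurally identical to that of Corollary \ref{p_kh}, and the only points that need a moment's attention are the bookkeeping of the summation bounds and the degenerate boundary values of $\syt$ — namely $\syt(\cdot,\cdot,-1)=0$, together with the non-vanishing of the leading term. An alternative that avoids adjusting the bounds is to keep the second sum at its original range $\lfloor\frac{k-2}{2}\rfloor$ and note that, for odd $k$, $\bsyt(h-k+2,\tfrac{k-1}{2},0)$ already equals $\syt(h-k+2,\tfrac{k-1}{2},0)$ by Proposition \ref{prop:bsyt_and_syt} and the same convention; either way the statement reduces to a single application of Proposition \ref{prop:bsyt_and_syt}, and the degree claim can also simply be quoted from Remark \ref{degree of p,q,z}.
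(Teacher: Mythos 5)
Your proposal is correct and follows essentially the same route as the paper, whose proof of Corollary \ref{q_kh} is just the one-line remark that the argument of Corollary \ref{p_kh} applies verbatim via Corollary \ref{cor:p_(k,h) in terms of uniform matroids} and Proposition \ref{prop:bsyt_and_syt}; your substitution, alignment of summation bounds, and the boundary convention $\syt(\cdot,\cdot,-1)=0$ (equivalently $\bsyt(a,i,0)=\syt(a,i,0)$) are exactly the details left implicit there.
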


\begin{proof}
   The proof is equivalent to that of Corollary \ref{p_kh} by using Corollary \ref{cor:p_(k,h) in terms of uniform matroids} and Proposition \ref{prop:bsyt_and_syt}. 
\end{proof}

One can use the skew tableaux also to get a combinatorial formula for the $Z$-polynomial. 
\begin{cor}\label{cor:z_poly_comb}
\[Z_{\U_{k,n}}(t)=t^k+\sum_{j=0}^{k-1}\sum_{i=0}^{\lfloor\frac{k-j}{2}\rfloor}\binom{n}{j}\skyt(n-k+1,i,k-j-2i+1)\,t^{i+j}.\]
\end{cor}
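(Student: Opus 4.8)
The plan is to expand the $Z$-polynomial directly from its definition $Z_{\M}(t)=\sum_{F\in\mathscr{L}(\M)}t^{\rk(F)}P_{\M_F}(t)$, using the explicit description of the lattice of flats of a uniform matroid, and then to substitute the tableau formula for the Kazhdan--Lusztig polynomial of a uniform matroid from Theorem \ref{teo:uniform combinatorial interpretation}.

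First I would recall that a subset $F$ of the ground set of $\U_{k,n}$ is a flat precisely when $|F|\le k-1$ or $F$ is the whole ground set. A flat of cardinality $j\le k-1$ is independent, has rank $j$, and its contraction satisfies $(\U_{k,n})_F\cong\U_{k-j,\,n-j}$; the flat $F=[n]$ has rank $k$, and $(\U_{k,n})_{[n]}$ is the empty matroid, so $P_{(\U_{k,n})_{[n]}}(t)=1$ by Theorem \ref{PM}. Since there are $\binom{n}{j}$ flats of cardinality $j$, this immediately gives
\[ Z_{\U_{k,n}}(t)=t^k+\sum_{j=0}^{k-1}\binom{n}{j}\,t^j\,P_{\U_{k-j,\,n-j}}(t). \]

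Next I would plug in Theorem \ref{teo:uniform combinatorial interpretation}. Since $(n-j)-(k-j)+1=n-k+1$ and $(k-j)-2i+1=k-j-2i+1$, we get
\[ P_{\U_{k-j,\,n-j}}(t)=\sum_{i=0}^{\lfloor(k-j-1)/2\rfloor}\skyt(n-k+1,\,i,\,k-j-2i+1)\,t^i, \]
and distributing the factor $t^j$ produces the asserted double sum, except that the inner index runs a priori only up to $\lfloor(k-j-1)/2\rfloor$ instead of $\lfloor(k-j)/2\rfloor$. Finally I would reconcile the ranges: when $k-j$ is odd the two floors coincide, while when $k-j$ is even (and positive, since $j\le k-1$) the single extra index value is $i=(k-j)/2\ge 1$, for which the third parameter equals $k-j-2i+1=1$; by the standing convention that $\skyt(a,i,1)=0$ whenever $i>0$, enlarging the bound only adds zero terms. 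This yields exactly the stated identity.

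I do not expect a genuine obstacle here: the argument is a routine unwinding of definitions. The only point requiring mild care is the bookkeeping on the summation index, namely verifying that the boundary terms vanish, which is precisely the purpose of the convention $\skyt(a,i,1)=0$ for $i>0$.
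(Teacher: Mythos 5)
Your proposal is correct and follows essentially the same route as the paper: expand $Z_{\U_{k,n}}(t)$ from the definition using the flats of the uniform matroid and the isomorphism $(\U_{k,n})_F\cong\U_{k-|F|,n-|F|}$, then substitute Theorem \ref{teo:uniform combinatorial interpretation}. Your extra care in reconciling the inner summation bound via the convention $\skyt(a,i,1)=0$ for $i>0$ is a point the paper glosses over silently, and it is handled correctly.
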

\begin{proof}
    Recall that by definition we have 
    \[Z_{\M}(t)=\sum_{F\in\mathscr{L}(\M)} t^{\rk(F)} P_{\M_F}(t).\]
    Also recall that if $\M=\U_{k,n}$, the flats of rank $r$ for $r\leq k-1$ are the subsets of size $r$. For this $\M$, we also have that $\M_F\cong \U_{k-|F|,n-|F|}$ for every flat $F$. Hence, using Theorem~\ref{teo:uniform combinatorial interpretation}, we have
    \begin{align*}
        Z_{\U_{k,n}}(t)&=t^k+\sum_{j=0}^{k-1}{\binom{n}{j}} t^j P_{\U_{k-j,n-j}}(t)\\
        &=t^k+\sum_{j=0}^{k-1}\binom{n}{j} t^j \sum_{i=0}^{\lfloor\frac{k-j}{2}\rfloor}\skyt(n-k+1,i,k-j-2i+1)\,t^i\\
        &=t^k+\sum_{j=0}^{k-1}\sum_{i=0}^{\lfloor\frac{k-j}{2}\rfloor}\binom{n}{j}\skyt(n-k+1,i,k-j-2i+1)\,t^{i+j}.\qedhere
    \end{align*}
\end{proof}

\begin{obs}
    As with $Q_{\U_{k,n}}(t)$ and $P_{\U_{k,n}}(t)$, it is desirable to find an interpretation for the coefficients of $Z_{\U_{k,n}}(t)$ that corresponds to the number of Young tableaux of some shape. Unfortunately, we have not been able to find such an interpretation. However, we can provide one way of understanding the coefficients as counting a collection of skew tableaux with varying diagram shapes. Observe that if $i<k$, then
    \begin{align*}
        [t^i]Z_{\U_{k,n}}(t)&=\sum_{j=0}^{k-1}{\binom{n}{j}} [t^{i-j}] P_{\U_{k-j,n-j}}(t)\\
        &=\sum_{j=0}^{k-1}{\binom{n}{n-j}} \skyt(n-k+1,i-j,k-2i+j+1).\\
    \end{align*}
    Note that $\skyt(n-k+1,i-j,k-2i+j+1)$ has $n-j$ entries. Hence, one can interpret the term ${\binom{n}{n-j}} \skyt(n-k+1,i-j,k-2i+j+1)$ as counting the number of ways of filling skew Young diagrams of the following shape with entries from $\{1,\ldots,n\}$ so that rows increase from left to right and columns increase from top to bottom. 
    
        \begin{figure}[h]
            \begin{tikzpicture}[scale=0.5, line width=1pt]
                \draw (-1,0) grid (0,6);
                \draw[decoration={brace,raise=7pt},decorate]
                (-1,0) -- node[left=7pt] {$n-k+1$} (-1,6);
                \draw (0,4) grid (4,6);
                \draw[decoration={brace,mirror, raise=4pt},decorate]
                (0.1,4) -- node[below=7pt] {$i-j$} (5,4);
                \draw (4,4) grid (5,9);
                \draw[decoration={brace,mirror, raise=5pt},decorate]
                (5,4) -- node[right=7pt] {$k-2i+j+1$} (5,9); 
            \end{tikzpicture}
        \end{figure}
        
        Hence, the $i$-th coefficient of $Z_{\U_{k,n}}(t)$ counts the number of such fillings for all diagrams as above, varying in all possible values of $j$. This is what makes finding a single object that this coefficient counts challenging---this coefficient counts fillings for diagrams of different sizes.
\end{obs}

\begin{prop}\label{z_kh}
    For every $k,h\geq 1$, 
    \[z_{k,h}(t) = \left[\binom{h}{k-1} -1 \right]t^{k-1} + \sum_{j=0}^{k-2}\sum_{i=1}^{\lfloor \frac{k-j}{2}\rfloor} \binom{h}{j} \bskyt(h-k+2,i,k-j-2i+1)\,t^{i+j}.\]
    This implies that $z_{k,h}(t)$ is a polynomial with non-negative coefficients of degree $k-1$.
\end{prop}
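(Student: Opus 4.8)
\textit{Approach.} The plan is to reduce everything to a finite manipulation of $\skyt$-numbers. By Corollary~\ref{cor:p_(k,h) in terms of uniform matroids} we have $z_{k,h}(t)=Z_{\U_{k,h+1}}(t)-(1+t)Z_{\U_{k-1,h}}(t)$, and Corollary~\ref{cor:z_poly_comb} gives both $Z_{\U_{k,h+1}}(t)$ and $Z_{\U_{k-1,h}}(t)$ as explicit double sums; note that the first $\Skyt$-parameter is $(h+1)-k+1 = h-(k-1)+1 = h-k+2$ in both cases, so, writing $a:=h-k+2$, every term of both expansions involves $\skyt(a,\,\cdot\,,\,\cdot\,)$. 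The whole proof is then a bookkeeping comparison of these sums, where the relevant conventions for $\skyt$ and $\bskyt$ (namely $\skyt(a,0,b)=1$, and $\skyt(a,i,b)=\bskyt(a,i,b)=0$ when $i>0$ and $b<2$) and Proposition~\ref{prop:bskyt and skyt} do all the work.

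\textit{Step 1: collapsing the expression.} In the expansion of $Z_{\U_{k,h+1}}(t)$ I would split $\binom{h+1}{j}=\binom{h}{j}+\binom{h}{j-1}$. A reindexing $j\mapsto j+1$ in the $\binom{h}{j-1}$-part, using $k-(j+1)-2i+1 = k-j-2i$ and $\lfloor\tfrac{k-(j+1)}{2}\rfloor=\lfloor\tfrac{k-1-j}{2}\rfloor$, shows that this part equals exactly $t\cdot Z_{\U_{k-1,h}}(t)-t^k$. Hence the $t^k$ terms and the factor $(1+t)Z_{\U_{k-1,h}}(t)$ cancel away and one is left with
\[
z_{k,h}(t)=\sum_{j=0}^{k-1}\sum_{i=0}^{\lfloor\frac{k-j}{2}\rfloor}\binom{h}{j}\skyt(a,i,k-j-2i+1)\,t^{i+j}\;-\;Z_{\U_{k-1,h}}(t).
\]
Isolating the $j=k-1$ summand of the first sum: only $i=0$ survives and $\skyt(a,0,2)=1$, so it contributes $\binom{h}{k-1}t^{k-1}$, which together with the $-t^{k-1}$ coming from $Z_{\U_{k-1,h}}(t)$ produces the term $\bigl[\binom{h}{k-1}-1\bigr]t^{k-1}$.

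\textit{Step 2: producing the $\bskyt$-numbers and conclusion.} For each $j\in\{0,\dots,k-2\}$ I would subtract, term by term in $i$, the inner sum $\sum_i \binom{h}{j}\skyt(a,i,k-j-2i)\,t^{i+j}$ (from $Z_{\U_{k-1,h}}$, since $(k-1)-j-2i+1=k-j-2i$) from $\sum_i\binom{h}{j}\skyt(a,i,k-j-2i+1)\,t^{i+j}$. As the second summand is $\skyt(a,i,b-1)$ for $b=k-j-2i+1$, Proposition~\ref{prop:bskyt and skyt} converts each matched pair into $\binom{h}{j}\bskyt(a,i,k-j-2i+1)\,t^{i+j}$; the $i=0$ contribution vanishes since $\bskyt(a,0,\,\cdot\,)=0$. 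The only delicate point is the range mismatch when $k-j$ is even, where the first sum carries an extra index $i=\lfloor\tfrac{k-j}{2}\rfloor$ for which $b=k-j-2i+1=1$: here $\skyt(a,i,1)=0$ (as $i>0$), so the extra term drops, and likewise $\bskyt(a,\lfloor\tfrac{k-j}{2}\rfloor,1)=0$, so the resulting $\bskyt$-sum may be taken up to $i=\lfloor\tfrac{k-j}{2}\rfloor$ without change. Reassembling gives precisely the claimed formula. Non-negativity of the coefficients is then immediate: every $\bskyt$-number is a cardinality and $\binom{h}{k-1}\ge 1$, since a hyperplane of a rank-$k$ matroid has at least $k-1$ elements, i.e.\ the relevant values of $h$ satisfy $h\ge k-1$; and the degree is exactly $k-1$ because $\binom{h}{k-1}-1\ge 1$ whenever $h\ge k\ge 2$ (for $k=1$ one has $z_{1,h}(t)=0$). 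I expect no genuine obstacle here—the two reindexing/convention checks in Steps~1 and~2 are the only places where a little care is needed.
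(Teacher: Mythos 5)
Your proof is correct and takes essentially the same route as the paper's: both expand $z_{k,h}(t)=Z_{\U_{k,h+1}}(t)-(1+t)Z_{\U_{k-1,h}}(t)$ via Corollary \ref{cor:z_poly_comb}, use Pascal's rule on the binomials (you split $\binom{h+1}{j}=\binom{h}{j}+\binom{h}{j-1}$ and recognize $t\,Z_{\U_{k-1,h}}(t)-t^k$, while the paper subtracts first and uses $\binom{h+1}{j}-\binom{h}{j-1}=\binom{h}{j}$ --- the same identity), isolate the $j=k-1$ term, and convert differences of $\skyt$'s into $\bskyt$'s via Proposition \ref{prop:bskyt and skyt}. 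Your bookkeeping of the conventions (vanishing $i=0$ terms, the extra index when $k-j$ is even) and of the degree statement for small $k,h$ is, if anything, slightly more explicit than the paper's.
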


\begin{proof}
    Let us write 
    \[ z_{k,h}(t) = Z_{\U_{k,h+1}}(t)- (1+t)Z_{\U_{k-1,h}}(t).\]
    We use the theorem above to make the three terms more explicit.
    \begin{align*}
        Z_{\U_{k,h+1}}(t) &= t^k + \sum_{j=0}^{k-1}\sum_{i=1}^{\lfloor \frac{k-j}{2}\rfloor} \binom{h+1}{j}\skyt(h-k+2,i,k-2i+1)\,t^{i+j}\\
        t\,Z_{\U_{k-1,h}}(t) &= t^k + \sum_{j=0}^{k-2}\sum_{i=1}^{\lfloor \frac{k-j}{2}\rfloor}\binom{h}{j} \skyt(h-k+2,i,k-j-2i)\,t^{i+j+1}\\
        &= t^k + \sum_{j=1}^{k-1}\sum_{i=1}^{\lfloor \frac{k-j}{2}\rfloor}\binom{h}{j-1}\skyt(h-k+2,i,k-j-2i+1)\,t^{i+j}\\
        Z_{\U_{k-1,h}}(t) &= t^{k-1} + \sum_{j=0}^{k-2}\sum_{i=1}^{\lfloor \frac{k-j-1}{2}\rfloor}\binom{h}{j}\skyt(h-k+2,i,k-j-2i)\,t^{i+j}.
    \end{align*}
    We proceed by subtracting the first two quantities. The degree-$k$ terms cancel out and we separate from the first sum the terms for $j=0$ (which do not have a corresponding term in the second sum). After using the known combinatorial fact that $\binom{h+1}{j} - \binom{h}{j-1} = \binom{h}{j}$, this leaves us with
    \begin{align*}
        Z_{\U_{k,h+1}}(t) - t\,Z_{\U_{k-1,h}}(t) = & \sum_{i=0}^{\lfloor\frac{k}{2}\rfloor}\skyt(h-k+2,i,k-2i+1)\,t^i\\
        &+\sum_{j=1}^{k-1}\sum_{i=1}^{\lfloor \frac{k-j}{2}\rfloor}\binom{h}{j}\skyt(h-k+2,i,k-j-2i+1)\,t^{i+j}\\
        =&\sum_{j=0}^{k-1}\sum_{i=1}^{\lfloor \frac{k-j}{2}\rfloor}\binom{h}{j}\skyt(h-k+2,i,k-j-2i+1)\,t^{i+j}.\\
    \end{align*}
    Now we want to subtract from what we obtained the quantity $Z_{\U_{k-1,h}}(t)$. This gives us
    \begin{align*}
        z_{k,h}(t) &= \sum_{j=0}^{k-1}\sum_{i=1}^{\lfloor \frac{k-j}{2}\rfloor}\binom{h}{j}\skyt(h-k+2,i,k-j-2i+1)\,t^{i+j} \;-\;t^{k-1}\\
        &\quad-\sum_{j=0}^{k-2}\sum_{i=1}^{\lfloor \frac{k-j-1}{2}\rfloor}\binom{h}{j}\skyt(h-k+2,i,k-j-2i)\,t^{i+j}\\
        &= \binom{h}{k-1}\skyt(h-k+2,0,2)t^{k-1}-t^{k-1}\\
        &\quad+\sum_{j=0}^{k-2}\sum_{i=1}^{\lfloor \frac{k-j}{2}\rfloor}\binom{h}{j}\bskyt(h-k+2,i,k-j-2i+1)\,t^{i+j},
    \end{align*}
    which gives us the desired result.
\end{proof}

\begin{obs}
    It is worth noticing that when $h=k$, that is $H$ is a circuit-hyperplane, we obtain that
    \begin{align*}
        z_{k,h}(t) &= (k-1)t^{k-1} + \sum_{j=0}^{k-2}\sum_{i=1}^{\lfloor \frac{k-j}{2}\rfloor}\binom{k}{j}\bskyt(2,i,k-j-2i+1)\,t^{i+j}\\
        &= (k-1)t^{k-1} + \sum_{j=0}^{k-2}\binom{k}{j}t^jp_{k-j,k-j}(t),
    \end{align*}
    which is \cite[Corollary 3.6]{ferroni-vecchi}.
\end{obs}

\section{Appendix}

In this section we collect some inequalities that are required to finish the proof of Theorem \ref{thm:sparsepaving-gammapositive}, that sparse paving matroids are $\gamma$-positive. We start with some basic inequalities that can be proved by just using elementary manipulations. Recall that $c_{k,n}$ is a shorthand for $\max\{k,n-k\}+1$.

\begin{lema}\label{lem:elementary inequalities}
    The following inequalities hold.
    \begin{enumerate}[\normalfont(a)]
        \item For every $1\leq k\leq n-1$, 
        \[\left(\frac{n}{kc_{k,n}}+\frac{n-k}{n-k+1}\right)(k-1)\leq k-\frac{k}{\binom{n-1}{k-1}}.\]
        \item For every $2\leq k\leq n-1$ and $n\geq 7$, \[\dfrac{n(n-k+2)}{2kc_{k,n}} + (n-k)\left(1-\frac{1}{n-\frac{k}{2}}\right) \leq \frac{k(n-2)(n-k)}{(k-1)(n-1)}.\]
        \item  For every $2\leq k\leq n-1$ and $n\geq 15$,
        \[ \frac{2n(n-k+1)}{3kc_{k,n}} + \frac{(n-k)(n-k+1)}{n-k+2} \leq \frac{k(n-2)(n-k)}{(k-1)(n-1)}.\]
    \end{enumerate}
\end{lema}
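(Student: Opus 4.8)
The plan is to treat all three statements as rational inequalities in the integers $n$ and $k$, the only mild complications being the quantity $c_{k,n}=\max\{k,n-k\}+1$ (which I will replace, regime by regime, by a sharp enough linear lower bound chosen among $c_{k,n}\ge\tfrac{n+2}{2}$, $c_{k,n}\ge n-k+1$ and $c_{k,n}\ge k+1$) and, in (a), the binomial coefficient $\binom{n-1}{k-1}$ appearing in a denominator. After substituting the chosen bound for $c_{k,n}$ and clearing denominators, each inequality becomes the assertion that an explicit polynomial in $n$ and $k$ is nonnegative on the stated range, which I then verify by elementary estimates.

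For (a) this reduction is clean. When $k=1$ both sides vanish. For $k\ge 2$ one first checks the identity
\[\left(\frac{n}{k(n-k+1)}+\frac{n-k}{n-k+1}\right)(k-1)=k-\frac{n}{k(n-k+1)},\]
and since $c_{k,n}\ge n-k+1$ always holds, the left side of (a) is at most $k-\frac{n}{k(n-k+1)}$. Hence (a) follows as soon as $\frac{k}{\binom{n-1}{k-1}}\le\frac{n}{k(n-k+1)}$, and using $n\binom{n-1}{k-1}=k\binom{n}{k}$ this is exactly $k(n-k+1)\le\binom{n}{k}$. This last inequality holds for $2\le k\le n-2$ (the range being empty unless $n\ge 4$): by unimodality of the binomials $\binom{n}{k}\ge\binom{n}{2}$, while the integer maximum of the quadratic $k(n-k+1)$ is at most $\binom{n}{2}$ for $n\ge 4$. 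The only remaining case is $k=n-1$, where the bound $c_{k,n}\ge n-k+1$ is far too lossy; there one computes with the true value $c_{k,n}=n$, and (a) collapses to $\frac{1}{n-1}+\frac12\le 1$, i.e. to $n\ge 3$.

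For (b) and (c) I will follow the same scheme, with more bookkeeping. The first step is to move the middle summand of the left side to the right: using $n-\tfrac k2=\tfrac{2n-k}{2}$ one finds, for (b),
\[\frac{k(n-2)(n-k)}{(k-1)(n-1)}-(n-k)\left(1-\frac{1}{n-k/2}\right)=\frac{(n-k)\left(2n^2-nk+k^2-4n-k+2\right)}{(k-1)(n-1)(2n-k)},\]
and the quadratic $2n^2-nk+k^2-4n-k+2$, viewed in $k$, has discriminant $-7n^2+18n-7<0$ for $n\ge 3$, hence is positive; the analogous computation for (c) yields the factor $n^2-kn+k^2-2k-1$, again a quadratic in $k$ with negative discriminant for $n\ge 3$. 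So in each case it remains only to dominate the single fraction $\frac{n(n-k+2)}{2kc_{k,n}}$ (respectively $\frac{2n(n-k+1)}{3kc_{k,n}}$) by the displayed expression. Here, exactly as in (a), the crude bound $c_{k,n}\ge\tfrac{n+2}{2}$ is adequate in the bulk but not near $k=n-1$, so I split into the regimes $k\le n-k$ (use $c_{k,n}=n-k+1$) and $k\ge n-k$ (use $c_{k,n}=k+1$). In each regime, clearing denominators produces a polynomial inequality that, for fixed $k$, has degree four in $n$ with positive leading coefficient, and the thresholds $n\ge 7$ in (b) and $n\ge 15$ in (c) are precisely what is needed to absorb the lower-order terms; these explicit polynomial inequalities are then confirmed directly (bounding the $k$-dependent coefficients using $2\le k\le n-1$, or appealing to the computer as in Remark \ref{rem:wilf-zeilberger}).

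The main obstacle is not conceptual but \emph{organizational}: one must keep track of which lower bound on $c_{k,n}$ is tight enough in which range of $k$ — replacing $c_{k,n}$ by $\tfrac{n+2}{2}$ everywhere would fail exactly near the endpoints $k=1$ and $k=n-1$, where (a), (b) and (c) degenerate to equalities at the smallest admissible value of $n$ — and one must push the routine (but lengthy) polynomial manipulations in (b) and (c) far enough to see that the stated bounds on $n$ are sufficient. Apart from this, everything is elementary.
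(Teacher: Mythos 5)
Your part (a) is correct, and it takes a somewhat different (arguably cleaner) route than the paper: the paper splits into the three cases $k=n-1$, $2k\ge n$, $2k<n$ and estimates each one, whereas you use the exact identity $\left(\frac{n}{k(n-k+1)}+\frac{n-k}{n-k+1}\right)(k-1)=k-\frac{n}{k(n-k+1)}$ together with the uniform bound $c_{k,n}\ge n-k+1$ to reduce (a) to the single inequality $k(n-k+1)\le\binom{n}{k}$ for $2\le k\le n-2$, handling $k=1$ and $k=n-1$ directly; I checked the identity, the reduction via $n\binom{n-1}{k-1}=k\binom{n}{k}$, the unimodality estimate, and the endpoint case, and they are all fine.

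For (b) and (c), however, there is a genuine gap. Your decomposition is correct (the right-hand side minus the middle summand is indeed $(n-k)\,\frac{2n^2-nk+k^2-4n-k+2}{(k-1)(n-1)(2n-k)}$ in (b) and $(n-k)\,\frac{n^2-nk+k^2-2k-1}{(k-1)(n-1)(n-k+2)}$ in (c), with positive quadratic factors), but the whole content of (b) and (c) lies in the remaining comparison of $\frac{n(n-k+2)}{2kc_{k,n}}$, resp.\ $\frac{2n(n-k+1)}{3kc_{k,n}}$, with these fractions, and this step is only asserted, not carried out. It cannot be dispatched by ``bounding the $k$-dependent coefficients using $2\le k\le n-1$'': (b) holds with \emph{equality} at $k=n-1$, $n=7$ (there it reduces to $3(n+1)\le 4(n-1)$), and (c) actually fails at $(n,k)=(14,7)$ and is nearly tight at $(n,k)=(15,7)$, so the binding regimes are the lines $k=n-1$ and $k\approx n/2$, not ``fixed $k$, large $n$''; crude coefficient bounds will not close a tight inequality. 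Moreover, your structural claim is wrong in the regime $c_{k,n}=k+1$: there $n\le 2k$, and after clearing denominators the difference of the two sides, viewed for fixed $k$ as a polynomial in $n$, has \emph{negative} leading coefficient, so the ``degree four in $n$ with positive leading coefficient, thresholds absorb lower-order terms'' picture does not apply. Finally, the appeal to Remark \ref{rem:wilf-zeilberger} is misplaced, since that remark concerns WZ-style verification of binomial-sum identities rather than two-variable polynomial inequalities over a region. (To be fair, the paper itself proves only (a) in detail and declares (b), (c) ``very similar''; but as written your proposal proves (a) and only sketches (b), (c), and the sketch as stated would not survive the tight cases.)
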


\begin{proof}
    We will only prove (a) as the other two inequalities are very similar. 
    \begin{itemize}
            \item Assume that $k = n - 1$, so that $c_{k,n} = k + 1 = n$. The inequality to prove becomes
                \[ \left(\frac{1}{n-1} + \frac{1}{2}\right)(n-2) \leq n-2.\] 
            which is true for $n\geq 2$.
            \item Assume that $2k\geq n$, so that $c_{k,n}=k+1$. Observe that $\frac{x}{x+1}$ is an increasing function, so that $\frac{n-k}{n-k+1}\leq \frac{k}{k+1}$ as $n-k\leq k$. In particular, it suffices to prove that
            \[ \left(\frac{n}{k(k+1)}+\frac{k}{k+1}\right)(k-1)\leq k-\frac{k}{\binom{n-1}{k-1}}.\]
            Notice that $\frac{n}{k}\leq 2$, so we can instead show that
            \[ \left(\frac{2}{k+1}+\frac{k}{k+1}\right)(k-1)\leq k-\frac{k}{\binom{n-1}{k-1}}.\]
            Observe that, after multiplying both sides by $k+1$, the previous inequality is equivalent to
                \[ (k+2)(k-1) \leq k(k+1) - \frac{k(k+1)}{\binom{n-1}{k-1}},\]
            which after subtracting $k^2+k$ and multiplying by $-1$ is
                \[ \frac{k(k+1)}{2} \leq \binom{n-1}{k-1},\]
            which is true whenever $k\leq n-2$. 
            \item Now assume that $2k<n$, so that $c_{k,n}=n-k+1$. In this case, the inequality to prove reduces to
            \begin{equation}
             \left(\frac{n}{k(n-k+1)}+\frac{n-k}{n-k+1}\right)(k-1)\leq k-\frac{k}{\binom{n-1}{k-1}}
            \end{equation}
            Notice that the second summand in the first factor can be rewritten as $1 - \frac{1}{n-k+1}$, so that it suffices to prove that
            \[ \left(1+\frac{n-k}{k(n-k+1)}\right)(k-1)\leq k-\frac{k}{\binom{n-1}{k-1}}\]
            Since $\frac{n-k}{n-k+1}<1$, it just suffices to prove
            \[ \left(1+\frac{1}{k}\right)(k-1)\leq k-\frac{k}{\binom{n-1}{k-1}}.\]
            The expression in the left is $k-\frac{1}{k}$. So that the last inequality is equivalent to
                \[ k^2 \leq \binom{n-1}{k-1},\]
            which is true since $2k<n$ and $k\geq 1$.\qedhere
        \end{itemize}
\end{proof}

We will need the following identities involving binomial sums. The proofs are omitted. See also Remark \ref{rem:wilf-zeilberger}.

\begin{lema}\label{lem:identity1}
    For every $1\leq i \leq k\leq n-1$, the following identity holds:
    \[ \sum_{j=i}^{k-1} j\binom{j-1}{i-1}\binom{n-k+j-1}{j} = \frac{i(n-k)}{n+i-k}\binom{n-1}{k-1}\binom{k-1}{i}.\]
\end{lema}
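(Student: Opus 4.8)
The plan is to evaluate the left-hand side directly by a short telescoping argument assembled from four textbook binomial identities; this is in the spirit of Remark~\ref{rem:wilf-zeilberger}, but short enough to record by hand.

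First I would absorb the leading factor via $j\binom{j-1}{i-1}=i\binom{j}{i}$, so the left-hand side becomes $i\sum_{j=i}^{k-1}\binom{j}{i}\binom{n-k+j-1}{j}$. Setting $m=n-k$ (so $m\geq 1$ because $k\leq n-1$) and applying upper negation $\binom{m+j-1}{j}=(-1)^j\binom{-m}{j}$ together with the trinomial-revision identity $\binom{-m}{j}\binom{j}{i}=\binom{-m}{i}\binom{-m-i}{j-i}$, the summand factors as $(-1)^j\binom{-m}{i}\binom{-m-i}{j-i}$. After the substitution $\ell=j-i$ the whole expression collapses to $i\,(-1)^i\binom{-m}{i}\sum_{\ell=0}^{k-1-i}(-1)^\ell\binom{-m-i}{\ell}$, where $\binom{-m}{i}$ no longer depends on the summation index.

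Next I would undo the negations: $(-1)^i\binom{-m}{i}=\binom{m+i-1}{i}$ and $(-1)^\ell\binom{-m-i}{\ell}=\binom{m+i-1+\ell}{\ell}$, turning the inner sum into the hockey-stick sum $\sum_{\ell=0}^{k-1-i}\binom{m+i-1+\ell}{\ell}=\binom{m+k-1}{k-1-i}$. Restoring $m=n-k$ gives $i\binom{n-k+i-1}{i}\binom{n-1}{k-1-i}$. To reach the stated right-hand side it then remains to rewrite this expression: using $\binom{n-k+i-1}{i}=\frac{n-k}{n-k+i}\binom{n-k+i}{i}$ together with the elementary identity $\binom{n-k+i}{i}\binom{n-1}{k-1-i}=\binom{n-1}{k-1}\binom{k-1}{i}$ (both sides equal $\frac{(n-1)!}{i!\,(k-1-i)!\,(n-k)!}$), and noting $n-k+i=n+i-k$, one obtains exactly $\frac{i(n-k)}{n+i-k}\binom{n-1}{k-1}\binom{k-1}{i}$.

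I do not anticipate a genuine obstacle. The only points needing care are the degenerate case $i=k$, where the sum is empty and the right-hand side vanishes because $\binom{k-1}{i}=0$, and interpreting the generalized binomial coefficients $\binom{-m}{\,\cdot\,}$ polynomially so that upper negation and trinomial revision apply verbatim. As an alternative consistent with the rest of the paper, one may instead feed the single hypergeometric sum over $j$ to Zeilberger's algorithm, obtain a first-order recurrence in $k$ satisfied by both sides, and check one base case; but the telescoping computation above is self-contained and preferable.
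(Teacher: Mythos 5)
Your argument is correct, and I checked each step: the absorption $j\binom{j-1}{i-1}=i\binom{j}{i}$, the upper negation and trinomial revision (both valid as polynomial identities in the upper argument), the hockey-stick evaluation $\sum_{\ell=0}^{k-1-i}\binom{m+i-1+\ell}{\ell}=\binom{m+k-1}{k-1-i}$, and the final rewriting via $\binom{n-k+i-1}{i}=\frac{n-k}{n-k+i}\binom{n-k+i}{i}$ and $\binom{n-k+i}{i}\binom{n-1}{k-1-i}=\binom{n-1}{k-1}\binom{k-1}{i}$ all hold, and spot checks (e.g.\ $i=2$, $k=4$, $n=6$ giving $30=30$) confirm the closed form $i\binom{n-k+i-1}{i}\binom{n-1}{k-1-i}$ agrees with the stated right-hand side. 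You also correctly dispose of the degenerate case $i=k$, where the sum is empty and $\binom{k-1}{i}=0$. However, your route is genuinely different from the paper's: the paper gives no written proof at all for this lemma, explicitly deferring to computer verification in the spirit of Remark \ref{rem:wilf-zeilberger} (Zeilberger-style certification of hypergeometric identities). What your telescoping computation buys is a short, self-contained, human-checkable derivation that makes Lemma \ref{lem:identity1} independent of any software, and it even exhibits the intermediate closed form $i\binom{n-k+i-1}{i}\binom{n-1}{k-1-i}$, which is arguably cleaner than the stated one. What the paper's approach buys is uniformity: the same mechanical method handles all the omitted identities at once, including the double sums (e.g.\ in Theorem \ref{teo:gamma-positivity-uniform}) where a one-line telescoping argument like yours is not available.
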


\begin{lema}\label{lem:identity2}
    For every $1\leq i \leq k\leq n-1$, the following identity holds:
    \[ \sum_{j=i}^{k-1} \binom{j-1}{i-1}\binom{n-k+j-2}{j-1} = \frac{i(n-k)}{(n-1)(n+i-k-1)}\binom{n-1}{k-1}\binom{k-1}{i}.\]
\end{lema}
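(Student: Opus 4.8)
The plan is to treat the summand as a hypergeometric term whose partial sums telescope, so that the identity collapses to the classical hockey-stick (parallel summation) identity together with a routine factorial manipulation. Set $t_j=\binom{j-1}{i-1}\binom{n-k+j-2}{j-1}$ for $i\le j\le k-1$. If $i=k$ the sum is empty and the right-hand side is zero as well (since $\binom{k-1}{i}=0$), so assume $i\le k-1$. The first step is to compute the ratio of consecutive summands,
\[
  \frac{t_{j+1}}{t_j}=\frac{\binom{j}{i-1}}{\binom{j-1}{i-1}}\cdot\frac{\binom{n-k+j-1}{j}}{\binom{n-k+j-2}{j-1}}=\frac{j}{j-i+1}\cdot\frac{n-k+j-1}{j}=\frac{n-k+j-1}{j-i+1}.
\]
Writing $j=i+m$ and $a:=n-k+i-1$ (note $a\ge 1$ under $1\le i\le k\le n-1$), this ratio is $\frac{m+a}{m+1}$. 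Hence by telescoping the $(i+m)$-th summand equals
\[
  t_{i+m}=t_i\cdot\frac{a(a+1)\cdots(a+m-1)}{m!}=\binom{n-k+i-2}{i-1}\binom{a+m-1}{m},
\]
where $t_i=\binom{i-1}{i-1}\binom{n-k+i-2}{i-1}=\binom{n-k+i-2}{i-1}$ and $a-1=n-k+i-2$.

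Summing over $m=0,\dots,k-1-i$ and applying $\sum_{m=0}^{M}\binom{a-1+m}{m}=\binom{a+M}{M}$ with $M=k-1-i$ (so that $a+M=n-2$) gives
\[
  \sum_{j=i}^{k-1}t_j=\binom{n-k+i-2}{i-1}\binom{n-2}{k-1-i}.
\]
It then remains to prove the elementary identity $\binom{n-k+i-2}{i-1}\binom{n-2}{k-1-i}=\frac{i(n-k)}{(n-1)(n+i-k-1)}\binom{n-1}{k-1}\binom{k-1}{i}$. I would do this by expanding both sides as products of falling factorials; equivalently, by combining the absorption identities $\frac{k-1}{n-1}\binom{n-1}{k-1}=\binom{n-2}{k-2}$ and $\binom{n-2}{k-1}=\frac{n-k}{n-1}\binom{n-1}{k-1}$ with the subset-of-a-subset rewriting $\binom{n-1}{k-1}\binom{k-1}{i}=\binom{n-1}{i}\binom{n-1-i}{k-1-i}$, after which the common factor $(n-k+1)(n-k+2)\cdots(n-k+i-2)$ cancels and the equality drops out.

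This last computation is the only place requiring care, but it is purely routine; beyond it the only subtlety is bookkeeping the shift $j\mapsto i+m$ and checking the degenerate cases ($i=k$, handled above, and $n=k+1$, where $a=i$ and the sum collapses to $\binom{k-1}{i}$ by hockey-stick again, matching the right-hand side). A shorter alternative avoids the summation entirely: replacing $n$ by $n-1$ in Lemma \ref{lem:identity1} and using $\binom{n-k+j-2}{j-1}=\tfrac{j}{n-k-1}\binom{n-k+j-2}{j}$ (valid when $n\ge k+2$; the case $n=k+1$ is immediate) reduces the present identity to Lemma \ref{lem:identity1}, after one more simplification via $\binom{n-2}{k-1}=\tfrac{n-k}{n-1}\binom{n-1}{k-1}$.
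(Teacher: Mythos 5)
Your proof is correct, but it takes a genuinely different route from the paper: the paper simply omits the proofs of Lemma \ref{lem:identity1} and Lemma \ref{lem:identity2}, appealing to Remark \ref{rem:wilf-zeilberger} (computer verification in the spirit of Petkov\v{s}ek--Wilf--Zeilberger), whereas you give a short human proof. Your key observation is that the ratio $t_{j+1}/t_j=\frac{n-k+j-1}{j-i+1}$ is so simple that the summand collapses to $\binom{n-k+i-2}{i-1}\binom{a+m-1}{m}$ with $a=n-k+i-1$, so the whole sum is a single instance of parallel summation (hockey stick), giving $\binom{n-k+i-2}{i-1}\binom{n-2}{k-1-i}$; the remaining step is a finite binomial identity, which indeed checks out, as both sides equal
\[
\frac{(n-2)!}{(i-1)!\,(k-1-i)!\,(n-k-1)!\,(n+i-k-1)},
\]
using $(n+i-k-1)!=(n+i-k-1)\,(n-k+i-2)!$ and $k\le n-1$ to keep all factorials nonnegative. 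Your edge cases ($i=k$, and $n=k+1$) are handled correctly, and all binomials $t_j$ are well defined since $n-k\ge 1$. What your approach buys is a self-contained, verifiable proof where the paper offers none in print; what the paper's approach buys is uniformity, since the same software check covers Lemma \ref{lem:identity1}, Lemma \ref{lem:identity2} and the genuinely harder double-sum identities elsewhere (e.g.\ Theorem \ref{teo:gamma-positivity-uniform}), which do not evaluate by a single classical summation. One small remark: your shorter alternative (replace $n$ by $n-1$ in Lemma \ref{lem:identity1} and absorb a factor $j$) is valid for $n\ge k+2$ as you say, but it inherits the status of Lemma \ref{lem:identity1}, whose proof the paper also omits; so the telescoping argument is the one that genuinely adds content.
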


In other words, the two lemmas above show that it is possible to deduce closed expressions for certain sums. However no closed expression exists for $\sum_{j=i}^{k-i} \binom{j-1}{i-1}\binom{n-k+j-1}{j}$. This means that we cannot tackle inequality \eqref{ineq:target2} directly. Fortunately, by combining the above identities we can give a sufficiently tight lower bound for the right-hand-side in \eqref{ineq:target2}.

\begin{lema}\label{lem:identity3}
    For every $1\leq i\leq k \leq n-1$, the following inequality holds:
    \[ \sum_{j=i}^{k-1}\binom{j-1}{i-1}\binom{n-k+j-1}{j} \geq \frac{(n-2)i(n-k)}{(k-1)(n-1)(n+i-k-1)}\binom{n-1}{k-1}\binom{k-1}{i}.\]
\end{lema}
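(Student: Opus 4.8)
The plan is to reduce this to Lemma~\ref{lem:identity2} by a term-by-term comparison of the two sums. The bridge between them is the elementary identity
\[
    \binom{n-k+j-1}{j} \;=\; \frac{n-k+j-1}{j}\,\binom{n-k+j-2}{j-1},
\]
valid for every $j\ge 1$ (both sides are zero in the degenerate case $n=k+1$). Since the lemma is vacuous when $i=k$ (the left-hand sum is empty and $\binom{k-1}{i}=0$, interpreting the right-hand side as $0$), I may assume $i\le k-1$, and in particular $k\ge 2$.

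First I would rewrite the left-hand side using the identity above:
\[
    \sum_{j=i}^{k-1}\binom{j-1}{i-1}\binom{n-k+j-1}{j}
    \;=\; \sum_{j=i}^{k-1}\frac{n-k+j-1}{j}\,\binom{j-1}{i-1}\binom{n-k+j-2}{j-1}.
\]
Next I would bound the weight $\frac{n-k+j-1}{j}=1+\frac{n-k-1}{j}$ from below uniformly over the summation range. Because $k\le n-1$ forces $n-k-1\ge 0$, and because $j\le k-1$ throughout the sum, we get
\[
    \frac{n-k+j-1}{j}\;=\;1+\frac{n-k-1}{j}\;\ge\;1+\frac{n-k-1}{k-1}\;=\;\frac{n-2}{k-1}
\]
for every index $j\in\{i,\dots,k-1\}$. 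Applying this bound to each term and summing yields
\[
    \sum_{j=i}^{k-1}\binom{j-1}{i-1}\binom{n-k+j-1}{j}
    \;\ge\; \frac{n-2}{k-1}\sum_{j=i}^{k-1}\binom{j-1}{i-1}\binom{n-k+j-2}{j-1},
\]
and then I would substitute the closed form supplied by Lemma~\ref{lem:identity2} for the remaining sum, obtaining
\[
    \frac{n-2}{k-1}\cdot\frac{i(n-k)}{(n-1)(n+i-k-1)}\binom{n-1}{k-1}\binom{k-1}{i}
    \;=\;\frac{(n-2)\,i\,(n-k)}{(k-1)(n-1)(n+i-k-1)}\binom{n-1}{k-1}\binom{k-1}{i},
\]
which is exactly the claimed lower bound.

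There is essentially no genuine obstacle: the only points requiring care are the boundary/degenerate cases ($n=k+1$, where the key identity still holds since both sides vanish, and $i=k$, where the inequality is $0\ge 0$) and making sure the monotonicity estimate $\frac{n-k+j-1}{j}\ge\frac{n-2}{k-1}$ points in the correct direction (it does, because the weight decreases in $j$ and $j\le k-1$). I note that Lemma~\ref{lem:identity1} is not needed for this argument—only Lemma~\ref{lem:identity2}—although one could alternatively run the same comparison with the weights $n-k+j-1$ coming from Lemma~\ref{lem:identity1} together with the bound $\frac{1}{j}\ge\frac{1}{k-1}$.
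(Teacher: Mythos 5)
Your proposal is correct and is essentially the paper's own proof: the same rewriting $\binom{n-k+j-1}{j}=\frac{n-k+j-1}{j}\binom{n-k+j-2}{j-1}$, the same uniform bound $\frac{n-k+j-1}{j}\geq\frac{n-2}{k-1}$ from $j\leq k-1$, and then the closed form of Lemma \ref{lem:identity2}. Your extra attention to the degenerate cases ($i=k$, $n=k+1$) is fine but does not change the argument.
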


\begin{proof}
    Observe that $\binom{n-k+j-1}{j} = \binom{n-k+j-2}{j-1}\frac{n-k+j-1}{j}\geq \binom{n-k+j-2}{j-1}\frac{n-2}{k-1}$, where in the last inequality we used that $j\leq k-1$. In particular, using Lemma \ref{lem:identity2}, we obtain the result.
\end{proof}

Now, we state an elementary inequality that will be used later to deduce the bounds we need to essentially prove inequality \eqref{ineq:target2} when $i\geq 2$. 

\begin{lem}\label{lem:identity4}
    For every $3\leq i\leq \frac{k}{2}$ and $k\leq n-1$, the following inequality holds
    \[ \frac{2n(n+i-k-1)}{k(i+1)c_{k,n}} + \frac{(n-k)(n+i-k-1)}{n+i-k} \leq \frac{k(n-2)(n-k)}{(k-1)(n-1)}.\]
    Also, if $i=2$ then the above holds whenever $n\geq 15$.
\end{lem}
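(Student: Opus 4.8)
The plan is to reduce the claimed inequality to the already-established parts~(b) and~(c) of Lemma~\ref{lem:elementary inequalities}, together with one elementary computation at the boundary $n=k+1$. Write the left-hand side as $L(i):=f(i)+g(i)$, where
\[
  f(i)=\frac{2n(n+i-k-1)}{k(i+1)c_{k,n}},\qquad
  g(i)=\frac{(n-k)(n+i-k-1)}{n+i-k}=(n-k)\Bigl(1-\tfrac{1}{n+i-k}\Bigr),
\]
and note that the right-hand side $\frac{k(n-2)(n-k)}{(k-1)(n-1)}$ is exactly the right-hand side appearing in Lemma~\ref{lem:elementary inequalities}(b) and~(c). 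So it suffices to compare $L(i)$ with the relevant left-hand sides there.

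First I would dispatch the case $i=2$. Substituting $i=2$ turns $n+i-k-1$ into $n-k+1$, $n+i-k$ into $n-k+2$, and $i+1$ into $3$, so $L(2)$ becomes verbatim the left-hand side of Lemma~\ref{lem:elementary inequalities}(c); hence for $n\geq 15$ the inequality is precisely that statement.

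For $i\geq 3$ I would split on whether $n=k+1$ or $n\geq k+2$. If $n=k+1$, then $c_{k,n}=k+1$, and a direct substitution gives $f(i)=\frac{2i}{k(i+1)}$, $g(i)=\frac{i}{i+1}$, and right-hand side $=1$; after multiplying through by $i+1$ the inequality collapses to $\frac{2i}{k}\leq 1$, i.e. $2i\leq k$, which is the hypothesis. If $n\geq k+2$, I claim $L(i)$ is bounded above by the left-hand side of Lemma~\ref{lem:elementary inequalities}(b). Indeed, writing $\frac{n+i-k-1}{i+1}=1+\frac{n-k-2}{i+1}$ and using $n-k-2\geq 0$ shows $f$ is non-increasing in $i$, so $f(i)\leq f(3)=\frac{n(n-k+2)}{2kc_{k,n}}$, which is the first summand of that left-hand side; and, viewing $g$ as a function of a real variable, $g$ is increasing, so $g(i)\leq g(k/2)=(n-k)\bigl(1-\frac{1}{n-k/2}\bigr)$, which is its second summand. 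Since $i\geq 3$ forces $k\geq 2i\geq 6$, we have $n\geq k+2\geq 8\geq 7$ and $k\leq n-2\leq n-1$, so Lemma~\ref{lem:elementary inequalities}(b) applies and yields the desired bound.

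I do not expect a genuine obstacle here: the two substantive cases ($i=2$, and $i\geq 3$ with $n\geq k+2$) are pure reductions to parts~(c) and~(b), and the boundary $n=k+1$ collapses to the hypothesis $2i\leq k$. The one point demanding care is that the monotonicity argument used for $i\geq 3$ genuinely fails for $i=2$ (the factor $\frac{n-k-2}{i+1}$ is then too large to push $f(2)$ below $f(3)$), which is exactly why the case $i=2$ needs its own hypothesis $n\geq 15$ and Lemma~\ref{lem:elementary inequalities}(c) rather than~(b). An alternative route, should one prefer not to cite~(b) and~(c), would be to run the three-way case analysis on $c_{k,n}$ (namely $k=n-1$, $2k\geq n$, and $2k<n$) exactly as in the proof of Lemma~\ref{lem:elementary inequalities}(a); this is self-contained but longer.
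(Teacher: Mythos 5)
Correct, and essentially the paper's own argument: you bound the first summand by its value at $i=3$ and the second by its value at $i=\tfrac{k}{2}$, reducing $i\geq 3$ to Lemma~\ref{lem:elementary inequalities}(b), while $i=2$ is verbatim Lemma~\ref{lem:elementary inequalities}(c). Your separate treatment of the boundary case $n=k+1$ is a sensible refinement (there $n-k-2<0$, so the first summand is no longer maximized at $i=3$, a point the paper's proof passes over), and your direct check that the inequality collapses to $2i\leq k$ — with equality at $i=\tfrac{k}{2}$ — is correct.
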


\begin{proof}
    Notice that the case $i=2$ is immediate since it is exactly the content of Lemma \ref{lem:elementary inequalities} (c). Assume from now on that $i\geq 3$, so that $n-1\geq k\geq 6$. Observe that the terms involving the variable $i$ are in the left-hand-side. 
    \begin{itemize}
        \item The first summand in the left is             
            \[\frac{2n}{kc_{k,n}}\cdot\frac{n+i-k-1}{i+1} = \frac{2n}{kc_{k,n}} \cdot \left(1+\frac{n-k-2}{i+1}\right).\]
        Note that the right-hand-side is maximized when $i=3$. Which gives us that the first summand is bounded above by $\frac{n(n-k+2)}{2kc_{k,n}}$.
        \item The second summand in the left is
            \[(n-k)\left(1-\frac{1}{n+i-k}\right),\]
        which is maximized when $i=\frac{k}{2}$. Hence, the second summand is bounded above by $(n-k)\left(1-\frac{1}{n-\frac{k}{2}}\right)$.
    \end{itemize}
    Therefore, it is sufficient to prove that
    \begin{equation}
        \frac{n(n-k+2)}{2kc_{k,n}} + (n-k)\left(1-\frac{1}{n-\frac{k}{2}}\right) \leq \frac{k(n-2)(n-k)}{(k-1)(n-1)},
    \end{equation}
    and this is just Lemma \ref{lem:elementary inequalities} (b), as $n\geq 7$.
\end{proof}

By combining all the preceding results, we can prove that the coefficients of degree $i\geq 2$ of the $\gamma$-polynomial of a sparse paving matroid are non-negative. 

\begin{prop}\label{prop:key_i_geq_2}
    For every $2\leq i\leq \left\lfloor\frac{k}{2}\right\rfloor$ and $k\leq n-1$, the following inequality holds
    \[ \frac{2i}{kc_{k,n}}\binom{n}{k}\binom{k}{i+1}\leq  \sum_{j=i}^{k-1} (k-j)\binom{j-1}{i-1}\binom{n-k+j-1}{j}. \]
\end{prop}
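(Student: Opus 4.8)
The plan is to rewrite the right-hand side so that the exact evaluations and bounds collected in the appendix apply directly. Writing the weight $k-j$ as the difference of $k$ and $j$, I would split
\[
\sum_{j=i}^{k-1}(k-j)\binom{j-1}{i-1}\binom{n-k+j-1}{j}=k\,S_1-S_2,
\]
where $S_1:=\sum_{j=i}^{k-1}\binom{j-1}{i-1}\binom{n-k+j-1}{j}$ and $S_2:=\sum_{j=i}^{k-1}j\binom{j-1}{i-1}\binom{n-k+j-1}{j}$. Lemma \ref{lem:identity1} evaluates $S_2$ exactly as $\frac{i(n-k)}{n+i-k}\binom{n-1}{k-1}\binom{k-1}{i}$, while Lemma \ref{lem:identity3} gives $S_1\ge\frac{(n-2)i(n-k)}{(k-1)(n-1)(n+i-k-1)}\binom{n-1}{k-1}\binom{k-1}{i}$. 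Substituting both yields the lower bound
\[
\sum_{j=i}^{k-1}(k-j)\binom{j-1}{i-1}\binom{n-k+j-1}{j}\ \ge\ i(n-k)\binom{n-1}{k-1}\binom{k-1}{i}\left(\frac{k(n-2)}{(k-1)(n-1)(n+i-k-1)}-\frac{1}{n+i-k}\right).
\]

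Next I would bring the left-hand side of the claimed inequality into the same shape. Using $\binom{n}{k}=\frac{n}{k}\binom{n-1}{k-1}$ and $\binom{k}{i+1}=\frac{k}{i+1}\binom{k-1}{i}$, one has $\frac{2i}{kc_{k,n}}\binom{n}{k}\binom{k}{i+1}=\frac{2in}{k(i+1)c_{k,n}}\binom{n-1}{k-1}\binom{k-1}{i}$. After cancelling the common positive factor $i\binom{n-1}{k-1}\binom{k-1}{i}$ and multiplying through by $n+i-k-1$ (positive, since $n-k\ge1$ and $i\ge2$ force $n+i-k-1\ge2$), the proposition reduces to
\[
\frac{2n(n+i-k-1)}{k(i+1)c_{k,n}}+\frac{(n-k)(n+i-k-1)}{n+i-k}\ \le\ \frac{k(n-2)(n-k)}{(k-1)(n-1)},
\]
which is precisely the inequality of Lemma \ref{lem:identity4}: valid for all $3\le i\le\frac{k}{2}$, and for $i=2$ as soon as $n\ge15$.

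Finally, the only cases left uncovered are $i=2$ together with $5\le n\le14$ (and $4\le k\le n-1$, since $i=2\le\lfloor k/2\rfloor$ forces $k\ge4$ and $k\le n-1$ forces $n\ge5$); these form a finite list, and for each of them I would verify the original inequality $\frac{4}{kc_{k,n}}\binom{n}{k}\binom{k}{3}\le\sum_{j=2}^{k-1}(k-j)(j-1)\binom{n-k+j-1}{j}$ directly, in the spirit of Remark \ref{rem:wilf-zeilberger}. I expect the only delicate point to be bookkeeping: confirming that every quantity cancelled or cross-multiplied in the reduction is positive, and pinning down the exact finite range of exceptional pairs. The single lossy step is the replacement of $S_1$ by the bound of Lemma \ref{lem:identity3} — no closed form for $S_1$ is available — and it is exactly this weakening that makes the separate small-$n$ check necessary; beyond that the argument is a purely mechanical reduction to Lemmas \ref{lem:identity1}, \ref{lem:identity3} and \ref{lem:identity4}.
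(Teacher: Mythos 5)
Your proposal is correct and follows essentially the same route as the paper: the same splitting of the weighted sum into $k\,S_1 - S_2$, the exact evaluation of $S_2$ via Lemma \ref{lem:identity1}, the lower bound on $S_1$ from Lemma \ref{lem:identity3}, and the reduction (after cancelling $i\binom{n-1}{k-1}\binom{k-1}{i}$ and clearing $n+i-k-1$) to the inequality of Lemma \ref{lem:identity4}, with the leftover $i=2$, $n\leq 14$ cases checked by finite inspection just as the paper does for $n\leq 14$.
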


\begin{proof}
    By finite inspection we can verify all the cases in which $n\leq 14$, so let us assume that $n\geq 15$, so that according to the preceding result, we have that Lemma \ref{lem:identity4} is valid, even in the case $i=2$. Let us write $S =\sum_{j=i}^{k-1} (k-j)\binom{j-1}{i-1}\binom{n-k+j-1}{j}$. By Lemma \ref{lem:identity1} and Lemma \ref{lem:identity3} we can bound $S$ as follows:
    \begin{align*}
        S&\geq \left( \frac{k(n-2)i(n-k)}{(k-1)(n-1)(n+i-k-1)} - \frac{i(n-k)}{n+i-k} \right)\binom{n-1}{k-1}\binom{k-1}{i}\\
        &=\frac{i}{n+i-k-1}\left( \frac{k(n-2)(n-k)}{(k-1)(n-1)} - \frac{(n-k)(n-k+i-1)}{n+i-k} \right)\binom{n-1}{k-1}\binom{k-1}{i}\\
        &\geq \frac{i}{n+i-k-1}\cdot \frac{2n(n+i-k-1)}{k(i+1)c_{k,n}}\binom{n-1}{k-1}\binom{k-1}{i},
    \end{align*}
    where in the last step we used Lemma \ref{lem:identity4}. Observe that the last expression can be simplified and is equal to $\frac{2ni}{k(i+1)c_{k,n}} \binom{n-1}{k-1}\binom{k-1}{i} = \frac{2i}{(i+1)c_{k,n}} \binom{n}{k}\binom{k-1}{i}=\frac{2i}{kc_{k,n}}\binom{n}{k}\binom{k}{i+1}$.
\end{proof}

To finish the proof, it only remains to prove the non-negativity of the linear coefficient of the $\gamma$-polynomial of a sparse paving matroid. The following result does the job.

\begin{prop}\label{prop:key_i_geq_1}
    For every $k\leq n-1$, the following inequality holds
    \[ \frac{2}{kc_{k,n}}\binom{n}{k}\binom{k}{2}\leq  \sum_{j=1}^{k-1} (k-j)\binom{n-k+j-1}{j}. \]
\end{prop}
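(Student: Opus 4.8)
The plan is to reduce this inequality to Lemma \ref{lem:elementary inequalities}(a), which has already been established, after first evaluating the right-hand side in closed form. The key observation is that the sum on the right actually collapses. Writing $a=n-k\geq 1$ and splitting $k-j$, I would write $\sum_{j=1}^{k-1}(k-j)\binom{a+j-1}{j}=k\sum_{j=1}^{k-1}\binom{a+j-1}{j}-\sum_{j=1}^{k-1}j\binom{a+j-1}{j}$. The first sum equals $\binom{a+k-1}{k-1}-1$ by the hockey-stick identity, and using $j\binom{a+j-1}{j}=a\binom{a+j-1}{j-1}$ together with hockey-stick again, the second sum equals $a\binom{a+k-1}{k-2}$. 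Since $a+k-1=n-1$ and $\binom{n-1}{k-2}=\tfrac{k-1}{n-k+1}\binom{n-1}{k-1}$, after simplification (using the identity $k-\tfrac{(n-k)(k-1)}{n-k+1}=\tfrac{n}{n-k+1}$) one gets the tidy formula $\sum_{j=1}^{k-1}(k-j)\binom{n-k+j-1}{j}=\tfrac{n}{n-k+1}\binom{n-1}{k-1}-k$.

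Next I would rewrite the left-hand side. Using $\binom{k}{2}=\tfrac{k(k-1)}{2}$ and $\binom{n}{k}=\tfrac{n}{k}\binom{n-1}{k-1}$, the left-hand side becomes $\tfrac{n(k-1)}{kc_{k,n}}\binom{n-1}{k-1}$. So the proposition is equivalent, after dividing through by $\binom{n-1}{k-1}$ and rearranging, to the inequality $\tfrac{n(k-1)}{kc_{k,n}}+\tfrac{k}{\binom{n-1}{k-1}}\leq\tfrac{n}{n-k+1}$.

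Finally, I would observe that this displayed inequality is exactly Lemma \ref{lem:elementary inequalities}(a) rewritten: expanding the product on the left-hand side of Lemma (a), using $\tfrac{n-k}{n-k+1}=1-\tfrac{1}{n-k+1}$ to split off the term $-\tfrac{k-1}{n-k+1}$, and moving it to the right together with the $k-\tfrac{k}{\binom{n-1}{k-1}}$ on that side, one recovers precisely the inequality above (again using $k-\tfrac{(n-k)(k-1)}{n-k+1}=\tfrac{n}{n-k+1}$). This closes the argument. The only substantive step is the closed-form evaluation of the sum; the rest is bookkeeping. The one thing to watch is the degenerate case $k=1$, where the sum is empty, $\binom{k}{2}=0$, and $\binom{n-1}{k-2}$ must be interpreted as $0$ — there both sides vanish and the statement holds trivially, so it can be disposed of first.
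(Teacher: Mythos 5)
Your proposal is correct and follows essentially the same route as the paper: evaluate the sum in closed form via the two elementary binomial identities (which you additionally derive by absorption and hockey-stick), then divide by $\binom{n-1}{k-1}$ and observe that the resulting inequality is exactly Lemma \ref{lem:elementary inequalities}(a). The extra simplification to $\frac{n}{n-k+1}\binom{n-1}{k-1}-k$ and the separate treatment of $k=1$ are harmless cosmetic differences.
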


\begin{proof}
    We will use the following two elementary identities: first $\sum_{j=1}^{k-1} \binom{n-k+j-1}{j} = \binom{n-1}{k-1}-1$, and second $\sum_{j=1}^{k-1} j\binom{n-k+j-1}{j} =\frac{2(n-k)}{n(n-k+1)}\binom{k}{2}\binom{n}{k}$. So that the statement to prove is equivalent to showing that
        \begin{equation}\label{ineq:target_i_equal1_simplified}
            \frac{2}{kc_{k,n}}\binom{n}{k}\binom{k}{2}\leq k\left(\binom{n-1}{k-1}-1\right)- \frac{2(n-k)}{n(n-k+1)}\binom{k}{2}\binom{n}{k}
        \end{equation}
        which can be further reduced to
        \begin{equation}\nonumber
            \left(\frac{2}{kc_{k,n}}+\frac{2(n-k)}{n(n-k+1)}\right)\binom{n}{k}\binom{k}{2}\leq k\left(\binom{n-1}{k-1}-1\right).
        \end{equation}
        After dividing by $\binom{n-1}{k-1}$, this is
        \begin{equation}\nonumber
            \left(\frac{n}{kc_{k,n}}+\frac{n-k}{n-k+1}\right)(k-1)\leq k-\frac{k}{\binom{n-1}{k-1}},
        \end{equation}
        which was proved in Lemma \ref{lem:elementary inequalities} (a).
\end{proof}

\section{Acknowledgments}
The authors would like to thank Nicholas Proudfoot for carefully reading our drafts and giving us helpful comments and suggestions that improved several important aspects of this manuscript; the third author is very grateful for the hospitality received at the University of Oregon. Also, the authors want to thank Christos Athanasiadis for useful comments that improved the exposition of Section \ref{sec:gamma polynomials}, and James Oxley for kindly discussing with us some notions related to matroid relaxations.

\bibliographystyle{amsalpha}
\bibliography{bibliography}

\end{document}